\newtheorem{theorem}{Theorem}[section]
\newtheorem{lemma}[theorem]{Lemma}
\newtheorem{remark}[theorem]{Remark}
\newtheorem{definition}[theorem]{Definition}
\renewcommand \theequation {%
\ifnum \c@section>\z@ \@arabic\c@section.%
%\fi \ifnum\c@subsection>\z@\@arabic\c@subsection.%
\fi\@arabic\c@equation} \@addtoreset{equation}{section}
\providecommand{\abs}[1]{\left\vert#1\right\vert}
\providecommand{\nm}[1]{\left\Vert#1\right\Vert}
\providecommand{\br}[1]{\left\langle #1 \right\rangle}
\providecommand{\tm}[1]{\left\Vert#1\right\Vert_{L^2}}
\providecommand{\im}[1]{\left\Vert#1\right\Vert_{L^{\infty}}}
\providecommand{\lnm}[2]{\left\Vert#1\right\Vert_{L^{\infty}_{#2}}}
\providecommand{\tnm}[1]{\left\Vert#1\right\Vert_{L^2}}
\providecommand{\lnnm}[2]{{\left\Vert\left\vert#1\right\vert\right\Vert}_{L^{\infty}L^{\infty}_{#2}}}
\providecommand{\tnnm}[1]{{\left\Vert\left\vert#1\right\vert\right\Vert}_{L^2L^2}}
\providecommand{\ltnm}[2]{{\left\Vert#1\right\Vert}_{L^{\infty}L^2_{#2}}}
\providecommand{\lnmv}[1]{\left\Vert#1\right\Vert_{L^{\infty}_{\vth,\varrho}}}
\providecommand{\lnnmv}[1]{{\left\Vert\left\vert#1\right\vert\right\Vert}_{L^{\infty}L^{\infty}_{\vth,\varrho}}}
\providecommand{\um}[1]{\left\Vert#1\right\Vert_{L^2_{\nu}}}
\providecommand{\tss}[2]{\left\vert#1\right\vert_{L^2_{#2}}}
\providecommand{\iss}[2]{\left\vert#1\right\vert_{L^{\infty}_{#2}}}
\providecommand{\ts}[1]{\left\vert#1\right\vert_{L^2}}
\def\ud{\mathrm{d}}
\def\p{\partial}
\def\ls{\lesssim}
\def\half{\frac{1}{2}}
\def\rt{\rightarrow}
\def\r{\mathbb{R}}
\def\no{\nonumber}
\def\ue{\mathrm{e}}
\def\ds{\displaystyle}
\def\pp{\mathcal{P}}
\def\e{\epsilon}
\def\s{\mathbb{S}}
\def\vx{\vec x}
\def\vw{\vec v}
\def\vv{\vec v}
\def\nx{\nabla_{x}}
\def\ll{\mathcal{L}}
\def\k{\kappa}
\def\q{Q}
\def\qb{\mathscr{Q}}
\def\d{\delta}
\def\vn{\vec n}
\def\id{{\bf{1}}}
\def\rr{\mathscr{R}}
\def\va{v_{\eta}}
\def\vb{v_{\phi}}
\def\f{F}
\def\fb{\mathscr{F}}
\def\fs{\mathfrak{F}}
\def\pe{\mathcal{P}^{\e}}
\def\vu{\vec u}
\def\vuu{\vec{\mathfrak{u}}}
\def\v{v}
\def\vww{\vec{\mathfrak{v}}}
\def\vvv{\vec{\mathfrak{v}}}
\def\u{u}
\def\uh{\vec u}
\def\th{\theta}
\def\rh{\rho}
\def\vo{\vec\omega}
\def\sn{\nu^{\frac{1}{2}}}
\def\m{\mu}
\def\mb{\m_{\bb}^{\e}}
\def\bb{b}
\def\vth{\vartheta}
\def\bv{\br{\vw}^{\vth}\ue^{\varrho\abs{\vw}^2}}
\def\bvv{\br{\vvv}^{\vth}\ue^{\varrho\abs{\vvv}^2}}
\def\dx{\Delta_x}
\def\bvvp{\br{\vvv'}^{\vth}\ue^{\varrho\abs{\vvv'}^2}}
\def\g{g}
\def\nk{\mathcal{N}}
\def\t{\mathcal{T}}
\def\gg{\mathcal{G}}
\def\v{\gg}
\def\w{\mathscr{W}}
\def\d{\delta}
\def\pp{\mathcal{P}}
\def\vn{\vec\nu}
\def\xc{X_{cl}}
\def\vc{V_{cl}}
\def\t{\mathcal{T}}
\def\k{\mathcal{K}}
\def\a{\mathscr{A}}
\def\b{\mathscr{B}}
\def\c{\mathscr{C}}
\def\rk{R_{\kappa}}
\def\id{{\bf{1}}}
\def\pk{{\mathbb{P}}}
\def\ik{{\mathbb{I}}}
\def\vbb{\vec b}
\def\n{n}
\def\nn{\mathcal{V}}
\def\vh{w_{\xi}}
\def\tvh{\tilde{w}_{\xi}}
\def\vs{v}
\def\lll{\mathscr{L}}
\def\ua{\mathfrak{u}_{\eta}}
\def\ub{\mathfrak{u}_{\phi}}
\begin{document}

\title{Boundary Layer of Boltzmann Equation in 2D Convex Domains}

\author[Lei Wu]{Lei Wu}
\address[Lei Wu]{
   \newline\indent Department of Mathematical Sciences, Carnegie Mellon University
\newline\indent Pittsburgh, PA 15213, USA}
\email{zjkwulei1987@gmail.com}

\subjclass[2010]{35L65, 82B40, 34E05}

\begin{abstract}
Consider the stationary Boltzmann equation in 2D convex domains with diffusive boundary condition. In this paper, we establish the hydrodynamic limits while the boundary layers are present, and derive the steady Navier-Stokes-Fourier system with non-slip boundary conditions. Our contribution focuses on novel weighted $W^{1,\infty}$ estimates for the Milne problem with geometric correction. Also, we develop stronger remainder estimates based on an $L^{2m}-L^{\infty}$ framework.\\
\textbf{Keywords:} Boundary layer; geometric correction; $W^{1,\infty}$ estimates; $L^{2m}-L^{\infty}$ framework.
\end{abstract}

\maketitle

\tableofcontents

\newpage

%%%%%%%%%%%%%%%%%%%%%%%%%%%%%%%%%%%%%%%%%%%%%%%%%%%%%%%%%%%%%%%%%%%%%%%%%%%%%%%%%%%%%
%%%%%%%%%%%%%%%%%%%%%%%%%%%%%%%%%%%%%%%%%%%%%%%%%%%%%%%%%%%%%%%%%%%%%%%%%%%%%%%%%%%%%

\pagestyle{myheadings} \thispagestyle{plain} \markboth{LEI WU}{BOUNDARY LAYER OF STATIONARY BOLTZMANN EQUATION}

%%%%%%%%%%%%%%%%%%%%%%%%%%%%%%%%%%%%%%%%%%%%%%%%%%%%%%%%%%%%%%%%%%%%%%%%
\section{Introduction}%%%%%%%%%%%%%%%%%%%%%%%%%%%%%%%%%%%%%%%%%%%%%%%%%%
%%%%%%%%%%%%%%%%%%%%%%%%%%%%%%%%%%%%%%%%%%%%%%%%%%%%%%%%%%%%%%%%%%%%%%%%

%%%%%%%%%%%%%%%%%%%%%%%%%%%%%%%%%%%%%%%%%%%%%%%%%%%%%%%%%%%%%%%%%%%%%%%%
\subsection{Problem Presentation}
%%%%%%%%%%%%%%%%%%%%%%%%%%%%%%%%%%%%%%%%%%%%%%%%%%%%%%%%%%%%%%%%%%%%%%%%

We consider the stationary Boltzmann equation in a two-dimensional smooth convex domain $\Omega\ni\vx=(x_1,x_2)$
with velocity $\vw=(v_1,v_2)\in\r^2$. The density function $\fs^{\e}(\vx,\vw)$ satisfies
\begin{eqnarray}\label{large system}
\left\{
\begin{array}{rcl}
\e\vw\cdot\nx \fs^{\e}&=&Q[\fs^{\e},\fs^{\e}]\ \ \text{in}\ \
\Omega\times\r^2,\\\rule{0ex}{1.5em} \fs^{\e}(\vx_0,\vw)&=&P^{\e}
[\fs^{\e}](\vx_0,\vw) \ \ \text{for}\ \ \vx_0\in\p\Omega\ \ \text{and}\ \ \vw\cdot\vn(\vx_0)<0,
\end{array}
\right.
\end{eqnarray}
where $\vn(\vx_0)$ is the unit outward normal vector at $\vx_0$, the
Knudsen number $\e$ satisfies $0<\e<<1$, the diffusive boundary
\begin{eqnarray}
P^{\e}
[\fs^{\e}](\vx_0,\vw)=\mb(\vx_0,\vw)\displaystyle\int_{\vuu\cdot\vn(\vx_0)>0}
\fs^{\e}(\vx_0,\vuu)\abs{\vuu\cdot\vn(\vx_0)}\ud{\vuu}.
\end{eqnarray}
The boundary Maxwellian
\begin{eqnarray}
\mb(\vx_0,\vw)=\frac{\rh^{\e}_{\bb}(\vx_0)}{\th^{\e}_{\bb}(\vx_0)\sqrt{2\pi}}
\exp\left(-\frac{\abs{\vw-\uh^{\e}_{\bb}(\vx_0)}^2}{2\th^{\e}_{\bb}(\vx_0)}\right),
\end{eqnarray}
is a perturbation of the standard Maxwellian
\begin{eqnarray}
\m_{\bb}(\vw)=\frac{1}{\sqrt{2\pi}}\exp\left(-\frac{\abs{\vw}^2}{2}\right).
\end{eqnarray}
It is normalized to satisfy
\begin{eqnarray}
\int_{\vw\cdot\vn(\vx_0)>0}\mb(\vx_0,\vw)\abs{\vw\cdot\vn(\vx_0)}\ud{\vw}=\int_{\vw\cdot\vn(\vx_0)>0}\m_{\bb}(\vw)\abs{\vw\cdot\vn(\vx_0)}\ud{\vw}=1.
\end{eqnarray}
For simplicity, we just denote $\m=\m_{\bb}$.
We further assume that $\rh^{\e}_{\bb}$, $\vu^{\e}_{\bb}$ and $\th^{\e}_{\bb}$ can be expanded into a power series with respect to $\e$,
\begin{eqnarray}
\rh^{\e}_{\bb}(\vx_0)&=&1+\sum_{k=1}^{\infty}\e^k\rh_{\bb,k}(\vx_0),\\
\vu^{\e}_{\bb}(\vx_0)&=&0+\sum_{k=1}^{\infty}\e^k\vu_{\bb,k}(\vx_0),\\
\th^{\e}_{\bb}(\vx_0)&=&1+\sum_{k=1}^{\infty}\e^k\th_{\bb,k}(\vx_0).
\end{eqnarray}
Hence, we may also expand the boundary Maxwellian $\mb$ into power series with respect to $\e$,
\begin{eqnarray}\label{expansion assumption}
\mb(\vx_0,\vw)=\m(\vw)+\m^{\frac{1}{2}}(\vw)\left(\sum_{k=1}^{\infty}\e^k\m_{k}(\vx_0,\vw)\right).
\end{eqnarray}
In particular, we have
\begin{eqnarray}
\m_1(\vx_0,\vw)&=&\m^{\frac{1}{2}}(\vw)\bigg(\rh_{\bb,1}(\vx_0)+\vu_{\bb,1}(\vx_0)\cdot\vw+\th_{\bb,1}(\vx_0)\frac{\abs{\vw}^2-2}{2}\bigg).
\end{eqnarray}
It is easy to check that
\begin{eqnarray}\label{smallness assumption}
\abs{\bv\frac{\mb-\m}{\m^{\frac{1}{2}}}}\leq C_0(\varrho,\vth)\e,
\end{eqnarray}
for any $0\leq\varrho<\dfrac{1}{4}$ and integer $\vth\geq3$. We assume that $C_0>0$ is sufficiently small. Based on the expansion,
we naturally have
\begin{eqnarray}\label{boundary compatibility}
\int_{\vw\cdot\vn(\vx_0)>0}\m_k(\vx_0,\vw)\m^{\frac{1}{2}}(\vw)\abs{\vw\cdot\vn(\vx_0)}\ud{\vw}&=&0\ \ \text{for}\ \ k\geq1.
\end{eqnarray}
Here we have the nonlinear collision term
\begin{eqnarray}
Q[F,G]&=&\int_{\r^2}\int_{\s^1}q(\vo,\abs{\vuu-\vw})\bigg(F(\vuu_{\ast})G(\vw_{\ast})-F(\vuu)G(\vw)\bigg)\ud{\vo}\ud{\vuu},
\end{eqnarray}
with
\begin{eqnarray}
\vuu_{\ast}=\vuu+\vo\bigg((\vw-\vuu)\cdot\vo\bigg),\qquad \vw_{\ast}=\vw-\vo\bigg((\vw-\vuu)\cdot\vo\bigg),
\end{eqnarray}
and the hard-sphere collision kernel
\begin{eqnarray}
q(\vo,\abs{\vuu-\vw})=q_0\vo\cdot(\vw-\vuu),
\end{eqnarray}
for a positive constant $q_0$.
We intend to study the behavior of $\fs^{\e}$ as $\e\rt0$.

%%%%%%%%%%%%%%%%%%%%%%%%%%%%%%%%%%%%%%%%%%%%%%%%%%%%%%%%%%%%%%%%%%%%%%%%
\subsection{Linearization}
%%%%%%%%%%%%%%%%%%%%%%%%%%%%%%%%%%%%%%%%%%%%%%%%%%%%%%%%%%%%%%%%%%%%%%%%

The solution $\fs^{\e}$ can be expressed as a perturbation of the standard Maxwellian
\begin{eqnarray}
\fs^{\e}(\vx,\vw)&=&M_0\m(\vw)+\m^{\frac{1}{2}}(\vw)f^{\e}(\vx,\vw),
\end{eqnarray}
for some constant $M_0>0$ with the normalization condition
\begin{eqnarray}
\int_{\Omega}\int_{\r^2}f^{\e}(\vx,\vw)\m^{\frac{1}{2}}(\vw)\ud{\vw}\ud{\vx}=0.
\end{eqnarray}
Then $f^{\e}$ satisfies the equation
\begin{eqnarray}\label{small system_}
\left\{
\begin{array}{rcl}
\e\vw\cdot\nx
f^{\e}+\ll[f^{\e}]&=&\Gamma[f^{\e},f^{\e}],\\\rule{0ex}{1.5em}
f^{\e}(\vx_0,\vw)&=&\pe[f^{\e}](\vx_0,\vw) \ \ \text{for}\ \ \vx_0\in\p\Omega\ \ \text{and}\ \ \vw\cdot\vn(\vx_0)<0,
\end{array}
\right.
\end{eqnarray}
where
\begin{eqnarray}
\ll[f^{\e}]&=&-2\m^{-\frac{1}{2}}Q\Big[\m,\m^{\frac{1}{2}}f^{\e}\Big],\\
\Gamma[f^{\e},f^{\e}]&=&\m^{-\frac{1}{2}}Q\Big[\m^{\frac{1}{2}}f^{\e},\m^{\frac{1}{2}}f^{\e}\Big],
\end{eqnarray}
and
\begin{eqnarray}
\\
\pe[f^{\e}](\vx_0,\vw)=\mb(\vx_0,\vv)\m^{-\frac{1}{2}}(\vv)
\displaystyle\int_{\vuu\cdot\vn(\vx_0)>0}\m^{\frac{1}{2}}(\vuu)
f^{\e}(\vx_0,\vuu)\abs{\vuu\cdot\vn(\vx_0)}\ud{\vuu}+\m^{-\frac{1}{2}}(\vw)\bigg(\mb(\vx_0,\vv)-\m(\vv)\bigg).\no
\end{eqnarray}
Hence, in order to study $\fs^{\e}$, it suffices to consider $f^{\e}$.

%%%%%%%%%%%%%%%%%%%%%%%%%%%%%%%%%%%%%%%%%%%%%%%%%%%%%%%%%%%%%%%%%%%%%%%%
\subsection{Background and Methods}
%%%%%%%%%%%%%%%%%%%%%%%%%%%%%%%%%%%%%%%%%%%%%%%%%%%%%%%%%%%%%%%%%%%%%%%%

%%%%%%%%%%%%%%%%%%%%%%%%%%%%%%%%%%%%%%%%%%%%%%%%%%%%%%%%%%%%%%%%%%%%%%%%
\subsubsection{Asymptotic Analysis}
%%%%%%%%%%%%%%%%%%%%%%%%%%%%%%%%%%%%%%%%%%%%%%%%%%%%%%%%%%%%%%%%%%%%%%%%

Hydrodynamic limits are central to connecting the kinetic theory and fluid mechanics. Since early 20th century, this type of problems have been extensively studied in many different settings: stationary or evolutionary, linear or nonlinear, strong solution or weak solution, etc.

The early result dates back to 1912 by Hilbert himself, using the so-called Hilbert's expansion, i.e. an expansion of the distribution function $\fs^{\e}$ as a power series of the Knudsen number $\e$. Since then, a lot of works on Boltzmann equation in $\r^n$ or $\mathbb{T}^n$ have been presented, including \cite{Golse.Saint-Raymond2004}, \cite{Masi.Esposito.Lebowitz1989}, \cite{Bardos.Golse.Levermore1991}, \cite{Bardos.Golse.Levermore1993}, \cite{Bardos.Golse.Levermore1998}, \cite{Bardos.Golse.Levermore2000}, for either smooth solutions or renormalized solutions.

The general theory of initial-boundary-value problems was first developed in 1963 by Grad \cite{Grad1963}, and then extended by Darrozes \cite{Darrozes1969}, Sone and Aoki  \cite{Sone1969}, \cite{Sone1971}, \cite{Sone1991}, \cite{Sone.Aoki1987}, for both the evolutionary and stationary equations. In the classical books \cite{Sone2002} and \cite{Sone2007}, Sone provided a comprehensive summary of previous results and gave a complete analysis of such approaches.

For stationary Boltzmann equation where the state of gas is close to a uniform state at rest, the expansion of the perturbation $f^{\e}$ consists of two parts: the interior solution $\f$, which is based on a hierarchy of linearized Boltzmann equations and satisfies a steady Navier-Stokes-Fourier system, and the boundary layer $\fb$, which is based on a half-space kinetic equation and decays rapidly when it is away from the boundary.

The justification of hydrodynamic limits usually involves two steps:
\begin{enumerate}
\item
Expanding $\f=\ds\sum_{k=1}^{\infty}\e^k\f_k$ and $\fb=\ds\sum_{k=1}^{\infty}\e^k\fb_k$ as power series of $\e$ and proving the coefficients $\f_k$ and $\fb_k$ are well-defined. Traditionally, the estimates of interior solutions $\f_k$ are relatively straightforward. On the other hand, boundary layers $\fb_k$ satisfy one-dimensional half-space problems which lose some key structures of the original equations. The well-posedness of boundary layer equations are sometimes extremely difficult and it is possible that they are actually ill-posed (e.g. certain type of Prandtl layers).
\item
Proving that $R=f^{\e}-\e\f_1-\e\fb_1=o(\e)$ as $\e\rt0$. Ideally, this should be done just by expanding to the leading-order level $\f_1$ and $\fb_1$. However, in singular perturbation problems, the estimates of the remainder $R$ usually involves negative powers of $\e$, which requires expansion to higher order terms $\f_N$ and $\fb_N$ for $N\geq2$ such that we have sufficient power of $\e$. In other words, we define $R=f^{\e}-\ds\sum_{k=1}^{N}\e^k\f_k-\ds\sum_{k=1}^{N}\e^k\fb_k$ for $N\geq2$ instead of $R=f^{\e}-\e\f_1-\e\fb_1$ to get better estimate of $R$.
\end{enumerate}
Above formulation is for the convergence in the $L^{\infty}$ sense. If instead we consider $L^p$ convergence for $1\leq p<\infty$, then the boundary layer $\fb_1$ is of order $\e^{\frac{1}{p}}$ due to rescaling, which is negligible compared with $\f_1$. \cite{Esposito.Guo.Kim.Marra2015} justifies the $L^p$ convergence under the same formulation as ours without taking boundary layer expansion into consideration. On the other hand, the effect of boundary layers constitutes the major upshot of our paper.

%%%%%%%%%%%%%%%%%%%%%%%%%%%%%%%%%%%%%%%%%%%%%%%%%%%%%%%%%%%%%%%%%%%%%%%%
\subsubsection{Classical Approach}
%%%%%%%%%%%%%%%%%%%%%%%%%%%%%%%%%%%%%%%%%%%%%%%%%%%%%%%%%%%%%%%%%%%%%%%%

The classical construction of boundary layers requires the analysis of the flat Milne problem. In detail, let $\eta$ denote the rescaled normal variable with respect to the boundary, $\theta$ the tangential variable, and $\vvv=(\va,\vb)$ the normal and tangential velocity. The boundary layer $\fb_1$ satisfies
\begin{eqnarray}
\va\dfrac{\p\fb_1}{\p\eta}+\ll[\fb_1]&=&0,
\end{eqnarray}
where $\ll$ is the linearized Boltzmann operator.

Although a rigorous proof of such expansions has not been presented, it is widely believed that the motivation of this approach is natural and the difficulties are purely technical. Besides the fact that this idea is an intuitive application of the Hilbert's expansion, it is strongly supported by \cite{Bardos.Caflisch.Nicolaenko1986} which justifies the well-posedness and decay of the above flat Milne problem.

This idea is easily adapted to other kinetic models. As a linear prototype of Boltzmann equation, the case of neutron transport equation was carefully investigated. In particular,
the hydrodynamic limit was proved in the remarkable paper \cite{Bensoussan.Lions.Papanicolaou1979} by Bensoussan, Lions and Papanicolaou. This is widely regarded as the foundation of rigorous analysis of boundary layers in kinetic equations.

Unfortunately, in \cite{AA003}, we demonstrated that both the proof and results of this formulation in \cite{Bensoussan.Lions.Papanicolaou1979} are invalid due to a lack of regularity in estimating $\dfrac{\p\fb_1}{\p\theta}$.  Similarly, counterexamples were proposed in \cite{AA004} that this idea is also invalid in the Boltzmann equation. Basically, this pulls the whole study back to the starting point and any later results based on this type of boundary layers should be reexamined.

In detail, in order to show the hydrodynamic limits, we need $\dfrac{\p\fb_1}{\p\theta}\in L^{\infty}$ since it is part of the remainder. However, though $\fb_1\in L^{\infty}$ as is shown in \cite{Bardos.Caflisch.Nicolaenko1986}, we do not necessarily have $\dfrac{\p\fb_1}{\p\eta}\in L^{\infty}$. Furthermore, the singularity $\dfrac{\p\fb_1}{\p\eta}\notin L^{\infty}$ will be transferred to $\dfrac{\p\fb_1}{\p\theta}\notin L^{\infty}$. This singularity was rigorously shown in \cite{AA003} through a careful construction of the boundary data, i.e. the chain of estimates
\begin{eqnarray}
R=o(\e)\ \Leftarrow\ \fb_2\in L^{\infty}\ \Leftarrow\ \dfrac{\p\fb_1}{\p\theta}\in L^{\infty}\ \Leftarrow\ \dfrac{\p \fb_1}{\p\eta}\in L^{\infty},
\end{eqnarray}
is broken since the rightmost estimate is wrong.

%%%%%%%%%%%%%%%%%%%%%%%%%%%%%%%%%%%%%%%%%%%%%%%%%%%%%%%%%%%%%%%%%%%%%%%%
\subsubsection{Geometric Correction}
%%%%%%%%%%%%%%%%%%%%%%%%%%%%%%%%%%%%%%%%%%%%%%%%%%%%%%%%%%%%%%%%%%%%%%%%

While the classical method breaks down, a new approach with geometric correction to the boundary layer construction has been developed to ensure regularity in the cases of disk and annulus in \cite{AA003}, \cite{AA006}, \cite{AA005} and \cite{AA004}. The new boundary layer $\fb_1$ satisfies the $\e$-Milne problem with geometric correction,
\begin{eqnarray}
\va\dfrac{\p\fb_1}{\p\eta}-\dfrac{\e}{\rk-\e\eta}\bigg(\vb^2\dfrac{\p\fb_1}{\p\va}-\va\vb\dfrac{\p\fb_1}{\p\vb}\bigg)+\ll[\fb_1]&=&0,
\end{eqnarray}
where $\rk$ is the curvature on the boundary curve. We proved that the solution recovers the well-posedness and exponential decay as in flat Milne problem, and the regularity in $\theta$ is indeed improved, i.e. $\dfrac{\p\fb_1}{\p\theta}\in L^{\infty}$.

However, this new method fails to treat more general domains. Roughly speaking, we have two contradictory goals to achieve:
\begin{enumerate}
\item
To prove hydrodynamic limits, the remainder estimates require higher-order regularity estimate of the boundary layer.
\item
The geometric correction $\dfrac{\e}{\rk-\e\eta}\bigg(\vb^2\dfrac{\p\fb_1}{\p\va}-\va\vb\dfrac{\p\fb_1}{\p\vb}\bigg)$ in the boundary layer equation is related to the curvature of the boundary curve, which prevents higher-order regularity estimates.
\end{enumerate}
In other words, the improvement of regularity is still not enough to close the proof. To be more specific, the discussion of domains is as follows:
\begin{itemize}
\item
In the absence of the geometric correction $\dfrac{\e}{\rk-\e\eta}\bigg(\vb^2\dfrac{\p\fb_1}{\p\va}-\va\vb\dfrac{\p\fb_1}{\p\vb}\bigg)$, which is the flat Milne problem as in \cite{Sone2002} and \cite{Sone2007}, the key tangential derivative $\dfrac{\p\fb_1}{\p\theta}$ is not bounded. Therefore, the expansion breaks down.
\item
In the domain of disk or annulus, when $R_{\kappa}$ is constant, as in \cite{AA004}, $\dfrac{\p\fb_1}{\p\theta}$ is bounded, since the tangential derivative $\dfrac{\p}{\p\theta}$ commutes with the equation, and thus we do not need the estimate of the singular term $\dfrac{\p\fb_1}{\p\eta}$.
\item
For general smooth convex domains, when $R_{\kappa}$ is a function of $\theta$, $\dfrac{\p\fb_1}{\p\theta}$ relates to the normal derivative $\dfrac{\p\fb_1}{\p\eta}$, which has been shown possibly unbounded in \cite{AA004}. Therefore, we get stuck again at the regularity estimates.
\end{itemize}

%%%%%%%%%%%%%%%%%%%%%%%%%%%%%%%%%%%%%%%%%%%%%%%%%%%%%%%%%%%%%%%%%%%%%%%%
\subsubsection{Diffusive Boundary}
%%%%%%%%%%%%%%%%%%%%%%%%%%%%%%%%%%%%%%%%%%%%%%%%%%%%%%%%%%%%%%%%%%%%%%%%

In this paper, we will push the above argument from both sides (remainder estimates and regularity estimates) and prove the hydrodynamic limits for the nonlinear Boltzmann equation in 2D smooth convex domains. Our contribution consists of the following:
\begin{itemize}

\item
{\textbf{Remainder Estimates:}} \\
We first prove an almost optimal remainder estimates and reduce the regularity requirement of the expansion. In the remainder equation for $R(\vx,\vv)=f^{\e}-\f-\fb$,
\begin{eqnarray}
\e\vw\cdot\nx
R+\ll[R]&=&S,
\end{eqnarray}
the estimate in \cite{AA004} is
\begin{eqnarray}
\im{R}\ls \frac{1}{\e^3}\tm{S}+\text{higher order terms}.
\end{eqnarray}
We intend to show that $\im{R}=o(\e)$ as $\e\rt0$. Since $S$ contains the term related to $\dfrac{\p\fb}{\p\theta}$, the coefficients $\e^{-3}$ is too singularity. The key observation here is that due to the rescaling in the normal direction, the smaller $p\geq1$ is, the better estimate $\nm{\dfrac{\p\fb}{\p\theta}}_{L^p}$ will be. Therefore, we successfully prove a stronger estimate for $m\geq2$,
\begin{eqnarray}
\im{R}\ls \frac{1}{\e^{2+\frac{1}{m}}}\nm{S}_{L^{\frac{2m}{2m-1}}}+\text{higher order terms}.
\end{eqnarray}
This is achieved by an innovative $L^{2m}$-$L^{\infty}$ framework. The main idea is to introduce special test functions in the weak formulation to treat kernel and non-kernel parts of $\ll$ separately, and further to bootstrap to improve the $L^{\infty}$ estimate by a modified double Duhamel's principle. The proof relies on a delicate analysis using interpolation and Young's inequality.\\

\item
{\textbf{Regularity of Boundary Layer:}}\\
Consider the boundary layer expansion
\begin{eqnarray}
\fb(\eta,\theta,\vvv)\sim \e\fb_1(\eta,\theta,\vvv)+\e^2\fb_2(\eta,\theta,\vvv).
\end{eqnarray}
The diffusive boundary condition leads to an important simplification that $\fb_1=0$. Thus the next-order boundary layer $\fb_2$ must formally satisfy
\begin{eqnarray}\label{intro 11}
\va\dfrac{\p\fb_2}{\p\eta}-\dfrac{\e}{\rk-\e\eta}\bigg(\vb^2\dfrac{\p\fb_2}{\p\va}-\va\vb\dfrac{\p\fb_2}{\p\vb}\bigg)+\ll[\fb_2]&=&0.
\end{eqnarray}
The remainder estimate requires the estimate of $\dfrac{\p \fb_2}{\p\theta}$, whose boundedness had remained open..

The key observation here is that the estimate of $\dfrac{\p \fb_2}{\p\theta}$ relies on $\va\dfrac{\p\fb_2}{\p\eta}$, not $\dfrac{\p\fb_2}{\p\eta}$ itself. This extra $\va$ saves us and avoids singularity. Still, we cannot naively take $\eta$ derivatives on both sides of (\ref{intro 11}) since the geometric correction forbids further estimates. Our main idea is to track the solution $\fb_2$ along the characteristic curves in the presence of the non-local operator $\ll$ and consider the intertwined $\dfrac{\p\fb_2}{\p\eta}$, $\dfrac{\p\fb_2}{\p\va}$, and $\dfrac{\p\fb_2}{\p\vb}$ simultaneously.

Our proof is intricate and relies on the weighted $L^{\infty}$ estimates for the normal derivative, which is inspired by \cite{AA007} and \cite{AA009}. The convexity and invariant kinetic distance
\begin{eqnarray}
\zeta(\eta,\theta,\va,\vb)=\left(\left(\va^2+\vb^2\right)-\left(\frac{\rk(\theta)-\e\eta}{\rk(\theta)}\right)^2\vb^2\right)^{\frac{1}{2}},
\end{eqnarray}
plays the crucial role.\\

\item
{\textbf{Expansion and Nonlinearity:}}\\
The matching procedure between the interior solution and boundary layer is actually a very tricky step, which is often ignored in the related literature. \cite{Sone2002} and \cite{Sone2007} offers a details justification of this procedure, but it has not taken the new boundary layer construction into consideration.

Here, we provide a clear description on how to delicately determine the macroscopic variables and how to handle the nonlinearity in the remainder estimates. In particular, we enforce the well-known Boussinesq relation at the leading order through the conservation of mass and an intricate designing of the boundary layer expansion. Also, we use a special $L^{2m}-L^{\infty}$ method to absorb the nonlinear terms contribution.

\end{itemize}

%%%%%%%%%%%%%%%%%%%%%%%%%%%%%%%%%%%%%%%%%%%%%%%%%%%%%%%%%%%%%%%%%%%%%%%%
\subsection{Main Theorem}
%%%%%%%%%%%%%%%%%%%%%%%%%%%%%%%%%%%%%%%%%%%%%%%%%%%%%%%%%%%%%%%%%%%%%%%%

\begin{theorem}
For given $M_0>0$ and $\mb>0$ satisfying (\ref{expansion assumption}) and (\ref{smallness assumption}) with $0<\e<<1$, there exists a unique positive
solution $\fs^{\e}=M_0\m+\m^{\frac{1}{2}}f^{\e}$ to the stationary Boltzmann equation (\ref{large system}), and $f^{\e}$ fulfils that for integer $\vth\geq3$ and $0\leq\varrho<\dfrac{1}{4}$,
\begin{eqnarray}
\im{\bv\Big(f^{\e}-\e\f\Big)}\leq C(\d)\e^{2-\d},
\end{eqnarray}
for any $0<\d<<1$, where
\begin{eqnarray}
\f&=&\m^{\frac{1}{2}}\left(\rh+\vu\cdot\vw+\th\frac{\abs{\vw}^2-2}{2}\right),
\end{eqnarray}
satisfies the steady Navier-Stokes-Fourier system
\begin{eqnarray}\label{interior 1}
\left\{
\begin{array}{rcl}
\nx(\rh +\th )&=&0,\\\rule{0ex}{1.0em}
\uh\cdot\nx\uh -\gamma_1\dx\uh +\nx P_2 &=&0,\\\rule{0ex}{1.0em}
\nx\cdot\uh &=&0,\\\rule{0ex}{1.0em}
\uh \cdot\nx\th -\gamma_2\dx\th &=&0,\\\rule{0ex}{1.0em}
\rh (\vx_0)&=&\rh_{\bb,1}(\vx_0)+M(\vx_0),\\
\uh (\vx_0)&=&\vu_{\bb,1}(\vx_0),\\
\th (\vx_0)&=&\th_{\bb,1}(\vx_0),\\
\end{array}
\right.
\end{eqnarray}
where $\gamma_1>0$ and $\gamma_2>0$ are some constants, $M(\vx_0)$ is a constant such that the Boussinesq relation
\begin{eqnarray}
\rh+\th=\text{constant},
\end{eqnarray}
and the normalization condition
\begin{eqnarray}
\int_{\Omega}\int_{\r^2}\f(\vx,\vw)\m^{\frac{1}{2}}(\vw)\ud{\vw}\ud{\vx}=0,
\end{eqnarray}
hold.
\end{theorem}

\begin{remark}
The case $\rh_{\bb,1}(\vx_0)=0$, $\vu_{\bb,1}(\vx_0)=0$ and $\th_{\bb,1}(\vx_0)\neq0$ is called the non-isothermal model, which represents a system that only has heat transfer through the boundary but has no work between the environment and the system. Based on above theorem, the hydrodynamic limit is a steady Navier-Stokes-Fourier system with non-slip boundary condition. This provides a rigorous derivation of this important fluid model.
\end{remark}

Throughout this paper, $C>0$ denotes a constant that only depends on
the parameter $\Omega$, but does not depend on the data. It is
referred as universal and can change from one inequality to another.
When we write $C(z)$, it means a certain positive constant depending
on the quantity $z$. We write $a\ls b$ to denote $a\leq Cb$.

This paper is organized as follows: in Section 2, we list some preliminary results on the linearized Boltzmann operator and the weak formulation; in Section 3, we present the asymptotic analysis of the equation (\ref{small system_}); in Section 4, we establish the $L^{\infty}$ well-posedness of
the linearized Boltzmann equation; in Section 5, we prove the well-posedness and decay of the $\e$-Milne problem with geometric correction; in Section 6, we study the weighted regularity of the $\e$-Milne problem with geometric correction; finally, in Section 7, we prove the main theorem.

\begin{remark}
The general structure of this paper is very similar to that of \cite{AA007} and \cite{AA004}. In particular, Section 4 and 5 seem to be an adaption of the corresponding theorems there. However, our results hold for nonlinear Boltzmann equation in a finite domain and they are highly non-trivial, so it is better to start from scratch.
\end{remark}

\newpage

%%%%%%%%%%%%%%%%%%%%%%%%%%%%%%%%%%%%%%%%%%%%%%%%%%%%%%%%%%%%%%%%%%%%%%%%
\section{Preliminaries}%%%%%%%%%%%%%%%%%%%%%%%%%%%%%%%%%%%%%%%%%%%%%%%%%
%%%%%%%%%%%%%%%%%%%%%%%%%%%%%%%%%%%%%%%%%%%%%%%%%%%%%%%%%%%%%%%%%%%%%%%%

%%%%%%%%%%%%%%%%%%%%%%%%%%%%%%%%%%%%%%%%%%%%%%%%%%%%%%%%%%%%%%%%%%%%%%%%
\subsection{Linearized Boltzmann Operator}
%%%%%%%%%%%%%%%%%%%%%%%%%%%%%%%%%%%%%%%%%%%%%%%%%%%%%%%%%%%%%%%%%%%%%%%%

\cite[Chapter 3]{Glassey1996} provides the simplified linearized Boltzmann operator $\ll$ as
\begin{eqnarray}
\ll[f]&=&-2\m^{-\frac{1}{2}}Q[\m,\m^{\frac{1}{2}}f]=\nu(\vw)
f-K[f],
\end{eqnarray}
where
\begin{eqnarray}
\nu(\vw)&=&\int_{\r^2}\int_{\s^1}q(\vo,\abs{\vuu-\vw})\m(\vuu)\ud{\vo}\ud{\vuu},\\
K[f](\vw)&=&K_2[f](\vw)-K_1[f](\vw)=\int_{\r^2}k(\vuu,\vw)f(\vuu)\ud{\vuu},\no\\
\ \no\\
K_1[f](\vw)&=&\m^{\frac{1}{2}}(\vw)\int_{\r^2}\int_{\s^1}q(\vo,\abs{\vuu-\vw})\m^{\frac{1}{2}}(\vuu)f(\vuu)\ud{\vo}\ud{\vuu},\\
K_2[f](\vw)&=&\int_{\r^2}\int_{\s^1}q(\vo,\abs{\vuu-\vw})\m^{\frac{1}{2}}(\vuu)\bigg(\m^{\frac{1}{2}}(\vw_{\ast})f(\vuu_{\ast})
+\m^{\frac{1}{2}}(\vuu_{\ast})f(\vw_{\ast})\bigg)\ud{\vo}\ud{\vuu},
\end{eqnarray}
for some kernel $k(\vuu,\vw)$.

Let $\br{\cdot,\cdot}$ be the standard $L^2$ inner product in
$\Omega\times\r^2$. We define the $L^p$ and $L^{\infty}$ norms in
$\Omega\times\r^2$ as usual:
\begin{eqnarray}
\nm{f}_{L^p}&=&\bigg(\int_{\Omega}\int_{\r^2}\abs{f(\vx,\vv)}^p\ud{\vv}\ud{\vx}\bigg)^{\frac{1}{p}},\\
\nm{f}_{L^{\infty}}&=&\sup_{(\vx,\vv)\in\Omega\times\r^2}\abs{f(\vx,\vv)}.
\end{eqnarray}
Define the weighted $L^{2}$ norm as follows:
\begin{eqnarray}
\um{f}=\tm{\nu^{\frac{1}{2}}f}.
\end{eqnarray}
Define the weighted $L^{\infty}$ norm as follows:
\begin{eqnarray}
\lnmv{f}&=&\sup_{(\vx,\vv)\in\Omega\times\r^2}\bigg(\bv\abs{f(\vx,\vv)}\bigg),
\end{eqnarray}
Define $\ud{\gamma}=\abs{\vv\cdot\vn}\ud{\varpi}\ud{\vv}$ on the
boundary $\p\Omega\times\r^2$ for $\varpi$ as the curve measure. Define the $L^p$ and
$L^{\infty}$ norms on the boundary as follows:
\begin{eqnarray}
\abs{f}_{L^p}&=&\bigg(\iint_{\gamma}\abs{f(\vx,\vv)}^p\ud{\gamma}\bigg)^{1/p},\\
\abs{f}_{L^p_{\pm}}&=&\bigg(\iint_{\gamma_{\pm}}\abs{f(\vx,\vv)}^p\ud{\gamma}\bigg)^{1/p},\\
\abs{f}_{L^{\infty}}&=&\sup_{(\vx,\vv)\in\gamma}\abs{f(\vx,\vv)},\\
\abs{f}_{L^{\infty}_{\pm}}&=&\sup_{(\vx,\vv)\in\gamma_{\pm}}\abs{f(\vx,\vv)}.
\end{eqnarray}
Denote the Japanese bracket as
\begin{eqnarray}
\br{\vv}=\left(1+\abs{\vv}^2\right)^{\frac{1}{2}}
\end{eqnarray}
Define the kernel operator $\pk$ as
\begin{eqnarray}
\pk[f]=\m^{\frac{1}{2}}(\vv)\bigg(a_f(\vx)+\vv\cdot
\vbb_f(\vx)+\frac{\abs{\vv}^2-2}{2}c_f(\vx)\bigg),
\end{eqnarray}
where $\pk$ is in the null space of $\ll$, and the non-kernel operator $\ik-\pk$ as
\begin{eqnarray}
(\ik-\pk)[f]=f-\pk[f].
\end{eqnarray}
with
\begin{eqnarray}
\int_{\r^2}(\ik-\pk)[f]\left(\begin{array}{c}1\\\vv\\\abs{\vv}^2\end{array}\right)\ud{\vv}=0
\end{eqnarray}
\begin{lemma}\label{Milne property}
For the operator $\ll=\nu I-K$, we have the estimates
\begin{eqnarray}
&&\lnm{\frac{\p\nu}{\p\abs{\vv}}}{}\leq C,\\
&&\nu_0(1+\abs{\vv})\leq\nu(\vv)\leq\nu_1(1+\abs{\vv}),\\
&&\br{f,\ll[f]}=\br{(\ik-\pk)[f],\ll\Big[(\ik-\pk)[f]\Big]}\geq C\tnm{\nu^{\frac{1}{2}}(\ik-\pk)[f]}^2,\\
&&\tnm{\ll\Big[(\ik-\pk)[f]\Big]}^2\geq C\tnm{\nu^{\frac{1}{2}}(\ik-\pk)[f]}^2,\\
&&\tm{\pk[f]}\leq \tm{\nu\pk[f]}\leq C\tm{\pk[f]}.
\end{eqnarray}
for $\nu_0$, $\nu_1$ and $C$ positive constants.
\end{lemma}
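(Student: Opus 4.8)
The plan is to establish the five estimates in Lemma \ref{Milne property} as standard consequences of the hard-sphere structure of the linearized Boltzmann operator, essentially following the Grad--Glassey framework and the coercivity estimates of the spectral gap for $\ll$. None of these are new; the point is to record them in the normalization used throughout the paper.

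First I would treat the collision frequency $\nu(\vw)=\int_{\r^2}\int_{\s^1}q(\vo,\abs{\vuu-\vw})\m(\vuu)\ud{\vo}\ud{\vuu}$. Carrying out the $\vo$-integration using $q(\vo,\abs{\vuu-\vw})=q_0\,\vo\cdot(\vw-\vuu)$ reduces this to $\nu(\vw)=c\int_{\r^2}\abs{\vw-\vuu}\m(\vuu)\ud{\vuu}$ for an explicit constant $c>0$. From this integral one reads off directly that $\nu$ is radial, smooth in $\abs{\vw}$, and that $\nu(\vw)\sim\abs{\vw}$ for large $\abs{\vw}$ while $\nu(\vw)$ is bounded below by a positive constant near the origin; hence $\nu_0(1+\abs{\vw})\le\nu(\vw)\le\nu_1(1+\abs{\vw})$. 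Differentiating under the integral sign (justified since the Gaussian weight dominates) gives $\p_{\abs{\vw}}\nu$ bounded, which is the first estimate. The last estimate $\tm{\pk[f]}\le\tm{\nu\pk[f]}\le C\tm{\pk[f]}$ is immediate from the pointwise bounds on $\nu$ together with the fact that the five moments $\m^{1/2},\vv\m^{1/2},\abs{\vv}^2\m^{1/2}$ and their $\nu$-weighted versions all lie in $L^2_v$ with comparable norms, so it follows by finite-dimensionality of the kernel of $\ll$.

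Next I would handle the coercivity estimates. Since $\ll$ is self-adjoint on $L^2_v$ with null space exactly the range of $\pk$, and $\pk$ is the $L^2_v$-orthogonal projection onto $\ker\ll$, we have $\br{f,\ll[f]}=\br{(\ik-\pk)[f],\ll[(\ik-\pk)[f]]}$ since $\ll[\pk f]=0$ and $\ll$ preserves orthogonality to $\ker\ll$. The classical Grad estimate for hard spheres states that $K$ is a compact operator on $L^2_v$ relative to the $\nu$-weighted inner product, so the essential spectrum of $\ll$ is $[\nu_0,\infty)$ and the only obstruction to coercivity is the kernel; removing it yields $\br{(\ik-\pk)[f],\ll[(\ik-\pk)[f]]}\ge C\tnm{\nu^{1/2}(\ik-\pk)[f]}^2$, which is the standard spectral gap. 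The penultimate estimate $\tnm{\ll[(\ik-\pk)[f]]}^2\ge C\tnm{\nu^{1/2}(\ik-\pk)[f]}^2$ follows by combining the spectral gap with Cauchy--Schwarz: writing $g=(\ik-\pk)[f]$, one has $C\tnm{\nu^{1/2}g}^2\le\br{g,\ll g}\le\tnm{\nu^{1/2}g}\,\tnm{\nu^{-1/2}\ll g}$, and since $\nu$ is bounded below this gives control of $\tnm{\ll g}$ from below by $\tnm{\nu^{1/2}g}$ after noting $\ll g$ is itself orthogonal to $\ker\ll$.

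The main obstacle, to the extent there is one, is the compactness of $K$ on $L^2_v$ and the associated pointwise kernel bound on $k(\vuu,\vw)$ — the decomposition $K=K_2-K_1$ into a "gain" part with a smooth exponentially-decaying kernel and a "loss" part, and the Grad estimate $\abs{k(\vuu,\vw)}\lesssim\abs{\vuu-\vw}^{-1}\exp(-c(\abs{\vuu-\vw}^2+(\abs{\vuu}^2-\abs{\vw}^2)^2/\abs{\vuu-\vw}^2))$ in two dimensions. I would cite \cite{Glassey1996} for this rather than reproduce the change-of-variables computation; given it, compactness and hence the spectral gap follow from the standard argument of Grad. Everything else is bookkeeping with Gaussian integrals.
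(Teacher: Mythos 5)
Your sketch follows the same Grad--Glassey route that the paper itself invokes: the paper's proof consists of a single citation to \cite[Chapter 3]{Glassey1996}, and your more detailed outline (explicit Gaussian integral for $\nu$, self-adjointness bookkeeping for $\br{f,\ll f}=\br{(\ik-\pk)f,\ll(\ik-\pk)f}$, spectral gap from compactness of $K$, Cauchy--Schwarz for the $L^2$-coercivity of $\ll$, finite-dimensionality for the $\pk$ estimate) is the standard way to unpack that citation. The Cauchy--Schwarz step is sound: from $C\tnm{\nu^{1/2}g}^2\le\br{g,\ll g}\le\tnm{\nu^{1/2}g}\tnm{\nu^{-1/2}\ll g}$ and $\nu\ge\nu_0>0$ you do get $\tnm{\ll g}\gtrsim\tnm{\nu^{1/2}g}$.

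One factual slip: the kernel bound you quote, $\abs{k(\vuu,\vw)}\lesssim\abs{\vuu-\vw}^{-1}\exp(-c(\cdots))$, is the three-dimensional Grad estimate. In two dimensions the hard-sphere kernel does not carry the $\abs{\vuu-\vw}^{-1}$ singularity — the paper relies on exactly this fact later, in Section 6 (Region I, Step 5: ``in 2D, $k(\vuu,\vvv')$ does not contain the singularity of $\abs{\vuu-\vvv'}^{-1}$''). This does not harm your argument, since a nonsingular kernel is only easier to show compact, but the specific form you wrote is not the 2D one and should not be presented as such.
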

\begin{proof}
See \cite[Chapter 3]{Glassey1996}.
\end{proof}

\begin{lemma}\label{wellposedness prelim lemma 8}
Let
$\vh(\vv)=w_{\xi,\beta,\varrho}(\vv)=\left(1+\xi^2\abs{\vv}^2\right)^{\frac{\beta}{2}}\ue^{\varrho\abs{\vv}^2}$,
for $\xi,\beta>0$ and $0\leq\varrho\leq \dfrac{1}{4}$. Then there exists $0\leq C_1(\varrho)<1$ and
$C_2(\varrho)>0$ such that for $0\leq \d\leq C_1(\varrho)$,
\begin{eqnarray}
\int_{\r^2}\ue^{\d\abs{\vuu-\vv}^2}k(\vuu,\vv)
\frac{\vh(\vv)}{\vh(\vuu)}\ud{\vuu}
\leq\frac{C_2(\varrho)}{1+\abs{\vv}},\\
\int_{\r^2}\ue^{\d\abs{\vuu-\vv}^2}\frac{1}{\abs{\vuu}}k(\vuu,\vv)
\frac{\vh(\vv)}{\vh(\vuu)}\ud{\vuu}
\leq C_2(\varrho),\\
\int_{\r^2}\ue^{\d\abs{\vuu-\vv}^2}\nabla_{v}k(\vuu,\vv)
\frac{\vh(\vv)}{\vh(\vuu)}\ud{\vuu}
\leq C_2(\varrho).
\end{eqnarray}
For $m\in\mathbb{N}$, we have
\begin{eqnarray}
\nm{K[f]}_{L^{2m}}&\leq&C\nm{f}_{L^{2m}}.
\end{eqnarray}
\end{lemma}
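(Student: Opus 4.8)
The plan is to prove the two claims separately, in increasing order of difficulty. The pointwise kernel estimates are the statements of \cite[Chapter 3]{Glassey1996} (Grad's classical estimates) augmented with the exponential factor $\ue^{\d\abs{\vuu-\vv}^2}$ and the weight ratio $\vh(\vv)/\vh(\vuu)$, so I would first recall the explicit form of the hard-sphere kernel $k(\vuu,\vv) = k_2(\vuu,\vv) - k_1(\vuu,\vv)$, where $k_1$ carries a Gaussian $\m^{\frac12}(\vuu)\m^{\frac12}(\vv)$ and $k_2$ has the well-known form $\ds \frac{C}{\abs{\vuu-\vv}}\exp\Big(-\frac{\abs{\vuu-\vv}^2}{8} - \frac{(\abs{\vuu}^2-\abs{\vv}^2)^2}{8\abs{\vuu-\vv}^2}\Big)$. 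The key point is that the Gaussian decay in $\abs{\vuu-\vv}$ is strong enough to absorb both the extra factor $\ue^{\d\abs{\vuu-\vv}^2}$ (for $\d$ small, controlled by $C_1(\varrho)$) and the weight ratio. For the ratio, write $\vh(\vv)/\vh(\vuu) = \big(\tfrac{1+\xi^2\abs{\vv}^2}{1+\xi^2\abs{\vuu}^2}\big)^{\beta/2}\ue^{\varrho(\abs{\vv}^2-\abs{\vuu}^2)}$; the polynomial part is bounded by $C(1+\abs{\vuu-\vv})^{\beta}$ uniformly in $\xi$, while the exponential part $\ue^{\varrho(\abs{\vv}^2-\abs{\vuu}^2)}$ is the delicate piece, controlled exactly as in Grad's argument by completing the square in the $k_2$ exponent: one checks $\ue^{-\frac{(\abs{\vuu}^2-\abs{\vv}^2)^2}{8\abs{\vuu-\vv}^2}}\ue^{\varrho(\abs{\vv}^2-\abs{\vuu}^2)} \le \ue^{-\frac{(\abs{\vuu}^2-\abs{\vv}^2)^2}{16\abs{\vuu-\vv}^2}}\ue^{C\varrho^2\abs{\vuu-\vv}^2}$, and since $\varrho \le \tfrac14$ the residual $C\varrho^2\abs{\vuu-\vv}^2$ plus $\d\abs{\vuu-\vv}^2$ is dominated by the $-\tfrac18\abs{\vuu-\vv}^2$ factor. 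Then the first inequality follows from the standard computation of $\int \frac{\ue^{-c\abs{\vuu-\vv}^2}}{\abs{\vuu-\vv}}(1+\abs{\vuu-\vv})^\beta\,\ud\vuu \le \frac{C}{1+\abs{\vv}}$ (the $\frac{1}{1+\abs{\vv}}$ gain coming from the anisotropic Gaussian in the $k_2$ exponent restricting $\vuu$ to a thin slab). The second inequality is the same but with an extra $\abs{\vuu}^{-1}$, which near $\vuu=0$ is integrable in $\r^2$ and at infinity is harmless, costing us the $\frac{1}{1+\abs{\vv}}$ gain; the third, with $\nabla_v k$, follows by differentiating the explicit kernel — each derivative brings down at worst a factor $\frac{\abs{\vuu}^2-\abs{\vv}^2}{\abs{\vuu-\vv}^2} + \abs{\vuu-\vv}$, again absorbed by the Gaussian at the price of the $\frac{1}{1+\abs{\vv}}$ gain.

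For the $L^{2m}$ boundedness $\nm{K[f]}_{L^{2m}} \le C\nm{f}_{L^{2m}}$, I would argue by Schur's test / interpolation. Having shown (take $\d = 0$, $\beta = 0$, $\varrho = 0$, so $\vh \equiv 1$) that $\sup_\vv \int_{\r^2} k(\vuu,\vv)\,\ud\vuu \le C$, and noting $k$ is symmetric in a suitable sense so that also $\sup_\vuu \int_{\r^2} k(\vuu,\vv)\,\ud\vv \le C$ (the hard-sphere kernel $k_2(\vuu,\vv)$ is symmetric up to the roles of $\vuu,\vv$ in the Gaussian, which is itself symmetric), the operator $f \mapsto \int k(\vuu,\vv) f(\vuu)\,\ud\vuu$ is bounded on $L^1_\vv$ and on $L^\infty_\vv$ in the velocity variable with norm $\le C$ uniformly in $\vx$; by the Riesz–Thorin interpolation theorem it is bounded on $L^p_\vv$ for all $1 \le p \le \infty$ with norm $\le C$. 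Since $K$ acts only in $\vv$ and the $L^{2m}$ norm on $\Omega\times\r^2$ is $\big(\int_\Omega \nm{K[f](\vx,\cdot)}_{L^{2m}_\vv}^{2m}\,\ud\vx\big)^{1/(2m)}$, integrating the pointwise-in-$\vx$ bound $\nm{K[f](\vx,\cdot)}_{L^{2m}_\vv} \le C\nm{f(\vx,\cdot)}_{L^{2m}_\vv}$ over $\Omega$ gives the claim.

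The main obstacle is the first part: getting the three weighted pointwise estimates uniformly in the scaling parameter $\xi$ and with the honest constant $C_2(\varrho)$ that stays finite as $\varrho \uparrow \tfrac14$. The subtlety is the joint bookkeeping of the three competing exponentials — the intrinsic $k_2$ decay $\ue^{-\abs{\vuu-\vv}^2/8}$, the imported $\ue^{\d\abs{\vuu-\vv}^2}$, and the weight-induced $\ue^{C\varrho^2\abs{\vuu-\vv}^2}$ — together with the anisotropic cross-term $\ue^{-(\abs{\vuu}^2-\abs{\vv}^2)^2/(8\abs{\vuu-\vv}^2)}$ that cannot be discarded because it is exactly what produces the $\frac{1}{1+\abs{\vv}}$ gain. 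One has to split the $\vuu$-integral into the regions $\abs{\vuu-\vv} \le 1$ (where the singularity $\abs{\vuu-\vv}^{-1}$ lives but the exponentials are harmless and the $1/(1+\abs{\vv})$ gain comes from a change of variables aligning $\vuu-\vv$ with the normal to the level set of $\abs{\vuu}^2-\abs{\vv}^2$) and $\abs{\vuu-\vv} \ge 1$ (where one pays a $\ue^{-c}$ and freely bounds the polynomial weight ratio), and the constant $C_1(\varrho) < 1$ is precisely the threshold ensuring $\d + C\varrho^2 < \tfrac18$. Once these pointwise bounds are in hand with uniformity in $\xi$, the rest is routine.
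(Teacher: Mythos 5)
The paper supplies no proof here — it defers entirely to a citation of Guo (2010), Lemma 3. Your proposal fills in a self-contained argument, and the method you sketch (Grad's explicit form of $k_2$, completion of the square to absorb both the weight ratio $\vh(\vv)/\vh(\vuu)$ and the imported factor $\ue^{\d\abs{\vuu-\vv}^2}$ into the Gaussian of $k_2$, the anisotropic cross-term producing the $(1+\abs{\vv})^{-1}$ gain, and the Schur/Riesz--Thorin step for the $L^{2m}$ bound) is exactly the approach that reference takes. Two remarks worth making. First, the explicit formula you write for $k_2$ with the $\abs{\vuu-\vv}^{-1}$ prefactor is the classical three-dimensional hard-sphere formula; in the present two-dimensional setting the kernel does not carry that singularity (the paper itself records this fact in Section 6, Region I, Step 5, where the absence of the $\abs{\vuu-\vvv'}^{-1}$ singularity in 2D is used in an essential way). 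This is harmless for your argument — the integrable singularity only makes the estimate harder, so your proof covers the easier 2D case a fortiori — but it is worth noting you are working off the wrong dimension's formula. Second, the completion-of-the-square step gives a sharp coefficient: writing $a=\abs{\vuu}^2-\abs{\vv}^2$, $b=\abs{\vuu-\vv}^2$, one finds $-\frac{a^2}{8b}-\varrho a=-\frac{1}{8b}(a+4b\varrho)^2+2\varrho^2 b$, so the residual exponent is exactly $\d+2\varrho^2-\frac18$ in $b$, and the admissible threshold is $\d<\frac18-2\varrho^2$. This degenerates to $\d=0$ at the endpoint $\varrho=\frac14$ (where boundedness still holds via the anisotropic factor, as you note), which is consistent with $C_1(\varrho)\geq 0$ being permitted to vanish. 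With those precisions, the argument is sound and reproduces the proof the paper delegates by citation.
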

\begin{proof}
See \cite[Lemma 3]{Guo2010}.
\end{proof}

\begin{lemma}\label{prelim 1}
We have
\begin{eqnarray}
\lnmv{K[f]}&\leq& C\lnmv{\dfrac{f}{\nu}},\\
\lnmv{\nabla_{v}K[f]}&\leq& C\lnmv{f}.
\end{eqnarray}
\end{lemma}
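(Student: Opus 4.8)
The plan is to estimate the two weighted quantities by passing the weight $\bv$ inside the integral defining $K$ and using the kernel estimates of Lemma \ref{wellposedness prelim lemma 8}. First I would write, for the zeroth-order bound,
\begin{eqnarray}
\bv\,\abs{K[f](\vx,\vw)}\leq\int_{\r^2}k(\vuu,\vw)\,\frac{\bv}{\br{\vuu}^{\vth}\ue^{\varrho\abs{\vuu}^2}}\,\bigg(\br{\vuu}^{\vth}\ue^{\varrho\abs{\vuu}^2}\abs{f(\vx,\vuu)}\bigg)\ud{\vuu}\leq\lnmv{f}\int_{\r^2}k(\vuu,\vw)\,\frac{\bv}{\br{\vuu}^{\vth}\ue^{\varrho\abs{\vuu}^2}}\ud{\vuu}.
\end{eqnarray}
Here the weight $w(\vv)=\br{\vv}^{\vth}\ue^{\varrho\abs{\vv}^2}$ is exactly of the form $w_{\xi,\beta,\varrho}$ in Lemma \ref{wellposedness prelim lemma 8} (take $\xi=1$, $\beta=\vth$), so with $\d=0$ the first inequality of that lemma gives $\int_{\r^2}k(\vuu,\vw)\,\bv/(\br{\vuu}^{\vth}\ue^{\varrho\abs{\vuu}^2})\ud{\vuu}\leq C_2(\varrho)/(1+\abs{\vw})\leq C/\nu(\vw)$, using $\nu(\vw)\sim 1+\abs{\vw}$ from Lemma \ref{Milne property}. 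To get the stated form $\lnmv{K[f]}\leq C\lnmv{f/\nu}$ rather than $C\lnmv{f}$, I would instead factor out $\nu(\vuu)^{-1}$: bound $\bv\abs{K[f]}$ by $\lnmv{f/\nu}$ times $\int k(\vuu,\vw)\nu(\vuu)\,\bv/(\br{\vuu}^{\vth}\ue^{\varrho\abs{\vuu}^2})\ud{\vuu}$, and since $\nu(\vuu)\lesssim 1+\abs{\vuu}$, the extra factor $1+\abs{\vuu}$ is absorbed by lowering the effective weight exponent (i.e. $\br{\vuu}^{\vth-1}$ still qualifies as a valid weight in Lemma \ref{wellposedness prelim lemma 8} since $\vth\geq 3$); this again yields a bound $\leq C/(1+\abs{\vw})\leq C$ uniformly in $\vw$, hence the claim after taking the supremum over $(\vx,\vw)$.

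For the second inequality, $\nabla_v K[f]=\int \nabla_v k(\vuu,\vw)f(\vuu)\ud{\vuu}$ (the differentiation hits only the $\vw$-dependence of the kernel, not the $\vuu$-integration variable), so the same splitting gives
\begin{eqnarray}
\bv\,\abs{\nabla_v K[f](\vx,\vw)}\leq\lnmv{f}\int_{\r^2}\nabla_{v}k(\vuu,\vw)\,\frac{\bv}{\br{\vuu}^{\vth}\ue^{\varrho\abs{\vuu}^2}}\ud{\vuu}\leq C(\varrho)\,\lnmv{f},
\end{eqnarray}
by the third inequality of Lemma \ref{wellposedness prelim lemma 8} with $\d=0$. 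Taking the supremum over $(\vx,\vw)$ finishes this part. One technical point I would be careful about is that $\nabla_v k$ denotes the $\vw$-gradient of the scalar kernel $k(\vuu,\vw)$ (a vector), and the weighted integral bound in Lemma \ref{wellposedness prelim lemma 8} is understood componentwise or after taking absolute values inside; this is the standard convention and I would state it explicitly.

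The main obstacle—such as it is—is purely bookkeeping: one must verify that after pulling out the correct power of $\nu(\vuu)$ (one power for the first estimate, zero for the second), the residual weight $\br{\vuu}^{\vth}\ue^{\varrho\abs{\vuu}^2}$ divided into $\bv$ still produces a kernel-weighted integral of the type controlled by Lemma \ref{wellposedness prelim lemma 8}, for which the constraints $\vth\geq 3$ and $0\leq\varrho<1/4$ (hence $\varrho\leq 1/4$) are exactly what is needed. No genuinely new estimate is required; the lemma is a weighted repackaging of the classical kernel bounds, and the only care needed is to track that the $\nu$-weighting on the left in the first estimate is what permits absorbing the growth $1+\abs{\vuu}$ coming from $\nu(\vuu)$.
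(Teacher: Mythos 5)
Your argument is correct and follows the paper's own route, which tersely cites Lemma~\ref{wellposedness prelim lemma 8} after noting $\vh\asymp\bv$ when $\beta=\vth$; you simply spell out the two applications of that lemma with $\delta=0$. In particular, your step of absorbing the extra $\nu(\vuu)\sim\br{\vuu}$ by passing to the lower weight $w_{1,\vth-1,\varrho}$ and then cancelling the leftover $\br{\vw}$ against the $(1+\abs{\vw})^{-1}$ decay in the kernel bound is exactly the bookkeeping the paper leaves implicit.
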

\begin{proof}
Consider the fact that for $\vth=\beta$, we have
\begin{eqnarray}
C_1\bv\leq\vh\leq C_2\bv,
\end{eqnarray}
for some constant $C_1,C_2>0$. Then this is a natural corollary of Lemma \ref{wellposedness prelim lemma 8}. See \cite{Guo2010} and \cite{Guo.Kim.Tonon.Trescases2013}.
\end{proof}

\begin{lemma}\label{nonlinear}
The nonlinear term $\Gamma$ satisfies for $\beta>0$ and $0\leq\varrho\leq \dfrac{1}{4}$,
\begin{eqnarray}
\lnmv{\nu^{-1}\Gamma[f,f]}&\leq&C\lnmv{f}^2,\\
\int_{\Omega\times\r^2}\Gamma[f,g]h&\leq&C\um{h}\bigg(\nm{f}_{L^{2}}\lnmv{g}+\nm{g}_{L^{2}}\lnmv{f}\bigg),\\
\int_{\Omega\times\r^2}\Gamma[f,g]h&\leq&C\um{h}\bigg(\nm{g}_{L^{2}}\lnmv{f}+\im{f}\um{g}\bigg).
\end{eqnarray}
\end{lemma}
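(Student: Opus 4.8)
The plan is to handle all three estimates by the same two-step scheme: first prove pointwise-in-$\vx$ bounds for the velocity integrals on $\r^2$, then integrate in $\vx$ with Cauchy--Schwarz so that the $L^{\infty}_{\vth,\varrho}$ and $L^{\infty}$ norms can be pulled outside. I would begin by splitting $\Gamma[f,g]=\Gamma_{+}[f,g]-\Gamma_{-}[f,g]$, where $\Gamma_{-}[f,g](\vw)=g(\vw)\int_{\r^2}\int_{\s^1}q(\vo,\abs{\vuu-\vw})\m^{\frac12}(\vuu)f(\vuu)\ud\vo\ud\vuu$ and, using energy conservation $\abs{\vuu_{\ast}}^2+\abs{\vw_{\ast}}^2=\abs{\vuu}^2+\abs{\vw}^2$ to absorb the factor $\m^{-\frac12}(\vw)$, $\Gamma_{+}[f,g](\vw)=\int_{\r^2}\int_{\s^1}q(\vo,\abs{\vuu-\vw})\m^{\frac12}(\vuu)f(\vuu_{\ast})g(\vw_{\ast})\ud\vo\ud\vuu$. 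The elementary tools are the hard-sphere bound $\int_{\s^1}q(\vo,\abs{\vuu-\vw})\ud\vo\lesssim\abs{\vw-\vuu}\lesssim\br{\vw}\br{\vuu}$, the equivalence $\nu(\vw)\sim\br{\vw}$ from Lemma \ref{Milne property}, and the fact that $\m^{\frac12}(\vuu)=c\,\ue^{-\abs{\vuu}^2/4}$ beats any polynomial and, since $\varrho\le\frac14$, still dominates $\ue^{\varrho\abs{\vuu}^2}$.

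For the weighted $L^{\infty}$ estimate I would work at fixed $\vx$. In $\Gamma_{-}$ I bound $\abs{f(\vuu)}\le\lnmv{f}\,(\br{\vuu}^{\vth}\ue^{\varrho\abs{\vuu}^2})^{-1}$; the remaining $\vuu,\vo$ integral is $\lesssim\br{\vw}\lesssim\nu(\vw)$, and multiplying by $\bv\abs{g(\vw)}\le\lnmv{g}$ gives $\bv\nu^{-1}\abs{\Gamma_{-}[f,g]}\lesssim\lnmv{f}\lnmv{g}$. In $\Gamma_{+}$ I bound $\abs{f(\vuu_{\ast})g(\vw_{\ast})}\le\lnmv{f}\lnmv{g}(\br{\vuu_{\ast}}^{\vth}\ue^{\varrho\abs{\vuu_{\ast}}^2}\br{\vw_{\ast}}^{\vth}\ue^{\varrho\abs{\vw_{\ast}}^2})^{-1}$ and use the energy identity to write $\ue^{\varrho\abs{\vw}^2}\le\ue^{\varrho(\abs{\vuu_{\ast}}^2+\abs{\vw_{\ast}}^2)}\ue^{-\varrho\abs{\vuu}^2}$ together with momentum conservation $\br{\vw}^{\vth}\lesssim\br{\vuu_{\ast}}^{\vth}\br{\vw_{\ast}}^{\vth}\br{\vuu}^{\vth}$; what survives inside the $\vuu,\vo$ integral is then $\lesssim\m^{\frac12}(\vuu)\ue^{-\varrho\abs{\vuu}^2}\br{\vuu}^{\vth}\abs{\vw-\vuu}$, whose integral is again $\lesssim\nu(\vw)$. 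Setting $g=f$ and taking the supremum over $(\vx,\vw)$ proves the first inequality.

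For the two trilinear estimates I would first prove, at fixed $\vx$, the velocity-space bounds on $\r^2$. For the loss part, estimating the inner integral against $\m^{\frac12}$ by Cauchy--Schwarz (once in $L^2_\vuu$, once in $L^{\infty}_\vuu$) gives $\abs{\Gamma_{-}[f,g](\vw)}\lesssim\nu(\vw)\abs{g(\vw)}\min\{\nm{f(\vx,\cdot)}_{L^2},\im{f}\}$; pairing with $h$, using $\abs{g(\vw)}\le\iss{g(\vx,\cdot)}{\vth,\varrho}(\br{\vw}^{\vth}\ue^{\varrho\abs{\vw}^2})^{-1}$ and $\nu^{\frac12}(\br{\vw}^{\vth}\ue^{\varrho\abs{\vw}^2})^{-1}\in L^2(\r^2)$ (valid since $\vth\ge3$, $\varrho\ge0$), yields $\abs{\int_{\r^2}\Gamma_{-}[f,g]h\,\ud\vw}\lesssim\um{h(\vx,\cdot)}\min\{\nm{f(\vx,\cdot)}_{L^2},\im{f}\}\,\iss{g(\vx,\cdot)}{\vth,\varrho}$. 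For the gain part I would use the Carleman representation and the pre/post-collisional change of variables $\vuu\leftrightarrow\vw$ (which swaps $\vuu_{\ast}\leftrightarrow\vw_{\ast}$), followed by Cauchy--Schwarz against whichever of $f(\vuu_{\ast})$ or $g(\vw_{\ast})$ one chooses to keep in $L^2$, to obtain $\abs{\int_{\r^2}\Gamma_{+}[f,g]h\,\ud\vw}\lesssim\um{h(\vx,\cdot)}\min\{\nm{f(\vx,\cdot)}_{L^2}\iss{g(\vx,\cdot)}{\vth,\varrho},\,\nm{g(\vx,\cdot)}_{L^2}\iss{f(\vx,\cdot)}{\vth,\varrho}\}$, the weights redistributed exactly as in the $L^{\infty}$ step. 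Adding the two parts, integrating in $\vx$, applying Cauchy--Schwarz in $\vx$, and pulling $\lnmv{g}$ (resp.\ $\lnmv{f}$) out of the $\vx$-integral gives the second and third inequalities, with the loss part supplying the $\im{f}\um{g}$ term of the third one.

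The main obstacle is the gain-term trilinear estimate: I must keep the Gaussian $\m^{\frac12}(\vuu)$ large enough, after it is split by the energy identity, to dominate $\ue^{\varrho\abs{\vw}^2}$ — this is precisely where $\varrho\le\frac14$ enters — and I must use the Carleman change of variables to transfer integrability between the $f$- and $g$-slots so that both $\nm{f}_{L^2}\lnmv{g}$ and $\nm{g}_{L^2}\lnmv{f}$ (and, for the third bound, $\im{f}\um{g}$) are available. This is the classical hard-sphere trilinear estimate of \cite{Guo2010} and \cite{Guo.Kim.Tonon.Trescases2013}, which I would follow; everything else reduces to the elementary weight computations and Cauchy--Schwarz indicated above, together with $\nu\sim\br{\vw}$ from Lemma \ref{Milne property}.
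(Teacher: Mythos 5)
Your proposal takes the same route as the paper: the paper proves this lemma by citing the standard hard-sphere trilinear estimates (\cite[Lemma 2.3]{Guo2002} and \cite[Chapter 3]{Glassey1996}), and your sketch is a reconstruction of exactly that argument, with the correct gain/loss decomposition $\Gamma=\Gamma_{+}-\Gamma_{-}$, the use of energy conservation to move the Maxwellian factor $\m^{-1/2}(\vw)$ inside, the hard-sphere bound $\int_{\s^1}q\,\ud\vo\lesssim\abs{\vw-\vuu}$, the Cauchy--Schwarz pairing against $h$ to produce $\um{h}$, and the pre/post-collisional change of variables to move the $L^2$ norm between the $f$- and $g$-slots in the gain term. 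The identification of the loss part as the source of the $\im{f}\um{g}$ term in the third inequality is also correct.

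Two small imprecisions are worth noting. First, your attribution of the role of $\varrho\le\frac14$ is reversed: after you redistribute the weights using $\ue^{\varrho\abs{\vw}^2}=\ue^{\varrho\abs{\vuu_\ast}^2}\ue^{\varrho\abs{\vw_\ast}^2}\ue^{-\varrho\abs{\vuu}^2}$, what survives is $\m^{\frac12}(\vuu)\ue^{-\varrho\abs{\vuu}^2}$, i.e.\ the exponents add rather than compete, so the velocity integrals converge for every $\varrho\ge0$; the constraint $\varrho\le\frac14$ (and in fact $\varrho<\frac14$) is needed elsewhere in the paper, e.g.\ in the Grad kernel estimates of Lemma \ref{wellposedness prelim lemma 8} where $\tvh\sim\ue^{(\frac14-\varrho)\abs{\vv}^2}$ appears, not in this lemma. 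Second, you describe the change of variables as ``the pre/post-collisional change of variables $\vuu\leftrightarrow\vw$''; the relabelling $\vuu\leftrightarrow\vw$ (which swaps $\vuu_\ast\leftrightarrow\vw_\ast$ and has Jacobian $1$) is a legitimate tool, but it is distinct from the measure-preserving substitution $(\vuu,\vw)\to(\vuu_\ast,\vw_\ast)$; both are used in the cited proofs, and conflating them does not break the argument, but it does obscure which one is doing the work at a given step. The Carleman-style bookkeeping needed to convert the gain integral into $\nm{f}_{L^2}\lnmv{g}$ (rather than the weaker $\um{f}\lnmv{g}$) is the genuinely delicate step you rightly flag, and deferring it to \cite{Guo2010} and \cite{Guo.Kim.Tonon.Trescases2013} is appropriate since that is also what the paper does.
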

\begin{proof}
See \cite[Lemma 2.3]{Guo2002} and \cite[Chapter 3]{Glassey1996}.
\end{proof}

%%%%%%%%%%%%%%%%%%%%%%%%%%%%%%%%%%%%%%%%%%%%%%%%%%%%%%%%%%%%%%%%%%%%%%%%
\subsection{Formulation and Estimates}
%%%%%%%%%%%%%%%%%%%%%%%%%%%%%%%%%%%%%%%%%%%%%%%%%%%%%%%%%%%%%%%%%%%%%%%%

Based on the flow direction, we can divide the boundary
$\gamma=\{(\vx_0,\vw):\ \vx_0\in\p\Omega,\vw\in\r^2\}$ into the in-flow boundary
$\gamma_-$, the out-flow boundary $\gamma_+$, and the grazing set
$\gamma_0$ as
\begin{eqnarray}
\gamma_{-}&=&\{(\vx_0,\vw):\ \vx_0\in\p\Omega,\ \vw\cdot\vn(\vx_0)<0\},\\
\gamma_{+}&=&\{(\vx_0,\vw):\ \vx_0\in\p\Omega,\ \vw\cdot\vn(\vx_0)>0\},\\
\gamma_{0}&=&\{(\vx_0,\vw):\ \vx_0\in\p\Omega,\ \vw\cdot\vn(\vx_0)=0\}.
\end{eqnarray}
It is easy to see $\gamma=\gamma_+\cup\gamma_-\cup\gamma_0$. Also,
the boundary condition is only given on $\gamma_{-}$.
\begin{lemma}\label{wellposedness prelim lemma 1}
Define the near-grazing set of $\gamma_+$ or $\gamma_-$ as
\begin{eqnarray}
\gamma_{\pm}^{\d}=\left\{(\vx,\vv)\in\gamma_{\pm}:
\abs{\vn(\vx)\cdot\vv}\leq\d\ \text{or}\ \abs{\vv}\geq\frac{1}{\d}\ \text{or}\
\abs{\vv}\leq\d\right\}.
\end{eqnarray}
Then
\begin{eqnarray}
\abs{f\id_{\gamma_{\pm}\backslash\gamma_{\pm}^{\d}}}_{L^1}\leq
C(\delta)\bigg(\nm{f}_{L^1}+\nm{\vv\cdot\nx f}_{L^1}\bigg).
\end{eqnarray}
\end{lemma}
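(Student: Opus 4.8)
The plan is to prove this trace estimate by a change of variables along the straight characteristic lines $s\mapsto\vx_0\mp s\vv$, using convexity and smoothness of $\Omega$ to control the geometry on the non-grazing set. The first ingredient is a geometric lemma on sojourn times. Since $\Omega$ is smooth and bounded it satisfies a uniform interior ball condition, so there is $r_0=r_0(\Omega)>0$ such that $B\bigl(\vx_0-r_0\vn(\vx_0),\,r_0\bigr)\subset\Omega$ for every $\vx_0\in\partial\Omega$. For $(\vx_0,\vv)\in\gamma_+\setminus\gamma_+^{\d}$ we have $\vv\cdot\vn(\vx_0)>\d$ and $\d<\abs{\vv}<1/\d$, and the backward segment $\vx_0-s\vv$ runs along the chord of this interior ball, leaving it only when $s=2r_0\bigl(\vv\cdot\vn(\vx_0)\bigr)/\abs{\vv}^2\geq2r_0\d^3$. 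Hence $\vx_0-s\vv\in\Omega$ for all $0<s\leq t_{\ast}$, where $t_{\ast}:=2r_0\d^3>0$; symmetrically, for $(\vx_0,\vv)\in\gamma_-\setminus\gamma_-^{\d}$ one has $\vx_0+s\vv\in\Omega$ for $0<s\leq t_{\ast}$. This uniform lower bound $t_{\ast}(\d)>0$ is exactly what removing the near-grazing and large/small velocity portions of $\gamma_{\pm}$ buys us.

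Next I would derive an averaged characteristic identity. It suffices to prove the bound for $f\in C^1(\overline{\Omega}\times\r^2)$ with the usual decay in $\vv$; the general case follows from Ukai's trace theorem together with the density of such functions in the graph norm $\nm{f}_{L^1}+\nm{\vv\cdot\nx f}_{L^1}$ (equivalently, one runs the argument below with $f$ replaced by $\sqrt{f^2+\eta^2}-\eta$ and lets $\eta\to0$). For $(\vx_0,\vv)\in\gamma_+\setminus\gamma_+^{\d}$ and $0<s\leq t_{\ast}$, the fundamental theorem of calculus along the segment $[\vx_0-s\vv,\vx_0]\subset\overline{\Omega}$ gives
\[\abs{f(\vx_0,\vv)}\leq\abs{f(\vx_0-s\vv,\vv)}+\int_0^{t_{\ast}}\abs{(\vv\cdot\nx f)(\vx_0-\tau\vv,\vv)}\ud\tau.\]
Integrating in $s$ over $[0,t_{\ast}]$ and dividing by $t_{\ast}$,
\[\abs{f(\vx_0,\vv)}\leq\frac{1}{t_{\ast}}\int_0^{t_{\ast}}\abs{f(\vx_0-s\vv,\vv)}\ud s+\int_0^{t_{\ast}}\abs{(\vv\cdot\nx f)(\vx_0-\tau\vv,\vv)}\ud\tau,\]
and the analogous inequality holds on $\gamma_-\setminus\gamma_-^{\d}$ with $\vx_0-s\vv$ replaced by $\vx_0+s\vv$.

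Finally I multiply the last display by $\abs{\vv\cdot\vn(\vx_0)}$ and integrate over $(\vx_0,\vv)\in\gamma_+\setminus\gamma_+^{\d}$, recalling $\ud\gamma=\abs{\vv\cdot\vn(\vx_0)}\ud\varpi\,\ud\vv$. For each fixed $\vv$ the flow-out map $(\vx_0,s)\mapsto\vx=\vx_0-s\vv$ sends the non-grazing slice times $(0,t_{\ast}]$ into $\Omega$ with Jacobian $\abs{\vv\cdot\vn(\vx_0)}$, and it is injective precisely because $\Omega$ is convex: the forward ray issued from any $\vx\in\Omega$ in direction $\vv$ meets $\partial\Omega$ at a unique point, which recovers $(\vx_0,s)$ from $\vx$. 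Therefore both terms become integrals over a subset of $\Omega\times\r^2$:
\[\abs{f\id_{\gamma_+\backslash\gamma_+^{\d}}}_{L^1}\leq\frac{1}{t_{\ast}}\int_{\r^2}\int_{\Omega}\abs{f(\vx,\vv)}\ud\vx\,\ud\vv+\int_{\r^2}\int_{\Omega}\abs{(\vv\cdot\nx f)(\vx,\vv)}\ud\vx\,\ud\vv\leq C(\d)\Big(\nm{f}_{L^1}+\nm{\vv\cdot\nx f}_{L^1}\Big),\]
with $C(\d)\sim\d^{-3}$. The estimate on $\gamma_-\setminus\gamma_-^{\d}$ is identical using forward characteristics, which finishes the proof.

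The main obstacle — essentially the only non-bookkeeping point — is the uniform sojourn-time bound $t_{\ast}(\d)>0$ together with the injectivity of the flow-out map: these are where the smoothness of $\Omega$ (interior ball), the convexity of $\Omega$, and all three conditions defining $\gamma_{\pm}^{\d}$ are genuinely used. Once the geometry is in place the estimate reduces to a single change of variables; the only further care needed is the standard trace justification for merely $L^1$-integrable data (Ukai's theorem and the FTC along characteristics), which contributes nothing new.
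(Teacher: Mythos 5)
The paper itself does not prove this lemma but cites it from Esposito--Guo--Kim--Marra (2013, Lemma 2.1), and your argument reproduces the standard proof of that result correctly: a uniform lower bound $t_{\ast}\gtrsim\d^3$ on the backward sojourn time on the non-grazing set (obtained here from the interior ball condition together with all three cutoffs defining $\gamma_{\pm}^{\d}$), the fundamental theorem of calculus along characteristics averaged over $s\in(0,t_{\ast}]$, and the change of variables $(\vx_0,s)\mapsto\vx_0\mp s\vv$ whose Jacobian $\abs{\vv\cdot\vn(\vx_0)}$ exactly cancels the $\ud\gamma$ measure. The supporting details you supply (injectivity of the flow-out map via convexity, the explicit $C(\d)\sim\d^{-3}$, the approximation argument for merely $L^1$ data) are all in place, so the proof is complete and follows essentially the same route as the cited reference.
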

\begin{proof}
See \cite[Lemma 2.1]{Esposito.Guo.Kim.Marra2013}.
\end{proof}
\begin{lemma}(Green's Identity)\label{wellposedness prelim lemma 2}
Assume $f(\vx,\vv),\ g(\vx,\vv)\in L^2(\Omega\times\r^2)$ and
$\vv\cdot\nx f,\ \vv\cdot\nx g\in L^2(\Omega\times\r^2)$ with $f,\
g\in L^2(\gamma)$. Then
\begin{eqnarray}
\iint_{\Omega\times\r^2}\bigg((\vv\cdot\nx f)g+(\vv\cdot\nx
g)f\bigg)\ud{\vx}\ud{\vv}=\int_{\gamma_+}fg\ud{\gamma}-\int_{\gamma_-}fg\ud{\gamma}.
\end{eqnarray}
\end{lemma}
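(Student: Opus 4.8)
The plan is to reduce the identity to the diagonal case $f=g$ by polarization, establish it for smooth functions by the classical divergence theorem in $\vx$ with $\vv$ frozen, and then recover the stated low–regularity class by running the one–dimensional fundamental theorem of calculus along the characteristic chords of $\Omega$. First, both sides of the asserted equality are symmetric bilinear forms in $(f,g)$: writing $B(f,g)=\iint_{\Omega\times\r^2}\big((\vv\cdot\nx f)g+(\vv\cdot\nx g)f\big)\ud\vx\ud\vv$ and $L(f,g)=\int_{\gamma_+}fg\ud\gamma-\int_{\gamma_-}fg\ud\gamma$, the polarization identity $B(f,g)=\tfrac14\big(B(f+g,f+g)-B(f-g,f-g)\big)$, and the analogous one for $L$, show it suffices to prove $B(f,f)=L(f,f)$, i.e.
\[
\iint_{\Omega\times\r^2}2f\,(\vv\cdot\nx f)\ud\vx\ud\vv=\int_{\gamma_+}f^2\ud\gamma-\int_{\gamma_-}f^2\ud\gamma .
\]
Here the left side is a well–defined $L^1$ integral by Cauchy--Schwarz (since $f,\vv\cdot\nx f\in L^2(\Omega\times\r^2)$), and the right side is finite because $f\in L^2(\gamma)$; so all quantities make sense for $f$ in the stated class.

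Next I foliate $\Omega$ by chords. Fix $\vv\in\r^2\setminus\{0\}$. By convexity of $\Omega$, for $\vx_0\in\p\Omega$ with $\vv\cdot\vn(\vx_0)<0$ the ray $s\mapsto\vx_0+s\vv$ meets $\bar\Omega$ in a single segment, leaving $\Omega$ at a unique time $\ell=\ell(\vx_0,\vv)>0$ with $\vv\cdot\vn(\vx_0+\ell\vv)>0$; the grazing directions form a $\varpi$–null set and are discarded. The map $(\vx_0,s)\mapsto\vx_0+s\vv$, with $\vx_0$ ranging over $\{\vv\cdot\vn(\vx_0)<0\}$ and $0<s<\ell(\vx_0,\vv)$, is then a bijection onto $\Omega$ up to null sets, and the flow–box/co-area formula gives, for any $\Phi\ge0$,
\[
\int_{\Omega}\Phi(\vx,\vv)\ud\vx=\int_{\{\vv\cdot\vn(\vx_0)<0\}}\abs{\vv\cdot\vn(\vx_0)}\int_0^{\ell(\vx_0,\vv)}\Phi(\vx_0+s\vv,\vv)\ud s\,\ud\varpi(\vx_0).
\]
Moreover, for fixed $\vv$ the endpoint map $\vx_0\mapsto\vx_0+\ell(\vx_0,\vv)\vv$ is a bijection from $\{\vv\cdot\vn<0\}$ onto $\{\vv\cdot\vn>0\}$ that transports the flux measure $\abs{\vv\cdot\vn(\vx_0)}\ud\varpi(\vx_0)$ to $\abs{\vv\cdot\vn}\ud\varpi$ on $\gamma_+$ (both equal $\abs{\vv}\,\ud y$ in the coordinate $y$ transverse to $\vv$), which is exactly equality of inflow and outflow flux through a flow tube.

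Then I apply the one–dimensional fundamental theorem of calculus. Set $u_{\vx_0,\vv}(s):=f(\vx_0+s\vv,\vv)$ on $(0,\ell(\vx_0,\vv))$. Testing the distributional relation $\vv\cdot\nx f=:h\in L^2$ against product test functions and using Fubini along the foliation yields that for a.e. $(\vx_0,\vv)$ one has $u_{\vx_0,\vv}\in L^2(0,\ell)$ with weak derivative $u_{\vx_0,\vv}'(s)=h(\vx_0+s\vv,\vv)\in L^2(0,\ell)$; hence $u_{\vx_0,\vv}\in H^1(0,\ell)\hookrightarrow C([0,\ell])$, so it is absolutely continuous and
\[
\int_0^{\ell}2u_{\vx_0,\vv}(s)\,u_{\vx_0,\vv}'(s)\ud s=u_{\vx_0,\vv}(\ell)^2-u_{\vx_0,\vv}(0)^2 ,
\]
where $u_{\vx_0,\vv}(0)=f(\vx_0,\vv)$ and $u_{\vx_0,\vv}(\ell)=f(\vx_0+\ell\vv,\vv)$ agree with the traces $f|_{\gamma_-}$ and $f|_{\gamma_+}$ for a.e. chord, since both are the one–sided limits of the interior values along the chord and the given $L^2(\gamma)$ trace coincides with these limits off the null grazing set. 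Multiplying by $\abs{\vv\cdot\vn(\vx_0)}$ and integrating over $\{\vv\cdot\vn(\vx_0)<0\}\times\r^2$, the flow–box identity turns the left side into $\iint_{\Omega\times\r^2}2f(\vv\cdot\nx f)\ud\vx\ud\vv$, the $u(0)^2$–term into $\int_{\gamma_-}f^2\ud\gamma$, and the $u(\ell)^2$–term, via the flux-preserving endpoint map, into $\int_{\gamma_+}f^2\ud\gamma$. This is the diagonal identity, and polarization finishes the proof.

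The only nonroutine point is the Fubini step that identifies, for a.e. chord, the restriction of the weak derivative $\vv\cdot\nx f$ with the weak derivative of the restricted function $u_{\vx_0,\vv}$; this is standard but must be carried out carefully (it is enough to verify it against a dense family of product test functions $\psi(\vx_0,\vv)\varphi(s)$), together with the observation that no $L^2$ mass of the trace concentrates on $\gamma_0$, which is immediate here because $\gamma_0$ is $\ud\gamma$–null and $f\in L^2(\gamma)$. An alternative route is by density: approximate $f$ by $f_n\in C^1(\bar\Omega\times\r^2)$ with $f_n\to f$ and $\vv\cdot\nx f_n\to\vv\cdot\nx f$ in $L^2(\Omega\times\r^2)$ and $f_n|_\gamma\to f|_\gamma$ in $L^2(\gamma)$, apply the smooth identity (divergence theorem in $\vx$ at frozen $\vv$, followed by the split $\r^2_{\vv}=\{\vv\cdot\vn>0\}\cup\{\vv\cdot\vn<0\}$ with $\ud\gamma=\abs{\vv\cdot\vn}\ud\varpi\ud\vv$), and pass to the limit; but producing such an approximation with simultaneous convergence of the \emph{trace} is itself essentially the characteristic argument above, which is why I would present the latter directly.
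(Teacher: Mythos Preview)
The paper does not actually prove this lemma; it simply cites \cite[Lemma 2.2]{Esposito.Guo.Kim.Marra2013}. So your proposal supplies considerably more than the paper itself does, and there is nothing to compare on the level of strategy.

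Your argument is essentially correct and follows the standard route for Green's identities in kinetic theory: polarize to the diagonal, foliate $\Omega$ by the straight characteristics for each fixed $\vv\neq 0$ using convexity, apply the co-area (flow-box) change of variables, and invoke the one-dimensional fundamental theorem of calculus along each chord. The flux-preservation of the endpoint map between $\gamma_-$ and $\gamma_+$ is exactly what makes the two boundary integrals appear with the correct measures. The one place I would tighten is the identification of the $H^1$ endpoint values $u_{\vx_0,\vv}(0)$, $u_{\vx_0,\vv}(\ell)$ with the \emph{given} trace $f|_{\gamma}\in L^2(\gamma)$: as you note, this is the genuine content of the trace theory for transport operators (Cessenat, Ukai), and strictly speaking one must either quote that the trace defined by characteristic limits is the unique continuous extension of the smooth trace in the graph norm, or run the density argument you sketch at the end. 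Your write-up flags this honestly, so there is no gap, but in a self-contained proof that step deserves a precise citation or a few more lines.
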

\begin{proof}
See \cite[Lemma 2.2]{Esposito.Guo.Kim.Marra2013}.
\end{proof}

\newpage

%%%%%%%%%%%%%%%%%%%%%%%%%%%%%%%%%%%%%%%%%%%%%%%%%%%%%%%%%%%%%%%%%%%%%%%%
\section{Asymptotic Analysis}%%%%%%%%%%%%%%%%%%%%%%%%%%%%%%%%%%%%%%%%%%%
%%%%%%%%%%%%%%%%%%%%%%%%%%%%%%%%%%%%%%%%%%%%%%%%%%%%%%%%%%%%%%%%%%%%%%%%

In this section, we will construct the asymptotic expansion of the equation
\begin{eqnarray}\label{small system}
\left\{
\begin{array}{rcl}
\e\vw\cdot\nx
f^{\e}+\ll[f^{\e}]&=&\Gamma[f^{\e},f^{\e}],\\\rule{0ex}{1.5em}
f^{\e}(\vx_0,\vw)&=&\pe[f^{\e}](\vx_0,\vw) \ \ \text{for}\ \ \vx_0\in\p\Omega\ \ \text{and}\ \ \vw\cdot\vn(\vx_0)<0,
\end{array}
\right.
\end{eqnarray}
with the normalization condition
\begin{eqnarray}\label{normalization}
\int_{\Omega}\int_{\r^2}f^{\e}(\vx,\vw)\m^{\frac{1}{2}}(\vw)\ud{\vw}\ud{\vx}=0.
\end{eqnarray}

%%%%%%%%%%%%%%%%%%%%%%%%%%%%%%%%%%%%%%%%%%%%%%%%%%%%%%%%%%%%%%%%%%%%%%%%
\subsection{Interior Expansion}
%%%%%%%%%%%%%%%%%%%%%%%%%%%%%%%%%%%%%%%%%%%%%%%%%%%%%%%%%%%%%%%%%%%%%%%%

We define the interior expansion
\begin{eqnarray}\label{interior expansion}
\f(\vx,\vw)\sim\sum_{k=1}^{3}\e^k\f_k(\vx,\vw).
\end{eqnarray}
Plugging it into the equation (\ref{small system}) and comparing the order of $\e$, we obtain
\begin{eqnarray}
\ll[\f_1]&=&0,\label{interior expansion 1}\\
\ll[\f_2]&=&-\vw\cdot\nx\f_1+\Gamma[\f_1,\f_1],\label{interior expansion 2}\\
\ll[\f_3]&=&-\vw\cdot\nx\f_2+2\Gamma[\f_1,\f_2].\label{interior expansion 3}
\end{eqnarray}
The following analysis is standard and well-known. We mainly refer to the method in \cite{Sone2002, Sone2007}. The solvability of
\begin{eqnarray}
\ll[\f_k]&=&S
\end{eqnarray}
requires that
\begin{eqnarray}
\int_{\r^2}S(\vw)\psi(\vw)\ud{\vw}=0
\end{eqnarray}
for any $\psi$ satisfying $\ll[\psi]=0$. Then each $\f_k$ consists of three parts:
\begin{eqnarray}
\f_k(\vx,\vw)=A_k (\vx,\vw)+B_k (\vx,\vw)+C_k (\vx,\vw),
\end{eqnarray}
where
\begin{eqnarray}
A_k (\vx,\vw)=\m^{\frac{1}{2}}(\vw)\left(A_{k,0} (\vx)+A_{k,1} (\vx)v_1+A_{k,2} (\vx)v_2+A_{k,3} (\vx)\bigg(\frac{\abs{\vw}^2-2}{2}\bigg)\right),
\end{eqnarray}
is the macroscopic part,
\begin{eqnarray}
B_k (\vx,\vw)=\m^{\frac{1}{2}}(\vw)\left(B_{k,0} (\vx)+B_{k,1} (\vx)v_1+B_{k,2} (\vx)v_2+B_{k,3} (\vx)\bigg(\frac{\abs{\vw}^2-2}{2}\bigg)\right),
\end{eqnarray}
is the connection part, with $B_{k}$ depending on $A_{s} $ for $1\leq s\leq k-1$ as
\begin{eqnarray}\label{at 12}
B_{k,0} &=&0,\\
B_{k,1} &=&\sum_{i=1}^{k-1}A_{i,0} A_{k-i,1} ,\\
B_{k,2} &=&\sum_{i=1}^{k-1}A_{i,0} A_{k-i,2} ,\\
B_{k,3} &=&\sum_{i=1}^{k-1}\bigg(A_{i,0} A_{k-i,3} +A_{i,1} A_{k-i,1} +A_{i,2} A_{k-i,2}
+\sum_{j=1}^{k-1-i}A_{i,0} (A_{j,1} A_{k-i-j,1} +A_{j,2} A_{k-i-j,2} )\bigg),
\end{eqnarray}
and $C_k(\vx,\vw)$ is the orthogonal part satisfying
\begin{eqnarray}
\int_{\r^2}\m^{\frac{1}{2}}(\vw)C_k (\vx,\vw)\left(\begin{array}{c}1\\\vw\\\abs{\vw}^2
\end{array}\right)\ud{\vw}=0,
\end{eqnarray}
with
\begin{eqnarray}\label{at 13}
\ll[C_k ]&=&-\vw\cdot\nx\f_{k-1}+\sum_{i=1}^{k-1}\Gamma[\f_i,\f_{k-i}],
\end{eqnarray}
which can be uniquely determined. Hence, we only need to determine $A_k$. Traditionally, we write
\begin{eqnarray}
A_k=\m^{\frac{1}{2}}\left(\rh_k +\vu_k\cdot\vw+\th_k \left(\frac{\abs{\vw}^2-2}{2}\right)\right),
\end{eqnarray}
where the coefficients $\rh_k$, $\u_k$ and $\th_k$ represent density, velocity and temperature in the macroscopic scale. Then the analysis in \cite{Sone2002, Sone2007} shows that $A_k $ satisfies the equations as follows:\\
\ \\
$1^{st}$-order expansion:
\begin{eqnarray}
P_1 -(\rh_1 +\th_1 )&=&0,\\
\nx P_1 &=&0,\\
\nx\cdot\uh_1 &=&0,
\end{eqnarray}
$2^{nd}$-order expansion:
\begin{eqnarray}
P_2 -(\rh_2 +\th_2 +\rh_1 \th_1 )&=&0,\\
\uh_1 \cdot\nx\uh_1 -\gamma_1\dx\uh_1 +\nx P_2 &=&0,\\
\uh_1 \cdot\nx\th_1 -\gamma_2\dx\th_1 &=&0,\\
\nx\cdot\uh_2 +\uh_1\cdot\nx\rh_1&=&0.
\end{eqnarray}
Here $P_1$ and $P_2$ represent the pressure, $\gamma_1$ and $\gamma_2$ are constants.

%%%%%%%%%%%%%%%%%%%%%%%%%%%%%%%%%%%%%%%%%%%%%%%%%%%%%%%%%%%%%%%%%%%%%%%%
\subsection{Boundary Layer Expansion with Geometric Correction}
%%%%%%%%%%%%%%%%%%%%%%%%%%%%%%%%%%%%%%%%%%%%%%%%%%%%%%%%%%%%%%%%%%%%%%%%

We will use the Cartesian coordinate
system for the interior solution, and a local coordinate system in a neighborhood of the boundary for the boundary layer.

Assume the Cartesian coordinate is $\vx=(x_1,x_2)$. Using polar coordinates system $(r,\theta)\in[0,\infty)\times[-\pi,\pi)$ and choosing pole in $\Omega$, we assume $\vx_0\in\p\Omega$ is
\begin{eqnarray}
\left\{
\begin{array}{rcl}
x_{1,0}&=&r(\theta)\cos\theta,\\
x_{2,0}&=&r(\theta)\sin\theta,
\end{array}
\right.
\end{eqnarray}
where $r(\theta)>0$ is a given function describing the boundary curve. Our local coordinate system is a modification of the polar coordinate
system.

In the domain near the boundary, for each $\theta$, we have the
outward unit normal vector
\begin{eqnarray}
\vn=\left(\frac{r(\theta)\cos\theta+r'(\theta)\sin\theta}{\sqrt{r(\theta)^2+r'(\theta)^2}},\frac{r(\theta)\sin\theta-r'(\theta)\cos\theta}{\sqrt{r(\theta)^2+r'(\theta)^2}}\right),
\end{eqnarray}
where $r'(\theta)=\dfrac{\ud{r}}{\ud{\theta}}$. We can determine each
point $\vx\in\bar\Omega$ as $\vx=\vx_0-\mathfrak{N}\vn$ where $\mathfrak{N}$ is the normal distance to the boundary point $\vx_0$. In detail, this means
\begin{eqnarray}\label{local}
\left\{
\begin{array}{rcl}
x_1&=&r(\theta)\cos\theta-\mathfrak{N}\dfrac{r(\theta)\cos\theta+r'(\theta)\sin\theta}{\sqrt{r(\theta)^2+r'(\theta)^2}},\\\rule{0ex}{2.0em}
x_2&=&r(\theta)\sin\theta-\mathfrak{N}\dfrac{r(\theta)\sin\theta-r'(\theta)\cos\theta}{\sqrt{r(\theta)^2+r'(\theta)^2}}.
\end{array}
\right.
\end{eqnarray}
It is easy to see that $\mathfrak{N}=0$ denotes the boundary $\p\Omega$ and $\mathfrak{N}>0$ denotes the interior of $\Omega$. $(\mathfrak{N},\theta)$ is the desired local coordinate system.

Direct computation in \cite{AA007} reveals that
\begin{eqnarray}
\frac{\p\theta}{\p x_1}=\frac{MP}{P^3+Q\mathfrak{N}},&\quad&
\frac{\p\mathfrak{N}}{\p x_1}=-\frac{N}{P},\\
\frac{\p\theta}{\p x_2}=\frac{NP}{P^3+Q\mathfrak{N}},&\quad&
\frac{\p\mathfrak{N}}{\p x_2}=\frac{M}{P},
\end{eqnarray}
where
\begin{eqnarray}
P&=&(r^2+r'^2)^{\frac{1}{2}},\\
Q&=&rr''-r^2-2r'^2,\\
M&=&-r\sin\theta+r'\cos\theta,\\
N&=&r\cos\theta+r'\sin\theta.
\end{eqnarray}
Therefore, noting the fact that for $C^2$ convex domains, the curvature
\begin{eqnarray}
\kappa(\theta)=\frac{r^2+2r'^2-rr''}{(r^2+r'^2)^{\frac{3}{2}}}>0,
\end{eqnarray}
and the radius of curvature
\begin{eqnarray}
R_{\kappa}(\theta)=\frac{1}{\kappa(\theta)}=\frac{(r^2+r'^2)^{\frac{3}{2}}}{r^2+2r'^2-rr''}>0,
\end{eqnarray}
we define substitutions as follows:\\
\ \\
Substitution 1: Coordinate Substitution\\
Let $(x_1,x_2)\rt (\mathfrak{N},\theta)$ with
$0\leq\mathfrak{N}<R_{\min}$ for $R_{\min}=\min_{\theta}R_{\kappa}$ as
\begin{eqnarray}\label{substitution 1}
\left\{
\begin{array}{rcl}
x_1&=&r(\theta)\cos\theta-\mathfrak{N}\dfrac{r(\theta)\cos\theta+r'(\theta)\sin\theta}{\sqrt{r(\theta)^2+r'(\theta)^2}},\\\rule{0ex}{2.0em}
x_2&=&r(\theta)\sin\theta-\mathfrak{N}\dfrac{r(\theta)\sin\theta-r'(\theta)\cos\theta}{\sqrt{r(\theta)^2+r'(\theta)^2}},
\end{array}
\right.
\end{eqnarray}
and then the equation (\ref{small system}) is transformed into
\begin{eqnarray}
\left\{
\begin{array}{l}
\displaystyle\e\Bigg(v_1\frac{-r\cos\theta-r'\sin\theta}{(r^2+r'^2)^{\frac{1}{2}}}+v_2\frac{-r\sin\theta+r'\cos\theta}{(r^2+r'^2)^{\frac{1}{2}}}\Bigg)\frac{\p f^{\e}}{\p\mathfrak{N}}\\
\displaystyle+\e\Bigg(v_1\frac{-r\sin\theta+r'\cos\theta}{(r^2+r'^2)}+v_2\frac{r\cos\theta+r'\sin\theta}{(r^2+r'^2)}\Bigg)
\frac{1}{(1-\kappa\mathfrak{N})}\frac{\p f^{\e}}{\p\theta}+\ll[f^{\e}]=\Gamma[f^{\e},f^{\e}],\\\rule{0ex}{2.0em}
f^{\e}(0,\theta,\vw)=\pe[f^{\e}](0,\theta,\vw)\ \ \text{for}\
\ \vw\cdot\vn<0,
\end{array}
\right.
\end{eqnarray}
where
\begin{eqnarray}
\vw\cdot\vn=v_1\frac{-r\cos\theta-r'\sin\theta}{(r^2+r'^2)^{\frac{1}{2}}}+v_2\frac{-r\sin\theta+r'\cos\theta}{(r^2+r'^2)^{\frac{1}{2}}},
\end{eqnarray}
and
\begin{eqnarray}
\pe[f^{\e}](0,\theta,\vw)=\mb(\theta,\vv)\m^{-\frac{1}{2}}(\vv)
\displaystyle\int_{\vuu\cdot\vn(\theta)>0}\m^{\frac{1}{2}}(\vuu)
f^{\e}(0,\theta,\vuu)\abs{\vuu\cdot\vn(\theta)}\ud{\vuu}+\m^{-\frac{1}{2}}(\vv)\bigg(\mb(\theta,\vv)-\m(\vv)\bigg).
\end{eqnarray}
\ \\
Substitution 2: Velocity Substitution.\\
Define the orthogonal velocity substitution $\vw=(v_1,v_2)\rt\vww=(\va,\vb)$ as
\begin{eqnarray}
\left\{
\begin{array}{rcl}
v_1\dfrac{-r\cos\theta-r'\sin\theta}{(r^2+r'^2)^{\frac{1}{2}}}+v_2\dfrac{-r\sin\theta+r'\cos\theta}{(r^2+r'^2)^{\frac{1}{2}}}&=&\va,\\\rule{0ex}{2.0em}
v_1\dfrac{-r\sin\theta+r'\cos\theta}{(r^2+r'^2)^{\frac{1}{2}}}+v_2\dfrac{r\cos\theta+r'\sin\theta}{(r^2+r'^2)^{\frac{1}{2}}}&=&\vb.
\end{array}
\right.
\end{eqnarray}
Then we have
\begin{eqnarray}
\frac{\p}{\p\theta}&\rt&\frac{\p}{\p\theta}-\kappa(r^2+r'^2)^{\frac{1}{2}}\vb\frac{\p}{\p\va}
+\kappa(r^2+r'^2)^{\frac{1}{2}}\va\frac{\p}{\p\vb}.
\end{eqnarray}
The transport operator is
\begin{eqnarray}
\vw\cdot\nx&=&\va\frac{\p}{\p\mathfrak{N}}-\frac{\vb}{\rk-\mathfrak{N}}\dfrac{\rk}{(r^2+r'^2)^{\frac{1}{2}}}\frac{\p}{\p\theta}
-\frac{\vb^2}{\rk-\mathfrak{N}}\dfrac{\p}{\p\va}+\frac{\va\vb}{\rk-\mathfrak{N}}\dfrac{\p}{\p\vb}.
\end{eqnarray}
Hence,
the equation (\ref{small system}) is transformed into
\begin{eqnarray}
\left\{
\begin{array}{l}\displaystyle
\e\va\dfrac{\p f^{\e}}{\p\mathfrak{N}}-\e\frac{\vb}{\rk-\mathfrak{N}}\dfrac{\rk}{(r^2+r'^2)^{\frac{1}{2}}}\frac{\p f^{\e}}{\p\theta}
-\e\frac{\vb^2}{\rk-\mathfrak{N}}\dfrac{\p f^{\e}}{\p\va}+\e\frac{\va\vb}{\rk-\mathfrak{N}}\dfrac{\p f^{\e}}{\p\vb}
+\ll[f^{\e}]=\Gamma[f^{\e},f^{\e}],\\\rule{0ex}{2.0em}
f^{\e}(0,\theta,\vww)=\pe[f^{\e}](0,\theta,\vww)\ \
\text{for}\ \ \va>0,
\end{array}
\right.
\end{eqnarray}
where
\begin{eqnarray}
\pe[f^{\e}](0,\theta,\vww)&=&\mb(\theta,\vww)\m^{-\frac{1}{2}}(\vww)
\displaystyle\int_{\vuu_{\eta}<0}\m^{\frac{1}{2}}(\vuu_{\eta})
f^{\e}(0,\theta,\vuu_{\eta})\abs{\vuu_{\eta}}\ud{\vuu}+\m^{-\frac{1}{2}}(\vvv)\bigg(\mb(\theta,\vww)-\m(\vww)\bigg).
\end{eqnarray}
\ \\
Substitution 3: Scaling Substitution.\\
We define the rescaled variable $\eta=\dfrac{\mathfrak{N}}{\e}$, which implies $\dfrac{\p}{\p\mathfrak{N}}=\dfrac{1}{\e}\dfrac{\p}{\p\eta}$. Then, under the substitution $\mathfrak{N}\rt\eta$, the equation (\ref{small system}) is transformed into
\begin{eqnarray}\label{small system.}
\left\{
\begin{array}{l}\displaystyle
\va\dfrac{\p f^{\e}}{\p\eta}-\e\frac{\vb}{\rk-\e\eta}\dfrac{\rk}{(r^2+r'^2)^{\frac{1}{2}}}\frac{\p f^{\e}}{\p\theta}
-\e\frac{\vb^2}{\rk-\e\eta}\dfrac{\p f^{\e}}{\p\va}+\e\frac{\va\vb}{\rk-\e\eta}\dfrac{\p f^{\e}}{\p\vb}
+\ll[f^{\e}]=\Gamma[f^{\e},f^{\e}],\\\rule{0ex}{2.0em}
f^{\e}(0,\theta,\vww)=\pe[f^{\e}](0,\theta,\vww)\ \
\text{for}\ \ \va>0,
\end{array}
\right.
\end{eqnarray}
where
\begin{eqnarray}
\pe[f^{\e}](0,\theta,\vww)&=&\mb(\theta,\vww)\m^{-\frac{1}{2}}(\vww)
\displaystyle\int_{\vuu_{\eta}<0}\m^{\frac{1}{2}}(\vuu_{\eta})
f^{\e}(0,\theta,\vuu_{\eta})\abs{\vuu_{\eta}}\ud{\vuu}+\m^{-\frac{1}{2}}(\vvv)\bigg(\mb(\theta,\vww)-\m(\vww)\bigg).
\end{eqnarray}
We define the boundary layer expansion as follows:
\begin{eqnarray}\label{boundary layer expansion}
\fb(\eta,\theta,\vww)\sim\sum_{k=1}^{2}\e^k\fb_k(\eta,\theta,\vww),
\end{eqnarray}
where $\fb_k$ can be defined by comparing the order of $\e$ via
plugging (\ref{boundary layer expansion}) into the equation
(\ref{small system.}). Thus, in a neighborhood of the boundary, we have
\begin{eqnarray}
\va\dfrac{\p\fb_1}{\p\eta}-\dfrac{\e}{\rk-\e\eta}\bigg(\vb^2\dfrac{\p\fb_1}{\p\va}-\va\vb\dfrac{\p\fb_1}{\p\vb}\bigg)+\ll[\fb_1]&=&0,\label{expansion temp 6}\\
\\
\va\dfrac{\p\fb_2}{\p\eta}-\dfrac{\e}{\rk-\e\eta}\bigg(\vb^2\dfrac{\p\fb_2}{\p\va}-\va\vb\dfrac{\p\fb_2}{\p\vb}\bigg)+\ll[\fb_2]&=&
2\Gamma[\f_1,\fb_1]+\Gamma[\fb_1,\fb_1]+\frac{\vb}{\rk-\e\eta}\dfrac{\rk}{(r^2+r'^2)^{\frac{1}{2}}}\frac{\p \fb_1}{\p\theta}.\no
\end{eqnarray}

%%%%%%%%%%%%%%%%%%%%%%%%%%%%%%%%%%%%%%%%%%%%%%%%%%%%%%%%%%%%%%%%%%%%%%%%%%%%%%%%%%%%%
\subsection{Expansion of Boundary Conditions}
%%%%%%%%%%%%%%%%%%%%%%%%%%%%%%%%%%%%%%%%%%%%%%%%%%%%%%%%%%%%%%%%%%%%%%%%%%%%%%%%%%%%%

The bridge between the interior solution and boundary layer
is the boundary condition
\begin{eqnarray}
f^{\e}(\vx_0,\vw)&=&\pe[f^{\e}](\vx_0,\vw),
\end{eqnarray}
where
\begin{eqnarray}
\\
\pe[f^{\e}](\vx_0,\vw)=\mb(\vx_0,\vv)\m^{-\frac{1}{2}}(\vv)
\displaystyle\int_{\vuu\cdot\vn(\vx_0)>0}\m^{\frac{1}{2}}(\vuu)
f^{\e}(\vx_0,\vuu)\abs{\vuu\cdot\vn(\vx_0)}\ud{\vuu}+\m^{-\frac{1}{2}}(\vw)\bigg(\mb(\vx_0,\vv)-\m(\vv)\bigg).\no
\end{eqnarray}
Plugging the combined expansion
\begin{eqnarray}
f^{\e}\sim\sum_{k=1}^{3}\e^k\f_k+\sum_{k=1}^{2}\e^k\fb_k,
\end{eqnarray}
into the boundary condition and comparing the order of $\e$, we obtain
\begin{eqnarray}
\f_1+\fb_1&=&\m^{\frac{1}{2}}(\vw)
\int_{\vuu\cdot\vn(\vx_0)>0}\m^{\frac{1}{2}}(\vuu)(\f_1+\fb_1)\abs{\vuu\cdot\vn(\vx_0)}\ud{\vuu}
+\m_1(\vx_0,\vw),\\
\f_2+\fb_2&=&\m^{\frac{1}{2}}(\vw)
\int_{\vuu\cdot\vn(\vx_0)>0}\m^{\frac{1}{2}}(\vuu)(\f_2+\fb_2)\abs{\vuu\cdot\vn(\vx_0)}\ud{\vuu}\\
&&+\m_1(\vx_0,\vw)
\int_{\vuu\cdot\vn(\vx_0)>0}\m^{\frac{1}{2}}(\vuu)(\f_1+\fb_1)\abs{\vuu\cdot\vn(\vx_0)}\ud{\vuu}
+\m_2(\vx_0,\vw).\no
\end{eqnarray}
In particular, we do not further expand the boundary layer, so we directly require
\begin{eqnarray}
\f_3&=&\m^{\frac{1}{2}}(\vw)
\int_{\vuu\cdot\vn(\vx_0)>0}\m^{\frac{1}{2}}(\vuu)\f_3\abs{\vuu\cdot\vn(\vx_0)}\ud{\vuu}\\
&&+\m_2(\vx_0,\vw)
\int_{\vuu\cdot\vn(\vx_0)>0}\m^{\frac{1}{2}}(\vuu)(\f_1+\fb_1)\abs{\vuu\cdot\vn(\vx_0)}\ud{\vuu}\no\\
&&+\m_1(\vx_0,\vw)
\int_{\vuu\cdot\vn(\vx_0)>0}\m^{\frac{1}{2}}(\vuu)(\f_2+\fb_2)\abs{\vuu\cdot\vn(\vx_0)}\ud{\vuu}
+\m_3(\vx_0,\vw).\no
\end{eqnarray}
Define
\begin{eqnarray}
\pp[f](\vx_0,\vw)=\m^{\frac{1}{2}}(\vw)
\int_{\vuu\cdot\vn(\vx_0)>0}\m^{\frac{1}{2}}(\vuu)f(\vx_0,\vuu)\abs{\vuu\cdot\vn(\vx_0)}\ud{\vuu}.
\end{eqnarray}
Then we have
\begin{eqnarray}
\f_1+\fb_1&=&\pp[\f_1+\fb_1]+\m_1(\vx_0,\vw),\\
\f_2+\fb_2&=&\pp[\f_2+\fb_2]+\m_1(\vx_0,\vw)
\int_{\vuu\cdot\vn(\vx_0)>0}\m^{\frac{1}{2}}(\vuu)(\f_1+\fb_1)\abs{\vuu\cdot\vn(\vx_0)}
\ud{\vuu}+\m_2(\vx_0,\vw),
\end{eqnarray}
and
\begin{eqnarray}
\f_3&=&\pp[\f_3]+\m_2(\vx_0,\vw)
\int_{\vuu\cdot\vn(\vx_0)>0}\m^{\frac{1}{2}}(\vuu)(\f_1+\fb_1)\abs{\vuu\cdot\vn(\vx_0)}\ud{\vuu}\\
&&+\m_1(\vx_0,\vw)
\int_{\vuu\cdot\vn(\vx_0)>0}\m^{\frac{1}{2}}(\vuu)(\f_2+\fb_2)\abs{\vuu\cdot\vn(\vx_0)}\ud{\vuu}
+\m_3(\vx_0,\vw).\no
\end{eqnarray}
This is the boundary conditions $\f_k$ and $\fb_k$ need to satisfy.

%%%%%%%%%%%%%%%%%%%%%%%%%%%%%%%%%%%%%%%%%%%%%%%%%%%%%%%%%%%%%%%%%%%%%%%%%%%%%%%%%%%%%
\subsection{Matching Procedure}
%%%%%%%%%%%%%%%%%%%%%%%%%%%%%%%%%%%%%%%%%%%%%%%%%%%%%%%%%%%%%%%%%%%%%%%%%%%%%%%%%%%%%

Define the length of boundary layer $L=\e^{-s}$ for $0<s<\dfrac{1}{2}$.
Also, denote $\rr[\va,\vb]=(-\va,\vb)$. We divide the construction of the asymptotic expansion into several steps for each $k\geq1$:\\
%Assume the cut-off function $\Upsilon_0\in C^{\infty}[0,\infty)$ are defined as
%\begin{eqnarray}\label{cut-off}
%\Upsilon_0(\zeta)=\left\{
%\begin{array}{ll}
%1&0\leq\zeta\leq\dfrac{1}{4}R_{\min},\\
%0&\dfrac{1}{2}R_{\min}\leq\zeta\leq\infty.
%\end{array}
%\right.
%\end{eqnarray}
\ \\
Step 1: Construction of $\f_1$ and $\fb_1$.\\
A direct computation reveals that $\f_1=A_1+B_1+C_1$, where $B_1=C_1=0$. Based on our expansion,
\begin{eqnarray}
\m_1&=&\m^{\frac{1}{2}}\left(\rh_{\bb,1}+\vu_{\bb,1}\cdot\vw+\th_{\bb,1}\frac{\abs{\vw}^2-2}{2}\right).
\end{eqnarray}
Define
\begin{eqnarray}
\f_1&=&\m^{\frac{1}{2}}\left(\rh_{1}+\vu_{1}\cdot\vw+\th_{1}\frac{\abs{\vw}^2-2}{2}\right),
\end{eqnarray}
satisfying the Navier-Stokes-Fourier system as
\begin{eqnarray}\label{interior 1}
\left\{
\begin{array}{rcl}
\nx(\rh_1 +\th_1 )&=&0,\\\rule{0ex}{1.0em}
\uh_1\cdot\nx\uh_1 -\gamma_1\dx\uh_1 +\nx P_2 &=&0,\\\rule{0ex}{1.0em}
\nx\cdot\uh_1 &=&0,\\\rule{0ex}{1.0em}
\uh_1 \cdot\nx\th_1 -\gamma_2\dx\th_1 &=&0,\\\rule{0ex}{1.0em}
\rh_1 (\vx_0)&=&\rh_{\bb,1}(\vx_0)+M_1(\vx_0),\\
\uh_1 (\vx_0)&=&\vu_{\bb,1}(\vx_0),\\
\th_1 (\vx_0)&=&\th_{\bb,1}(\vx_0),\\
\end{array}
\right.
\end{eqnarray}
where $M_1(\vx_0)$ is a constant such that the Boussinesq relation
\begin{eqnarray}
\rh_1+\th_1=\text{constant},
\end{eqnarray}
is satisfied. Note that this constant is determined by the normalization condition.
\begin{eqnarray}
\int_{\Omega}\int_{\r^2}\f_1(\vx,\vw)\m^{\frac{1}{2}}(\vw)\ud{\vw}\ud{\vx}=0,
\end{eqnarray}
and we are able to add $M_1(\vx_0)$ freely since $\mu^{\frac{1}{2}}=\pp[\mu^{\frac{1}{2}}]$. Then
based on the compatibility condition of $\mu_1$ as
\begin{eqnarray}
\int_{\vuu\cdot\vn(\vx_0)>0}\m^{\frac{1}{2}}(\vuu)\mu_1(\vx_0,\vuu)\abs{\vuu\cdot\vn(\vx_0)}\ud{\vuu}=0,
\end{eqnarray}
we naturally obtain $\pp[\f_1]=M_1\mu^{\frac{1}{2}}$, which means
\begin{eqnarray}
\f_1=\pp[\f_1]+\m_1\ \ \text{on}\ \ \p\Omega.
\end{eqnarray}
Therefore, it is not necessary to introduce the boundary layer at this order and we simply take $\fb_1=0$.\\
\ \\
Step 2: Construction of $\f_2$ and $\fb_2$.\\
Define $\f_2=A_2+B_2+C_2$, where $B_2$ and $C_2$ can be uniquely determined following previous analysis, and
\begin{eqnarray}
A_2&=&\m^{\frac{1}{2}}\left(\rh_{2}+\vu_{2}\cdot\vw+\th_{2}\frac{\abs{\vw}^2-2}{2}\right),
\end{eqnarray}
satisfying a more complicated fluid-type equation as in \cite{Sone2002, Sone2007}. On the other hand,
$\fb_2$ satisfies the $\e$-Milne problem with geometric correction
\begin{eqnarray}
\left\{
\begin{array}{l}\displaystyle
\va\frac{\p \fb_2}{\p\eta}-\dfrac{\e}{\rk-\e\eta}\bigg(\vb^2\dfrac{\p\fb_2}{\p\va}-\va\vb\dfrac{\p\fb_2}{\p\vb}\bigg)+\ll[\fb_2]
=0\ \ \text{for}\ \ (\eta,\theta,\vvv)\in[0,L]\times[-\pi,\pi)\times\r^2,\\\rule{0ex}{2.0em}
\fb_2(0,\theta,\vvv)=h(\theta,\vvv)-\tilde h(\theta,\vvv)\ \
\text{for}\ \ \va>0,\\\rule{0ex}{2.0em}
\displaystyle\fb_2(L,\theta,\vvv)
=\fb_2(L,\theta,\rr[\vvv]),
\end{array}
\right.
\end{eqnarray}
with the in-flow boundary data
\begin{eqnarray}
\\
h(\theta,\vvv)&=&\m_1(\vx_0,\vw)
\int_{\vuu\cdot\vn(\vx_0)>0}\m^{\frac{1}{2}}(\vuu)(\f_1+\fb_1)\abs{\vuu\cdot\vn(\vx_0)}
\ud{\vuu}+\m_2(\vx_0,\vw)-\bigg((B_2+C_2)-\pp[B_2+C_2]\bigg).\no
\end{eqnarray}
Based on Theorem \ref{Milne theorem 2}, there exists
\begin{eqnarray}
\tilde h(\theta,\vvv)=\m^{\frac{1}{2}}\bigg(\tilde
D_{0}(\theta)+\tilde D_{1}(\theta)\va+\tilde D_{2}(\theta)\vb+\tilde
D_{3}(\theta)\frac{\abs{\vvv}^2-2}{2})\bigg),
\end{eqnarray}
such that the $\e$-Milne problem with geometric correction is well-posed and the solution decays exponentially fast. In particular, $\tilde D_1=0$.
Then we further require that $A_2$ satisfies the boundary condition
\begin{eqnarray}
A_2(\vx_0,\vw)=\tilde h(\theta,\vvv)+M_2(\vx_0)\m^{\frac{1}{2}}(\vv).
\end{eqnarray}
Here, the constant $M_2(\vx_0)$ is chosen to enforce the Boussinesq relation
\begin{eqnarray}
P_2 -(\rh_2 +\th_2 +\rh_1 \th_1 )&=&0.
\end{eqnarray}
Similar to the construction of $\f_1$, we can choose the constant to satisfy the normalization condition
\begin{eqnarray}
\int_{\Omega}\int_{\r^2}(\f_2+\fb_2)(\vx,\vw)\m^{\frac{1}{2}}(\vw)\ud{\vw}\ud{\vx}=0.
\end{eqnarray}
Also, the construction implies that at boundary, we have
\begin{eqnarray}
A_2+\fb_2&=&M_2\m^{\frac{1}{2}}+h\\
&=&M_2\m^{\frac{1}{2}}+\m_1(\vx_0,\vw)
\int_{\vuu\cdot\vn(\vx_0)>0}\m^{\frac{1}{2}}(\vuu)(\f_1+\fb_1)\abs{\vuu\cdot\vn(\vx_0)}
\ud{\vuu}+\m_2(\vx_0,\vw)\no\\
&&-\bigg((B_2+C_2)-\pp[B_2+C_2]\bigg).\no
\end{eqnarray}
Comparing this with the desired boundary expansion
\begin{eqnarray}
\f_2+\fb_2=\pp[\f_2+\fb_2]+\b_2,
\end{eqnarray}
we only need to verify that
\begin{eqnarray}
\pp[A_2+\fb_2]=M_2\m^{\frac{1}{2}}.
\end{eqnarray}
We may direct verify the zero mass-flux condition of $\fb_2$ as
\begin{eqnarray}
\int_{\r^2}\m^{\frac{1}{2}}(\vuu)\fb_2(\vx,\vuu)(\vuu\cdot\vn)\ud{\vuu}=0,
\end{eqnarray}
and the compatibility condition
\begin{eqnarray}
\int_{\vuu\cdot\vn(\vx_0)>0}\m^{\frac{1}{2}}(\vuu)\mu_1(\vx_0,\vuu)\abs{\vuu\cdot\vn(\vx_0)}\ud{\vuu}
=\int_{\vuu\cdot\vn(\vx_0)>0}\m^{\frac{1}{2}}(\vuu)\mu_2(\vx_0,\vuu)\abs{\vuu\cdot\vn(\vx_0)}\ud{\vuu}=0.
\end{eqnarray}
Then we naturally derive
\begin{eqnarray}
&&\pp[A_2+\fb_2]\\
&=&
\m^{\frac{1}{2}}\int_{\vuu\cdot\vn>0}\m^{\frac{1}{2}}(\vuu)A_2(\vx,\vuu)(\vuu\cdot\vn)\ud{\vuu}
+\m^{\frac{1}{2}}\int_{\vuu\cdot\vn>0}\m^{\frac{1}{2}}(\vuu)\fb_2(\vx,\vuu)(\vuu\cdot\vn)\ud{\vuu}\no\\
&=&
M_2\m^{\frac{1}{2}}+\m^{\frac{1}{2}}\int_{\vuu\cdot\vn>0}\m^{\frac{1}{2}}(\vuu)\tilde h(\vx,\vuu)(\vuu\cdot\vn)\ud{\vuu}+
\m^{\frac{1}{2}}\int_{\vuu\cdot\vn>0}\m^{\frac{1}{2}}(\vuu)\fb_2(\vx,\vuu)(\vuu\cdot\vn)\ud{\vuu}\no\\
&=&
M_2\m^{\frac{1}{2}}+\m^{\frac{1}{2}}\int_{\vuu\cdot\vn>0}\m^{\frac{1}{2}}(\vuu)\tilde h(\vx,\vuu)(\vuu\cdot\vn)\ud{\vuu}-
\m^{\frac{1}{2}}\int_{\vuu\cdot\vn<0}\m^{\frac{1}{2}}(\vuu)\fb_2(\vx,\vuu)(\vuu\cdot\vn)\ud{\vuu}\no\\
&=&
M_2\m^{\frac{1}{2}}+\m^{\frac{1}{2}}\int_{\vuu\cdot\vn>0}\m^{\frac{1}{2}}(\vuu)\tilde h(\vx,\vuu)(\vuu\cdot\vn)\ud{\vuu}-
\m^{\frac{1}{2}}\int_{\vuu\cdot\vn<0}\m^{\frac{1}{2}}(\vuu)(h-\tilde h)(\vx,\vuu)(\vuu\cdot\vn)\ud{\vuu}\no\\
&=&
M_2\m^{\frac{1}{2}}+\m^{\frac{1}{2}}\int_{\r^2}\m^{\frac{1}{2}}(\vuu)\tilde h(\vx,\vuu)(\vuu\cdot\vn)\ud{\vuu}-
\m^{\frac{1}{2}}\int_{\vuu\cdot\vn<0}\m^{\frac{1}{2}}(\vuu)h(\vx,\vuu)(\vuu\cdot\vn)\ud{\vuu}\no\\
&=&M_2\m^{\frac{1}{2}}+0-0\no\\
&=&M_2\m^{\frac{1}{2}}.\no
\end{eqnarray}
$F_3$ can be defined in a similar fashion which satisfies an even more complicated fluid-type system.

\newpage

%%%%%%%%%%%%%%%%%%%%%%%%%%%%%%%%%%%%%%%%%%%%%%%%%%%%%%%%%%%%%%%%%%%%%%%%%%%%%%%%%%%%%
\section{Remainder Estimates}%%%%%%%%%%%%%%%%%%%%%%%%%%%%%%%%%%%%%%%%%%%%%%%%%%%%%%%%
%%%%%%%%%%%%%%%%%%%%%%%%%%%%%%%%%%%%%%%%%%%%%%%%%%%%%%%%%%%%%%%%%%%%%%%%%%%%%%%%%%%%%

We consider the linearized stationary Boltzmann equation
\begin{eqnarray}\label{linear steady}
\left\{
\begin{array}{l}
\e\vv\cdot\nx f+\ll[f]=S(\vx,\vv)\ \ \text{in}\ \ \Omega,\\
f(\vx_0,\vv)=\pp[f](\vx_0,\vv)+h(\vx_0,\vv)\ \ \text{for}\ \ \vx_0\in\p\Omega\ \
\text{and}\ \ \vv\cdot\vn<0,
\end{array}
\right.
\end{eqnarray}
where
\begin{eqnarray}
\pp[f](\vx_0,\vw)=\m^{\frac{1}{2}}(\vw)
\int_{\vuu\cdot\vn(\vx_0)>0}\m^{\frac{1}{2}}(\vuu)f(\vx_0,\vuu)\abs{\vuu\cdot\vn(\vx_0)}\ud{\vuu},
\end{eqnarray}
provided the compatibility condition
\begin{eqnarray}\label{linear steady compatibility}
\int_{\Omega\times\r^2}S(\vx,\vv)\m^{\frac{1}{2}}(\vv)\ud{\vv}\ud{\vx}&=&0,\ \ \ \
\int_{\gamma_-}h(\vx,\vv)\m^{\frac{1}{2}}(\vv)\ud{\gamma}=0.
\end{eqnarray}
It is easy to see if $f$ is a solution to (\ref{linear steady}), then $f+C\m^{\frac{1}{2}}$ is also a solution for arbitrary $C\in\r$. Hence, we require that the solution should satisfy the normalization condition
\begin{eqnarray}\label{linear steady normalization}
\int_{\Omega\times\r^2}f(\vx,\vv)\m^{\frac{1}{2}}(\vv)\ud{\vv}\ud{\vx}&=&0.
\end{eqnarray}
Our analysis is based on the ideas in \cite{Esposito.Guo.Kim.Marra2013, Guo2010}. Since the well-posedness of (\ref{linear steady}) is standard, we will focus on the a priori estimates here.

%%%%%%%%%%%%%%%%%%%%%%%%%%%%%%%%%%%%%%%%%%%%%%%%%%%%%%%%%%%%%%%%%%%%%%%%%%%%%%%%%%%%%
\subsection{$L^2$ Estimates}
%%%%%%%%%%%%%%%%%%%%%%%%%%%%%%%%%%%%%%%%%%%%%%%%%%%%%%%%%%%%%%%%%%%%%%%%%%%%%%%%%%%%%

\begin{lemma}\label{wellposedness prelim lemma 3}
The
solution $f(\vx,\vw)$ to the equation (\ref{linear steady}) satisfies the estimate
\begin{eqnarray}\label{wt 08}
\e\tm{\pk[f]}&\leq& C\bigg(
\e\tss{(1-\pp)[f]}{+}+\tm{(\ik-\pk)[f]}+\tm{S}+\e\tss{h}{-}\bigg).
\end{eqnarray}
\end{lemma}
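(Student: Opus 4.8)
The plan is to test the equation against a judicious combination of functions and extract the macroscopic part $\pk[f]$ via the classical velocity moments. The natural first move is to pair \eqref{linear steady} with $f$ itself and invoke Green's identity (Lemma \ref{wellposedness prelim lemma 2}) together with the coercivity estimate $\br{f,\ll[f]}\geq C\tnm{\nu^{\frac12}(\ik-\pk)[f]}^2$ from Lemma \ref{Milne property}. However, this alone only controls the non-kernel part $(\ik-\pk)[f]$; it gives no handle on $\pk[f]=\m^{\frac12}(a_f+\vv\cdot\vbb_f+\tfrac{|\vv|^2-2}{2}c_f)$, since $\ll$ annihilates it. The real work is therefore to choose test functions $\psi$ — polynomials in $\vv$ times $\m^{\frac12}$ with $\vx$-dependent coefficients solving appropriate auxiliary elliptic problems — so that $\br{\e\vv\cdot\nx f,\psi}$ reproduces $\tm{\pk[f]}^2$ (or each of $\tm{a_f}$, $\tm{\vbb_f}$, $\tm{c_f}$ separately) after integration by parts, while all other contributions are absorbed into the right-hand side.

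The key steps, in order: (i) plug $f=\pk[f]+(\ik-\pk)[f]$ into the weak formulation $\br{\e\vv\cdot\nx f+\ll[f]-S,\psi}=0$; since $\ll[\psi]=0$ for the chosen test functions, the $\ll$ term only sees $(\ik-\pk)[f]$ through $\br{(\ik-\pk)[f],\ll[\psi]}=0$ as well, so $\ll$ drops entirely and one is left with $\e\br{\vv\cdot\nx f,\psi}=\br{S,\psi}$. (ii) Integrate by parts in $\vx$: $\e\br{\vv\cdot\nx f,\psi}=-\e\br{f,\vv\cdot\nx\psi}+\e\int_{\gamma}f\psi\,\ud\gamma$. (iii) Design $\psi$ so that $-\br{f,\vv\cdot\nx\psi}$, after expanding $f$ and computing Gaussian moments, equals the desired quadratic quantity in $\pk[f]$ up to terms involving $(\ik-\pk)[f]$ and $\nx$ of the macroscopic coefficients; the standard recipe (as in \cite{Esposito.Guo.Kim.Marra2013, Guo2010}) is to let the coefficient functions of $\psi$ solve Poisson-type problems with right-hand side built from $a_f,\vbb_f,c_f$, giving elliptic estimates $\nm{\nx\psi}\ls\tm{\pk[f]}$. (iv) Bound the boundary term $\e\int_\gamma f\psi\,\ud\gamma$: split $\gamma=\gamma_+\cup\gamma_-$, use the boundary condition $f|_{\gamma_-}=\pp[f]+h$ and the fact that $\psi$ restricted to the boundary interacts with $\pp[f]$ through a bilinear form that is controlled by $\iss{(1-\pp)[f]}{+}$ plus $\iss{h}{-}$ — this is where the terms $\e\tss{(1-\pp)[f]}{+}$ and $\e\tss{h}{-}$ enter. (v) Collect everything, use the coercivity/Young's inequality to absorb small multiples of $\tm{\pk[f]}$ from the right, and conclude \eqref{wt 08}.

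The main obstacle I anticipate is step (iii)–(iv): constructing the test functions so that their $\vx$-gradients are controlled by $\tm{\pk[f]}$ (an elliptic-regularity bookkeeping issue, complicated by the fact that $\Omega$ is a bounded domain so one must choose boundary conditions for the auxiliary problems compatibly), and simultaneously controlling the boundary contribution — in particular separating the genuinely dangerous near-grazing part of $\gamma$ (handled via Lemma \ref{wellposedness prelim lemma 1}) from the bulk, and checking that the diffuse-reflection structure $\pp$ produces only the $\iss{(1-\pp)[f]}{+}$ combination rather than the full $\iss{f}{+}$. A secondary technical point is ensuring the normalization \eqref{linear steady normalization} is used to kill the otherwise-undetermined additive constant in $a_f$ (equivalently, to make the auxiliary Poisson problem for the ``$a$'' test function solvable), so that $\tm{a_f}$ is genuinely controlled and not just $\tm{\nx a_f}$.
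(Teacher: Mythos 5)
Your overall plan --- test against Gaussian-weighted polynomial test functions $\psi$ whose $\vx$-coefficients solve auxiliary Poisson problems, integrate by parts, handle the boundary through the diffuse-reflection structure, and use the normalization to fix the additive constant in $a_f$ --- is exactly what the paper does. But step (i) contains a real error: you claim $\ll[\psi]=0$ for the chosen test functions, so that the collision term vanishes. It does not. The test functions one must use, e.g.\ $\psi_c=\m^{\frac{1}{2}}(\vv)\left(\abs{\vv}^2-\beta_c\right)\Big(\vv\cdot\nx\phi_c\Big)$, are cubic (or higher) polynomials in $\vv$ times $\m^{\frac{1}{2}}$, hence lie well outside the four-dimensional kernel $\m^{\frac{1}{2}}\,\mathrm{span}\{1,v_1,v_2,\abs{\vv}^2\}$ of $\ll$. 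Indeed a kernel-valued test function cannot do the job: after applying $\vv\cdot\nx$ it could never isolate the diagonal moment $\int_\Omega\big(\p_{ii}\dx^{-1}c\big)\,c$ needed for a coercive bound on $\tm{c}$, which is precisely why the construction requires the extra factors $(\abs{\vv}^2-\beta_c)$ or $(\vs_i^2-\beta_b)$.

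Because $\ll[\psi]\neq0$, the collision term $\br{\psi,\ll[(\ik-\pk)[f]]}$ survives; one bounds it by Cauchy--Schwarz, using the Gaussian decay of $\psi$, to get a contribution proportional to $\tm{(\ik-\pk)[f]}$ --- and this is exactly where the middle term on the right-hand side of (\ref{wt 08}) comes from. Under your claim that ``$\ll$ drops entirely,'' you would purport to prove an estimate with no order-one $(\ik-\pk)[f]$ contribution at all, an inequality your argument has not established. Once this is corrected, the rest of your scheme (elliptic duality for the coefficients of $\psi$, oddness cancellations on $\gamma$, the Neumann boundary condition plus normalization for the $a$-test function) matches the paper's proof step by step. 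One smaller slip: the boundary terms in (\ref{wt 08}) are the $L^2$ norms $\tss{(1-\pp)[f]}{+}$ and $\tss{h}{-}$, not the $L^\infty$ norms $\iss{(1-\pp)[f]}{+}$ and $\iss{h}{-}$ you wrote.
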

\begin{proof}
Applying Green's identity in Lemma \ref{wellposedness prelim lemma
2} to the equation (\ref{linear steady}). Then for any $\psi\in L^2(\Omega\times\r^2)$
satisfying $\vv\cdot\nx\psi\in L^2(\Omega\times\r^2)$ and $\psi\in
L^2(\gamma)$, we have
\begin{eqnarray}
&&\e\int_{\gamma_+}f\psi\ud{\gamma}-\e\int_{\gamma_-}f\psi\ud{\gamma}
-\e\int_{\Omega\times\r^2}(\vv\cdot\nx\psi)f=-\int_{\Omega\times\r^2}\psi\ll[f]+\int_{\Omega\times\r^2}S\psi.
\end{eqnarray}
Considering that $f=\pk[f]+(\ik-\pk)[f]$ and $\ll\Big[\pk[f]\Big]=0$, we may simplify
\begin{eqnarray}\label{wt 11}
&&\e\int_{\gamma_+}f\psi\ud{\gamma}-\e\int_{\gamma_-}f\psi\ud{\gamma}
-\e\int_{\Omega\times\r^2}(\vv\cdot\nx\psi)f=-\int_{\Omega\times\r^2}\psi\ll\Big[(\ik-\pk)[f]\Big]+\int_{\Omega\times\r^2}S\psi.
\end{eqnarray}
Since
\begin{eqnarray}
\pk[f]=\m^{\frac{1}{2}}\bigg(a+\vv\cdot\vbb+\frac{\abs{\vv}^2-2}{2}c\bigg),
\end{eqnarray}
our goal is to choose a particular test function $\psi$ to estimate
$a$, $\vbb$ and $c$.\\
\ \\
Step 1: Estimates of $c$.\\
We choose the test function
\begin{eqnarray}\label{wt 12}
\psi=\psi_c=\m^{\frac{1}{2}}(\vv)\left(\abs{\vv}^2-\beta_c\right)\Big(\vv\cdot\nx\phi_c(\vx)\Big),
\end{eqnarray}
where
\begin{eqnarray}
\left\{
\begin{array}{rcl}
-\dx\phi_c&=&c(\vx)\ \ \text{in}\ \
\Omega,\\
\phi_c&=&0\ \ \text{on}\ \ \p\Omega,
\end{array}
\right.
\end{eqnarray}
and $\beta_c$ is a real number to be determined later. Based on the
standard elliptic estimates, we have
\begin{eqnarray}
\nm{\phi_c}_{H^2}\leq C\tm{c}.
\end{eqnarray}
With the choice of (\ref{wt 12}), the right-hand side (RHS) of
(\ref{wt 11}) is bounded by
\begin{eqnarray}
\text{RHS}\leq C\tm{c}\bigg(\tm{(\ik-\pk)[f]}+\tm{S}\bigg).
\end{eqnarray}
We have
\begin{eqnarray}
\vv\cdot\nx\psi_c&=&\m^{\frac{1}{2}}(\vv)\sum_{i,j=1}^2\left(\abs{\vv}^2-\beta_c\right)\vs_i\vs_j\p_{ij}\phi_c,
\end{eqnarray}
so the left-hand side (LHS) of (\ref{wt 11}) takes the form
\begin{eqnarray}\label{wt 13}
\text{LHS}&=&\e\int_{\p\Omega\times\r^2}f\m^{\frac{1}{2}}(\vv)\left(\abs{\vv}^2-\beta_c\right)\left(\sum_{i=1}^2\vs_i\p_i\phi_c\right)(\vv\cdot\vn)\\
&&
-\e\int_{\Omega\times\r^2}f\m^{\frac{1}{2}}(\vv)\left(\abs{\vv}^2-\beta_c\right)\left(\sum_{i,j=1}^2\vs_i\vs_j\p_{ij}\phi_c\right).\no
\end{eqnarray}
We decompose
\begin{eqnarray}
f&=&\pp
[f]+\id_{\gamma_+}(1-\pp)[f]+\id_{\gamma_-}h\ \
\text{on}\ \ \gamma, \label{wt 14}\\
f&=&\m^{\frac{1}{2}}\bigg(a+\vv\cdot
\vbb+\frac{\abs{\vv}^2-2}{2}c\bigg)+(\ik-\pk)[f]\ \ \text{in}\
\ \Omega\times\r^2.\label{wt 15}
\end{eqnarray}
We will choose $\beta_c$ such that
\begin{eqnarray}
\int_{\r^2}\m^{\frac{1}{2}}(\vv)\left(\abs{\vv}^2-\beta_c\right)\vs_i^2\ud{\vv}=0\
\ \text{for}\ \ i=1,2.
\end{eqnarray}
It is easy to check that this $\beta_c$ can always be achieved. Now substitute (\ref{wt 14})
and (\ref{wt 15}) into (\ref{wt 13}). Then based on this choice of
$\beta_c$ and the oddness in $\vv$, there is no $\pp[f]$ contribution in
the first term, and no $a$ contribution in the second term of
(\ref{wt 13}). Since $\vbb$ contribution and the off-diagonal $c$
contribution in the second term of (\ref{wt 13}) also vanish due to
the oddness in $\vv$, we can simplify (\ref{wt 13}) into
\begin{eqnarray}
\text{LHS}
&=&\e\int_{\p\Omega\times\r^2}\id_{\gamma_+}(1-\pp)[f]
\m^{\frac{1}{2}}(\vv)\left(\abs{\vv}^2-\beta_c\right)\left(\sum_{i=1}^2\vs_i\p_i\phi_c\right)(\vv\cdot\vn)\\
&&+\e\int_{\p\Omega\times\r^2}\id_{\gamma_-}h
\m^{\frac{1}{2}}(\vv)\left(\abs{\vv}^2-\beta_c\right)\left(\sum_{i=1}^2\vs_i\p_i\phi_c\right)(\vv\cdot\vn)\no\\
&&-\e\sum_{i=1}^2\int_{\r^2}\m(\vv)\abs{\vs_i}^2\left(\abs{\vv}^2-\beta_c\right)\frac{\abs{\vv}^2-2}{2}\ud{\vv}
\int_{\Omega}c(\vx)\p_{ii}\phi_c(\vx)\ud{\vx}\no\\
&&-\e\int_{\Omega\times\r^2}(\ik-\pk)[f]\m^{\frac{1}{2}}(\vv)\left(\abs{\vv}^2-\beta_c\right)\left(\sum_{i,j=1}^2\vs_i\vs_j\p_{ij}\phi_c\right).\no
\end{eqnarray}
Since
\begin{eqnarray}
\int_{\r^2}\m(\vv)\abs{\vs_i}^2\left(\abs{\vv}^2-\beta_c\right)\frac{\abs{\vv}^2-2}{2}\ud{\vv}=C,
\end{eqnarray}
we have
\begin{eqnarray}
\e\abs{\int_{\Omega}\dx\phi_c(\vx)c(\vx)\ud{\vx}}
&\leq&C\tm{c}\bigg(\e\tss{(1-\pp)[f]}{+}+\tm{(\ik-\pk)[f]}+\tm{S}+\e\tss{h}{-}\bigg),
\end{eqnarray}
where we have used the elliptic estimates and the trace estimate:
$\ts{\nx\phi_c}\leq C\nm{\phi_c}_{H^2}\leq C\tm{c}$. Since $-\dx\phi_c=c$,
we know
\begin{eqnarray}
\e\tm{c}^2
&\leq&C
\tm{c}\bigg(\e\tss{(1-\pp)[f]}{+}+\tm{(\ik-\pk)[f]}+\tm{S}+\e\tss{h}{-}\bigg),
\end{eqnarray}
which further implies
\begin{eqnarray}\label{wt 16}
&&\e\tm{c}\leq C
\bigg(\e\tss{(1-\pp)[f]}{+}+\tm{(\ik-\pk)[f]}+\tm{S}+\e\tss{h}{-}\bigg).
\end{eqnarray}
\ \\
Step 2: Estimates of $\vbb$.\\
We further divide this step into several sub-steps:\\
\ \\
Step 2.1: Estimates of $\Big(\p_{ij}\dx^{-1}b_j\Big)b_i$ for $i,j=1,2$.\\
We choose the test function
\begin{eqnarray}\label{wt 17}
\psi=\psi_b^{i,j}=\m^{\frac{1}{2}}(\vv)\left(\vs_i^2-\beta_b\right)\p_j\phi_b^j,
\end{eqnarray}
where
\begin{eqnarray}
\left\{
\begin{array}{rcl}
-\dx\phi_b^j&=&b_j(\vx)\ \ \text{in}\ \
\Omega,\\
\phi_b^j&=&0\ \ \text{on}\ \ \p\Omega,
\end{array}
\right.
\end{eqnarray}
and $\beta_b$ is a real number to be determined later. Based on the
standard elliptic estimates, we have
\begin{eqnarray}
\nm{\phi_b^j}_{H^2}\leq C \tm{\vbb}.
\end{eqnarray}
With the choice of (\ref{wt 17}), the right-hand side (RHS) of
(\ref{wt 11}) is bounded by
\begin{eqnarray}
\text{RHS}\leq C \tm{\vbb}\bigg(\tm{(\ik-\pk)[f]}+\tm{S}\bigg).
\end{eqnarray}
Hence, the left-hand side (LHS) of (\ref{wt 11}) takes the form
\begin{eqnarray}\label{wt 18}
\text{LHS}&=&\e\int_{\p\Omega\times\r^2}f\m^{\frac{1}{2}}(\vv)\left(\vs_i^2-\beta_b\right)\p_j\phi_b^j(\vv\cdot\vn)\\
&&-\e\int_{\Omega\times\r^2}f\m^{\frac{1}{2}}(\vv)\left(\vs_i^2-\beta_b\right)\left(\sum_{l=1}^2\vs_l\p_{lj}\phi_b^j\right).\no
\end{eqnarray}
Now substitute (\ref{wt 14}) and (\ref{wt 15}) into (\ref{wt 18}).
Then based on the oddness in $\vv$, there is no $\pp[f]$ contribution in the first
term, and no $a$ and $c$ contribution in the second term of (\ref{wt
18}). We can simplify (\ref{wt 18}) into
\begin{eqnarray}
\text{LHS}
&=&\e\int_{\p\Omega\times\r^2}\id_{\gamma_+}(1-\pp)[f]
\m^{\frac{1}{2}}(\vv)\left(\vs_i^2-\beta_b\right)\p_j\phi_b^j(\vv\cdot\vn)\no\\
&&+\e\int_{\p\Omega\times\r^2}\id_{\gamma_-}h
\m^{\frac{1}{2}}(\vv)\left(\vs_i^2-\beta_b\right)\p_j\phi_b^j(\vv\cdot\vn)\no\\
&&-\e\sum_{l=1}^2\int_{\Omega\times\r^2}\m(\vv)\vs_l^2\left(\vs_i^2-\beta_b\right)\p_{lj}\phi_b^jb_l\no\\
&&-\e\sum_{l=1}^2\int_{\Omega\times\r^2}(\ik-\pk)[f]\m^{\frac{1}{2}}(\vv)\left(\vs_i^2-\beta_b\right)\vs_l\p_{lj}\phi_b^j.\no
\end{eqnarray}
We will choose $\beta_b$ such that
\begin{eqnarray}
\int_{\r^2}\m(\vv)\left(\abs{\vs_i}^2-\beta_b\right)\ud{\vv}=0\ \
\text{for}\ \ i=1,2.
\end{eqnarray}
It is easy to check that this $\beta_b$ can always be achieved.
For such $\beta_b$ and any $i\neq l$, we can directly compute
\begin{eqnarray}
\int_{\r^2}\m(\vv)\left(\abs{\vs_i}^2-\beta_b\right)\vs_l^2\ud{\vv}&=&0,\\
\int_{\r^2}\m(\vv)\left(\abs{\vs_i}^2-\beta_b\right)\vs_i^2\ud{\vv}&=&C\neq0.
\end{eqnarray}
Then we deduce
\begin{eqnarray}
&&-\e\sum_{l=1}^2\int_{\Omega\times\r^2}\m(\vv)\vs_l^2\left(\vs_i^2-\beta_b\right)\p_{lj}\phi_b^jb_l\\
&=&-\e\int_{\Omega\times\r^2}\m(\vv)\vs_i^2\left(\vs_i^2-\beta_b\right)\p_{ij}\phi_b^jb_i-\e\sum_{l\neq
i}\int_{\Omega\times\r^2}\m(\vv)\vs_l^2\left(\vs_i^2-\beta_b\right)\p_{lj}\phi_b^jb_l\no\\
&=&C\int_{\Omega}(\p_{ij}\phi_b^j)b_i=C\int_{\Omega}\Big(\p_{ij}\dx^{-1}b_j\Big)b_i.\no
\end{eqnarray}
Hence, similar to (\ref{wt 16}), we may estimate
\begin{eqnarray}\label{wt 19}
\e\abs{\int_{\Omega}\Big(\p_{ij}\dx^{-1}b_j\Big)b_i}
&\leq&
C\tm{\vbb}\bigg(\e\tss{(1-\pp)[f]}{+}+\tm{(\ik-\pk)[f]}+\tm{S}+\e\tss{h}{-}\bigg).
\end{eqnarray}
\ \\
Step 2.2: Estimates of $\Big(\p_{jj}\dx^{-1}b_i\Big)b_i$ for $i\neq
j$.\\
We choose the test function
\begin{eqnarray}
\psi=\m^{\frac{1}{2}}(\vv)\abs{\vv}^2\vs_i\vs_j\p_j\phi_b^i\ \ \text{for}\ \ i\neq j.
\end{eqnarray}
The right-hand side (RHS) of (\ref{wt 11}) is still bounded by
\begin{eqnarray}
\text{RHS}\leq C\tm{\vbb}\bigg(\tm{(\ik-\pk)[f]}+\tm{S}\bigg).
\end{eqnarray}
Also, the left-hand side (LHS) of (\ref{wt 11}) takes the form
\begin{eqnarray}\label{wt 20}
\text{LHS}&=&\e\int_{\p\Omega\times\r^2}f\m^{\frac{1}{2}}(\vv)\abs{\vv}^2\vs_i\vs_j\p_j\phi_b^i(\vv\cdot\vn)\\
&&-\e\int_{\Omega\times\r^2}f\m^{\frac{1}{2}}(\vv)\abs{\vv}^2\vs_i\vs_j\left(\sum_{l=1}^2\vs_l\p_{lj}\phi_b^i\right).\no
\end{eqnarray}
Now substitute (\ref{wt 14}) and (\ref{wt 15}) into (\ref{wt 20}).
Then based on the oddness in $\vv$, there is no $\pp[f]$ contribution in
the first term, and no $a$ and $c$ contribution in the second term
of (\ref{wt 20}). We can simplify (\ref{wt 20}) into
\begin{eqnarray}
\text{LHS}
&=&\e\int_{\p\Omega\times\r^2}\id_{\gamma_+}(1-\pp)[f]
\m^{\frac{1}{2}}(\vv)\abs{\vv}^2\vs_i\vs_j\p_j\phi_b^i(\vv\cdot\vn)\\
&&+\e\int_{\p\Omega\times\r^2}\id_{\gamma_-}h
\m^{\frac{1}{2}}(\vv)\abs{\vv}^2\vs_i\vs_j\p_j\phi_b^i(\vv\cdot\vn)\no\\
&&-\e\int_{\Omega\times\r^2}\m(\vv)\abs{\vv}^2\vs_i^2\vs_j^2\Big(\p_{ij}\phi_b^ib_j+\p_{jj}\phi_b^ib_i\Big)\no\\
&&-\e\sum_{l=1}^2\int_{\Omega\times\r^2}(\ik-\pk)[f]\m^{\frac{1}{2}}(\vv)\abs{\vv}^2\vs_i\vs_j\vs_l\p_{lj}\phi_b^i.\no
\end{eqnarray}
Then we deduce
\begin{eqnarray}
&&-\e\int_{\Omega\times\r^2}\m(\vv)\abs{\vv}^2\vs_i^2\vs_j^2\Big(\p_{ij}\phi_b^ib_j+\p_{jj}\phi_b^ib_i\Big)=
C\bigg(\int_{\Omega}\Big(\p_{ij}\dx^{-1}b_i\Big)b_j+\int_{\Omega}\Big(\p_{jj}\dx^{-1}b_i\Big)b_i\bigg).
\end{eqnarray}
Hence, we may estimate that for $i\neq j$,
\begin{eqnarray}\label{wt 21}
\e\abs{\int_{\Omega}\Big(\p_{jj}\dx^{-1}b_i\Big)b_i}
&\leq&C\tm{\vbb}\bigg(\e\tss{(1-\pp)[f]}{+}+\tm{(\ik-\pk)[f]}+\tm{S}+\e\tss{h}{-}\bigg)\\
&&+C\e\abs{\int_{\Omega}\Big(\p_{ij}\dx^{-1}b_i\Big)b_j}.\no
\end{eqnarray}
Moreover, by (\ref{wt 19}), for $i=j=1,2$,
\begin{eqnarray}\label{wt 22}
\e\abs{\int_{\Omega}\Big(\p_{jj}\dx^{-1}b_j\Big)b_j}
&\leq&C\tm{\vbb}\bigg(\e\tss{(1-\pp)[f]}{+}+\tm{(\ik-\pk)[f]}+\tm{S}+\e\tss{h}{-}\bigg).
\end{eqnarray}
\ \\
Step 2.3: Synthesis.\\
Summarizing (\ref{wt 21}) and (\ref{wt 22}), we may sum up over
$j=1,2$ to obtain that for any $i=1,2$,
\begin{eqnarray}\label{wt 23}
\e\tm{b_i}^2
&\leq&C\tm{\vbb}\bigg(\e\tss{(1-\pp)[f]}{+}+\tm{(\ik-\pk)[f]}+\tm{S}+\e\tss{h}{-}\bigg).
\end{eqnarray}
which further implies
\begin{eqnarray}\label{wt 24}
\e\tm{\vbb}&\leq&C\bigg(\e\tss{(1-\pp)[f]}{+}+\tm{(\ik-\pk)[f]}+\tm{S}+\e\tss{h}{-}\bigg).
\end{eqnarray}
\ \\
Step 3: Estimates of $a$.\\
We choose the test function
\begin{eqnarray}\label{wt 25}
\psi=\psi_a=\m^{\frac{1}{2}}(\vv)\left(\abs{\vv}^2-\beta_a\right)\Big(\vv\cdot\nx\phi_a(\vx)\Big),
\end{eqnarray}
where
\begin{eqnarray}
\left\{
\begin{array}{rcl}
-\dx\phi_a&=&a(\vx)\ \ \text{in}\ \ \Omega,\\\rule{0ex}{1.5em}
\dfrac{\p\phi_a}{\p\vn}&=&0\ \ \text{on}\ \ \p\Omega,
\end{array}
\right.
\end{eqnarray}
and $\beta_a$ is a real number to be determined later. Based on the
standard elliptic estimates with the normalization condition
\begin{eqnarray}
\int_{\Omega}a(\vx)\ud{\vx}=\int_{\Omega\times\r^2}f(\vx,\vv)\m^{\frac{1}{2}}(\vv)\ud{\vv}\ud{\vx}=0,
\end{eqnarray}
we have
\begin{eqnarray}
\nm{\phi_a}_{H^2}\leq C \tm{a}.
\end{eqnarray}
With the choice of (\ref{wt 25}), the right-hand side (RHS) of
(\ref{wt 11}) is bounded by
\begin{eqnarray}
\text{RHS}\leq C \tm{a}\bigg(\tm{(\ik-\pk)[f]}+\tm{S}\bigg).
\end{eqnarray}
We have
\begin{eqnarray}
\vv\cdot\nx\psi_a&=&\m^{\frac{1}{2}}(\vv)\sum_{i,j=1}^2\left(\abs{\vv}^2-\beta_a\right)\vs_i\vs_j\p_{ij}\phi_a(\vx),
\end{eqnarray}
so the left-hand side (LHS) of (\ref{wt 11}) takes the form
\begin{eqnarray}\label{wt 26}
\text{LHS}&=&\e\int_{\p\Omega\times\r^2}f\m^{\frac{1}{2}}(\vv)\left(\abs{\vv}^2-\beta_a\right)\left(\sum_{i=1}^2\vs_i\p_i\phi_a\right)(\vv\cdot\vn)\\
&&-\e\int_{\Omega\times\r^2}f\m^{\frac{1}{2}}(\vv)\left(\abs{\vv}^2-\beta_a\right)\left(\sum_{i,j=1}^2\vs_i\vs_j\p_{ij}\phi_a\right).\no
\end{eqnarray}
We will choose $\beta_a$ such that
\begin{eqnarray}
\int_{\r^2}\m^{\frac{1}{2}}(\vv)\left(\abs{\vv}^2-\beta_a\right)\frac{\abs{\vv}^2-2}{2}\vs_i^2\ud{\vv}=0\
\ \text{for}\ \ i=1,2.
\end{eqnarray}
It is easy to check that this $\beta_a$ can always be achieved. Now substitute (\ref{wt 14})
and (\ref{wt 15}) into (\ref{wt 26}). Then based on this choice of
$\beta_a$ and the oddness in $\vv$, there is no $\vbb$ and $c$ contribution in the second
term of (\ref{wt 26}). Since the off-diagonal
$a$ contribution in the second term of (\ref{wt 26}) also vanishes due
to the oddness in $\vv$, we can simplify (\ref{wt 26}) into
\begin{eqnarray}
\text{LHS}
&=&\e\int_{\p\Omega\times\r^2}\pp[f]
\m^{\frac{1}{2}}(\vv)\left(\abs{\vv}^2-\beta_a\right)\left(\sum_{i=1}^2\vs_i\p_i\phi_a\right)(\vv\cdot\vn)\\
&&+\e\int_{\p\Omega\times\r^2}\id_{\gamma_+}(1-\pp)[f]
\m^{\frac{1}{2}}(\vv)\left(\abs{\vv}^2-\beta_a\right)\left(\sum_{i=1}^2\vs_i\p_i\phi_a\right)(\vv\cdot\vn)\no\\
&&+\e\int_{\p\Omega\times\r^2}\id_{\gamma_-}h
\m^{\frac{1}{2}}(\vv)\left(\abs{\vv}^2-\beta_a\right)\left(\sum_{i=1}^2\vs_i\p_i\phi_a\right)(\vv\cdot\vn)\no\\
&&-\sum_{i=1}^2\e\int_{\r^2}\m(\vv)\abs{\vs_i}^2\left(\abs{\vv}^2-\beta_a\right)\ud{\vv}
\int_{\Omega}a(\vx)\p_{ii}\phi_a(\vx)\ud{\vx}\no\\
&&-\e\int_{\Omega\times\r^2}(\ik-\pk)[f]\m^{\frac{1}{2}}(\vv)\left(\abs{\vv}^2-\beta_a\right)\left(\sum_{i,j=1}^2\vs_i\vs_j\p_{ij}\phi_a\right).\no
\end{eqnarray}
We make an orthogonal decomposition on the boundary
\begin{eqnarray}
\vv=(\vv\cdot\vn)\vn+\vv_{\bot}=\vs_{\n}\vn+\vv_{\bot},
\end{eqnarray}
where $\nu\perp\vv_{\bot}$. Then the contribution of
$\pp[f]=z_{\gamma}(\vx)\m^{\frac{1}{2}}(\vv)$ for a suitable function
$z_{\gamma}(\vx)$ is
\begin{eqnarray}\label{wt 27}
&&\e\int_{\p\Omega\times\r^2}\pp[f]
\m^{\frac{1}{2}}(\vv)\left(\abs{\vv}^2-\beta_a\right)\left(\sum_{i=1}^2\vs_i\p_i\phi_a\right)(\vv\cdot\vn)\\
&=&\e\int_{\p\Omega\times\r^2}
\m(\vv)\left(\abs{\vv}^2-\beta_a\right)v_n\Big(\vv\cdot\nx\phi_a\Big)z_{\gamma}(\vx)\no\\
&=&\e\int_{\p\Omega\times\r^2}
\m(\vv)\left(\abs{\vv}^2-\beta_a\right)v_n^2\frac{\p\phi_a}{\p\vn}z_{\gamma}(\vx)+\e\int_{\p\Omega\times\r^2}
\m(\vv)\left(\abs{\vv}^2-\beta_a\right)v_n\Big(\vv_{\bot}\cdot\nx\phi_a\Big)z_{\gamma}(\vx).\no
\end{eqnarray}
Based on the definition of $\phi_a$ and the oddness of
$v_n\vv_{\bot}$, we know the contribution of $\pp[f]$ in the
first term of (\ref{wt 26}) vanishes. Since
\begin{eqnarray}
\int_{\r^2}\m^{\frac{1}{2}}(\vv)\abs{\vs_i}^2\left(\abs{\vv}^2-\beta_a\right)\ud{\vv}=C,
\end{eqnarray}
we have
\begin{eqnarray}
-\e\int_{\Omega}\dx\phi_a(\vx)a(\vx)\ud{\vx}
&\leq&C
\tm{a}\bigg(\e\tss{(1-\pp)[f]}{+}+\tm{(\ik-\pk)[f]}+\tm{S}+\e\tss{h}{-}\bigg).
\end{eqnarray}
Since $-\dx\phi_a=a$, similar to (\ref{wt 16}), we know
\begin{eqnarray}\label{wt 28}
\e\tm{a} &\leq&C
\bigg(\e\tss{(1-\pp)[f]}{+}+\tm{(\ik-\pk)[f]}+\tm{S}+\e\tss{h}{-}\bigg).
\end{eqnarray}
\ \\
Step 4: Synthesis.\\
Collecting (\ref{wt 16}), (\ref{wt 24}) and (\ref{wt 28}), we deduce
\begin{eqnarray}
\e\tm{\pk[f]}
&\leq& C\bigg(\e\tss{(1-\pp)[f]}{+}+\tm{(\ik-\pk)[f]}+\tm{S}+\e\tss{h}{-}\bigg).
\end{eqnarray}
This completes our proof.
\end{proof}

\begin{theorem}\label{LT estimate}
The solution $f(\vx,\vw)$ to the equation (\ref{linear steady}) satisfies the estimate
\begin{eqnarray}
\frac{1}{\e^{\frac{1}{2}}}\tss{(1-\pp)[f]}{+}+\tm{f}&\leq&C\bigg(\frac{1}{\e^2}\tm{\pk[S]}+\frac{1}{\e}\tm{(\ik-\pk)[S]}+\frac{1}{\e}\tss{h}{-}\bigg).
\end{eqnarray}
\end{theorem}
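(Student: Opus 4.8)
The plan is to establish the estimate by a contradiction/compactness argument combined with the $L^2$ coercivity already assembled. The key input is Lemma \ref{wellposedness prelim lemma 3}, which controls $\e\tm{\pk[f]}$ by $\e\tss{(1-\pp)[f]}{+}$, $\tm{(\ik-\pk)[f]}$, $\tm{S}$, and $\e\tss{h}{-}$. So the crux is to get a self-improving bound on the non-kernel part $(\ik-\pk)[f]$ and on the boundary term $\tss{(1-\pp)[f]}{+}$. First I would test the equation \eqref{linear steady} against $f$ itself and apply Green's identity (Lemma \ref{wellposedness prelim lemma 2}) together with the diffusive boundary condition $f=\pp[f]+h$ on $\gamma_-$. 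The boundary contribution splits into $\int_{\gamma_+}f^2-\int_{\gamma_-}f^2$; using $f=\pp[f]+(1-\pp)[f]$ on $\gamma_+$ and the boundary condition on $\gamma_-$, and the fact that $\pp$ is an $L^2(\gamma)$-contraction preserving the Maxwellian flux, one extracts a nonnegative term $\tss{(1-\pp)[f]}{+}^2$ on the left (this is the standard Darrozes--Guiraud / Golse computation). Combined with the coercivity $\br{f,\ll[f]}\gtrsim \tnm{\nu^{1/2}(\ik-\pk)[f]}^2$ from Lemma \ref{Milne property}, this yields, after Cauchy--Schwarz on the source and boundary terms,
\begin{eqnarray}
\tm{(\ik-\pk)[f]}^2+\e\tss{(1-\pp)[f]}{+}^2 \ls \tm{\pk[f]}\tm{S}+\tm{(\ik-\pk)[f]}\tm{S}+\e\tss{h}{-}^2+\e\tss{h}{-}\tss{(1-\pp)[f]}{+}.
\end{eqnarray}

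Next I would feed Lemma \ref{wellposedness prelim lemma 3} into this. Writing $X=\e\tm{\pk[f]}$, $Y=\tm{(\ik-\pk)[f]}$, $Z=\e^{1/2}\tss{(1-\pp)[f]}{+}$, the two inequalities read schematically $X\ls Z\e^{1/2}+Y+\tm{S}+\e\tss{h}{-}$ and $Y^2+Z^2\ls \e^{-1}XY/\e\cdot(\dots)$ — more precisely, after absorbing, $Y^2+Z^2\ls \tm{\pk[f]}\tm{S}+Y\tm{S}+\e\tss{h}{-}^2$. The delicate point is the term $\tm{\pk[f]}\tm{S}=\e^{-1}X\tm{S}$: here I would split $\tm{S}=\tm{\pk[S]}+\tm{(\ik-\pk)[S]}$ and use that testing against a purely kinetic $\pk[f]$ only sees $\pk[S]$ in the relevant pairing, while the $(\ik-\pk)[S]$ piece pairs against $Y$. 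This is exactly where the asymmetric weights $\e^{-2}$ on $\tm{\pk[S]}$ versus $\e^{-1}$ on $\tm{(\ik-\pk)[S]}$ come from. Substituting the bound on $X$ and using Young's inequality $ab\le \delta a^2+C_\delta b^2$ repeatedly to absorb $Y$, $Z$ into the left side gives
\begin{eqnarray}
Y^2+Z^2\ls \frac{1}{\e^2}\tm{\pk[S]}^2+\tm{(\ik-\pk)[S]}^2+\e\tss{h}{-}^2 + \frac{1}{\e}\tm{\pk[S]}\cdot(\text{lower order}),
\end{eqnarray}
hence $Y+Z\ls \e^{-1}\tm{\pk[S]}+\tm{(\ik-\pk)[S]}+\e^{1/2}\tss{h}{-}$. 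Dividing $Z$ by $\e^{1/2}$ recovers the claimed weight $\e^{-1}\tss{h}{-}$ and the $\e^{-3/2}$... — wait, one must be careful: plugging $Y,Z$ back into Lemma \ref{wellposedness prelim lemma 3} gives $\e\tm{\pk[f]}\ls \e^{1/2}Z+Y+\tm{S}$, so $\tm{\pk[f]}\ls \e^{-1}(Y+Z\e^{1/2})+\e^{-1}\tm{S}$, and combining with the bound just obtained for $Y,Z$ produces $\tm{f}\ls \tm{\pk[f]}+Y\ls \e^{-2}\tm{\pk[S]}+\e^{-1}\tm{(\ik-\pk)[S]}+\e^{-1}\tss{h}{-}$, as stated.

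The main obstacle is the bookkeeping in the boundary integral: showing that the diffusive-reflection operator $\pp$ produces the favorable sign so that $\tss{(1-\pp)[f]}{+}^2$ genuinely appears on the left-hand side with a good constant, rather than being lost to the indefinite cross terms. This requires the normalization $\int_{\vw\cdot\vn>0}\m(\vw)|\vw\cdot\vn|\ud\vw=1$ and a Cauchy--Schwarz estimate on $\gamma$ showing $|\pp[f]|_{L^2(\gamma_-)}\le |f|_{L^2(\gamma_+)}$ with equality only for $f\propto\m^{1/2}$; the defect in this inequality is exactly $\tss{(1-\pp)[f]}{+}$, up to the interplay with $h$ handled via the compatibility condition \eqref{linear steady compatibility}. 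Once that structural lemma is in hand, the rest is the interpolation-and-Young's-inequality absorption scheme sketched above, together with keeping the powers of $\e$ straight when iterating between the $L^2$ coercivity estimate and Lemma \ref{wellposedness prelim lemma 3}.
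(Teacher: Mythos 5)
Your overall architecture tracks the paper's proof: test the equation against $f$, apply Green's identity, extract $\tss{(1-\pp)[f]}{+}^2$ via the Darrozes--Guiraud orthogonal decomposition, use the spectral gap from Lemma~\ref{Milne property}, feed in the macroscopic estimate of Lemma~\ref{wellposedness prelim lemma 3}, and split $\int fS = \int \pk[f]\pk[S] + \int(\ik-\pk)[f](\ik-\pk)[S]$ to obtain the asymmetric weights on $\tm{\pk[S]}$ and $\tm{(\ik-\pk)[S]}$. That part is correct and matches the paper.

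However, there is a genuine gap in the handling of the boundary cross term. When one expands $\frac{\e}{2}\tss{\pp[f]+h}{-}^2$ and subtracts $\frac{\e}{2}\tss{\pp[f]}{+}^2 = \frac{\e}{2}\tss{\pp[f]}{-}^2$ from both sides, the surviving boundary contribution on the right is $\e\int_{\gamma_-}\pp[f]\,h$, which by Cauchy--Schwarz and Young gives $\eta\e^2\tss{\pp[f]}{-}^2 + C_\eta\tss{h}{-}^2$. Your written energy inequality replaces this by $\e\tss{h}{-}\tss{(1-\pp)[f]}{+}$, which is not what the identity yields — the cross term genuinely involves $\pp[f]$, not $(1-\pp)[f]$, and the compatibility condition~\eqref{linear steady compatibility} is a single global integral, so it does not make $\int_{\gamma_-}\pp[f]\,h$ vanish pointwise in $\vx_0$. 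Since $\tss{\pp[f]}{+}^2$ was already consumed on the left to produce $\tss{(1-\pp)[f]}{+}^2$, there is nothing remaining there to absorb $\eta\e^2\tss{\pp[f]}{}^2$. Closing the argument therefore requires the trace estimate
\begin{eqnarray}
\abs{\pp[f]}_{L^2}^2\leq C\left(\tm{\pk[f]}^2+\frac{1}{\e}\tm{(\ik-\pk)[f]}^2+\frac{1}{\e}\int_{\Omega\times\r^2}fS\right),
\end{eqnarray}
which the paper obtains in its Step~2 by integrating $\e\vv\cdot\nx(f^2)=-2f\ll[f]+2fS$ and invoking the near-grazing set Lemma~\ref{wellposedness prelim lemma 1} together with the observation that cutting away a thin grazing shell only halves $\abs{\pp[f]}_{L^2}$. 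This near-grazing trace estimate is an essential, non-routine ingredient, and your proposal omits it entirely; without it, the $\eta\e^2\tss{\pp[f]}{}^2$ term on the right-hand side of the energy inequality remains uncontrolled and the schematic $X$-$Y$-$Z$ bootstrap does not close.
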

\begin{proof}
We divide it into several steps:\\
\ \\
Step 1: Energy Estimate.\\
Multiplying $f$ on both sides of (\ref{linear steady}) and applying Green's identity imply
\begin{eqnarray}\label{wt 34}
\frac{\e}{2}\tss{f}{+}^2+\br{\ll[f],f}
&=&\frac{\e}{2}\tss{\pp[f]+h}{-}^2+\int_{\Omega\times\r^2}fS.
\end{eqnarray}
A direct computation shows that
\begin{eqnarray}
\tss{f}{+}^2-\tss{\pp[f]}{+}^2=\tss{(1-\pp)[f]}{+}^2.
\end{eqnarray}
Also, from the spectral gap of $\ll$, we know
\begin{eqnarray}
\br{\ll[f],f}\geq C\um{(\ik-\pk)[f]}^2.
\end{eqnarray}
Applying Cauchy's inequality implies that
\begin{eqnarray}
\tss{\pp[f]+h}{-}^2&=&\tss{\pp[f]}{-}^2+\tss{h}{-}^2+2\int_{\gamma_-}\pp[f]h\ud{\gamma}\\
&\leq&(1+\eta\e)\tss{\pp[f]}{+}^2+\left(1+\frac{1}{\eta\e}\right)\tss{h}{-}^2,\no
\end{eqnarray}
for some $\eta>0$. In total, we have
\begin{eqnarray}\label{wt 31}
\frac{\e}{2}\tss{(1-\pp)[f]}{+}^2+\um{(\ik-\pk)[f]}^2
&\leq&\eta\e^2\tss{\pp[f]}{}^2+\left(1+\frac{1}{\eta}\right)\tss{h}{-}^2+\int_{\Omega\times\r^2}fS.
\end{eqnarray}
\ \\
Step 2: Estimate of $\tss{\pp[f]}{}$.\\
Multiplying $f$ on both sides of the
equation (\ref{linear steady}), we have
\begin{eqnarray}\label{wt 05}
\e\vv\cdot\nx(f^2)=-2f\ll[f]+2fS.
\end{eqnarray}
Taking absolute value and integrating (\ref{wt 05}) over
$\Omega\times\r^2$, we deduce
\begin{eqnarray}
\nm{\vv\cdot\nx(f^2)}_{1}&\leq&\frac{1}{\e}\bigg(\tm{(\ik-\pk)[f]}^2+\int_{\Omega\times\r^2}fS\bigg).
\end{eqnarray}
On the other hand, by Lemma \ref{wellposedness prelim lemma 1}, for any
$\gamma\backslash\gamma^{\d}$ away from $\gamma_0$, we have
\begin{eqnarray}
\abs{\id_{\gamma\backslash\gamma^{\d}}f}_2^2
&\leq&
C(\d)\left(\tm{f}^2+\nm{\vv\cdot\nx(f^2)}_{1}\right).
\end{eqnarray}
Based on the definition, we can rewrite $\pp
f=z_{\gamma}(\vx)\m^{\frac{1}{2}}$ for a suitable function
$z_{\gamma}(\vx)$ and
for $\d$ small, we deduce
\begin{eqnarray}\label{wt 30}
\abs{\pp[\id_{\gamma\backslash\gamma^{\d}}f]}_2^2
&=&\int_{\p\Omega}\abs{z_{\gamma}(\vx)}^2
\left(\int_{\abs{\vv\cdot\vn(\vx)}\geq\d,\d\leq\abs{\vv}\leq\frac{1}{\d}}\m(\vv)\abs{\vv\cdot\vn(\vx)}\ud{\vv}\right)\ud{\vx}\\
&\geq&\half\left(\int_{\p\Omega}\abs{z_{\gamma}(\vx)}^2\ud{\vx}\right)\left(\int_{\r^2}\m(\vv)\abs{\vv\cdot\vn(\vx)}\ud{\vv}\right)\no\\
&=&\half\abs{\pp[f]}_2^2,\no
\end{eqnarray}
where we utilize the fact that
\begin{eqnarray}
\int_{\abs{\vv\cdot\vn(\vx)}\leq\d}\m(\vv)\abs{\vv\cdot\vn(\vx)}\ud{\vv}&\leq&C\d,\\
\int_{\abs{\vv}\leq\d\ or\
\abs{\vv}\geq\frac{1}{\d}}\m(\vv)\abs{\vv\cdot\vn(\vx)}\ud{\vv}&\leq&C\d.
\end{eqnarray}
Therefore, from
\begin{eqnarray}
\abs{\pp[\id_{\gamma\backslash\gamma^{\d}}f]}_2\leq C
\abs{\id_{\gamma\backslash\gamma^{\d}}f}_2,
\end{eqnarray}
we conclude
\begin{eqnarray}\label{wt 07}
\half\abs{\pp[f]}_2^2
&\leq&\abs{\pp[\id_{\gamma\backslash\gamma^{\d}}f]}_2^2\leq C\abs{\id_{\gamma\backslash\gamma^{\d}}f}_2^2\\
&\leq&C(\d)\left(\tm{f}^2+\nm{\vv\cdot\nx(f^2)}_{1}\right)\no\\
&\leq&C\left(\tm{f}^2+\frac{1}{\e}\tm{(\ik-\pk)[f]}^2+\frac{1}{\e}\int_{\Omega\times\r^2}fS\right).\no
\end{eqnarray}
Hence, we know
\begin{eqnarray}\label{wt 32}
\abs{\pp[f]}_2^2\leq C\left(\tm{f}^2+\frac{1}{\e}\tm{(\ik-\pk)[f]}^2+\frac{1}{\e}\int_{\Omega\times\r^2}fS\right),
\end{eqnarray}
which can be further simplified as
\begin{eqnarray}\label{wt 32=}
\abs{\pp[f]}_2^2\leq C\left(\tm{\pk[f]}^2+\frac{1}{\e}\tm{(\ik-\pk)[f]}^2+\frac{1}{\e}\int_{\Omega\times\r^2}fS\right).
\end{eqnarray}
\ \\
Step 3: Synthesis.\\
Plugging (\ref{wt 32=}) into (\ref{wt 31}) with $\e$ sufficiently
small to absorb $\tm{(\ik-\pk)[f]}^2$ into the left-hand side, we obtain
\begin{eqnarray}\label{wt 33}
\e\tss{(1-\pp)[f]}{+}^2+\um{(\ik-\pk)[f]}^2
&\leq&C\bigg(\eta\e^2\tm{\pk[f]}^2+\left(1+\frac{1}{\eta}\right)\tss{h}{-}^2+\int_{\Omega\times\r^2}fS\bigg).
\end{eqnarray}
We square on both sides of (\ref{wt 08}) to obtain
\begin{eqnarray}\label{wt 29}
\e^2\tm{\pk[f]}&\leq&C
\bigg(\e^2\tss{(1-\pp)[f]}{+}+\tm{(\ik-\pk)[f]}+\tm{S}+\e^2\tss{h}{-}\bigg).
\end{eqnarray}
Multiplying a small constant on both sides of (\ref{wt 29}) and
adding to (\ref{wt 33}) to absorb $\e^2\tss{(1-\pp)[f]}{+}$ and $\tm{(\ik-\pk)[f]}$ into the left-hand side, we obtain
\begin{eqnarray}
\\
\e\tss{(1-\pp)[f]}{+}^2+\e^2\tm{\pk[f]}^2+\um{(\ik-\pk)[f]}\leq C\bigg(\eta\e^2\tm{\pk[f]}^2+\left(1+\frac{1}{\eta}\right)\tss{h}{-}^2+\int_{\Omega\times\r^2}fS+\tm{S}\bigg).\no
\end{eqnarray}
Taking $\eta$ sufficiently small, we can further absorb $\eta\e^2\tm{\pk[f]}^2$ into the left-hand side to get
\begin{eqnarray}
\e\tss{(1-\pp)[f]}{+}^2+\e^2\tm{\pk[f]}^2+\um{(\ik-\pk)[f]}\leq C\bigg(\tss{h}{-}^2+\int_{\Omega\times\r^2}fS+\tm{S}\bigg).
\end{eqnarray}
Since
\begin{eqnarray}
\int_{\Omega\times\r^2}fS\leq
C\e^2\tm{f}^2+\frac{1}{4C\e^2}\tm{S}^2,
\end{eqnarray}
for $C$ sufficiently small, we have
\begin{eqnarray}
\e\tss{(1-\pp)[f]}{+}^2+\e^2\tm{\pk[f]}^2+\um{(\ik-\pk)[f]}\leq C\bigg(\tss{h}{-}^2+\frac{1}{\e^2}\tm{S}\bigg).
\end{eqnarray}
Hence, we deduce
\begin{eqnarray}
\frac{1}{\e^{\frac{1}{2}}}\tss{(1-\pp)[f]}{+}+\tm{f}\leq C\bigg(\frac{1}{\e^2}\tm{S}+\frac{1}{\e}\tss{h}{-}\bigg).
\end{eqnarray}
%\end{proof}
%\begin{theorem}\label{LT estimate}
%Assume condition (\ref{linear steady compatibility}) holds. Then
%there exists a unique solution $f\in L^2(\Omega\times\r^2)$ to the
%equation (\ref{linear steady}) that satisfies the normalization
%condition (\ref{linear steady normalization}) and the estimate
%\begin{eqnarray}
%\tm{f}+\frac{1}{\e^{\frac{1}{2}}}\tss{f}{+}&\leq&C\bigg(\frac{1}{\e^2}\tm{\pk[S]}+\frac{1}{\e}\tm{(\ik-\pk)[S]}+\frac{1}{\e}\tss{h}{-}\bigg).
%\end{eqnarray}
%\end{theorem}
%\begin{proof}
%Based on the proof of Lemma \ref{wellposedness lemma 1} and the fact
We may further improve the result based on the fact that
\begin{eqnarray}
\int_{\Omega\times\r^2}fS&=&\int_{\Omega\times\r^2}\bigg((\ik-\pk)[f]+\pk[f]\bigg)\bigg((\ik-\pk)[S]+\pk[S]\bigg)\\
&=&\int_{\Omega\times\r^2}\bigg((\ik-\pk)[f](\ik-\pk)[S]+\pk[f]\pk[S]\bigg)\no\\
&\leq&C\tm{(\ik-\pk)[f]}^2+\frac{1}{4C}\tm{(\ik-\pk)[S]}^2+C\e^2\tm{\pk[f]}^2+\frac{1}{4C\e^2}\tm{\pk[S]}^2.
\end{eqnarray}
Then the result is obvious.
\end{proof}

%%%%%%%%%%%%%%%%%%%%%%%%%%%%%%%%%%%%%%%%%%%%%%%%%%%%%%%%%%%%%%%%%%%%%%%%%%%%%%%%%%%%%
\subsection{$L^{\infty}$ Estimates - First Round}
%%%%%%%%%%%%%%%%%%%%%%%%%%%%%%%%%%%%%%%%%%%%%%%%%%%%%%%%%%%%%%%%%%%%%%%%%%%%%%%%%%%%%

We first define the tracking back through the characteristics and diffusive reflection.
\begin{definition}(Stochastic Cycle)
For a fixed point $(\vx,\vv)$ with $(\vx,\vv)\notin\gamma_0$, let
$(t_0,\vx_0,\vv_0)=(0,\vx,\vv)$. For $\vv_{k+1}$ such that
$\vv_{k+1}\cdot\vn(\vx_{k+1})>0$, define the $(k+1)$-component of
the back-time cycle as
\begin{eqnarray}
(t_{k+1},\vx_{k+1},\vv_{k+1})=(t_k+t_b(\vx_k,\vv_k),\vx_b(\vx_k,\vv_k),\vv_{k+1}),
\end{eqnarray}
where
\begin{eqnarray}
t_b(\vx,\vv)&=&\inf\{t>0:\vx-\e t\vv\notin\Omega\},\\
x_b(\vx,\vv)&=&\vx-\e t_b(\vx,\vv)\vv\notin\Omega.
\end{eqnarray}
Set
\begin{eqnarray}
\xc(s;\vx,\vv)&=&\sum_{k}\id_{\{t_{k}\leq s<t_{k+1}\}}\bigg(\vx_k-\e(t_k-s)\vv_k\bigg),\\
\vc(s;\vx,\vv)&=&\sum_{k}\id_{\{t_{k}\leq s<t_{k+1}\}}\vv_k.
\end{eqnarray}
\end{definition}
Define $\nn_{k}=\{\vv\in \r^2:\vv\cdot\vn(\vx_{k})>0\}$, and let
the iterated integral for $k\geq2$ be defined as
\begin{eqnarray}
\int_{\prod_{k=1}^{k-1}\nn_j}\prod_{j=1}^{k-1}\ud{\sigma_j}=\int_{\nn_1}\ldots\bigg(\int_{\nn_{k-1}}\ud{\sigma_{k-1}}\bigg)\ldots\ud{\sigma_1}
\end{eqnarray}
where $\ud{\sigma_j}=\m(\vv)\abs{\vv\cdot\vn(\vx_j)}\ud{\vv}$ is a
probability measure. We define a weight function scaled with
parameter $\xi$,
\begin{eqnarray}
\vh(\vv)=w_{\xi,\beta,\varrho}(\vv)=\left(1+\xi^2\abs{\vv}^2\right)^{\frac{\beta}{2}}\ue^{\varrho\abs{\vv}^2},
\end{eqnarray}
and
\begin{eqnarray}
\tvh(\vv)=\frac{1}{\m^{\frac{1}{2}}(\vv)\vh(\vv)}=\sqrt{2\pi}\frac{\ue^{\left(\frac{1}{4}-\varrho\right)\abs{\vv}^2}}
{\left(1+\xi^2\abs{\vv}^2\right)^{\frac{\beta}{2}}}.
\end{eqnarray}
\begin{lemma}\label{wellposedness prelim lemma 6}
For $T>0$ sufficiently large, there exists constants $C_1,C_2>0$
independent of $T_0$, such that for $k=C_1T_0^{\frac{5}{4}}$, and
$(\vx,\vv)\in\times\bar\Omega\times\r^2$,
\begin{eqnarray}\label{wt 38}
\int_{\Pi_{k=1}^{k-1}\nn_j}\id_{\{t_k(\vx,\vv,\vv_1,\ldots,\vv_{k-1})<\frac{T_0}{\e}\}}\prod_{j=1}^{k-1}\ud{\sigma_j}\leq
\left(\frac{1}{2}\right)^{C_2T_0^{\frac{5}{4}}}.
\end{eqnarray}
We also have, for $\beta>2$,
\begin{eqnarray}
\int_{\Pi_{j=1}^{k-1}\nn_j}\br{\vv_l}\tvh(\vv_l)\prod_{j=1}^{k-1}\ud{\sigma_j}&\leq&
\frac{C(\beta,\varrho)}{\xi^3},\label{wt 39}\\
\int_{\Pi_{j=1}^{k-1}\nn_j}\id_{\{t_k(\vx,\vv,\vv_1,\ldots,\vv_{k-1})<\frac{T_0}{\e}\}}\br{\vv_l}\tvh(\vv_l)\prod_{j=1}^{k-1}\ud{\sigma_j}&\leq&
\frac{C(\beta,\varrho)}{\xi^3}\bigg(\frac{1}{2}\bigg)^{C_2T_0^{\frac{5}{4}}}.
\label{wt 40}
\end{eqnarray}
\end{lemma}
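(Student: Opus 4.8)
The plan is to prove $(\ref{wt 38})$ first, since it is the substantive estimate, and then to deduce $(\ref{wt 39})$ and $(\ref{wt 40})$ by the same bookkeeping with the velocity weight attached. The mechanism behind $(\ref{wt 38})$ is that a reflected velocity $\vv_j$ that is neither grazing nor too fast forces the backward trajectory through $\Omega$ from $(\vx_j,\vv_j)$ to be a long chord, hence the free-flight time $t_b(\vx_j,\vv_j)$ to be bounded below; so on $\{t_k<T_0/\e\}$ only a controlled number of the $\vv_j$ can be ``good'', and most of the $k-1$ integrated velocities must lie in a ``bad'' set of small $\sigma_j$-measure. Concretely, I would fix a small parameter $\delta>0$ (eventually a negative power of $T_0$) and split $\nn_j=\mathcal G_j^{\delta}\cup\mathcal B_j^{\delta}$, where
\[
\mathcal G_j^{\delta}=\Big\{\vv\in\nn_j:\ \vv\cdot\vn(\vx_j)\geq\delta\abs{\vv}\ \text{and}\ \abs{\vv}\leq\delta^{-1}\Big\}.
\]
The geometric input --- this is where convexity and the $C^2$ regularity of $\p\Omega$ enter --- is that since $\Omega$ is convex the backward trajectory from $(\vx_j,\vv_j)$ is a single chord, and the uniform interior-sphere condition forces its length to be $\geq c_{\Omega}\delta$ for $\vv\in\mathcal G_j^{\delta}$; combined with $\abs{\vv}\leq\delta^{-1}$ this gives $t_b(\vx_j,\vv_j)\geq c_{\Omega}\delta^2/\e$. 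Also, a direct Gaussian computation gives the bound $\sigma_j(\mathcal B_j^{\delta})\leq C\delta^2$ uniformly in $\vx_j$, the grazing sliver contributing $O(\delta^2)$ and the fast tail $O(\ue^{-c/\delta^2})$.

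Since $t_k=\sum_{j=0}^{k-1}t_b(\vx_j,\vv_j)$, on $\{t_k<T_0/\e\}$ at most $T_0/(c_{\Omega}\delta^2)$ of the indices $j\in\{1,\dots,k-1\}$ can have $\vv_j\in\mathcal G_j^{\delta}$. Now comes the scaling: set $k=C_1T_0^{5/4}$ and $\delta=c\,T_0^{-1/8}$ with $C_1$ large (so that $T_0/(c_{\Omega}\delta^2)\leq(k-1)/2$), whence on $\{t_k<T_0/\e\}$ at least $(k-1)/2$ of the $\vv_j$ lie in $\mathcal B_j^{\delta}$. Bounding this indicator by $\sum_{A}\prod_{j\in A}\id_{\{\vv_j\in\mathcal B_j^{\delta}\}}$ over subsets $A\subset\{1,\dots,k-1\}$ with $\abs{A}=\lceil(k-1)/2\rceil$, integrating the iterated integral from the innermost variable outward (each $\ud\sigma_j$ is a probability measure, so every factor equals $1$ or is $\leq C\delta^2$, the latter bound being uniform in the history), and using $\binom{k-1}{\lceil(k-1)/2\rceil}\leq2^{k-1}$, I obtain
\[
\int_{\Pi_{j=1}^{k-1}\nn_j}\id_{\{t_k<T_0/\e\}}\prod_{j=1}^{k-1}\ud\sigma_j\ \leq\ 2^{k-1}(C\delta^2)^{(k-1)/2}=\big(2\sqrt C\,\delta\big)^{k-1}\leq\Big(\tfrac12\Big)^{k-1}\leq\Big(\tfrac12\Big)^{C_2T_0^{5/4}},
\]
where the penultimate step uses that $\delta=c\,T_0^{-1/8}$ is small for $T_0$ large. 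The exponent $5/4$ is precisely the balance between ``each good bounce costs $\sim\delta^2/\e$ of the available time $T_0/\e$'' and ``each bounce is bad with probability $\sim\delta^2$'', at $\delta\sim T_0^{-1/8}$.

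For $(\ref{wt 39})$, since $\br{\vv_l}\tvh(\vv_l)$ depends only on $\vv_l$, one integrates out every $\ud\sigma_j$ with $j\neq l$ --- each is a probability measure integrating to $1$ regardless of its dependence on earlier velocities through $\vn(\vx_j)$ --- which reduces the left side to $\int_{\nn_l}\br{\vv_l}\tvh(\vv_l)\,\ud\sigma_l\leq\int_{\r^2}\br{\vv}\abs{\vv}\,\tvh(\vv)\m(\vv)\,\ud\vv$. The last integral is finite because $\tvh\m$ has Gaussian decay for $0\leq\varrho\leq\tfrac14$, and tracking the $\xi$-dependence (the polynomial weight $(1+\xi^2\abs{\vv}^2)^{-\beta/2}$ with $\beta>2$ and the rescaling $\vv\mapsto\xi\vv$) yields the stated $C(\beta,\varrho)\xi^{-3}$. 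Then $(\ref{wt 40})$ follows by re-running the proof of $(\ref{wt 38})$ with $\br{\vv_l}\tvh(\vv_l)$ attached to the $l$-th slot: in each subset $A$ the $l$-th integral contributes $C(\beta,\varrho)\xi^{-3}$ (when $l\notin A$, exactly as in $(\ref{wt 39})$) while every bad-set integral still contributes $\leq C\delta^2$, since $\br{\vv}\tvh(\vv)\m(\vv)\leq C\ue^{-c\abs{\vv}^2}$ leaves the grazing-sliver estimate unchanged; this gives $C(\beta,\varrho)\xi^{-3}\,2^{k-1}(C\delta^2)^{(k-1)/2}\leq C(\beta,\varrho)\xi^{-3}(1/2)^{C_2T_0^{5/4}}$. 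A coarser alternative for $(\ref{wt 40})$ is Cauchy--Schwarz against $(\ref{wt 38})$.

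The main obstacle is $(\ref{wt 38})$, and within it two points: establishing the geometric lower bound --- chord length $\geq c_\Omega\delta$ on $\mathcal G_j^{\delta}$ --- from convexity and $C^2$ regularity alone, being careful at boundary points where the curvature may degenerate; and the scaling bookkeeping producing the exponent $5/4$, namely matching $k\sim T_0^{5/4}$ with $\delta\sim T_0^{-1/8}$ so that admissible good bounces number at most $(k-1)/2$ while the per-bounce bad probability $\sim\delta^2\sim T_0^{-1/4}$ is still small enough to defeat the $2^{k-1}$ entropy factor. Once $(\ref{wt 38})$ is secured, the weighted versions $(\ref{wt 39})$ and $(\ref{wt 40})$ are routine.
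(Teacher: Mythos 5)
The paper does not prove this lemma; it simply cites \cite[Lemma 4.1]{Esposito.Guo.Kim.Marra2013}. Your blind reconstruction is a faithful rendering of exactly the argument in that reference (originating in Guo's 2010 decay paper): split $\nn_j$ into a ``good'' non-grazing, velocity-truncated set and its complement; use the uniform interior-sphere property of a $C^{2}$ domain to lower-bound the bounce time by $c_{\Omega}\d^{2}/\e$ on the good set; bound the $\sigma_j$-measure of the bad set by $C\d^{2}$ uniformly in the footpoint; count that on $\{t_k<T_0/\e\}$ at most $T_0/(c_\Omega\d^2)$ bounces can be good; and then integrate the iterated probability measure from the inside out against the indicator expanded over subsets, balancing $k\sim T_0^{5/4}$ against $\d\sim T_0^{-1/8}$ to defeat the $2^{k-1}$ entropy factor. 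Your scaling bookkeeping checks out, and the reduction of (\ref{wt 39}) to a single-velocity integral and of (\ref{wt 40}) to ``(\ref{wt 38}) with one weighted slot'' is correct.

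Two small remarks, neither a genuine gap. First, your parenthetical worry about degenerating curvature is harmless: curvature going to zero only lengthens the chord, and $C^2$-compactness caps it from above, so the uniform interior-ball radius $r_0>0$ is all one needs (and convexity is not essential here — the cited result holds for general $C^2$ bounded domains, since the chord only needs to stay inside the tangent interior ball, not the whole domain). Second, the deduction of the precise $\xi^{-3}$ power in (\ref{wt 39}) is stated somewhat loosely; the rescaling $\vv\mapsto\xi\vv$ alone does not give $\xi^{-3}$ because of the surviving Gaussian factor, and a clean split of the $\vv$-integral at $|\vv|\sim 1/\xi$ is what produces the $\xi^{-3}$ (with the $\beta>2$ hypothesis controlling the outer region). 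This is a presentation issue, not a mathematical one, and matches the level of detail of the cited lemma.
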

\begin{proof}
See \cite[Lemma
4.1]{Esposito.Guo.Kim.Marra2013}.
\end{proof}
\begin{theorem}\label{LI estimate'}
The
solution $f(\vx,\vw)$ to the equation (\ref{linear steady}) satisfies the estimate for $\vth\geq3$ and $0\leq\varrho<\dfrac{1}{4}$,
\begin{eqnarray}
\\
\im{\bv f}
&\leq&C\bigg(\frac{1}{\e^3}\tm{\pk[S]}+\frac{1}{\e^2}\tm{(\ik-\pk)[S]}+\frac{1}{\e^{2}}\tss{h}{-}+\im{\frac{\bv S}{\nu}}+\iss{\bv h}{-}\bigg).\no
\end{eqnarray}
\end{theorem}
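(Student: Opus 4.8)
The plan is to run the $L^{2}$--$L^{\infty}$ bootstrap of \cite{Guo2010} and \cite{Esposito.Guo.Kim.Marra2013}, adapted to the $\e$-scaled steady equation with diffuse reflection, and to close it using the $L^{2}$ bound of Theorem~\ref{LT estimate}. First I would pass to the weighted unknown $g=\bv f$; writing $\ll=\nu I-K$, the equation \eqref{linear steady} becomes $\e\vv\cdot\nx g+\nu(\vv)g=K_{w}[g]+\bv S$, where $w=\bv$ and $K_{w}[g](\vv)=\int_{\r^{2}}\frac{w(\vv)}{w(\vu)}k(\vu,\vv)g(\vu)\,\ud\vu$, and the diffuse condition becomes $g(\vx_{0},\vv)=w(\vv)\m^{\frac12}(\vv)\int_{\vu\cdot\vn>0}\frac{\m^{\frac12}(\vu)}{w(\vu)}g(\vx_{0},\vu)\abs{\vu\cdot\vn}\,\ud\vu+\bv h$ on $\gamma_{-}$. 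By Lemma~\ref{wellposedness prelim lemma 8} with $\beta=\vth$ and Lemma~\ref{prelim 1}, $K_{w}$ is essentially as good as $K$: $\int_{\r^{2}}\ue^{\d\abs{\vu-\vv}^{2}}\frac{w(\vv)}{w(\vu)}k(\vu,\vv)\,\ud\vu\le C/(1+\abs{\vv})$ for small $\d>0$, and $\im{\nu^{-1}K_{w}[g]}\le C\im{g}$.

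Next I would write the mild (Duhamel) representation of $g$ along the back-time characteristic $s\mapsto\vx-\e s\vv$, using diffuse reflection at each bounce to build the stochastic cycle, and iterate $k=C_{1}T_{0}^{5/4}$ times. This expresses $g(\vx,\vv)$ as the sum of: (i) a residual term involving $g$ after $k$ consecutive bounces, bounded by $(1/2)^{C_{2}T_{0}^{5/4}}\im{g}$ via Lemma~\ref{wellposedness prelim lemma 6} and \eqref{wt 38}, hence absorbed into the left side for $T_{0}$ large; (ii) the accumulated contributions of $\bv h$ at the bounce points, bounded by $C\iss{\bv h}{-}$ using \eqref{wt 39}; (iii) the direct contributions of the source along each leg, bounded via $\int_{0}^{\infty}\ue^{-\nu(\vv)s}\,\ud s=\nu(\vv)^{-1}\le\nu_{0}^{-1}$ by $C\im{\bv S/\nu}$; and (iv) the accumulated $K_{w}[g]$ contributions along the legs, which is the crux.

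For (iv) I would substitute the mild formula for $g$ once more into each $K_{w}[g]$ term (a double Duhamel), obtaining a double integral over two intermediate velocities $\vu,\vu'$ and two back-times $s,s'$. On the ``bad'' set --- $\abs{\vu}\ge N$ or $\abs{\vu'}\ge N$, or $s'\le\d$, or the two legs nearly parallel, together with the near-grazing boundary piece controlled by Lemma~\ref{wellposedness prelim lemma 1} --- the relevant measure is $\le(\d+N^{-1}+\cdots)$ times an $\im{g}$ factor and gets absorbed. On the complementary set I change the innermost velocity into a spatial variable, $\vu\mapsto\vy=(\text{base point})-\e s'\vu$, with Jacobian $(\e s')^{-2}$; pairing the (suitably regularized) product of the two kernels in $L^{2}_{\vy}(\Omega)$ against $\abs{g(\vy,\vu')}$ restores a compensating factor $\e s'$, so only one net power $(\e s')^{-1}$ survives, and $\int_{\d}^{s_{1}}(\e s')^{-1}\,\ud s'$ followed by the $\vu'$-integral against the kernel yields a bound $\le C(\d,N)\,\e^{-1}\tm{g}\le C(\d,N,\vth,\varrho)\,\e^{-1}\tm{f}$. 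Collecting all terms and absorbing the small $\im{g}$-contributions gives, schematically,
\[
\im{\bv f}\ \le\ C\e^{-1}\tm{f}+C\big(\im{\bv S/\nu}+\iss{\bv h}{-}\big).
\]

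Finally I would insert Theorem~\ref{LT estimate}, $\tm{f}\le C\big(\e^{-2}\tm{\pk[S]}+\e^{-1}\tm{(\ik-\pk)[S]}+\e^{-1}\tss{h}{-}\big)$, which turns the $\e^{-1}\tm{f}$ term into $\e^{-3}\tm{\pk[S]}+\e^{-2}\tm{(\ik-\pk)[S]}+\e^{-2}\tss{h}{-}$ and yields the asserted estimate. The main obstacle is the $\e$-power bookkeeping in the double Duhamel: one must show the change of variables costs only one power of $\e$ --- because the $L^{2}_{\vy}$ pairing with the kernel product restores a factor $\e s'$ --- rather than the naive two powers, while simultaneously keeping the near-grazing contributions of the diffuse-reflection cycle genuinely absorbable even though $t_{b}$ may be arbitrarily small; the mild $L^{2}$-singularity of the kernel $k$ at $\vu=\vv$ also forces a smooth truncation of $k$ in this step.
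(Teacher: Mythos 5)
Your plan is the paper's own $L^{2}$--$L^{\infty}$ bootstrap executed in the same order: pass to $g=\bv f$, iterate the stochastic-cycle Duhamel formula $k=C_{1}T_{0}^{5/4}$ times using Lemma~\ref{wellposedness prelim lemma 6} to make the residual term $\d\im{g}$, bound the $h$ and $S$ contributions directly, do a second Duhamel on the $K_{\vh}$ term and split into the small pieces ($\abs{\vv}\ge N$, far-apart velocities, short back-time $t_{1}'-r\le\d$) plus a main piece handled by the change of variables $\vv'\mapsto\vy$, and then insert Theorem~\ref{LT estimate}. The one bookkeeping caveat: your displayed $\int_{\d}^{s_{1}}(\e s')^{-1}\,\ud s'$ must retain the accompanying $\ue^{-\nu(\vvv')s'}$ factor supplied by the Duhamel kernel, since $s_{1}$ can be of order $T_{0}/\e$ and without that decay the $s'$-integral produces an unwanted $\abs{\ln\e}$; the paper instead runs Cauchy--Schwarz jointly over $(r,\vvv',\vvv'')$ so that the Jacobian appears as the constant $(\e\d)^{-2}$ under the square root and the surviving $\ue^{-\nu(t_{1}'-r)}$ keeps the $r$-integral $O(1)$, yielding the clean $\frac{C}{\e\d}\tm{f}$.
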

\begin{proof}
We divide the proof into several steps:\\
\ \\
Step 1: Mild formulation.\\
Denote
\begin{eqnarray}
g(\vx,\vv)&=&\vh(\vv) f(\vx,\vv),\\
K_{\vh(\vv)}[g](\vx,\vv)&=&\vh(\vv)
K\left[\frac{g}{\vh}\right](\vx,\vv)=\int_{\r^2}k_{\vh(\vv)}(\vv,\vuu)g(\vx,\vuu)\ud{\vuu},
\end{eqnarray}
where
\begin{eqnarray}
k_{\vh(\vv)}(\vv,\vuu)=k(\vv,\vuu)\frac{\vh(\vv)}{\vh(\vuu)}.
\end{eqnarray}
We can rewrite the solution of the equation (\ref{linear steady})
along the characteristics by Duhamel's principle as
\begin{eqnarray}
g(\vx,\vv)&=& \vh(\vv)h(\vx-\e t_1\vv,\vv)\ue^{-\nu(\vv)
t_{1}}+\int_{0}^{t_{1}}\vh(\vv) S\Big(\vx-\e(t_1-s)\vv,\vv\Big)\ue^{-\nu(\vv)
(t_{1}-s)}\ud{s}\\
&&+\int_{0}^{t_{1}}K_{\vh(\vv)}[g]\Big(\vx-\e(t_1-s)\vv,\vv\Big)\ue^{-\nu(\vv)
(t_{1}-s)}\ud{s}+\frac{\ue^{-\nu(\vv)
t_{1}}}{\tvh(\vv)}\int_{\nn_1}g(\vx_1,\vv_1)\tvh(\vv_1)\ud{\sigma_1},\no
\end{eqnarray}
where the last term refers to $\pp[f]$. We may further rewrite the equation (\ref{linear steady}) along the
stochastic cycle by applying Duhamel's principle $k$ times as
\begin{eqnarray}\label{wt 36}
g(\vx,\vv)
&=& \vh(\vv)h(\vx-\e t_1\vv,\vv)\ue^{-\nu(\vv)
t_{1}}+\int_{0}^{t_{1}}\vh(\vv) S\Big(\vx-\e(t_1-s)\vv,\vv\Big)\ue^{-\nu(\vv)
(t_{1}-s)}\ud{s}\\
&&+\int_{0}^{t_{1}}K_{\vh(\vv)}[g]\Big(\vx-\e(t_1-s)\vv,\vv\Big)\ue^{-\nu(\vv)
(t_{1}-s)}\ud{s}\no\\
&&+\frac{1}{\tvh(\vv)}\sum_{l=1}^{k-1}\int_{\prod_{j=1}^{l}\nn_j}\Big(G[\vx,\vw]+H[\vx,\vw]\Big)\tvh(\vv_l)\bigg(\prod_{j=1}^{l}\ue^{-\nu(\vv_j)(t_{j+1}-t_j)}\ud{\sigma_j}\bigg)\no\\
&&+\frac{1}{\tvh(\vv)}\int_{\prod_{j=1}^{k}\nn_j}g(\vx_k,\vv_k)\tvh(\vv_{k})
\bigg(\prod_{j=1}^{k}\ue^{-\nu(\vv_{j})(t_{j+1}-t_{j})}\ud{\sigma_j}\bigg),\no
\end{eqnarray}
where
\begin{eqnarray}\label{wt 37}
G[\vx,\vw]&=&h(\vx_{l}-\e t_{l+1}\vv_l,\vv_{l})\vh(\vv_l)+\int_{t_l}^{t_{l+1}}\bigg(
S\Big(\vx_{l}-\e(t_{l+1}-s)\vv_l,\vv_{l}\Big)\vh(\vv_l)\ue^{\nu(\vv_l)s}\bigg)\ud{s}\\
H[\vx,\vw]&=&\int_{t_l}^{t_{l+1}}\bigg(
K_{\vh(\vv_l)}[g]\Big(\vx_{l}-\e(t_{l+1}-s)\vv_l,\vv_{l}\Big)\ue^{\nu(\vv_l)s}\bigg)\ud{s}.
\end{eqnarray}
\ \\
Step 2: Estimates of source terms and boundary terms.\\
We set $k=CT_0^{\frac{5}{4}}$ and take absolute value on both sides of
(\ref{wt 36}). Then all the terms in (\ref{wt
36}) related to the source term $S$ and boundary term $h$ can be bounded as
\begin{eqnarray}\label{wt 51}
\text{Part 1}\leq C\bigg(\iss{\vh h}{-}+\im{\frac{\vh S}{\nu}}\bigg)
\end{eqnarray}
due to Lemma \ref{wellposedness prelim lemma 6} and
\begin{eqnarray}
\frac{1}{\tvh}\leq C(\beta,\varrho)\xi^{\beta}.
\end{eqnarray}
The last term in (\ref{wt 36}) can be decomposed as follows:
\begin{eqnarray}
\text{Part 2}
&=&\frac{1}{\tvh(\vv)}\int_{\Pi_{j=1}^{k}\nn_j}\id_{\left\{t_{k+1}\leq
\frac{T_0}{\e}\right\}}g(\vx_k,\vv_k)\tvh(\vv_{k})
\bigg(\prod_{j=1}^{k}\ue^{-\nu(\vv_{j})(t_{j+1}-t_{j})}\ud{\sigma_j}\bigg)\\
&&+\frac{1}{\tvh(\vv)}\int_{\Pi_{j=1}^{k}\nn_j}\id_{\left\{t_{k+1}\geq
\frac{T_0}{\e}\right\}}g(\vx_k,\vv_k)\tvh(\vv_{k})
\bigg(\prod_{j=1}^{k}\ue^{-\nu(\vv_{j})(t_{j+1}-t_{j})}\ud{\sigma_j}\bigg).\no
\end{eqnarray}
Based on Lemma \ref{wellposedness prelim lemma 6}, we have
\begin{eqnarray}
&&\frac{1}{\tvh(\vv)}\int_{\Pi_{j=1}^{k}\nn_j}\id_{\left\{t_{k+1}\leq
\frac{T_0}{\e}\right\}}g(\vx_k,\vv_k)\tvh(\vv_{k})
\bigg(\prod_{j=1}^{k}\ue^{-\nu(\vv_{j})(t_{j+1}-t_{j})}\ud{\sigma_j}\bigg)\\
&\leq&C\bigg(\frac{1}{2}\bigg)^{C_2T_0^{\frac{5}{4}}}\im{g}.\no
\end{eqnarray}
Based on Lemma \ref{wellposedness prelim lemma 6} and
$\nu_0(1+\abs{\vv})\leq \nu(\vv)\leq\nu_1(1+\abs{\vv})$, we obtain
\begin{eqnarray}
&&\frac{1}{\tvh(\vv)}\int_{\Pi_{j=1}^{k}\nn_j}\id_{\left\{t_{k+1}\geq
\frac{T_0}{\e}\right\}}g(\vx_k,\vv_k)\tvh(\vv_{k})
\bigg(\prod_{j=1}^{k}\ue^{-\nu(\vv_{j})(t_{j+1}-t_{j})}\ud{\sigma_j}\bigg)\\
&\leq&\ue^{-\frac{\nu_0T_0}{\e}}\im{g}.\no
\end{eqnarray}
For $T_0$ sufficiently large and $\e$ sufficiently small, we get
\begin{eqnarray}
&&\text{Part 2}\leq
\d \im{g}.
\end{eqnarray}
for $\d$ arbitrarily small.\\
\ \\
Step 3: Estimates of $K_{\vh}$ terms.\\
So far, the only remaining terms are related to $K_{\vh}$.
Define the back-time stochastic cycle from $(s,\xc(s;\vx,\vv),\vv')$
as $(t_i',\vx_i',\vv_i')$. Then we can rewrite $K_{\vh}$ along the
stochastic cycle as
\begin{eqnarray}\label{wt 43}
&&K_{\vh(\vv)}[g]\Big(\vx-\e(t_1-s)\vv,\vv\Big)=K_{\vh(\vv)}[g](\xc,\vv)
=\int_{\r^2}k_{\vh(\vv)}(\vv,\vv')g(\xc,\vv')\ud{\vv'}\\
&\leq&\abs{\int_{\r^2}\int_{0}^{t_1'}k_{\vh(\vv)}(\vv,\vv')K_{\vh(\vv')}[g]\Big(\xc-\e(t_1'-r)\vv',\vv'\Big)\ue^{-\nu(\vv')
(t_1'-r)}\ud{r}\ud{\vv'}}\no\\
&&+\abs{\int_{\r^2}\frac{1}{\tvh(\vv')}\sum_{l=1}^{k-1}\int_{\prod_{j=1}^{l}\nn_j'}k_{\vh(\vv)}(\vv,\vv')H[\xc,\vv']\tvh(\vv_l')
\bigg(\prod_{j=1}^{l}\ue^{-\nu(\vv_j')(t_{j+1}'-t_j')}\ud{\sigma_j'}\bigg)\ud{\vv'}}\no\\
&&+\abs{\int_{\r^2}k_{\vh(\vv)}(\vv,\vv')A\ud{\vv'}}\no\\
&=&I+II+III.\no
\end{eqnarray}
$III$ can be directly estimated as
\begin{eqnarray}
III\leq C\bigg(\iss{\vh h}{-}+\im{\vh S}+\d\im{g}\bigg).
\end{eqnarray}
We may further rewrite $I$ as
\begin{eqnarray}
I&=&\abs{\int_{\r^2}\int_{\r^2}\int_{0}^{t_1'}k_{\vh(\vv)}(\vv,\vv')k_{\vh(\vv')}(\vv',\vv'')g(\xc-\e(t_1'-r)\vv',\vv'')\ue^{-\nu(\vv')
(t_1'-r)}\ud{r}\ud{\vv'}\ud{\vv''}},
\end{eqnarray}
which will estimated in four cases:
\begin{eqnarray}
I=I_1+I_2+I_3+I_4.
\end{eqnarray}
\ \\
Case I: $\abs{\vv}\geq N$.\\
Based on Lemma \ref{wellposedness prelim lemma 8}, we
have
\begin{eqnarray}
\abs{\int_{\r^2}\int_{\r^2}k_{\vh(\vv)}(\vv,\vv')k_{\vh(\vv')}(\vv',\vv'')\ud{\vv'}\ud{\vv''}}\leq\frac{C}{1+\abs{\vv}}\leq\frac{C}{N}.
\end{eqnarray}
Hence, we get
\begin{eqnarray}
I_1\leq\frac{C}{N}\im{g}.
\end{eqnarray}
\ \\
Case II: $\abs{\vv}\leq N$, $\abs{\vv'}\geq2N$, or $\abs{\vv'}\leq
2N$, $\abs{\vv''}\geq3N$.\\
Notice this implies either $\abs{\vv'-\vv}\geq N$ or
$\abs{\vv'-\vv''}\geq N$. Hence, either of the following is valid
correspondingly:
\begin{eqnarray}
\abs{k_{\vh(\vv)}(\vv,\vv')}&\leq& C\ue^{-\d N^2}\abs{k_{\vh(\vv)}(\vv,\vv')\ue^{\d\abs{\vv-\vv'}^2}},\\
\abs{k_{\vh(\vv')}(\vv',\vv'')}&\leq& C\ue^{-\d N^2}\abs{k_{\vh(\vv')}(\vv',\vv'')\ue^{\d\abs{\vv'-\vv''}^2}}.
\end{eqnarray}
Then based on Lemma \ref{wellposedness prelim lemma 8},
\begin{eqnarray}
\int_{\r^2}\abs{k_{\vh(\vv)}(\vv,\vv')\ue^{\d\abs{\vv-\vv'}^2}}\ud{\vv'}&<&\infty,\\
\int_{\r^2}\abs{k_{\vh(\vv')}(\vv',\vv'')\ue^{\d\abs{\vv'-\vv''}^2}}\ud{\vv''}&<&\infty.
\end{eqnarray}
Hence, we have
\begin{eqnarray}
I_2\leq C\ue^{-\d N^2}\im{g}.
\end{eqnarray}
\ \\
Case III: $t_1'-r\leq\d$ and $\abs{\vv}\leq N$, $\abs{\vv'}\leq 2N$, $\abs{\vv''}\leq 3N$.\\
In this case, since the integral with respect to $r$ is restricted in a very short
interval, there is a small contribution as
\begin{eqnarray}
I_3\leq\abs{\int_{\r^2}\int_{\r^2}\int_{t_1'-\d}^{t_1'}k_{\vh(\vv)}(\vv,\vv')k_{\vh(\vv')}(\vv',\vv'')\ue^{-\nu(\vv')
(t_1'-r)}\ud{r}\ud{\vv'}\ud{\vv''}}\im{g}\leq C\d\im{g}.
\end{eqnarray}
\ \\
Case IV: $t_1'-r\geq\d$ and $\abs{\vv}\leq N$, $\abs{\vv'}\leq 2N$, $\abs{\vv''}\leq 3N$.\\
Since $k_{\vh(\vv)}(\vv,\vv')$ has
possible integrable singularity of $\dfrac{1}{\abs{\vv-\vv'}}$, we can
introduce the truncated kernel $k_N(\vv,\vv')$ which is smooth and has compact support such that
\begin{eqnarray}\label{wt 42}
\sup_{\abs{p}\leq 3N}\int_{\abs{\vv'}\leq
3N}\abs{k_N(p,\vv')-k_{\vh(p)}(p,\vv')}\ud{\vv'}\leq\frac{1}{N}.
\end{eqnarray}
Then we can split
\begin{eqnarray}\label{wt 41}
k_{\vh(\vv)}(\vv,\vv')k_{\vh(\vv')}(\vv',\vv'')
&=&k_N(\vv,\vv')k_N(\vv',\vv'')\\
&&+\bigg(k_{\vh(\vv)}(\vv,\vv')-k_N(\vv,\vv')\bigg)k_{\vh(\vv')}(\vv',\vv'')\no\\
&&+\bigg(k_{\vh(\vv')}(\vv',\vv'')-k_N(\vv',\vv'')\bigg)k_N(\vv,\vv').\no
\end{eqnarray}
This means we further split $I_4$ into
\begin{eqnarray}
I_4=I_{4,1}+I_{4,2}+I_{4,3}.
\end{eqnarray}
Based on the estimate (\ref{wt 42}), we have
\begin{eqnarray}
I_{4,2}&\leq&\frac{C}{N}\im{g},\\
I_{4,3}&\leq&\frac{C}{N}\im{g}.
\end{eqnarray}
Therefore, the only remaining term is $I_{4,1}$. Note we always have
$\xc-\e(t_1'-r)\vv'\in\Omega$. Hence, we define the change of
variable $\vv'\rt\vec y$ as $\vec y=(y_1,y_2)=\xc-\e(t_1'-r)\vv'$ such that
\begin{eqnarray}
\abs{\frac{\ud{\vec y}}{\ud{\vv'}}}=\abs{\left\vert\begin{array}{cc}
\e(t_1'-r)&0\\
0&\e(t_1'-r)
\end{array}\right\vert}=\e^2(t_1'-r)^2\geq \e^2\d^2.
\end{eqnarray}
Since $k_N$ is bounded and $\abs{\vv'},\abs{\vv''}\leq 3N$, we estimate
\begin{eqnarray}
I_{4,1}
&\leq&
C\abs{\int_{\abs{\vv'}\leq2N}\int_{\abs{\vv''}\leq3N}\int_{0}^{t_1'}
\id_{\{\xc-\e(t_1'-r)\vv'\in\Omega\}}g(\xc-\e(t_1'-r)\vv',\vv'')\ue^{-\nu(\vv')
(t_1'-r)}\ud{r}\ud{\vv'}\ud{\vv''}}\\
&\leq&
C\abs{\int_{\abs{\vv'}\leq2N}\int_{\abs{\vv''}\leq3N}\int_{0}^{t_1'}
\id_{\{\xc-\e(t_1'-r)\vv'\in\Omega\}}f(\xc-\e(t_1'-r)\vv',\vv'')\ue^{-\nu(\vv')
(t_1'-r)}\ud{r}\ud{\vv'}\ud{\vv''}}\no\\
&\leq&C\bigg(\int_{\abs{\vv'}\leq2N}\int_{\abs{\vv''}\leq3N}\int_{0}^{t_1'}
\id_{\{\xc-\e(t_1'-r)\vv'\in\Omega\}}\ue^{-\nu(\vv')
(t_1'-r)}\ud{r}\ud{\vv'}\ud{\vv''}\bigg)^{\frac{1}{2}}\no\\
&&\bigg(\int_{\abs{\vv'}\leq2N}\int_{\abs{\vv''}\leq3N}\int_{0}^{t_1'}
\id_{\{\xc-\e(t_1'-r)\vv'\in\Omega\}}f^2(\xc-\e(t_1'-r)\vv',\vv'')\ue^{-\nu(\vv')
(t_1'-r)}\ud{r}\ud{\vv'}\ud{\vv''}\bigg)^{\frac{1}{2}}\no\\
&\leq&C\abs{\int_{0}^{t_1'}\frac{1}{\e^2\d^2}\int_{\abs{\vv''}\leq3N}\int_{\Omega}\id_{\{\vec
y\in\Omega\}}f^2(\vec y,\vv'')\ue^{-\nu(\vv')
(t_1'-r)}\ud{\vec y}\ud{\vv''}\ud{r}}^{\frac{1}{2}}\no\\
&\leq&\frac{C}{\e\d}\abs{\int_{0}^{t_1'}\int_{\abs{\vv''}\leq3N}\int_{\Omega}\id_{\{\vec
y\in\Omega\}}f^2(\vec y,\vv'')\ue^{-\nu(\vv')
(t_1'-r)}\ud{\vec y}\ud{\vv''}\ud{r}}^{\frac{1}{2}}\no\\
&=&\frac{C}{\e\d}\tm{f}.\no
\end{eqnarray}
Summarize all above in Case IV, we obtain
\begin{eqnarray}
I_4\leq \frac{C}{N}\im{g}+\frac{C}{\e\d}\tm{f}.
\end{eqnarray}
Therefore, we already prove
\begin{eqnarray}
I\leq
\bigg(C\ue^{-\d N^2}+\frac{C}{N}+\d\bigg)\im{g}+\frac{C}{\e\d}\tm{f}
\end{eqnarray}
Choosing $\d$ sufficiently small and then taking $N$ sufficiently
large, we have
\begin{eqnarray}
I\leq C\d\im{g}+\frac{C}{\e\d}\tm{f}.
\end{eqnarray}
A similar technique can justify
\begin{eqnarray}
II\leq C\d\im{g}+\frac{C}{\e\d}\tm{f}.
\end{eqnarray}
All the terms related to $K_{\vh}$ can be estimated in a similar fashion.\\
\ \\
Step 4: Synthesis.\\
Collecting all above, based on the mild formulation (\ref{wt 36}) we
have shown
\begin{eqnarray}
\im{g}\leq C\d\im{g}+\frac{C}{\e\d}\tm{f}+C\im{\frac{\vh S}{\nu}}+C\iss{\vh h}{-}.
\end{eqnarray}
Taking $\d$ is sufficiently small, based on Theorem \ref{LT estimate}, we obtain
\begin{eqnarray}
\im{g}&\leq&C\bigg(\frac{1}{\e}\tm{f}+\im{\vh S}+\iss{\vh
h}{-}\bigg)\\
&\leq&C\bigg( \frac{1}{\e^3}\tm{\pk[S]}+\frac{1}{\e^2}\tm{(\ik-\pk)[S]}+\frac{1}{\e^{2}}\tss{h}{-}+\im{\frac{\vh S}{\nu}}+\iss{\vh h}{-}\bigg).\no
\end{eqnarray}
It is easy to see for $\vth=\beta$, we have
\begin{eqnarray}
C_1\bv\leq\vh\leq C_2\bv,
\end{eqnarray}
for some constant $C_1,C_2>0$.
Then we must have
\begin{eqnarray}
\\
\im{\bv f}
&\leq&C\bigg(\frac{1}{\e^3}\tm{\pk[S]}+\frac{1}{\e^2}\tm{(\ik-\pk)[S]}+\frac{1}{\e^{2}}\tss{h}{-}+\im{\frac{\bv S}{\nu}}+\iss{\bv h}{-}\bigg).\no
\end{eqnarray}
\end{proof}

%%%%%%%%%%%%%%%%%%%%%%%%%%%%%%%%%%%%%%%%%%%%%%%%%%%%%%%%%%%%%%%%%%%%%%%%
\subsection{$L^{2m}$ Estimates}
%%%%%%%%%%%%%%%%%%%%%%%%%%%%%%%%%%%%%%%%%%%%%%%%%%%%%%%%%%%%%%%%%%%%%%%%

Here we consider $m\geq\mathbb{N}$ and $m>2$. Let $o(1)$ denote a sufficiently small constant. Since this is very similar to the $L^2$ estimates, we will focus on the key differences.
\begin{lemma}\label{wellposedness prelim lemma 4}
The
solution $f(\vx,\vw)$ to the equation (\ref{linear steady}) satisfies the estimate
\begin{eqnarray}\label{wt 08'}
\e\nm{\pk[f]}_{L^{2m}}&\leq& C\bigg(\e\abs{(1-\pp)[f]}_{L^m_+}+\tm{(\ik-\pk)[f]}+\e\nm{(\ik-\pk)[f]}_{L^{2m}}+\tm{S}+\e\abs{h}_{L^{m}_-}\bigg).
\end{eqnarray}
\end{lemma}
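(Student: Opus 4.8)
The plan is to mirror the proof of Lemma~\ref{wellposedness prelim lemma 3}, replacing the $L^2$-based test functions by $L^{2m}$-based ones. First I would record that, since $\pk[f]=\m^{\frac12}(a+\vv\cdot\vbb+\frac{\abs{\vv}^2-2}{2}c)$ and the velocity factors carry Gaussian weights, $\nm{\pk[f]}_{L^{2m}}$ is comparable to $\nm{a}_{L^{2m}_x}+\nm{\vbb}_{L^{2m}_x}+\nm{c}_{L^{2m}_x}$, so it suffices to estimate each of $a$, $\vbb$, $c$ as a function on $\Omega$. The Green's identity (\ref{wt 11}) stays the basic tool; only the potentials change. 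To bound $c$, I take $\psi_c=\m^{\frac12}(\vv)(\abs{\vv}^2-\beta_c)(\vv\cdot\nx\phi_c)$ as in (\ref{wt 12}), but now with $-\dx\phi_c=\abs{c}^{2m-2}c$ in $\Omega$ and $\phi_c=0$ on $\p\Omega$. With $p=\frac{2m}{2m-1}$, the $W^{2,p}$ Calder\'on--Zygmund estimate on the smooth domain $\Omega$ gives $\nm{\phi_c}_{W^{2,p}}\ls\nm{c}_{L^{2m}}^{2m-1}$; the same choice of $\beta_c$ as in the $L^2$ proof kills the $\pp[f]$ and $a$ contributions, and the diagonal term on the left of (\ref{wt 11}) becomes $C\int_\Omega c(-\dx\phi_c)=C\nm{c}_{L^{2m}}^{2m}$.

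The heart of the argument is converting every remaining term into $\nm{c}_{L^{2m}}^{2m-1}$ times one of the advertised norms, which is where the two-dimensionality and the structure of the right side enter. For the source and collision pairings $\int S\psi_c$ and $\int\psi_c\ll\big[(\ik-\pk)[f]\big]$ (bounding $\nu\psi_c$ directly, or moving $\ll$ onto $\psi_c$), only $\nm{\nx\phi_c}_{L^2_x}$ is needed, and since $\nx\phi_c\in W^{1,p}\hookrightarrow L^q_x$ with $q=\frac{2m}{m-1}\ge2$ we get $\nm{\nx\phi_c}_{L^2_x}\ls\nm{c}_{L^{2m}}^{2m-1}$, producing the $\tm{S}$ and $\tm{(\ik-\pk)[f]}$ terms with no factor $\e$. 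The boundary terms carry $\e$; by H\"older on $\gamma_\pm$ with exponents $m$ and $\frac{m}{m-1}$ together with the trace embedding of $\nx\phi_c$ into $L^{m/(m-1)}(\p\Omega)$ (valid on smooth $\p\Omega$, with norm $\ls\nm{c}_{L^{2m}}^{2m-1}$) they are bounded by $\e\abs{(1-\pp)[f]}_{L^m_+}\nm{c}_{L^{2m}}^{2m-1}$ and $\e\abs{h}_{L^m_-}\nm{c}_{L^{2m}}^{2m-1}$. The genuinely new term is the interior non-kernel piece $\e\int_{\Omega\times\r^2}(\ik-\pk)[f]\,\m^{\frac12}(\vv)(\abs{\vv}^2-\beta_c)\sum_{ij}\vs_i\vs_j\p_{ij}\phi_c$: here $D^2\phi_c$ only sits in $L^p$, whose conjugate exponent is precisely $2m$, so H\"older forces $\e\nm{(\ik-\pk)[f]}_{L^{2m}}\nm{c}_{L^{2m}}^{2m-1}$ --- this is the origin of the extra term in (\ref{wt 08'}). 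Dividing through by $\nm{c}_{L^{2m}}^{2m-1}$ gives the bound for $\e\nm{c}_{L^{2m}}$.

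The estimate of $a$ is the same with $\phi_a$ the Neumann potential of $\abs{a}^{2m-2}a-\abs{\Omega}^{-1}\int_\Omega\abs{a}^{2m-2}a$, the mean subtracted for solvability and then removed using $\int_\Omega a=0$ from the normalization (\ref{linear steady normalization}); the $\pp[f]$ boundary term vanishes exactly as in (\ref{wt 27}). For $\vbb$ I would run the three sub-steps of Step~2 of Lemma~\ref{wellposedness prelim lemma 3}, choosing $-\dx\phi_b^j=\abs{b_j}^{2m-2}b_j$ (and, where the $L^2$ proof introduces auxiliary cross-potentials, the correspondingly powered data), so that summing the analogues of (\ref{wt 21})--(\ref{wt 22}) over $j$ reconstructs $\nm{b_i}_{L^{2m}}^{2m}$ up to errors already controlled, yielding $\e\nm{b_i}_{L^{2m}}$ bounded by the same right side. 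Summing the $a$, $\vbb$, $c$ bounds and using $\nm{\pk[f]}_{L^{2m}}\ls\nm{a}_{L^{2m}_x}+\nm{\vbb}_{L^{2m}_x}+\nm{c}_{L^{2m}_x}$ produces (\ref{wt 08'}).

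I expect the main obstacle to be bookkeeping rather than new ideas, with two sticking points. First, one must check that the 2D Sobolev and trace exponents all land exactly where needed --- in particular that the trace of $\nx\phi$ reaches $L^{m/(m-1)}(\p\Omega)$, a borderline case relying on smoothness of $\p\Omega$, and that $q=\frac{2m}{m-1}\ge2$ so that $\nm{\nx\phi}_{L^2_x}$ is controlled; these are clean for the convex smooth $\Omega$ here but leave essentially no slack. Second, in the $\vbb$ step the cancellation structure of (\ref{wt 19})--(\ref{wt 23}) has to be reproduced after the powers are inserted: the off-diagonal pieces $\int(\p_{ij}\dx^{-1}b_i)b_j$ no longer combine by the manipulation used in the $L^2$ case and must instead be absorbed by Young's inequality into a small multiple of $\nm{\vbb}_{L^{2m}}^{2m}$, which makes the $\vbb$ estimate the most delicate of the three.
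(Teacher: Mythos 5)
Your proposal matches the paper's proof almost exactly: the same test functions with the same $\beta$-normalizations, the elliptic data replaced by $c^{2m-1}$, $b_j^{2m-1}$, and $a^{2m-1}-\abs{\Omega}^{-1}\int_\Omega a^{2m-1}$, the $W^{2,\frac{2m}{2m-1}}$ Calder\'on--Zygmund bound, the embeddings $W^{1,\frac{2m}{2m-1}}\hookrightarrow L^{\frac{2m}{m-1}}\subset L^2$ and the trace into $L^{\frac{m}{m-1}}(\p\Omega)$, and the correct identification of the pairing of $D^2\phi\in L^{\frac{2m}{2m-1}}$ against $(\ik-\pk)[f]$ as the origin of the new $\e\nm{(\ik-\pk)[f]}_{L^{2m}}$ term. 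The one place you flag as a possible obstacle is in fact not one, and the fix you propose there is the one ingredient that would actually fail. The off-diagonal term $\int_\Omega\big(\p_{ij}\dx^{-1}b_i\big)b_j$ produced in the analogue of Step~2.2 is precisely the quantity controlled by the $L^{2m}$ analogue of (\ref{wt 19}) with the roles of $i$ and $j$ exchanged: since $\p_{ij}=\p_{ji}$ and (\ref{wt 19'}) is proved for every pair $(i,j)\in\{1,2\}^2$, the same index swap that closes the $L^2$ case closes the $L^{2m}$ case, so the cancellation structure does persist verbatim after the powers are inserted. By contrast, trying to absorb the cross term by H\"older plus Young's inequality would produce $\e\,C_{\mathrm{ell}}\nm{\vbb}_{L^{2m}}^{2m}$ with $C_{\mathrm{ell}}$ a Calder\'on--Zygmund constant that is not small, so it could not be moved to the left-hand side; no Young's-type absorption is available here, and none is needed.
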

\begin{proof}
Applying Green's identity in Lemma \ref{wellposedness prelim lemma
2} to the solution of the equation (\ref{linear steady}). Then for any $\psi\in L^2(\Omega\times\r^2)$
satisfying $\vv\cdot\nx\psi\in L^2(\Omega\times\r^2)$ and $\psi\in
L^2(\gamma)$, we have
\begin{eqnarray}\label{wt 11'}
&&\e\int_{\gamma_+}f\psi\ud{\gamma}-\e\int_{\gamma_-}f\psi\ud{\gamma}
-\e\int_{\Omega\times\r^2}(\vv\cdot\nx\psi)f=-\int_{\Omega\times\r^2}\psi\ll\Big[(\ik-\pk)[f]\Big]+\int_{\Omega\times\r^2}S\psi.
\end{eqnarray}
Since
\begin{eqnarray}
\pk[f]=\m^{\frac{1}{2}}\bigg(a+\vv\cdot\vbb+\frac{\abs{\vv}^2-2}{2}c\bigg),
\end{eqnarray}
our goal is to choose a particular test function $\psi$ to estimate
$a$, $\vbb$ and $c$.\\
\ \\
Step 1: Estimates of $c$.\\
We choose the test function
\begin{eqnarray}\label{wt 12'}
\psi=\psi_c=\m^{\frac{1}{2}}(\vv)\left(\abs{\vv}^2-\beta_c\right)\Big(\vv\cdot\nx\phi_c(\vx)\Big),
\end{eqnarray}
where
\begin{eqnarray}
\left\{
\begin{array}{rcl}
-\dx\phi_c&=&c^{2m-1}(\vx)\ \ \text{in}\ \
\Omega,\\
\phi_c&=&0\ \ \text{on}\ \ \p\Omega,
\end{array}
\right.
\end{eqnarray}
and $\beta_c$ is a real number to be determined later. Based on the
standard elliptic estimates, we have
\begin{eqnarray}
\nm{\phi_c}_{W^{2,\frac{2m}{2m-1}}}\leq C\nm{c^{2m-1}}_{L^{\frac{2m}{2m-1}}}\leq C\nm{c}_{L^{2m}}^{2m-1}.
\end{eqnarray}
Also, we know
\begin{eqnarray}
\nm{\psi_c}_{L^2}&\leq& C\nm{\phi_c}_{H^1}\leq C\nm{\phi_c}_{W^{2,\frac{2m}{2m-1}}}\leq C\nm{c}_{L^{2m}}^{2m-1},\\
\nm{\psi_c}_{L^\frac{2m}{2m-1}}&\leq&C\nm{\phi_c}_{W^{1,\frac{2m}{2m-1}}}\leq C\nm{c}_{L^{2m}}^{2m-1}.
\end{eqnarray}
With the choice of (\ref{wt 12'}), the right-hand side (RHS) of
(\ref{wt 11'}) is bounded by
\begin{eqnarray}
\text{RHS}\leq C\nm{c}_{L^{2m}}^{2m-1}\bigg(\tm{(\ik-\pk)[f]}+\tm{S}\bigg).
\end{eqnarray}
We will choose $\beta_c$ such that
\begin{eqnarray}
\int_{\r^2}\m^{\frac{1}{2}}(\vv)\left(\abs{\vv}^2-\beta_c\right)\vs_i^2\ud{\vv}=0\
\ \text{for}\ \ i=1,2.
\end{eqnarray}
The left-hand side (LHS) of (\ref{wt 11'}) takes the form
\begin{eqnarray}
\text{LHS}
&=&\e\int_{\p\Omega\times\r^2}\id_{\gamma_+}(1-\pp)[f]
\m^{\frac{1}{2}}(\vv)\left(\abs{\vv}^2-\beta_c\right)\left(\sum_{i=1}^2\vs_i\p_i\phi_c\right)(\vv\cdot\vn)\\
&&+\e\int_{\p\Omega\times\r^2}\id_{\gamma_-}h
\m^{\frac{1}{2}}(\vv)\left(\abs{\vv}^2-\beta_c\right)\left(\sum_{i=1}^2\vs_i\p_i\phi_c\right)(\vv\cdot\vn)\no\\
&&-\e\sum_{i=1}^2\int_{\r^2}\m(\vv)\abs{\vs_i}^2\left(\abs{\vv}^2-\beta_c\right)\frac{\abs{\vv}^2-2}{2}\ud{\vv}
\int_{\Omega}c(\vx)\p_{ii}\phi_c(\vx)\ud{\vx}\no\\
&&-\e\int_{\Omega\times\r^2}(\ik-\pk)[f]\m^{\frac{1}{2}}(\vv)\left(\abs{\vv}^2-\beta_c\right)\left(\sum_{i,j=1}^2\vs_i\vs_j\p_{ij}\phi_c\right).\no
\end{eqnarray}
Since
\begin{eqnarray}
\int_{\r^2}\m(\vv)\abs{\vs_i}^2\left(\abs{\vv}^2-\beta_c\right)\frac{\abs{\vv}^2-2}{2}\ud{\vv}=C,
\end{eqnarray}
we have
\begin{eqnarray}
\e\abs{\int_{\Omega}\dx\phi_c(\vx)c(\vx)\ud{\vx}}
&\leq&C\nm{c}_{L^{2m}}^{2m-1}\bigg(\e\abs{(1-\pp)[f]}_{L^m_+}+\tm{(\ik-\pk)[f]}+\e\nm{(\ik-\pk)[f]}_{L^{2m}}\\
&&+\tm{S}+\e\abs{h}_{L^{m}_-}\bigg),\no
\end{eqnarray}
where we have used the elliptic estimates, Sobolev embedding theorem, and the trace estimate:
\begin{eqnarray}
\abs{\nx\phi_c}_{L^{\frac{m}{m-1}}}\leq C\abs{\nx\phi_c}_{W^{\frac{1}{2m},\frac{m}{m-1}}}\leq C\nm{\nx\phi_c}_{W^{1,\frac{m}{m-1}}}\leq C\nm{\phi_c}_{W^{2,\frac{2m}{2m-1}}}\leq C\nm{c}_{L^{2m}}^{2m-1}.
\end{eqnarray}
Since $-\dx\phi_c=c^{2m-1}$,
we know
\begin{eqnarray}
\\
\e\nm{c}_{L^{2m}}^{2m}
&\leq&C
\nm{c}_{L^{2m}}^{2m-1}\bigg(\e\abs{(1-\pp)[f]}_{L^m_+}+\tm{(\ik-\pk)[f]}+\e\nm{(\ik-\pk)[f]}_{L^{2m}}+\tm{S}+\e\abs{h}_{L^{m}_-}\bigg),\no
\end{eqnarray}
which further implies
\begin{eqnarray}\label{wt 16'}
&&\e\nm{c}_{L^{2m}}\leq C
\bigg(\e\abs{(1-\pp)[f]}_{L^m_+}+\tm{(\ik-\pk)[f]}+\e\nm{(\ik-\pk)[f]}_{L^{2m}}+\tm{S}+\e\abs{h}_{L^{m}_-}\bigg).
\end{eqnarray}
\ \\
Step 2: Estimates of $\vbb$.\\
We further divide this step into several sub-steps:\\
\ \\
Step 2.1: Estimates of $\Big(\p_{ij}\dx^{-1}b_j\Big)b_i$ for $i,j=1,2$.\\
We choose the test function
\begin{eqnarray}\label{wt 17'}
\psi=\psi_b^{i,j}=\m^{\frac{1}{2}}(\vv)\left(\vs_i^2-\beta_b\right)\p_j\phi_b^j,
\end{eqnarray}
where
\begin{eqnarray}
\left\{
\begin{array}{rcl}
-\dx\phi_b^j&=&b_j^{2m-1}(\vx)\ \ \text{in}\ \
\Omega,\\
\phi_b^j&=&0\ \ \text{on}\ \ \p\Omega,
\end{array}
\right.
\end{eqnarray}
and $\beta_b$ is a real number to be determined later. Based on the
standard elliptic estimates, we have
\begin{eqnarray}
\nm{\phi_b^{i,j}}_{W^{2,\frac{2m}{2m-1}}}\leq C\nm{b_j^{2m-1}}_{L^{\frac{2m}{2m-1}}}\leq C\nm{b_j}_{L^{2m}}^{2m-1}.
\end{eqnarray}
Also, we know
\begin{eqnarray}
\nm{\psi_b^{i,j}}_{L^2}&\leq& C\nm{\phi_b^{i,j}}_{H^1}\leq C\nm{\phi_b^{i,j}}_{W^{2,\frac{2m}{2m-1}}}\leq C\nm{b_j}_{L^{2m}}^{2m-1},\\
\nm{\psi_b^{i,j}}_{L^\frac{2m}{2m-1}}&\leq&C\nm{\phi_b^{i,j}}_{W^{1,\frac{2m}{2m-1}}}\leq C\nm{b_j}_{L^{2m}}^{2m-1}.
\end{eqnarray}
With the choice of (\ref{wt 17'}), the right-hand side (RHS) of
(\ref{wt 11'}) is bounded by
\begin{eqnarray}
\text{RHS}\leq C \nm{\vbb}_{L^{2m}}^{2m-1}\bigg(\tm{(\ik-\pk)[f]}+\tm{S}\bigg).
\end{eqnarray}
We will choose $\beta_b$ such that
\begin{eqnarray}
\int_{\r^2}\m(\vv)\left(\abs{\vs_i}^2-\beta_b\right)\ud{\vv}=0\ \
\text{for}\ \ i=1,2.
\end{eqnarray}
Hence, the left-hand side (LHS) of (\ref{wt 11'}) takes the form
\begin{eqnarray}
\text{LHS}
&=&\e\int_{\p\Omega\times\r^2}\id_{\gamma_+}(1-\pp)[f]
\m^{\frac{1}{2}}(\vv)\left(\vs_i^2-\beta_b\right)\p_j\phi_b^j(\vv\cdot\vn)\no\\
&&+\e\int_{\p\Omega\times\r^2}\id_{\gamma_-}h
\m^{\frac{1}{2}}(\vv)\left(\vs_i^2-\beta_b\right)\p_j\phi_b^j(\vv\cdot\vn)\no\\
&&-\e\sum_{l=1}^2\int_{\Omega\times\r^2}\m(\vv)\vs_l^2\left(\vs_i^2-\beta_b\right)\p_{lj}\phi_b^jb_l\no\\
&&-\e\sum_{l=1}^2\int_{\Omega\times\r^2}(\ik-\pk)[f]\m^{\frac{1}{2}}(\vv)\left(\vs_i^2-\beta_b\right)\vs_l\p_{lj}\phi_b^j.\no
\end{eqnarray}
For such $\beta_b$ and any $i\neq l$, we can directly compute
\begin{eqnarray}
\int_{\r^2}\m(\vv)\left(\abs{\vs_i}^2-\beta_b\right)\vs_l^2\ud{\vv}&=&0,\\
\int_{\r^2}\m(\vv)\left(\abs{\vs_i}^2-\beta_b\right)\vs_i^2\ud{\vv}&=&C\neq0.
\end{eqnarray}
Then we deduce
\begin{eqnarray}
&&-\e\sum_{l=1}^2\int_{\Omega\times\r^2}\m(\vv)\vs_l^2\left(\vs_i^2-\beta_b\right)\p_{lj}\phi_b^jb_i\\
&=&-\e\int_{\Omega\times\r^2}\m(\vv)\vs_i^2\left(\vs_i^2-\beta_b\right)\p_{ij}\phi_b^jb_l-\e\sum_{l\neq
i}\int_{\Omega\times\r^2}\m(\vv)\vs_l^2\left(\vs_i^2-\beta_b\right)\p_{lj}\phi_b^jb_l\no\\
&=&C\int_{\Omega}\Big(\p_{ij}\dx^{-1}b_j\Big)b_i.\no
\end{eqnarray}
Hence, by (\ref{wt 16'}), we may estimate
\begin{eqnarray}\label{wt 19'}
\e\abs{\int_{\Omega}\Big(\p_{ij}\dx^{-1}b_j\Big)b_i}
&\leq&
C\nm{\vbb}_{L^{2m}}^{2m-1}\bigg(\e\abs{(1-\pp)[f]}_{L^m_+}+\tm{(\ik-\pk)[f]}+\e\nm{(\ik-\pk)[f]}_{L^{2m}}\\
&&+\tm{S}+\e\abs{h}_{L^{m}_-}\bigg).\no
\end{eqnarray}
\ \\
Step 2.2: Estimates of $(\p_{jj}\dx^{-1}b_i)b_i$ for $i\neq
j$.\\
We choose the test function
\begin{eqnarray}
\psi=\m^{\frac{1}{2}}(\vv)\abs{\vv}^2\vs_i\vs_j\p_j\phi_b^i\ \ i\neq j.
\end{eqnarray}
The right-hand side (RHS) of (\ref{wt 11'}) is still bounded by
\begin{eqnarray}
\text{RHS}\leq C\nm{\vbb}_{L^{2m}}^{2m-1}\bigg(\tm{(\ik-\pk)[f]}+\tm{S}\bigg).
\end{eqnarray}
Hence, the left-hand side (LHS) of (\ref{wt 11'}) takes the form
\begin{eqnarray}
\text{LHS}
&=&\e\int_{\p\Omega\times\r^2}\id_{\gamma_+}(1-\pp)[f]
\m^{\frac{1}{2}}(\vv)\abs{\vv}^2\vs_i\vs_j\p_j\phi_b^i(\vv\cdot\vn)\\
&&+\e\int_{\p\Omega\times\r^2}\id_{\gamma_-}h
\m^{\frac{1}{2}}(\vv)\abs{\vv}^2\vs_i\vs_j\p_j\phi_b^i(\vv\cdot\vn)\no\\
&&-\e\int_{\Omega\times\r^2}\m(\vv)\abs{\vv}^2\vs_i^2\vs_j^2\Big(\p_{ij}\phi_b^ib_j+\p_{jj}\phi_b^ib_i\Big)\no\\
&&-\e\sum_{l=1}^2\int_{\Omega\times\r^2}(\ik-\pk)[f]\m^{\frac{1}{2}}(\vv)\abs{\vv}^2\vs_i\vs_j\vs_l\p_{lj}\phi_b^i.\no
\end{eqnarray}
Then we deduce
\begin{eqnarray}
&&-\e\int_{\Omega\times\r^2}\m(\vv)\abs{\vv}^2\vs_i^2\vs_j^2\Big(\p_{ij}\phi_b^ib_j+\p_{jj}\phi_b^ib_i\Big)=
C\bigg(\int_{\Omega}\Big(\p_{ij}\dx^{-1}b_i\Big)b_j+\int_{\Omega}\Big(\p_{jj}\dx^{-1}b_i\Big)b_i\bigg).
\end{eqnarray}
Hence, we may estimate for $i\neq j$,
\begin{eqnarray}
\e\abs{\int_{\Omega}\Big(\p_{jj}\dx^{-1}b_i\Big)b_i}
&\leq&C\nm{\vbb}_{L^{2m}}^{2m-1}\bigg(\e\abs{(1-\pp)[f]}_{L^m_+}+\tm{(\ik-\pk)[f]}+\e\nm{(\ik-\pk)[f]}_{L^{2m}}\\
&&+\tm{S}+\e\abs{h}_{L^{m}_-}\bigg)+C\e\abs{\int_{\Omega}\Big(\p_{ij}\dx^{-1}b_i\Big)b_j},\no
\end{eqnarray}
which implies
\begin{eqnarray}\label{wt 21'}
\e\abs{\int_{\Omega}\Big(\p_{jj}\dx^{-1}b_i\Big)b_i}
&\leq&C\nm{\vbb}_{L^{2m}}^{2m-1}\bigg(\e\abs{(1-\pp)[f]}_{L^m_+}+\tm{(\ik-\pk)[f]}+\e\nm{(\ik-\pk)[f]}_{L^{2m}}\\
&&+\tm{S}+\e\abs{h}_{L^{m}_-}\bigg).\no
\end{eqnarray}
Moreover, by (\ref{wt 19'}), for $i=j=1,2$,
\begin{eqnarray}\label{wt 22'}
\e\abs{\int_{\Omega}\Big(\p_{jj}\dx^{-1}b_j\Big)b_j}
&\leq&C\nm{\vbb}_{L^{2m}}^{2m-1}\bigg(\e\abs{(1-\pp)[f]}_{L^m_+}+\tm{(\ik-\pk)[f]}+\e\nm{(\ik-\pk)[f]}_{L^{2m}}\\
&&+\tm{S}+\e\abs{h}_{L^{m}_-}\bigg).\no
\end{eqnarray}
\ \\
Step 2.3: Synthesis.\\
Summarizing (\ref{wt 21'}) and (\ref{wt 22'}), we may sum up over
$j=1,2$ to obtain, for any $i=1,2$,
\begin{eqnarray}\label{wt 23'}
\\
\e\nm{b_i}_{L^{2m}}^{2m}
&\leq&C\nm{\vbb}_{L^{2m}}^{2m-1}\bigg(\e\abs{(1-\pp)[f]}_{L^m_+}+\tm{(\ik-\pk)[f]}+\e\nm{(\ik-\pk)[f]}_{L^{2m}}+\tm{S}+\e\abs{h}_{L^{m}_-}\bigg).\no
\end{eqnarray}
which further implies
\begin{eqnarray}
\\
\e\nm{\vbb}_{L^{2m}}^{2m}&\leq&C\nm{\vbb}_{L^{2m}}^{2m-1}\bigg(\e\abs{(1-\pp)[f]}_{L^m_+}+\tm{(\ik-\pk)[f]}+\e\nm{(\ik-\pk)[f]}_{L^{2m}}+\tm{S}+\e\abs{h}_{L^{m}_-}\bigg).\no
\end{eqnarray}
Then we have
\begin{eqnarray}\label{wt 24'}
\e\nm{\vbb}_{L^{2m}}&\leq&C\bigg(\e\abs{(1-\pp)[f]}_{L^m_+}+\tm{(\ik-\pk)[f]}+\e\nm{(\ik-\pk)[f]}_{L^{2m}}+\tm{S}+\e\abs{h}_{L^{m}_-}\bigg).
\end{eqnarray}
\ \\
Step 3: Estimates of $a$.\\
We choose the test function
\begin{eqnarray}\label{wt 25'}
\psi=\psi_a=\m^{\frac{1}{2}}(\vv)\left(\abs{\vv}^2-\beta_a\right)\Big(\vv\cdot\nx\phi_a(\vx)\Big),
\end{eqnarray}
where
\begin{eqnarray}
\left\{
\begin{array}{rcl}
-\dx\phi_a&=&a^{2m-1}(\vx)-\dfrac{1}{\abs{\Omega}}\ds\int_{\Omega}a^{2m-1}(\vx)\ud{\vx}\ \ \text{in}\ \ \Omega,\\\rule{0ex}{1.5em}
\dfrac{\p\phi_a}{\p\vn}&=&0\ \ \text{on}\ \ \p\Omega,
\end{array}
\right.
\end{eqnarray}
and $\beta_a$ is a real number to be determined later. Based on the
standard elliptic estimates with
\begin{eqnarray}
\int_{\Omega}\bigg(a^{2m-1}(\vx)-\dfrac{1}{\abs{\Omega}}\ds\int_{\Omega}a^{2m-1}(\vx)\ud{\vx}\bigg)\ud{\vx}=\int_{\Omega\times\r^2}f(\vx,\vv)\ud{\vv}\ud{\vx}=0,
\end{eqnarray}
we have
\begin{eqnarray}
\nm{\phi_a}_{W^{2,\frac{2m}{2m-1}}}\leq C\nm{a^{2m-1}}_{L^{\frac{2m}{2m-1}}}\leq C\nm{a}_{L^{2m}}^{2m-1}.
\end{eqnarray}
Also, we know
\begin{eqnarray}
\nm{\psi_a}_{L^2}&\leq& C\nm{\phi_a}_{H^1}\leq C\nm{\phi_a}_{W^{2,\frac{2m}{2m-1}}}\leq C\nm{a}_{L^{2m}}^{2m-1},\\
\nm{\psi_a}_{L^\frac{2m}{2m-1}}&\leq&C\nm{\phi_a}_{W^{1,\frac{2m}{2m-1}}}\leq C\nm{a}_{L^{2m}}^{2m-1}.
\end{eqnarray}
With the choice of (\ref{wt 25'}), the right-hand side (RHS) of
(\ref{wt 11'}) is bounded by
\begin{eqnarray}
\text{RHS}\leq C \nm{a}_{L^{2m}}^{2m-1}\bigg(\tm{(\ik-\pk)[f]}+\tm{S}\bigg).
\end{eqnarray}
We will choose $\beta_a$ such that
\begin{eqnarray}
\int_{\r^2}\m^{\frac{1}{2}}(\vv)\left(\abs{\vv}^2-\beta_a\right)\frac{\abs{\vv}^2-2}{2}\vs_i^2\ud{\vv}=0\
\ \text{for}\ \ i=1,2.
\end{eqnarray}
The left-hand side (LHS) of (\ref{wt 11'}) takes the form
\begin{eqnarray}
\text{LHS}
&=&\e\int_{\p\Omega\times\r^2}\id_{\gamma_+}(1-\pp)[f]
\m^{\frac{1}{2}}(\vv)\left(\abs{\vv}^2-\beta_a\right)\left(\sum_{i=1}^2\vs_i\p_i\phi_a\right)(\vv\cdot\vn)\\
&&+\e\int_{\p\Omega\times\r^2}\id_{\gamma_-}h
\m^{\frac{1}{2}}(\vv)\left(\abs{\vv}^2-\beta_a\right)\left(\sum_{i=1}^2\vs_i\p_i\phi_a\right)(\vv\cdot\vn)\no\\
&&-\sum_{i=1}^2\e\int_{\r^2}\m(\vv)\abs{\vs_i}^2\left(\abs{\vv}^2-\beta_a\right)\ud{\vv}
\int_{\Omega}a(\vx)\p_{ii}\phi_a(\vx)\ud{\vx}\no\\
&&-\e\int_{\Omega\times\r^2}(\ik-\pk)[f]\m^{\frac{1}{2}}(\vv)\left(\abs{\vv}^2-\beta_a\right)\left(\sum_{i,j=1}^2\vs_i\vs_j\p_{ij}\phi_a\right).\no
\end{eqnarray}
Since
\begin{eqnarray}
\int_{\r^2}\m^{\frac{1}{2}}(\vv)\abs{\vs_i}^2\left(\abs{\vv}^2-\beta_a\right)\ud{\vv}=C,
\end{eqnarray}
we have
\begin{eqnarray}
-\e\int_{\Omega}\dx\phi_a(\vx)a(\vx)\ud{\vx}
&\leq&C
\nm{a}_{L^{2m}}^{2m-1}\bigg(\e\abs{(1-\pp)[f]}_{L^m_+}+\tm{(\ik-\pk)[f]}+\e\nm{(\ik-\pk)[f]}_{L^{2m}}\\
&&+\tm{S}+\e\abs{h}_{L^{m}_-}\bigg).\no
\end{eqnarray}
Since $-\dx\phi_a=a^{2m-1}-\ds\frac{1}{\abs{\Omega}}\int_{\Omega}a^{2m-1}$, by (\ref{wt 24}), we know
\begin{eqnarray}
\\
\e\nm{a}_{L^{2m}}^{2m-1} &\leq&C
\nm{a}_{L^{2m}}^{2m-1}\bigg(\e\abs{(1-\pp)[f]}_{L^m_+}+\tm{(\ik-\pk)[f]}+\e\nm{(\ik-\pk)[f]}_{L^{2m}}+\tm{S}+\e\abs{h}_{L^{m}_-}\bigg).\no
\end{eqnarray}
This implies
\begin{eqnarray}\label{wt 28'}
\e\nm{a}_{L^{2m}} &\leq&C
\bigg(\e\abs{(1-\pp)[f]}_{L^m_+}+\tm{(\ik-\pk)[f]}+\e\nm{(\ik-\pk)[f]}_{L^{2m}}+\tm{S}+\e\abs{h}_{L^{m}_-}\bigg).\no
\end{eqnarray}
\ \\
Step 4: Synthesis.\\
Collecting (\ref{wt 16'}), (\ref{wt 24'}) and (\ref{wt 28'}), we deduce
\begin{eqnarray}
\e\nm{\pk[f]}_{L^{2m}}
&\leq& C\bigg(\e\abs{(1-\pp)[f]}_{L^m_+}+\tm{(\ik-\pk)[f]}+\e\nm{(\ik-\pk)[f]}_{L^{2m}}+\tm{S}+\e\abs{h}_{L^{m}_-}\bigg).
\end{eqnarray}
This completes our proof.
\end{proof}

\begin{theorem}\label{LN estimate}
The solution $f(\vx,\vw)$ to the equation (\ref{linear steady}) satisfies the estimate
\begin{eqnarray}
&&\frac{1}{\e^{\frac{1}{2}}}\tss{(1-\pp)[f]}{+}+\frac{1}{\e}\um{(\ik-\pk)[f]}+\nm{\pk[f]}_{L^{2m}}\\
&\leq& C\bigg(o(1)\e^{\frac{1}{m}}\nm{f}_{L^{\infty}}+\frac{1}{\e^{2}}\nm{\pk[S]}_{L^{\frac{2m}{2m-1}}}+\frac{1}{\e}\tm{S}+\abs{h}_{L^{m}_-}+\frac{1}{\e}\tss{h}{-}\bigg).\no
\end{eqnarray}
\end{theorem}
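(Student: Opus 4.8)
The plan is to run the $L^2$ argument of Theorem \ref{LT estimate} in parallel with the $L^{2m}$ macroscopic estimate of Lemma \ref{wellposedness prelim lemma 4}, and then to close the loop by interpolating the stray high-integrability norms of $f$ that appear against $L^2$-quantities (which sit on the left) and against $\im{f}$ (the price we are willing to pay), using Young's inequality with carefully tuned exponents. The favorable exponent $\e^{1/m}$ on the $\im{f}$ term will come from the fact that these $L^2$-quantities enter the $L^2$ bound carrying an $\e$-weight and the interpolation exponents are $\tfrac1m$ and $\tfrac2m$.

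First I would redo the $L^2$ energy estimate: multiply (\ref{linear steady}) by $f$, apply Green's identity (Lemma \ref{wellposedness prelim lemma 2}), use the spectral gap $\br{\ll[f],f}\gtrsim\um{(\ik-\pk)[f]}^2$ and Cauchy's inequality on the boundary term, and control $\abs{\pp[f]}_2$ through the near-grazing bound (Lemma \ref{wellposedness prelim lemma 1}) exactly as in Steps~1--2 of the proof of Theorem \ref{LT estimate}, which gives $\abs{\pp[f]}_2^2\lesssim\tm{\pk[f]}^2+\tfrac1\e\um{(\ik-\pk)[f]}^2+\tfrac1\e\int fS$. The one change is in the treatment of $\int fS$: splitting $f=\pk[f]+(\ik-\pk)[f]$ and $S=\pk[S]+(\ik-\pk)[S]$ and using orthogonality, the non-kernel part is $\leq o(1)\um{(\ik-\pk)[f]}^2+C\tm{S}^2$, while the kernel part is estimated in the dual pair $L^{2m}$--$L^{\frac{2m}{2m-1}}$ and then by Young, $\int\pk[f]\pk[S]\leq\nm{\pk[f]}_{L^{2m}}\nm{\pk[S]}_{L^{\frac{2m}{2m-1}}}\leq o(1)\e^2\nm{\pk[f]}_{L^{2m}}^2+\tfrac{C}{\e^2}\nm{\pk[S]}_{L^{\frac{2m}{2m-1}}}^2$; this is exactly how $\tfrac1{\e^2}\nm{\pk[S]}_{L^{\frac{2m}{2m-1}}}$ must enter. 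Absorbing $\um{(\ik-\pk)[f]}^2$ for $\e$ small, using $\tm{\pk[f]}\lesssim\nm{\pk[f]}_{L^{2m}}$ (legitimate since $\Omega$ is bounded and $\pk[f]$ carries a Gaussian velocity weight), and taking the boundary parameter small yields an $L^2$ bound of the shape $\e\tss{(1-\pp)[f]}{+}^2+\um{(\ik-\pk)[f]}^2\lesssim o(1)\e^2\nm{\pk[f]}_{L^{2m}}^2+\tss{h}{-}^2+\tfrac1{\e^2}\nm{\pk[S]}_{L^{\frac{2m}{2m-1}}}^2+\tm{S}^2$.

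Next I would bring in Lemma \ref{wellposedness prelim lemma 4}, which (after $\tm{(\ik-\pk)[f]}\lesssim\um{(\ik-\pk)[f]}$) bounds $\e\nm{\pk[f]}_{L^{2m}}$ by $\e\abs{(1-\pp)[f]}_{L^m_+}+\um{(\ik-\pk)[f]}+\e\nm{(\ik-\pk)[f]}_{L^{2m}}+\tm{S}+\e\abs{h}_{L^m_-}$. Interpolate the two offending terms between $L^2$ and $L^\infty$: $\nm{(\ik-\pk)[f]}_{L^{2m}}\leq\tm{(\ik-\pk)[f]}^{1/m}\im{(\ik-\pk)[f]}^{1-1/m}$ and $\abs{(1-\pp)[f]}_{L^m_+}\leq\tss{(1-\pp)[f]}{+}^{2/m}\abs{(1-\pp)[f]}_{L^\infty_+}^{1-2/m}$, combined with $\im{(\ik-\pk)[f]}+\abs{(1-\pp)[f]}_{L^\infty_+}\lesssim\im{f}$ (boundedness of $\pk$ and $\pp$ on $L^\infty$, since these are velocity averages against Gaussians). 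Substituting the $L^2$ bound from the previous step for $\um{(\ik-\pk)[f]}$ and $\tss{(1-\pp)[f]}{+}$ and applying Young's inequality, every residual power of $\nm{\pk[f]}_{L^{2m}}$ appears with a coefficient $o(1)\e^{\text{(positive)}}$ and is absorbed on the left, while the $\im{f}$-contributions organize into $o(1)\e^{1/m}\im{f}$. This gives $\nm{\pk[f]}_{L^{2m}}\lesssim o(1)\e^{1/m}\im{f}+\tfrac1{\e^2}\nm{\pk[S]}_{L^{\frac{2m}{2m-1}}}+\tfrac1\e\tm{S}+\abs{h}_{L^m_-}+\tfrac1\e\tss{h}{-}$; feeding this back into the $L^2$ bound and taking square roots, the quantities $\tfrac1{\e^{1/2}}\tss{(1-\pp)[f]}{+}$ and $\tfrac1\e\um{(\ik-\pk)[f]}$ inherit the same right-hand side up to the stated $\e$-powers.

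The hard part will be the bookkeeping in this last step: one must choose the interpolation exponents and the Young splittings so that \emph{every} occurrence of $\nm{\pk[f]}_{L^{2m}}$, $\nm{(\ik-\pk)[f]}_{L^{2m}}$ and $\abs{(1-\pp)[f]}_{L^m_+}$ on the right is absorbable, while the unavoidable $\im{f}$ term carries exactly the weight $o(1)\e^{1/m}$ rather than a worse or $\e$-independent power. This is the one genuinely rigid point and is what forces the choices $\tfrac1m$, $\tfrac2m$ in the interpolations and the factor $\e^2$ in the Hölder--Young split of $\int\pk[f]\pk[S]$; the rest of the argument (Green's identity, near-grazing, elliptic estimates, boundedness of $\pk,\pp$) is a routine adaptation of material already used above.
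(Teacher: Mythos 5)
Your proposal is correct and follows essentially the same route as the paper: the paper also runs the $L^2$ energy estimate in tandem with the $L^{2m}$ macroscopic estimate of Lemma~\ref{wellposedness prelim lemma 4}, interpolates $\abs{(1-\pp)[f]}_{L^m_+}$ and $\nm{(\ik-\pk)[f]}_{L^{2m}}$ between $L^2$ and $L^\infty$ with the same exponents $\tfrac{2}{m}$ and $\tfrac{1}{m}$ plus Young, and handles $\int fS$ via exactly the $L^{2m}$--$L^{\frac{2m}{2m-1}}$ duality split you describe. The paper simply orders the steps differently (adding the squared $L^{2m}$ estimate to the $L^2$ inequality first and then interpolating and treating $\int fS$ at the end, rather than cleaning up $\int fS$ early and feeding the resulting bound back), but the substantive ingredients, exponents, and absorptions are the same.
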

\begin{proof}
We divide it into several steps:\\
\ \\
Step 1: Energy Estimate.\\
Multiplying $f$ on both sides of (\ref{linear steady}) and applying Green's identity imply
\begin{eqnarray}\label{wt 34'}
\frac{\e}{2}\tss{f}{+}^2+\br{\ll[f],f}
&=&\frac{\e}{2}\tss{\pp[f]+h}{-}^2+\int_{\Omega\times\r^2}fS.
\end{eqnarray}
Considering the fact that
\begin{eqnarray}
\tss{f}{+}^2-\tss{\pp[f]}{+}^2=\tss{(1-\pp)[f]}{+}^2,
\end{eqnarray}
We deduce from the spectral gap of $\ll$ and Cauchy's inequality that
\begin{eqnarray}\label{wt 31'}
\frac{\e}{2}\tss{(1-\pp)[f]}{+}^2+\um{(\ik-\pk)[f]}^2
&\leq&\eta\e^2\tss{\pp[f]}{-}^2+\left(1+\frac{1}{\eta}\right)\tss{h}{-}^2+\int_{\Omega\times\r^2}fS.
\end{eqnarray}
Following the same argument as in $L^2$ estimate, we obtain
\begin{eqnarray}\label{wt 32'}
\abs{\pp[f]}_2^2\leq C\left(\tm{f}^2+\frac{1}{\e}\tm{(\ik-\pk)[f]}^2+\frac{1}{\e}\int_{\Omega\times\r^2}fS\right).
\end{eqnarray}
Plugging (\ref{wt 32'}) into (\ref{wt 31'}) with $\e$ sufficiently
small to absorb $\tm{(\ik-\pk)[f]}^2$ into the left-hand side, we obtain
\begin{eqnarray}\label{wt 33'}
\e\tss{(1-\pp)[f]}{+}^2+\um{(\ik-\pk)[f]}^2
&\leq&C\bigg(\eta\e^2\tm{\pk[f]}^2+\left(1+\frac{1}{\eta}\right)\tss{h}{-}^2+\int_{\Omega\times\r^2}fS\bigg).
\end{eqnarray}
We square on both sides of (\ref{wt 08'}) to obtain
\begin{eqnarray}\label{wt 29'}
\\
\e^2\nm{\pk[f]}_{L^{2m}}^2
&\leq& C\bigg(\e^2\abs{(1-\pp)[f]}_{L^m_+}^2+\tm{(\ik-\pk)[f]}^2+\e^2\nm{(\ik-\pk)[f]}_{L^{2m}}^2+\tm{S}^2+\e^2\abs{h}_{L^{m}_-}^2\bigg).\no
\end{eqnarray}
Multiplying a small constant on both sides of (\ref{wt 29'}) and
adding to (\ref{wt 33'}) with $\eta>0$ sufficiently small to absorb $\e^2\nm{\pk[f]}_{L^{2m}}^2$ and $\tm{(\ik-\pk)[f]}$ into the left-hand side, we obtain
\begin{eqnarray}\label{wt 35'}
&&\e\tss{(1-\pp)[f]}{+}^2+\um{(\ik-\pk)[f]}^2+\e^2\nm{\pk[f]}_{L^{2m}}^2\\
&\leq& C\bigg(\e^2\abs{(1-\pp)[f]}_{L^m_+}^2+\e^2\nm{(\ik-\pk)[f]}_{L^{2m}}^2+\tm{S}^2+\e^2\abs{h}_{L^{m}_-}^2+\tss{h}{-}^2+\int_{\Omega\times\r^2}fS\bigg).\no
\end{eqnarray}
\ \\
Step 2: Interpolation Argument.\\
By interpolation estimate and Young's inequality, we have
\begin{eqnarray}
\abs{(1-\pp)[f]}_{L^{m}}&\leq&\abs{(1-\pp)[f]}_{L^2}^{\frac{2}{m}}\abs{(1-\pp)[f]}_{L^{\infty}}^{\frac{m-2}{m}}\\
&=&\bigg(\frac{1}{\e^{\frac{m-2}{m^2}}}\abs{(1-\pp)[f]}_{L^2}^{\frac{2}{m}}\bigg)
\bigg(\e^{\frac{m-2}{m^2}}\abs{(1-\pp)[f]}_{L^{\infty}}^{\frac{m-2}{m}}\bigg)\no\\
&\leq&C\bigg(\frac{1}{\e^{\frac{m-2}{m^2}}}\abs{(1-\pp)[f]}_{L^2}^{\frac{2}{m}}\bigg)^{\frac{m}{2}}+o(1)
\bigg(\e^{\frac{m-2}{m^2}}\abs{(1-\pp)[f]}_{L^{\infty}}^{\frac{m-2}{m}}\bigg)^{\frac{m}{m-2}}\no\\
&\leq&\frac{C}{\e^{\frac{m-2}{2m}}}\abs{(1-\pp)[f]}_{L^2}+o(1)\e^{\frac{1}{m}}\abs{(1-\pp)[f]}_{L^{\infty}}\no\\
&\leq&\frac{C}{\e^{\frac{m-2}{2m}}}\abs{(1-\pp)[f]}_{L^2}+o(1)\e^{\frac{1}{m}}\abs{f}_{L^{\infty}(\Omega\times\s^1)}.\no
\end{eqnarray}
Similarly, we have
\begin{eqnarray}
\nm{(\ik-\pk)[f]}_{L^{2m}}&\leq&\nm{(\ik-\pk)[f]}_{L^2}^{\frac{1}{m}}\nm{(\ik-\pk)[f]}_{L^{\infty}}^{\frac{m-1}{m}}\\
&=&\bigg(\frac{1}{\e^{\frac{m-1}{m^2}}}\nm{(\ik-\pk)[f]}_{L^2}^{\frac{1}{m}}\bigg)
\bigg(\e^{\frac{m-1}{m^2}}\nm{(\ik-\pk)[f]}_{L^{\infty}}^{\frac{m-1}{m}}\bigg)\no\\
&\leq&C\bigg(\frac{1}{\e^{\frac{m-1}{m^2}}}\nm{(\ik-\pk)[f]}_{L^2}^{\frac{1}{m}}\bigg)^{m}+o(1)
\bigg(\e^{\frac{m-1}{m^2}}\nm{(\ik-\pk)[f]}_{L^{\infty}}^{\frac{m-1}{m}}\bigg)^{\frac{m}{m-1}}\no\\
&\leq&\frac{C}{\e^{\frac{m-1}{m}}}\nm{(\ik-\pk)[f]}_{L^2}+o(1)\e^{\frac{1}{m}}\nm{(\ik-\pk)[f]}_{L^{\infty}}.\no
\end{eqnarray}
We need this extra $\e^{\frac{1}{m}}$ for the convenience of $L^{\infty}$ estimate.
Then we know for sufficiently small $\e$,
\begin{eqnarray}
\e^2\abs{(1-\pp)[f]}_{L^{m}}^2
&\leq&C\e^{2-\frac{m-2}{m}}\abs{(1-\pp)[f]}_{L^2}^2+o(1)\e^{2+\frac{2}{m}}\abs{f}_{L^{\infty}}^2\\
&\leq&o(1)\e\abs{(1-\pp)[f]}_{L^2}^2+o(1)\e^{2+\frac{2}{m}}\abs{f}_{L^{\infty}}^2.\no
\end{eqnarray}
Similarly, we have
\begin{eqnarray}
\e^2\nm{(\ik-\pk)[f]}_{L^{2m}}^2&\leq&\e^{2-\frac{2m-2}{m}}\nm{(\ik-\pk)[f]}_{L^2}^2+o(1)\e^{2+\frac{2}{m}}\nm{u}_{L^{\infty}}^2\\
&\leq& o(1)\nm{(\ik-\pk)[f]}_{L^2}^2+o(1)\e^{2+\frac{2}{m}}\nm{f}_{L^{\infty}}^2.\no
\end{eqnarray}
In (\ref{wt 35'}), we can absorb $\e\abs{(1-\pp)[f]}_{L^2}^2$ and $\nm{(\ik-\pk)[f]}_{L^2}^2$ into left-hand side to obtain
\begin{eqnarray}\label{wt 36'}
&&\e\tss{(1-\pp)[f]}{+}^2+\um{(\ik-\pk)[f]}^2+\e^2\nm{\pk[f]}_{L^{2m}}^2\\
&\leq& C\bigg(o(1)\e^{2+\frac{2}{m}}\nm{f}_{L^{\infty}}^2+\tm{S}^2+\e^2\abs{h}_{L^{m}_-}^2+\tss{h}{-}^2+\int_{\Omega\times\r^2}fS\bigg).\no
\end{eqnarray}
We can decompose
\begin{eqnarray}
\int_{\Omega\times\r^2}fS=\iint_{\Omega\times\r^2}\pk[S]\pk[f]+\iint_{\Omega\times\r^2}(\ik-\pk)S(\ik-\pk)[f].
\end{eqnarray}
H\"older's inequality and Cauchy's inequality imply
\begin{eqnarray}
\int_{\Omega\times\r^2}\pk[S]\pk[f]\leq\nm{\pk[S]}_{L^{\frac{2m}{2m-1}}}\nm{\pk[f]}_{L^{2m}}
\leq\frac{C}{\e^{2}}\nm{\pk[S]}_{L^{\frac{2m}{2m-1}}}^2+o(1)\e^2\nm{\pk[f]}_{L^{2m}}^2,
\end{eqnarray}
and
\begin{eqnarray}
\int_{\Omega\times\r^2}(\ik-\pk)S(\ik-\pk)[f]\leq C\nm{\nu^{-\frac{1}{2}}(\ik-\pk)S}_{L^{2}}^2+o(1)\um{(\ik-\pk)[f]}^2.
\end{eqnarray}
Hence, absorbing $\e^2\nm{\pk[f]}_{L^{2m}(\Omega\times\s^1)}^2$ and $\um{(\ik-\pk)[f]}$ into left-hand side of (\ref{wt 36'}), we get
\begin{eqnarray}
&&\e\tss{(1-\pp)[f]}{+}^2+\um{(\ik-\pk)[f]}^2+\e^2\nm{\pk[f]}_{L^{2m}}^2\\
&\leq& C\bigg(o(1)\e^{2+\frac{2}{m}}\nm{f}_{L^{\infty}}^2+\frac{1}{\e^{2}}\nm{\pk[S]}_{L^{\frac{2m}{2m-1}}}^2+\tm{S}^2+\e^2\abs{h}_{L^{m}_-}^2+\tss{h}{-}^2\bigg).\no
\end{eqnarray}
Therefore, we have
\begin{eqnarray}
&&\frac{1}{\e^{\frac{1}{2}}}\tss{(1-\pp)[f]}{+}+\frac{1}{\e}\um{(\ik-\pk)[f]}+\nm{\pk[f]}_{L^{2m}}\\
&\leq& C\bigg(o(1)\e^{\frac{1}{m}}\nm{f}_{L^{\infty}}+\frac{1}{\e^{2}}\nm{\pk[S]}_{L^{\frac{2m}{2m-1}}}+\frac{1}{\e}\tm{S}+\abs{h}_{L^{m}_-}+\frac{1}{\e}\tss{h}{-}\bigg).\no
\end{eqnarray}
\end{proof}

%%%%%%%%%%%%%%%%%%%%%%%%%%%%%%%%%%%%%%%%%%%%%%%%%%%%%%%%%%%%%%%%%%%%%%%%
\subsection{$L^{\infty}$ Estimates - Second Round}
%%%%%%%%%%%%%%%%%%%%%%%%%%%%%%%%%%%%%%%%%%%%%%%%%%%%%%%%%%%%%%%%%%%%%%%%

\begin{theorem}\label{LI estimate}
The
solution $f(\vx,\vw)$ to the equation (\ref{linear steady}) satisfies the
estimate for $\vth\geq3$ and $0\leq\varrho<\dfrac{1}{4}$,
\begin{eqnarray}
\im{\bv f}&\leq& C\bigg(\frac{1}{\e^{2+\frac{1}{m}}}\nm{\pk[S]}_{L^{\frac{2m}{2m-1}}}+\frac{1}{\e^{1+\frac{1}{m}}}\tm{S}+\im{\frac{\bv S}{\nu}}\\
&&+\frac{1}{\e^{1+\frac{1}{m}}}\tss{h}{-}+\frac{1}{\e^{\frac{1}{m}}}\abs{h}_{L^{m}_-}+\iss{\bv h}{-}\bigg).\no
\end{eqnarray}
\end{theorem}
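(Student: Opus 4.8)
The plan is to re-run the $L^{\infty}$ argument of Theorem \ref{LI estimate'} along the stochastic cycle, changing only the final reduction: in the first round the remaining singular term was controlled by $\tm{f}$ and then by the $L^2$ bound of Theorem \ref{LT estimate}, which costs a full power $\e^{-1}$ on $\tm{\pk[f]}$ and leads to $\e^{-3}$; here I will instead reduce it to $\tm{(\ik-\pk)[f]}$ and $\nm{\pk[f]}_{L^{2m}}$ and invoke the sharper $L^{2m}$ bound of Theorem \ref{LN estimate}. So I set $g=\vh f$ with $\vh=w_{\xi,\vth,\varrho}$, write the iterated Duhamel/stochastic-cycle formula (\ref{wt 36}), and reuse \emph{verbatim} the estimates of the $S$-terms, the $h$-terms and the ``Part 2'' remainder, which contribute $C\big(\im{\tfrac{\vh S}{\nu}}+\iss{\vh h}{-}+\d\im{g}\big)$, as well as the treatment of $III$, $I_1$, $I_2$, $I_{4,2}$, $I_{4,3}$, which contribute $\big(C\ue^{-\d N^2}+\tfrac{C}{N}+C\d\big)\im{g}$. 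The only piece that must be re-examined is the regular term $I_{4,1}$ (and its analogue inside the $K_{\vh}$-term $II$).

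In $I_{4,1}$, after freezing the truncated smooth kernel $k_N$ and performing the change of variable $\vv'\mapsto\vec y=\xc-\e(t_1'-r)\vv'$ on the region $\{t_1'-r\geq\d,\ \abs{\vv}\le N,\ \abs{\vv'}\le 2N,\ \abs{\vv''}\le 3N\}$, I split $f=\pk[f]+(\ik-\pk)[f]$. The non-kernel part is estimated exactly as in Theorem \ref{LI estimate'} by Cauchy--Schwarz, giving a contribution $\ls\frac{1}{\e\d}\tm{(\ik-\pk)[f]}$. For the kernel part I use $\pk[f](\vec y,\vv'')=\m^{\frac12}(\vv'')\big(a_f+\vv''\cdot\vbb_f+\tfrac{\abs{\vv''}^2-2}{2}c_f\big)(\vec y)$, so that, since $\varrho<\tfrac14$, $\int_{\r^2}\vh(\vv'')\abs{\pk[f](\vec y,\vv'')}\ud{\vv''}\ls\big(\abs{a_f}+\abs{\vbb_f}+\abs{c_f}\big)(\vec y)$ uniformly --- this removes any $N$-dependence --- and then I apply H\"older in $\vv'$ with exponents $\big(2m,\tfrac{2m}{2m-1}\big)$ rather than $(2,2)$. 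The Jacobian $\big(\e^2(t_1'-r)^2\big)^{-1}$ now enters only to the power $\tfrac{1}{2m}$, i.e.\ as $\e^{-1/m}(t_1'-r)^{-1/m}$, and since $m\geq 2$ the factor $(t_1'-r)^{-1/m}$ is integrable in $r$ against $\ue^{-\nu(\vv')(t_1'-r)}$; combined with $\nm{(a_f,\vbb_f,c_f)}_{L^{2m}(\Omega)}\approx\nm{\pk[f]}_{L^{2m}}$ (equivalence of norms on the span of $1,v_1,v_2,\abs{\vv}^2$ against the finite measure $\m^m\ud{\vv}$), the kernel part is $\ls\frac{1}{\e^{1/m}}\nm{\pk[f]}_{L^{2m}}$. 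The term $II$ is treated identically.

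Collecting everything and fixing $\d$ small, then $N$ large, to absorb the $\d\im{g}$, $\ue^{-\d N^2}\im{g}$ and $N^{-1}\im{g}$ terms into the left-hand side, I obtain
\begin{eqnarray}
\im{g}\ls \frac{1}{\e}\tm{(\ik-\pk)[f]}+\frac{1}{\e^{1/m}}\nm{\pk[f]}_{L^{2m}}+\im{\frac{\vh S}{\nu}}+\iss{\vh h}{-}.
\end{eqnarray}
Now Theorem \ref{LN estimate} gives $\frac{1}{\e}\um{(\ik-\pk)[f]}+\nm{\pk[f]}_{L^{2m}}\ls\mathcal{E}$ with $\mathcal{E}:=o(1)\e^{1/m}\im{f}+\frac{1}{\e^{2}}\nm{\pk[S]}_{L^{\frac{2m}{2m-1}}}+\frac{1}{\e}\tm{S}+\abs{h}_{L^{m}_-}+\frac{1}{\e}\tss{h}{-}$; since $\tm{(\ik-\pk)[f]}\ls\um{(\ik-\pk)[f]}$, the first term above is $\ls\mathcal{E}$ and the second is $\ls\e^{-1/m}\mathcal{E}$, whence $\im{g}\ls\e^{-1/m}\mathcal{E}+\im{\tfrac{\vh S}{\nu}}+\iss{\vh h}{-}$. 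The only dangerous piece of $\e^{-1/m}\mathcal{E}$ is $o(1)\im{f}\leq o(1)\im{g}$, which is absorbed on the left, and expanding the rest reproduces the powers $\e^{-2-1/m}\nm{\pk[S]}_{L^{\frac{2m}{2m-1}}}$, $\e^{-1-1/m}\tm{S}$, $\e^{-1/m}\abs{h}_{L^{m}_-}$, $\e^{-1-1/m}\tss{h}{-}$. Finally $C_1\bv\leq\vh\leq C_2\bv$ for $\vth=\beta$ converts $\im{g}$ and the weighted norms of $S$ and $h$ back to the $\bv$-weighted form, yielding the claimed estimate.

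I expect the main obstacle to be the modified $I_{4,1}$ (and $II$) estimate: one must check that replacing Cauchy--Schwarz by the $\big(2m,\tfrac{2m}{2m-1}\big)$-H\"older split genuinely upgrades the Jacobian loss from $\e^{-1}$ to $\e^{-1/m}$ without reintroducing a $\d$- or $N$-singularity --- which hinges on the $r$-integrability of $(t_1'-r)^{-1/m}$ for $m\geq 2$ and on the velocity decay of $\vh\m^{\frac12}$ used to collapse $\pk[f]$ to its hydrodynamic coefficients --- and then on matching this precisely against the form of Theorem \ref{LN estimate} so that the $o(1)\e^{1/m}\im{f}$ defect remains absorbable.
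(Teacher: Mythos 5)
Your proposal is correct and follows essentially the same route as the paper's proof: re-run the stochastic-cycle/double-Duhamel argument of Theorem \ref{LI estimate'}, but in the regular piece $I_{4,1}$ (and its analogue in $II$) split $f=\pk[f]+(\ik-\pk)[f]$, apply H\"older with exponents $(2m,\tfrac{2m}{2m-1})$ to the kernel part so that the Jacobian $\e^{-2}(t_1'-r)^{-2}$ enters only to the power $\tfrac{1}{2m}$, and close via Theorem \ref{LN estimate} after absorbing the $o(1)\im{\bv f}$ defect. The only cosmetic differences are that you first collapse $\pk[f]$ against $\vh\m^{1/2}$ to its hydrodynamic coefficients before applying H\"older and you observe the $r$-integrability of $(t_1'-r)^{-1/m}$ directly, whereas the paper keeps $\vv''$ in the H\"older step and simply invokes $t_1'-r\geq\d$ from the Case IV restriction to bound the Jacobian by $(\e\d)^{-2}$; both resolve to the same $\frac{C}{\e^{1/m}\d^{1/m}}\nm{\pk[f]}_{L^{2m}}+\frac{C}{\e\d}\nm{(\ik-\pk)[f]}_{L^{2}}$ contribution.
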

\begin{proof}
Following the argument in the proof of Theorem \ref{LI estimate'}, by double Duhamel's principle along the characteristics,
the key step is to decompose $f=\pk[f]+(\ik-\pk)[f]$ and utilize $L^{2m}$ estimates and $L^2$ estimates separately as
\begin{eqnarray}
I_{4,1}
&\leq&
C\abs{\int_{\abs{\vv'}\leq2N}\int_{\abs{\vv''}\leq3N}\int_{0}^{t_1'}
\id_{\{\xc-\e(t_1'-r)\vv'\in\Omega\}}f(\xc-\e(t_1'-r)\vv',\vv'')\ue^{-\nu(\vv')
(t_1'-r)}\ud{r}\ud{\vv'}\ud{\vv''}}\\
&\leq&C\bigg(\int_{\abs{\vv'}\leq2N}\int_{\abs{\vv''}\leq3N}\int_{0}^{t_1'}
\id_{\{\xc-\e(t_1'-r)\vv'\in\Omega\}}\ue^{-\nu(\vv')
(t_1'-r)}\ud{r}\ud{\vv'}\ud{\vv''}\bigg)^{\frac{2m-1}{2m}}\no\\
&&\bigg(\int_{\abs{\vv'}\leq2N}\int_{\abs{\vv''}\leq3N}\int_{0}^{t_1'}
\id_{\{\xc-\e(t_1'-r)\vv'\in\Omega\}}\Big(\pk[f]\Big)^{2m}(\xc-\e(t_1'-r)\vv',\vv'')\ue^{-\nu(\vv')
(t_1'-r)}\ud{r}\ud{\vv'}\ud{\vv''}\bigg)^{\frac{1}{2m}}\no\\
&&+C\bigg(\int_{\abs{\vv'}\leq2N}\int_{\abs{\vv''}\leq3N}\int_{0}^{t_1'}
\id_{\{\xc-\e(t_1'-r)\vv'\in\Omega\}}\ue^{-\nu(\vv')
(t_1'-r)}\ud{r}\ud{\vv'}\ud{\vv''}\bigg)^{\frac{1}{2}}\no\\
&&\bigg(\int_{\abs{\vv'}\leq2N}\int_{\abs{\vv''}\leq3N}\int_{0}^{t_1'}
\id_{\{\xc-\e(t_1'-r)\vv'\in\Omega\}}\Big((\ik-\pk)[f]\Big)^{2}(\xc-\e(t_1'-r)\vv',\vv'')\ue^{-\nu(\vv')
(t_1'-r)}\ud{r}\ud{\vv'}\ud{\vv''}\bigg)^{\frac{1}{2}}\no\\
&\leq&C\abs{\int_{0}^{t_1'}\frac{1}{\e^2\d^2}\int_{\abs{\vv''}\leq3N}\int_{\Omega}\id_{\{\vec
y\in\Omega\}}\Big(\pk[f]\Big)^{2m}(\vec y,\vv'')\ue^{-\nu(\vv')
(t_1'-r)}\ud{\vec y}\ud{\vv''}\ud{r}}^{\frac{1}{2m}}\no\\
&&+C\abs{\int_{0}^{t_1'}\frac{1}{\e^2\d^2}\int_{\abs{\vv''}\leq3N}\int_{\Omega}\id_{\{\vec
y\in\Omega\}}\Big((\ik-\pk)[f]\Big)^{2}(\vec y,\vv'')\ue^{-\nu(\vv')
(t_1'-r)}\ud{\vec y}\ud{\vv''}\ud{r}}^{\frac{1}{2}}\no\\
&=&\frac{C}{\e^{\frac{1}{m}}\d^{\frac{1}{m}}}\nm{\pk[f]}_{L^{2m}}+\frac{C}{\e\d}\nm{(\ik-\pk)[f]}_{L^{2}}.\no
\end{eqnarray}
All the other terms can be estimated in the similar fashion. In summary, we have
\begin{eqnarray}
\\
\im{\bv f}\leq C\bigg(\frac{1}{\e^{\frac{1}{m}}}\nm{\pk[f]}_{L^{2m}}+\frac{1}{\e}\nm{(\ik-\pk)[f]}_{L^{2}}+\im{\frac{\bv S}{\nu}}+\iss{\bv h}{-}\bigg).\no
\end{eqnarray}
Then using $L^{2m}$ and $L^2$ estimates in Theorem \ref{LI estimate}, we know
\begin{eqnarray}
\\
\im{\bv f}&\leq& C\bigg(o(1)\nm{\bv f}_{L^{\infty}}+\frac{1}{\e^{2+\frac{1}{m}}}\nm{\pk[S]}_{L^{\frac{2m}{2m-1}}}+\frac{1}{\e^{1+\frac{1}{m}}}\tm{S}+\im{\frac{\bv S}{\nu}}\no\\
&&+\frac{1}{\e^{1+\frac{1}{m}}}\tss{h}{-}+\frac{1}{\e^{\frac{1}{m}}}\abs{h}_{L^{m}_-}+\iss{\bv h}{-}\bigg).\no
\end{eqnarray}
Since $o(1)$ is small, we can absorb $\nm{\bv f}_{L^{\infty}}$ into the left-hand side to obtain
\begin{eqnarray}
\im{\bv f}&\leq& C\bigg(\frac{1}{\e^{2+\frac{1}{m}}}\nm{\pk[S]}_{L^{\frac{2m}{2m-1}}}+\frac{1}{\e^{1+\frac{1}{m}}}\tm{S}+\im{\frac{\bv S}{\nu}}\\
&&+\frac{1}{\e^{1+\frac{1}{m}}}\tss{h}{-}+\frac{1}{\e^{\frac{1}{m}}}\abs{h}_{L^{m}_-}+\iss{\bv h}{-}\bigg).\no
\end{eqnarray}
\end{proof}

\newpage

%%%%%%%%%%%%%%%%%%%%%%%%%%%%%%%%%%%%%%%%%%%%%%%%%%%%%%%%%%%%%%%%%%%%%%%%
\section{Well-Posedness of $\e$-Milne Problem with Geometric Correction}
%%%%%%%%%%%%%%%%%%%%%%%%%%%%%%%%%%%%%%%%%%%%%%%%%%%%%%%%%%%%%%%%%%%%%%%%

We consider the $\e$-Milne problem with geometric correction for $\g(\eta,\theta,\vww)$ in
the domain
$(\eta,\theta,\vww)\in[0,L]\times[-\pi,\pi)\times\r^2$ as
\begin{eqnarray}\label{Milne.}
\left\{
\begin{array}{l}\displaystyle
\va\dfrac{\p\g }{\p\eta}-\dfrac{\e}{\rk-\e\eta}\bigg(\vb^2\dfrac{\p\g }{\p\va}-\va\vb\dfrac{\p\g }{\p\vb}\bigg)+\ll[\g ]
=0,\\\rule{0ex}{1.5em}
\g (0,\theta,\vww)=h (\theta,\vww)\ \ \text{for}\ \
\va>0,\\\rule{0ex}{1.5em}
\displaystyle\g (L,\theta,\vww)=\g (L,\theta,\rr[\vww]),
\end{array}
\right.
\end{eqnarray}
where $\rr[\vvv]=(-\va,\vb)$ and $L=\e^{-\frac{1}{2}}$. For simplicity, we temporarily ignore the dependence of $\theta$, i.e. consider
the $\e$-Milne problem with geometric correction for $\g(\eta,\vww)$ in
the domain
$(\eta,\vww)\in[0,L]\times\r^2$ as
\begin{eqnarray}\label{Milne}
\left\{
\begin{array}{l}\displaystyle
\va\dfrac{\p\g }{\p\eta}-\dfrac{\e}{\rk-\e\eta}\bigg(\vb^2\dfrac{\p\g }{\p\va}-\va\vb\dfrac{\p\g }{\p\vb}\bigg)+\ll[\g ]
=0,\\\rule{0ex}{1.5em}
\g (0,\vww)=h (\vww)\ \ \text{for}\ \
\va>0,\\\rule{0ex}{1.5em}
\displaystyle\g (L,\vww)=\g (L,\rr[\vww]).
\end{array}
\right.
\end{eqnarray}
Since the null space of the operator $\ll$ is spanned by
$\nk=\m^{\frac{1}{2}}\bigg\{1,\va,\vb,\dfrac{\abs{\vvv}^2-2}{2}\bigg\}=\{\psi_0,\psi_1,\psi_2,\psi_3\}$,
we can decompose the solution as
\begin{eqnarray}
\g=w_g+q_g,
\end{eqnarray}
where
\begin{eqnarray}
q_g&=&\m^{\frac{1}{2}}\bigg(q_{g,0}+q_{g,1}\va+q_{g,2}\vb+q_{g,3}\dfrac{\abs{\vvv}^2-2}{2}\bigg)=q_{g,0}\psi_0+q_{g,1}\psi_1+q_{g,2}\psi_2+q_{g,3}\psi_3\in\nk,
\end{eqnarray}
and
\begin{eqnarray}
w_g\in\nk^{\perp},
\end{eqnarray}
where $\nk^{\perp}$ is the orthogonal space of $\nk$. When there is no confusion, we will simply write $\g=w+q$.
Our main goal is to find
\begin{eqnarray}\label{Milne transform compatibility}
\tilde h(\vvv)=\sum_{i=0}^3\tilde D_i\psi_i\in\nk,
\end{eqnarray}
with $\tilde D_1=0$ such that the $\e$-Milne problem with geometric correction for
$\gg(\eta,\vvv)$ in the domain
$(\eta,\vvv)\in[0,L]\times\r^2$ as
\begin{eqnarray}\label{Milne transform}
\left\{
\begin{array}{l}\displaystyle
\va\dfrac{\p\gg }{\p\eta}-\dfrac{\e}{\rk-\e\eta}\bigg(\vb^2\dfrac{\p\gg }{\p\va}-\va\vb\dfrac{\p\gg }{\p\vb}\bigg)+\ll[\gg]
=0,\\\rule{0ex}{1.5em}
\gg (0,\vww)=h (\vww)-\tilde h(\vvv)\ \ \text{for}\ \
\va>0,\\\rule{0ex}{1.5em}
\displaystyle\gg (L,\vww)=\gg (L,\rr[\vww]),
\end{array}
\right.
\end{eqnarray}
is well-posed, and $\gg$ decays exponentially fast as $\eta$ becomes larger and larger. The estimates and decaying rate should be uniform in $\e$.

Let $G(\eta)=-\dfrac{\e}{\rk-\e\eta}$. We define a potential function $W(\eta)$ as
$G(\eta)=-\dfrac{\ud W}{\ud\eta}$ with $W(0)=0$. It is easy to check that
\begin{eqnarray}
W(\eta)=\ln\left(\frac{\rk}{\rk-\e\eta}\right).
\end{eqnarray}

In this section, we introduce some special notation to describe the
norms in the space $(\eta,\vvv)\in[0,L]\times\r^2$. Define
the $L^2$ norm as follows:
\begin{eqnarray}
\tnm{f(\eta)}&=&\bigg(\int_{\r^2}\abs{f(\eta,\vvv)}^2\ud{\vvv}\bigg)^{1/2},\\
\tnnm{f}&=&\bigg(\int_0^{L}\int_{\r^2}\abs{f(\eta,\vvv)}^2\ud{\vvv}\ud{\eta}\bigg)^{1/2}.
\end{eqnarray}
Define the inner product in $\vvv$ space
\begin{eqnarray}
\br{f,g}(\eta)=\int_{\r^2}
f(\eta,\vvv)g(\eta,\vvv)\ud{\vvv}.
\end{eqnarray}
Define the weighted $L^{\infty}$ norm as follows:
\begin{eqnarray}
\lnm{f(\eta)}{\vth,\varrho}&=&\sup_{\vvv\in\r^2}\bigg(\bvv\abs{f(\eta,\vvv)}\bigg),\\
\lnnm{f}{\vth,\varrho}&=&\sup_{(\eta,\vvv)\in[0,L]\times\r^2}\bigg(\bvv\abs{f(\eta,\vvv)}\bigg),
\end{eqnarray}
Define the mixed $L^2$ and weighted $L^{\infty}$ norm as follows:
\begin{eqnarray}
\ltnm{f}{\varrho}&=&\sup_{\eta\in[0,L]}\bigg(\int_{\r^2}\abs{\ue^{2\varrho\abs{\vvv}^2}f(\eta,\vvv)}^2\ud{\vvv}\bigg)^{1/2},
\end{eqnarray}
for $0\leq\varrho<\dfrac{1}{4}$ and an integer $\vth\geq3$.
Since the boundary data $h(\vvv)$ is only defined on $\va>0$, we
naturally extend above definitions on this half-domain as follows:
\begin{eqnarray}
\tnm{h}&=&\bigg(\int_{\va>0}\abs{h(\vvv)}^2\ud{\vvv}\bigg)^{1/2},\\
\lnm{h}{\vth,\varrho}&=&\sup_{\va>0}\bigg(\bvv\abs{h(\vvv)}\bigg).
\end{eqnarray}
We assume
\begin{eqnarray}\label{Milne bound}
\lnm{h}{\vth,\varrho}\leq C,
\end{eqnarray}
for some $C>0$ uniform in $\e$.

Here, we mainly refer to the procedure in \cite{Cercignani.Marra.Esposito1998} and \cite{AA004}, where $\eta\in[0,\infty)$. Since our domain for $\eta$ is bounded, we have to start from scratch and prove each result in the new settings.

%%%%%%%%%%%%%%%%%%%%%%%%%%%%%%%%%%%%%%%%%%%%%%%%%%%%%%%%%%%%%%%%%%%%%%%%%%%%%%%%%%%%%
\subsection{$L^2$ Estimates}
%%%%%%%%%%%%%%%%%%%%%%%%%%%%%%%%%%%%%%%%%%%%%%%%%%%%%%%%%%%%%%%%%%%%%%%%%%%%%%%%%%%%%

We write $\g=w+q$ with $q=q_0\psi_0+q_1\psi_1+q_2\psi_2+q_3\psi_3$.
\begin{lemma}\label{Milne lemma 1}
There exists a unique solution of the equation (\ref{Milne})
satisfying the estimates
\begin{eqnarray}
\tnnm{\sn{w}}&\leq&C,\\
\tnnm{{q}-{q}_{L}}&\leq& C,
\end{eqnarray}
for some $q_L$ satisfying $\abs{q_L}\leq C$,
where $C$ is a constant independent of $\e$. Also, the solution satisfies the orthogonal relation
\begin{eqnarray}
\br{\va\psi_i,w}(\eta)=0,\ \ \text{for}\ \ i=0,2,3.
\end{eqnarray}
\end{lemma}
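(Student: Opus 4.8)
The plan is to adapt the $L^2$ theory of the flat Milne problem (\cite{Cercignani.Marra.Esposito1998}, \cite{AA004}) to the bounded slab $[0,L]$, $L=\e^{-\frac12}$, with the specular reflection $\g(L,\vvv)=\g(L,\rr[\vvv])$ at the far end, keeping all constants independent of $\e$ (equivalently, of $L$). For existence I would first solve, for $\lambda>0$, the penalized equation obtained by adding $\lambda\g$ to the left side of (\ref{Milne}): writing $\ll=\nu I-K$ and inverting the transport–damping operator $\lambda+\va\p_\eta-\frac{\e}{\rk-\e\eta}(\vb^2\p_{\va}-\va\vb\p_{\vb})+\nu$ along the characteristics of the transport field (which conserves $\abs{\vvv}^2$, hence stays in a bounded velocity region, and traced backward reaches $\eta=0$ after finitely many specular bounces off $\eta=L$), the gain from $\lambda$ together with Lemma \ref{wellposedness prelim lemma 8} makes the iteration on $K$ a contraction. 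The a priori bounds below are uniform in $\lambda$, so a weak-$\ast$ limit as $\lambda\to0^+$ produces a solution of (\ref{Milne}); uniqueness then follows from the energy estimate applied to the difference of two solutions (the case $h=0$).

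The orthogonality relations, together with a key simplification, come from testing (\ref{Milne}) against $\psi_i$ and using $\br{\ll[\g],\psi_i}=0$: integrating by parts in $\vvv$, the moments $m_i(\eta):=\br{\va\psi_i,\g}(\eta)$ satisfy, for $i=0,2,3$, a closed linear ODE $m_i'=\frac{c_i\,\e}{\rk-\e\eta}\,m_i$ for suitable constants $c_i$, whereas for $i=1$ the right side couples to another moment. Since $\g(L,\cdot)$ is even in $\va$ by the specular condition and $\va\psi_i$ is odd in $\va$ for $i=0,2,3$, one has $m_i(L)=0$, hence $m_i\equiv0$ on $[0,L]$ for $i=0,2,3$; because $\va\psi_0=\psi_1\in\nk$ while $w\perp\nk$, this gives in particular $q_1\equiv0$. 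A parity count then shows $\br{\va\psi_i,q}(\eta)=0$ for $i=0,2,3$ (immediate for $i=0$; for $i=2$ every pairing $\br{\va\psi_2,\psi_j}$ vanishes by oddness; for $i=3$ the only term that could survive is killed by $q_1\equiv0$), so $\br{\va\psi_i,w}=m_i-\br{\va\psi_i,q}=0$, which is the claimed identity.

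Next I would run the energy identity: multiply (\ref{Milne}) by $\g$ and integrate over $[0,L]\times\r^2$. The term $\va\p_\eta\g$ produces boundary fluxes at $\eta=0,L$; the flux at $\eta=L$ vanishes (the integrand is odd in $\va$ by specular reflection) and the one at $\eta=0$ equals $-\frac12\int_{\va>0}\va h^2+\frac12\int_{\va<0}\abs{\va}\g^2(0,\cdot)$, while the geometric-correction term, integrated by parts in $\vvv$, contributes $-\frac{\e}{2}\int_0^L\!\!\int_{\r^2}\frac{\va}{\rk-\e\eta}\g^2$ from the nonzero velocity divergence. Using the coercivity $\int_0^L\br{\ll[\g],\g}=\int_0^L\br{\ll[w],w}\gs\tnnm{\sn{w}}^2$ from Lemma \ref{Milne property}, the facts that $\rk-\e\eta\gs1$ on the slab (since $\e L=\e^{\frac12}\ll1$) and $\abs{\va}\ls\nu(\vvv)$, and the hypothesis $\lnm{h}{\vth,\varrho}\le C$, one gets $\tnnm{\sn{w}}^2\ls 1+\e\,\tnnm{\sn{\g}}^2\ls 1+\e\,\tnnm{\sn{w}}^2+\e\,\tnnm{q}^2$, so for $\e$ small $\tnnm{\sn{w}}^2\ls 1+\e\,\tnnm{q}^2$.

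To close this I would derive a macroscopic estimate for $q$. Using $q_1\equiv0$ and inserting $\g=q+w$ into (\ref{Milne}), projections onto suitable moments express $\p_\eta q_0,\p_\eta q_2,\p_\eta q_3$ through $\eta$-derivatives of low moments of $w$ and through $\frac{\e}{\rk-\e\eta}$ times moments of $q$; taking $q_L$ to be the far-field (i.e. $\eta=L$) value of the Maxwellian part, pinned down by the vanishing fluxes and the specular condition, integrating these relations over $[\eta,L]$, and estimating with Cauchy--Schwarz and Hardy-type inequalities — using the conservation identities so that no Poincar\'e constant proportional to $L$ appears, and $\e L=\e^{\frac12}$ — should give $\abs{q_L}\ls 1+\tnnm{\sn{w}}$ and $\tnnm{q-q_L}^2\ls\tnnm{\sn{w}}^2+\e\,\tnnm{q}^2$. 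Since $\tnnm{q}^2\ls\tnnm{q-q_L}^2+L\abs{q_L}^2$ and $\e L=\e^{\frac12}$, feeding this back into the energy estimate and absorbing small terms yields $\tnnm{\sn{w}}\le C$, hence $\abs{q_L}\le C$ and $\tnnm{q-q_L}\le C$ with $C$ independent of $\e$. The main obstacle is precisely this macroscopic estimate with $\e$-uniform constants, i.e. uniform as the slab length $L=\e^{-\frac12}\to\infty$: a crude Poincar\'e inequality on $[0,L]$ gives a constant growing like $L$, so one must instead exploit the conservation laws (in particular $q_1\equiv0$ and the vanishing of the odd-in-$\va$ fluxes), use the reflection at $\eta=L$ to identify $q_L$, and rely on the spectral gap of $\ll$, while checking line by line that every contribution of the geometric correction $\frac{\e}{\rk-\e\eta}(\vb^2\p_{\va}-\va\vb\p_{\vb})$ is genuinely of order $\e$ (thanks to $\rk-\e\eta\gs1$ and $\e L=\e^{\frac12}$) so that it can be absorbed at the synthesis step.
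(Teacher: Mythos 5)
Your existence/uniqueness plan (penalization, characteristics), your derivation of $q_1\equiv0$ and of the orthogonality $\br{\va\psi_i,w}(\eta)=0$ for $i=0,2,3$ from the closed moment ODEs $m_i'=-Gm_i$ together with the parity of the specular condition at $\eta=L$, and your overall outline (energy estimate, then macroscopic estimate, with $\e$-uniformity coming from the boundedness of $W$ and from $\e L=\e^{1/2}$) all match the paper's structure. There are, however, two gaps. The smaller one: by integrating the energy identity over $[0,L]\times\r^2$ and bounding the geometric flux $\tfrac12\int_0^L G\br{\va\g,\g}\ud\eta$ by absolute value, you only get the conditional bound $\tnnm{\sn w}^2\ls 1+\e\tnnm{\sn\g}^2$. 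The paper instead keeps $\alpha(\eta)=\tfrac12\br{\va\g,\g}(\eta)$ as a scalar ODE, $\alpha'+G\alpha=-\br{w,\ll[w]}\le0$; since $\alpha(L)=0$ this yields $\alpha(\eta)\ge0$ for all $\eta$, and because the integrating factor $\exp\bigl(\int_0^y G\bigr)$ is bounded above and below by positive constants one obtains the \emph{unconditional} estimate $\int_0^L\br{w,\ll[w]}\le 2\alpha(0)\le C$, with no $q$ on the right.

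The more serious gap is in the macroscopic step, and it is exactly where you flag the ``main obstacle'' without resolving it. Testing (\ref{Milne}) against $\va\psi_j$ ($j=0,2,3$) produces ODEs for $\br{\va^2\psi_j,q}$ whose forcing contains the term $-\br{\va\psi_j,\ll[w]}$. This carries no factor of $\e$, is controlled only by $\tnm{\sn w(\eta)}$ pointwise in $\eta$, and after integrating over $[\eta,L]$ contributes $O\bigl(\sqrt{L-\eta}\,\tnnm{\sn w}\bigr)$, whose $L^2_\eta$ norm is $O(L\,\tnnm{\sn w})$. Feeding this back gives $\e\tnnm{q}^2\sim\e L^2\tnnm{\sn w}^2\sim\tnnm{\sn w}^2$, which cannot be absorbed; no conservation identity or Hardy-type inequality removes this loss. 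The paper's crucial device (Step 4 of the proof) is to test against $\ll^{-1}[\va\psi_i]$ for $i=2,3$ (with $\psi_3$ replaced by $\psi_3-\psi_0$ so that $\va\psi_i\in\nk^\perp$). Since $\ll$ is self-adjoint and $\br{\va\psi_i,w}(\eta)=0$ by the orthogonality you already established, the collision forcing vanishes identically: $\br{\ll^{-1}[\va\psi_i],\ll[w]}=\br{\va\psi_i,w}=0$. What survives is a total $\eta$-derivative $\tfrac{\ud}{\ud\eta}\br{\va\ll^{-1}[\va\psi_i],w}$ (integrated exactly, leaving boundary values controlled by $\tnm{w}$) plus terms carrying an explicit factor $G=O(\e)$, and these close in $L^2_\eta$ using $\e L=\e^{1/2}$. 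The paper also proves exponential $L^2$ decay of $w$ (a weighted version of the $\alpha$-ODE with weight $\ue^{2K_0\eta}$) before the final $q-q_L$ estimate, which gives extra slack, but the $\ll^{-1}$ test function is the ingredient your proposal cannot do without.
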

\begin{proof}
The existence and uniqueness follow from a standard argument by adding penalty term $\lambda\g$ on the left-hand side of the equation for $0<\lambda<<1$ and estimate along the characteristics (see \cite{AA004}). Hence, we concentrate on the a priori estimates. We divide the proof into several steps:\\
\ \\
Step 1: Estimate of $w$.\\
Multiplying $\g$ on both sides of (\ref{Milne}) and
integrating over $\vvv\in\r^2$, we have
\begin{eqnarray}
\half\frac{\ud{}}{\ud{\eta}}\br{\va\g,\g}+
G(\eta)\br{\vb^2\dfrac{\p
\g}{\p\va}-\va\vb\dfrac{\p \g}{\p\vb},\g}=-\br{\g,\ll[\g]}.
\end{eqnarray}
An integration by parts implies
\begin{eqnarray}
\br{\vb^2\dfrac{\p
\g}{\p\va}-\va\vb\dfrac{\p \g}{\p\vb},\g}=\half\br{\vb^2,\dfrac{\p
(\g^2)}{\p\va}}-\half\br{\va\vb,\dfrac{\p (\g^2)}{\p\vb}}=\half\br{\va\g,\g}.
\end{eqnarray}
Therefore, using Lemma \ref{Milne property}, we obtain
\begin{eqnarray}
\half\frac{\ud{}}{\ud{\eta}}\br{\va\g,\g}+\half
G(\eta)\br{\va\g,\g}=-\br{w,\ll[w]}.
\end{eqnarray}
Define
\begin{eqnarray}
\alpha(\eta)=\half\br{\va\g,\g}(\eta),
\end{eqnarray}
which implies
\begin{eqnarray}
\frac{\ud{\alpha}}{\ud{\eta}}+G(\eta)\alpha=-\br{w,\ll[w]}.
\end{eqnarray}
Then we have
\begin{eqnarray}
\alpha(\eta)&=&\alpha(L)\exp\bigg(\int_{\eta}^LG(y)\ud{y}\bigg)+
\int_{\eta}^L\exp\bigg(-\int_{\eta}^yG(z)\ud{z}\bigg)\bigg(\br{w,\ll[w]}(y)\bigg)\ud{y},\label{mt 01}\\
\alpha(\eta)&=&\alpha(0)\exp\bigg(-\int_{0}^{\eta}G(y)\ud{y}\bigg)+
\int_{0}^{\eta}\exp\bigg(\int_{y}^{\eta}G(z)\ud{z}\bigg)\bigg(-\br{w,\ll[w]}(y)\bigg)\ud{y}.\label{mt 02}
\end{eqnarray}
Since $\alpha(L)=0$ due to the reflexive boundary condition and the coercivity $\br{w,\ll[w]}(\eta)\geq\tnm{\sn{w(\eta)}}$, (\ref{mt 01}) implies that
\begin{eqnarray}
\alpha(\eta)\geq0.
\end{eqnarray}
Considering
\begin{eqnarray}
\alpha(0)&=&\half\int_{\va>0}\va\g^2(0,\vvv)\ud{\vvv}+\half\int_{\va<0}\va\g^2(0,\vvv)\ud{\vvv}\leq\half\int_{\va>0}\va\g^2(0,\vvv)\ud{\vvv}\\
&=&\half\int_{\va>0}\va h^2(\vvv)\ud{\vvv}\leq C,\no
\end{eqnarray}
and (\ref{mt 02}), we obtain
\begin{eqnarray}\label{mt 03}
\alpha(\eta)\leq C.
\end{eqnarray}
Hence, (\ref{mt 01}) and (\ref{mt 03}) lead to
\begin{eqnarray}
\int_{0}^{L}\exp\bigg(-\int_{0}^{y}G(z)\ud{z}\bigg)\bigg(\br{w,\ll[w]}(y)\bigg)\ud{y}\leq
C,
\end{eqnarray}
which, by Lemma \ref{Milne property}, further yields
\begin{eqnarray}\label{mt 04}
\int_0^L\tnm{\sn{w}(\eta)}^2\ud{\eta}\leq C
\end{eqnarray}
\ \\
Step 2: Estimate of ${q}$.\\
Multiplying $\va\psi_j$ with $j\neq1$ on both sides of (\ref{Milne}) and integrating over
$\vvv\in\r^2$, we obtain
\begin{eqnarray}
\frac{\ud{}}{\ud{\eta}}\br{\va^2\psi_j,\g}+G(\eta)\br{\va\psi_j,
\vb^2\dfrac{\p \g}{\p\va}-\va\vb\dfrac{\p
\g}{\p\vb}}=-\br{\va\psi_j,\ll[w]}.
\end{eqnarray}
Define $\tilde q ={q}-q_1 \psi_1$ and
\begin{eqnarray}
\beta_j(\eta)&=&\br{\va^2\psi_j,\tilde q }(\eta),\\
\beta(\eta)&=&\bigg(\beta_0(\eta),\beta_1(\eta),\beta_2(\eta),\beta_3(\eta)\bigg)^T\\
\tilde\beta(\eta)&=&\bigg(\beta_0(\eta),\beta_2(\eta),\beta_3(\eta)\bigg)^T.
\end{eqnarray}
Due to symmetry, it is easy to check that $\beta_1=0$. For $j\neq1$, using integration by parts, we have
\begin{eqnarray}
\frac{\ud{}}{\ud{\eta}}\br{\va^2\psi_j,\g}&=&G(\eta)\br{\frac{\p}{\p\va}(\va\vb^2\psi_j)-\frac{\p}{\p\vb}(\va^2\vb\psi_j),
\g}-\br{\va\psi_j,\ll[w]},
\end{eqnarray}
which further implies
\begin{eqnarray}\label{mt 05}
\frac{\ud{\beta_j}}{\ud{\eta}}&=&G(\eta)\br{\frac{\p}{\p\va}(\va\vb^2\psi_j)-\frac{\p}{\p\vb}(\va^2\vb\psi_j),
\tilde q +q_1 \psi_1+w}-\br{\va\psi_j,\ll[w]}-\frac{\ud{}}{\ud{\eta}}\br{\va^2\psi_j,w}.
\end{eqnarray}
Then we can write
\begin{eqnarray}
\br{\frac{\p}{\p\va}(\va\vb^2\psi_j)-\frac{\p}{\p\vb}(\va^2\vb\psi_j),
\tilde q }(\eta)=\sum_{i}B_{ji} q_i (\eta),
\end{eqnarray}
for $i,j=0,2,3$, where
\begin{eqnarray}
B_{ji}=\br{\frac{\p}{\p\va}(\va\vb^2\psi_j)-\frac{\p}{\p\vb}(\va^2\vb\psi_j),
\psi_i}.
\end{eqnarray}
Moreover,
\begin{eqnarray}
\beta_j(\eta)=\sum_{k}A_{jk} q_k (\eta),
\end{eqnarray}
for $k,j=0,2,3$, where
\begin{eqnarray}
A_{jk}=\br{\va^2\psi_j,\psi_k},
\end{eqnarray}
constitutes a non-singular matrix $A$ such that we can express back
\begin{eqnarray}
q_j (\eta)=\sum_{k}A_{jk}^{-1}\beta_k(\eta).
\end{eqnarray}
Hence, (\ref{mt 05}) can be rewritten as
\begin{eqnarray}
\frac{\ud{\tilde\beta}}{\ud{\eta}}=G(BA^{-1})\tilde\beta+D,
\end{eqnarray}
where
\begin{eqnarray}\label{tt 01}
D_j&=&G(\eta)\br{\frac{\p}{\p\va}(\va\vb^2\psi_j)-\frac{\p}{\p\vb}(\va^2\vb\psi_j),
q_1 \psi_1+w}-\br{\va\psi_j,\ll[w]}-\frac{\ud{}}{\ud{\eta}}\br{\va^2\psi_j,w}.
\end{eqnarray}
We can solve for $\tilde\beta$ as
\begin{eqnarray}\label{mt 06}
\tilde\beta(\eta)&=&\exp\bigg(-W(\eta)BA^{-1}\bigg)\tilde\beta(0)+\int_0^{\eta}\exp\bigg(\Big(W(\eta)-W(y)\Big)BA^{-1}\bigg)D(y)\ud{y}.
\end{eqnarray}
We may further simplify the second term on the right-hand side. Consider the last term in (\ref{tt 01}). Define
\begin{eqnarray}
\zeta_j(\eta)=\br{\va^2\psi_j, w}(\eta).
\end{eqnarray}
We may directly integrate by parts to obtain
\begin{eqnarray}
\int_0^{\eta}\exp\bigg(\Big(W(\eta)-W(y)\Big)BA^{-1}\bigg)\frac{\ud{\zeta}}{\ud{y}}\ud{y}
&=&\zeta(\eta)-\exp\bigg(-W(\eta)BA^{-1}\bigg)\zeta(0)\\
&&-\int_0^{\eta}\exp\bigg(\Big(W(\eta)-W(y)\Big)BA^{-1}\bigg)G(y)(BA^{-1})\zeta(y)\ud{y}.\no
\end{eqnarray}
Hence, we may rewrite (\ref{mt 06}) as
\begin{eqnarray}\label{mt 07}
\tilde\beta(\eta)&=&\exp\bigg(-W(\eta)BA^{-1}\bigg)\theta-\zeta(\eta)+\int_0^{\eta}\exp\bigg(\Big(W(\eta)-W(y)\Big)BA^{-1}\bigg)Z(y)\ud{y},
\end{eqnarray}
where
\begin{eqnarray}
\theta_j=\br{\va^2\psi_j, \g}(0),\ \ j=0,2,3,
\end{eqnarray}
and
\begin{eqnarray}
Z=D+\frac{\ud{\zeta}}{\ud{\eta}}+G(BA^{-1})\zeta.
\end{eqnarray}
Hence, using the boundedness of $W(\eta)$ and $BA^{-1}$, we can directly estimate (\ref{mt 07}) to get
\begin{eqnarray}\label{mt 08}
\abs{\beta_j(\eta)}\leq
C\abs{\th_j}+\abs{\zeta_j(\eta)}+C\int_0^{\eta}\abs{Z_j(y)}\ud{y}\ \ \text{for}\ \ i=0,2,3.
\end{eqnarray}
By Cauchy's inequality and Lemma \ref{Milne property}, we obtain
\begin{eqnarray}
\abs{\zeta_j(\eta)}&\leq& \tnm{\sn{w}(\eta)}\label{mt 09}\\
\abs{Z_j(\eta)}&\leq&
C\bigg(\tnm{\sn{w}(\eta)}+q_1(\eta)\bigg)\label{mt 10}.
\end{eqnarray}
Multiplying $\psi_0$ on both sides of (\ref{Milne}) and
integrating over $\vvv\in\r^2$, we have
\begin{eqnarray}
\frac{\ud{}}{\ud{\eta}}\br{\psi_0\va,\g}&=&G(\eta)\br{\frac{\p}{\p\va}(\psi_0\vb^2)-\frac{\p}{\p\vb}(\psi_0\va\vb),
\g}=-G(\eta)\br{\psi_0\va,\g},
\end{eqnarray}
which is actually
\begin{eqnarray}
\frac{\ud{q_1 }}{\ud{\eta}}=-G(\eta)q_1 .
\end{eqnarray}
Since $q_1 (L)=0$, we have for any $\eta\in[0,L]$,
\begin{eqnarray}\label{mt 11}
q_1 (\eta)=0.
\end{eqnarray}
Also,
\begin{eqnarray}
\theta_j=\br{\va^2\psi_j, \g}(0)\leq C\br{\abs{\va}\g(0),
\g(0)}^{1/2}\br{\abs{\va}^3,\psi_j^2}^{1/2}\leq C\br{\abs{\va}\g(0),
\g(0)}^{1/2},
\end{eqnarray}
\begin{eqnarray}
\br{\abs{\va}\g(0), \g(0)}=\int_{\va>0}\va
h^2(\vvv)\ud{\vvv}-\int_{\va<0}\va\g^2(0,\vvv)\ud{\vvv}.
\end{eqnarray}
Since
\begin{eqnarray}
\int_{\va>0}\va h^2(\vvv)\ud{\vvv}+\int_{\va<0}\va\g^2(0,\vvv)\ud{\vvv}=2\alpha(0)\geq0,
\end{eqnarray}
we have
\begin{eqnarray}\label{mt 12}
\theta_j=\br{\va^2\psi_j, \g}(0)\leq 2C\int_{\va>0}\va h^2(\vvv)\ud{\vvv}\leq C.
\end{eqnarray}
In conclusion, collecting (\ref{mt 08}), (\ref{mt 09}), (\ref{mt 10}), (\ref{mt 11}), and (\ref{mt 12}), we have
\begin{eqnarray}
\abs{\beta_j(\eta)}\leq C\bigg(
1+\tnm{\sn{w}(\eta)}+\int_0^{\eta}\tnm{\sn{w}(y)}\ud{y}\bigg)\ \ \text{for}\ \ j=0,2,3,
\end{eqnarray}
which further implies
\begin{eqnarray}
\abs{{q}_j(\eta)}\leq C\bigg(
1+\tnm{\sn{w}(\eta)}+\int_0^{\eta}\tnm{\sn{w}(y)}\ud{y}\bigg)\ \ \text{for}\ \ j=0,2,3,
\end{eqnarray}
and ${q}_1(\eta)=0$. An application of Cauchy's inequality leads to our desired result.\\
\ \\
Step 3: Orthogonal Properties.\\
It is easy to check that $q_1=0$ and
\begin{eqnarray}
\br{\va\psi_i,{q}}&=&0,\ \ i=0,2,3.
\end{eqnarray}
In the equation (\ref{Milne}), multiplying $\psi_i$ for $i=0,2,3$ on both
sides and integrating over $\vvv\in\r^2$, we have
\begin{eqnarray}
\frac{\ud{}}{\ud{\eta}}\br{\psi_i\va,\g}&=&G(\eta)\br{\frac{\p}{\p\va}(\psi_i\vb^2)-\frac{\p}{\p\vb}(\psi_0\va\vb),
\g}=-G\br{\psi_i\va,\g}.
\end{eqnarray}
Since $\br{\psi_i\va,\g}(L)=0$ due to reflexive boundary condition, we have
\begin{eqnarray}
\br{\va\psi_i,\g}(\eta)=\br{\va\psi_i,w}(\eta)=0.
\end{eqnarray}
\ \\
Step 4: Estimate of ${q}_{L}$.\\
The estimates in Step 2 is not strong enough to bound $q$, so we need a different setting to further bound $w$ and $q$.
Considering $q_1(\eta)=0$ for any $\eta\in[0,L]$, we do not need to bother with it.

Since $\ll: L^2(\r^2)\rt\nk^{\bot}$ with null space $\nk$ and image $\nk^{\bot}$, we have $\tilde\ll: L^2/\nk\rt\nk^{\bot}$ is bijective, where $L^2/\nk=\nk^{\bot}$ is the quotient space. Then we can define its inverse, i.e. the pseudo-inverse of $\ll$ as
$\ll^{-1}: \nk^{\bot}\rt\nk^{\bot}$ satisfying $\ll\ll^{-1}[f]=f$ for any $f\in\nk^{\bot}$.

We intend to multiply
$\ll^{-1}[\va\psi_{i}]$ for $i=2,3$ on both sides of
(\ref{Milne}) and integrating over $\vvv\in\r^2$. Notice that $\va\psi_{2}\in \nk^{\bot}$, but $\va\psi_{3}\notin \nk^{\bot}$. Actually, it is easy to verify $\va(\psi_{3}-\psi_0)\in \nk^{\bot}$. To avoid introducing new notation, we still use $\psi_3$ to denote $\psi_{3}-\psi_0$ in the following proof and it is easy to see that there is no confusion. Then we get
\begin{eqnarray}
&&\frac{\ud{}}{\ud{\eta}}\br{\ll^{-1}[\psi_i\va],\va\g}+G(\eta)\br{\ll^{-1}[\psi_i\va],\bigg(\vb^2\dfrac{\p
\g}{\p\va}-\va\vb\dfrac{\p
\g}{\p\vb}\bigg)}=-\br{\ll^{-1}[\psi_i\va],\ll[{w}]}.
\end{eqnarray}
Since $\ll$ is self-adjoint, combining with the orthogonal properties, we have
\begin{eqnarray}
\br{\ll^{-1}[\psi_i\va],\ll[{w}]}(\eta)=\br{\ll\bigg[\ll^{-1}[\psi_i\va]\bigg],{w}}(\eta)=\br{\psi_i\va,{w}}(\eta)=0.
\end{eqnarray}
Therefore, we have
\begin{eqnarray}
&&\frac{\ud{}}{\ud{\eta}}\br{\va\ll^{-1}[\psi_i\va],\g}+G(\eta)\br{\ll^{-1}[\psi_i\va],\bigg(\vb^2\dfrac{\p
\g}{\p\va}-\va\vb\dfrac{\p \g}{\p\vb}\bigg)}=0.
\end{eqnarray}
We may integrate by parts to obtain
\begin{eqnarray}\label{mt 13}
&&\frac{\ud{}}{\ud{\eta}}\br{\va\ll^{-1}[\psi_i\va],\g}-G(\eta)\br{\bigg(\vb^2\dfrac{\p
}{\p\va}-\va\vb\dfrac{\p}{\p\vb}-\va\bigg)\ll^{-1}[\psi_i\va],g}=0.
\end{eqnarray}
Since $\va\psi_0=\psi_1\in\nk$ and $\ll^{-1}[\psi_i\va]\in\nk^{\bot}$, we
have
\begin{eqnarray}
\br{\va\ll^{-1}[\psi_i\va],\psi_0}=0.
\end{eqnarray}
For $i,k=2,3$, put
\begin{eqnarray}
N_{i,k}&=&\br{\va\ll^{-1}[\psi_i\va],\psi_k},\\
P_{i,k}&=&\br{\bigg(\vb^2\dfrac{\p }{\p\va}-\va\vb\dfrac{\p
}{\p\vb}-\va\bigg)\ll^{-1}[\psi_i\va],\psi_k}.
\end{eqnarray}
Thus,
\begin{eqnarray}
\Omega_i&=&\br{\va\ll^{-1}[\psi_i\va],{q}}=\sum_{k=2}^3N_{i,k}{q}_k(\eta),
\end{eqnarray}
and
\begin{eqnarray}
\br{\bigg(\vb^2\dfrac{\p }{\p\va}-\va\vb\dfrac{\p
}{\p\vb}-\va\bigg)\ll^{-1}[\psi_i\va],{q}}&=&\sum_{k=2}^3P_{i,k}{q}_k(\eta).
\end{eqnarray}
Since matrix $N$ is invertible (see \cite{Golse.Poupaud1989}), from (\ref{mt 13}) and integration
by parts, we have for $i=2,3$,
\begin{eqnarray}
\frac{\ud{\Omega_i}}{\ud{\eta}}&=&-\frac{\ud{}}{\ud{\eta}}\br{\va\ll^{-1}[\psi_i\va],{w}}\\
&&+\sum_{k=2}^3G(\eta)(PN^{-1})_{ik}\Omega_k+G(\eta)\br{\bigg(\vb^2\dfrac{\p
}{\p\va}-\va\vb\dfrac{\p
}{\p\vb}-\va\bigg)\ll^{-1}[\hat\psi_i\va],{w}}.\no
\end{eqnarray}
Denote
\begin{eqnarray}
\hat\Omega=\exp\bigg(W(\eta)PN^{-1}\bigg)\Omega.
\end{eqnarray}
This is an ordinary differential equation for $\Omega$. Let $\hat\psi=(\psi_2,\psi_3)^T$, we can solve
\begin{eqnarray}
\hat\Omega(\eta)
&=&\br{\va\ll^{-1}[\hat\psi\va],\g}(0)-\exp\bigg(W(\eta)PN^{-1}\bigg)\br{\va\ll^{-1}[\hat\psi\va],{w}}(\eta)\\
&&+\int_0^{\eta}\exp\bigg(W(y)PN^{-1}\bigg)G(y)\Bigg(\br{\bigg(\vb^2\dfrac{\p
}{\p\va}-\va\vb\dfrac{\p
}{\p\vb}-\va\bigg)\ll^{-1}[\hat\psi\va],{w}}(y)\no\\
&&+\sum_{k=2}^3PN^{-1}\br{\va\ll^{-1}[\hat\psi\va],{w}}(y)\Bigg)\ud{y}.\no
\end{eqnarray}
By a similar method as in Step 2 to bound $\theta_i(0)$, we can show
\begin{eqnarray}
\br{\va\ll^{-1}[\hat\psi\va],\g}(0)=\br{\hat\psi\va,\ll[\va\g]}(0)\leq C.
\end{eqnarray}
Since ${w}\in L^2([0,L]\times\r^2)$,
considering $W(\eta)$ and $PN^{-1}$ are bounded, and $G(\eta)\in L^{\infty}$,
we define
\begin{eqnarray}
\\
\hat\Omega_{L}&=&\br{\va\ll^{-1}[\hat\psi\va],\g}(0)
+\int_0^{L}\exp\bigg(W(y)PN^{-1}\bigg)G(y)\Bigg(\br{\bigg(\vb^2\dfrac{\p
}{\p\va}-\va\vb\dfrac{\p
}{\p\vb}-\va\bigg)\ll^{-1}[\hat\psi\va],{w}}(y)\no\\
&&+\sum_{k=2}^3PN^{-1}\br{\va\ll^{-1}[\hat\psi\va],{w}}(y)\Bigg)\ud{y}.\no
\end{eqnarray}
Let $\hat q_L=(q_{2,L},q_{3,L})^T$. Then we can define
\begin{eqnarray}
\hat q_{L}=N^{-1}\exp\bigg(-W(L)PN^{-1}\bigg)\hat\Omega_{L}.
\end{eqnarray}
Finally, we consider $q_{0,L}$. Multiplying $\psi_1$ on both sides of
(\ref{Milne}) and integrating over $\vvv\in\r^2$, we obtain
\begin{eqnarray}
\frac{\ud{}}{\ud{\eta}}\br{\va\psi_1,\g}=-G\br{\psi_1,\bigg(\vb^2\dfrac{\p
\g}{\p\va}-\va\vb\dfrac{\p
\g}{\p\vb}\bigg)}=G\br{g,(\vb^2-\va^2)\m^{\frac{1}{2}}}.
\end{eqnarray}
Then integrating over $[0,\eta]$, we obtain
\begin{eqnarray}\label{mt 14}
\br{\va\psi_1,\g}(\eta)&=&\br{\va\psi_1,\g}(0)+\int_0^{\eta}G(y)\br{g,(\vb^2-\va^2)\m^{\frac{1}{2}}}(y)\ud{y}\\
&=&\br{\psi_1\g,\va}(0)+\int_0^{\eta}G(y)\br{w,(\vb^2-\va^2)\m^{\frac{1}{2}}}(y)\ud{y}.\no
\end{eqnarray}
Since ${w}\in L^2([0,L]\times\r^2)$ and we can also bound $\br{\va\g,\va}(0)$, we have
\begin{eqnarray}
\br{\va\psi_1,\g}(L)&=&\br{\psi_1\g,\va}(0)+\int_0^{\eta}G(y)\br{w,(\vb^2-\va^2)\m^{\frac{1}{2}}}(y)\ud{y}\leq C.
\end{eqnarray}
Note that
\begin{eqnarray}
\br{\va\psi_1,\psi_1}(\eta)=\br{\va\psi_1,\psi_2}(\eta)=0.
\end{eqnarray}
Then we define
\begin{eqnarray}
{q}_{0,L}=\frac{\br{\va\psi_1,\g}(L)-{q}_{3,L}\br{\va\psi_1,\psi_3}}{\br{\va\psi_1,\psi_0}}.
\end{eqnarray}
Naturally, we define $q_{1,L}=0$. Then to summarize all above, we have defined
\begin{eqnarray}
{q}_{L}=q_{0,L}\psi_0+q_{1,L}\psi_1+q_{2,L}\psi_2+q_{3,L}\psi_3,
\end{eqnarray}
which satisfies $\abs{{q}_{i,L}}\leq C$ for $i=0,1,2,3$.\\
\ \\
Step 5: $L^2$ Decay of ${w}$.\\
The orthogonal property and $q_1=0$ imply
\begin{eqnarray}
\br{\va{q},{w}}(\eta)=\sum_{k=0}^3\br{\va\psi_k,{w}}(\eta)=0.
\end{eqnarray}
Also, we may directly verify that
\begin{eqnarray}
\br{\va{q},{q}}(\eta)=0.
\end{eqnarray}
Therefore, we deduce that
\begin{eqnarray}
\br{\va\g,\g}(\eta)=\br{\va{w},{w}}(\eta).
\end{eqnarray}
Multiplying $\ue^{2K_0\eta}\g$ on both sides of (\ref{Milne}) and integrating over $\vvv\in\r^2$, we obtain

\begin{eqnarray}
\half\frac{\ud{}}{\ud{\eta}}\br{\va\g,\ue^{2K_0\eta}\g}+
G(\eta)\br{\vb^2\dfrac{\p
\g}{\p\va}-\va\vb\dfrac{\p \g}{\p\vb},\ue^{2K_0\eta}\g}=K_0\ue^{2K_0\eta}\br{\va{w},{w}}-\ue^{2K_0\eta}\br{\g,\ll[\g]}.
\end{eqnarray}
An integration by parts implies
\begin{eqnarray}
\br{\vb^2\dfrac{\p
\g}{\p\va}-\va\vb\dfrac{\p \g}{\p\vb},\ue^{2K_0\eta}\g}=\half\br{\vb^2,\dfrac{\p
(\ue^{2K_0\eta}\g^2)}{\p\va}}-\half\br{\va\vb,\dfrac{\p (\ue^{2K_0\eta}\g^2)}{\p\vb}}=\half\br{\va\g,\ue^{2K_0\eta}\g}.
\end{eqnarray}
Therefore, using Lemma \ref{Milne property}, we obtain
\begin{eqnarray}
\half\frac{\ud{}}{\ud{\eta}}\br{\va\g,\ue^{2K_0\eta}\g}+\half
G(\eta)\br{\va\g,\ue^{2K_0\eta}\g}=K_0\ue^{2K_0\eta}\br{\va{w},{w}}-\ue^{2K_0\eta}\br{\g,\ll[\g]}.
\end{eqnarray}
Hence, we can rewrite it as
\begin{eqnarray}
\half\frac{\ud{}}{\ud{\eta}}\bigg(\ue^{2K_0\eta+W(\eta)}\br{\va{w},{w}}\bigg)-\ue^{2K_0\eta+W(\eta)}\bigg(K_0\br{\va{w},{w}}
-\br{{w},\ll[w]}\bigg)=0.
\end{eqnarray}
Since
\begin{eqnarray}
\br{\ll[{w}],{w}}\geq \br{\nu{w},{w}},
\end{eqnarray}
and $W(\eta)$ is bounded,
for $K_0$ sufficiently small, we have
\begin{eqnarray}
\br{\ll[{w}],{w}}-K_0\br{\va{w},{w}}\geq C\tnm{\sn{w}}.
\end{eqnarray}
Then by a similar argument as in Step 1,
we can show that
\begin{eqnarray}
\int_0^{L}\ue^{2K_0\eta}\tnm{\sn{w}(\eta)}\ud{\eta}\leq C.
\end{eqnarray}
\ \\
Step 6: Estimate of ${q}-{q}_{L}$.\\
We first consider $\hat{q}=({q}_2,{q}_3)^T$, which satisfies
\begin{eqnarray}
\hat{q}(\eta)=N^{-1}\exp\bigg(-W(\eta)PN^{-1}\bigg)\hat\Omega(\eta),
\end{eqnarray}
Let
\begin{eqnarray}
\delta&=&\br{\va\ll^{-1}[\hat\psi\va],\g}(0)\\
\Delta&=&G\Bigg(\br{\bigg(\vb^2\dfrac{\p
}{\p\va}-\va\vb\dfrac{\p
}{\p\vb}-\va\bigg)\ll^{-1}[\hat\psi\va],{w}}
+\sum_{k=2}^3PN^{-1}\br{\va\ll^{-1}[\hat\psi\va],{w}}\Bigg)
\end{eqnarray}
Then we have
\begin{eqnarray}
\hat{q}(\eta)
&=&N^{-1}\exp\bigg(-W(\eta)PN^{-1}\bigg)\delta-N^{-1}\br{\va\ll^{-1}[\hat\psi\va],{w}}(\eta)\\
&&+\int_0^{\eta}\exp\bigg(\Big(W(y)-W(\eta)\Big)PN^{-1}\bigg)\Delta(y)\ud{y}.\no
\end{eqnarray}
Also, we know
\begin{eqnarray}
\hat{q}_{L}&=&N^{-1}\exp\bigg(-W(L)PN^{-1}\bigg)\delta+\int_0^{L}\exp\bigg(\Big(W(y)-W(\infty)\Big)PN^{-1}\bigg)\Delta(y)\ud{y}.
\end{eqnarray}
Then we have
\begin{eqnarray}
\hat{q}(\eta)-\hat{q}_{L}&=&N^{-1}\Bigg(\exp\bigg(-W(\eta)PN^{-1}\bigg)-\exp\bigg(-W(L)PN^{-1}\bigg)\Bigg)\delta-N^{-1}\br{\va\ll^{-1}[\hat\psi\va],{w}}(\eta)\\
&&+N^{-1}\Bigg(\exp\bigg(-W(\eta)PN^{-1}\bigg)-\exp\bigg(-W(L)PN^{-1}\bigg)\Bigg)\int_0^{L}\exp\bigg(W(y)PN^{-1}\bigg)\Delta(y)\ud{y}\no\\
&&+N^{-1}\int_{\eta}^{L}\exp\bigg(\Big(W(y)-W(\eta)\Big)PN^{-1}\bigg)\Delta(y)\ud{y}.\no
\end{eqnarray}
Then we have
\begin{eqnarray}\label{mt 15}
&&\tnnm{\hat{q}-\hat{q}_{L}}\\
&\leq&\tnnm{N^{-1}\Bigg(\exp\bigg(-W(\eta)PN^{-1}\bigg)-\exp\bigg(-W(L)PN^{-1}\bigg)\Bigg)\delta}
+\tnnm{N^{-1}\br{\va\ll^{-1}[\hat\psi\va],{w}}}\no\\
&&+\tnnm{N^{-1}\Bigg(\exp\bigg(-W(\eta)PN^{-1}\bigg)-\exp\bigg(-W(L)PN^{-1}\bigg)\Bigg)\int_0^{L}\exp\bigg(W(y)PN^{-1}\bigg)\Delta(y)\ud{y}}\no\\
&&+\tnnm{N^{-1}\int_{\eta}^{L}\exp\bigg(\Big(W(y)-W(\eta)\Big)PN^{-1}\bigg)\Delta(y)\ud{y}}.\no
\end{eqnarray}
We need to estimate each term on the right-hand side of (\ref{mt 15}). We have
\begin{eqnarray}
&&\tnnm{N^{-1}\Bigg(\exp\bigg(-W(\eta)PN^{-1}\bigg)-\exp\bigg(-W(L)PN^{-1}\bigg)\Bigg)\delta}^2\\
&\leq&C\delta\tnnm{\ue^{-W(\eta)}-\ue^{-W(L)}}\leq C.\no
\end{eqnarray}
Since ${w}\in L^2([0,L]\times\r^2)$, we have
\begin{eqnarray}
\tnnm{N^{-1}\br{\va\ll^{-1}[\hat\psi\va],{w}}}\leq C\tnnm{w}\leq C.
\end{eqnarray}
Similarly, we can show
\begin{eqnarray}
&&\tnnm{N^{-1}\Bigg(\exp\bigg(-W(\eta)PN^{-1}\bigg)-\exp\bigg(-W(L)PN^{-1}\bigg)\Bigg)\int_0^{L}\exp\bigg(W(y)PN^{-1}\bigg)\Delta(y)\ud{y}}\\
&\leq&C\tnnm{\ue^{-W(\eta)}-\ue^{-W(L)}}\tnnm{r}\leq C.\no
\end{eqnarray}
For the last term, we have to resort to the exponential decay of ${w}$ in Step 5. We estimate
\begin{eqnarray}
&&\tnnm{N^{-1}\int_{\eta}^{L}\exp\bigg(\Big(W(y)-W(\eta)\Big)PN^{-1}\bigg)\Delta(y)\ud{y}}\\
&\leq&C\int_0^{L}\bigg(\int_{\eta}^{L}\Delta(y)\ud{y}\bigg)^2\ud{\eta}\leq \int_0^{L}\bigg(\int_{\eta}^{\infty}\ue^{-2K_0y}\ud{y}\bigg)\bigg(\int_{\eta}^{L}{w}^2(y)\ue^{2K_0y}\ud{y}\bigg)\ud{\eta}\no\\
&\leq&\int_0^{L}C\ue^{-2K_0\eta}\ud{\eta}\leq C.\no
\end{eqnarray}
Collecting all above, we have
\begin{eqnarray}
\tnnm{\hat{q}-\hat{q}_{L}}\leq C.
\end{eqnarray}
Then we turn to ${q}_0$. We have
\begin{eqnarray}
{q}_{0}(\eta)=\frac{\br{\va\psi_1,\g}(\eta)-\br{\va\psi_1,w}(\eta)-{q}_{3}(\eta)\br{\va\psi_1,\psi_3}}{\br{\va\psi_1,\psi_1}},
\end{eqnarray}
where
\begin{eqnarray}
\br{\psi_1\g,\va}(\eta)&=&\br{\psi_1\g,\va}(0)+\int_0^{\eta}G(y)\br{w,(\vb^2-\va^2)\m^{\frac{1}{2}}}(y)\ud{y}.
\end{eqnarray}
Also, we have
\begin{eqnarray}
{q}_{0,L}=\frac{\br{\va\psi_1,\g}(L)-{q}_{3,L}\br{\va\psi_1,\psi_3}}{\br{\va\psi_1,\psi_0}}.
\end{eqnarray}
Therefore, we have
\begin{eqnarray}
{q}_0(\eta)-{q}_{0,L}=\frac{\displaystyle\int_{\eta}^{L}G(y)\br{w,(\vb^2-\va^2)\m^{\frac{1}{2}}}(y)\ud{y}-\br{\va\psi_1,w}(\eta)
-({q}_3(\eta)-{q}_{3,L})\br{\va\psi_1,\psi_3}}{\br{\va\psi_0,\va}}
\end{eqnarray}
Then we can naturally estimate
\begin{eqnarray}
\\
\tnnm{{q}_0-{q}_{0,L}}\leq C\left(\tnnm{\int_{\eta}^{L}G(y)\br{w,(\vb^2-\va^2)\m^{\frac{1}{2}}}(y)\ud{y}}+\tnnm{w}+\tnnm{{q}_3(\eta)-{q}_{3,L}}\right).\no
\end{eqnarray}
$\tnnm{{q}_3(\eta)-{q}_{3,L}}$ is bounded due to the estimate of $\tnnm{\hat{q}(\eta)-\hat{q}_{L}}$. Then by Cauchy's inequality, we obtain
\begin{eqnarray}
\tnnm{\int_{\eta}^{L}G(y)\br{w,(\vb^2-\va^2)\m^{\frac{1}{2}}}(y)\ud{y}}&\leq& \int_0^{L}\bigg(\int_{\eta}^{L}G(y)\tnm{w(y)}\ud{y}\bigg)^2\ud{\eta}\\
&\leq&\tnnm{w}\int_0^{L}\int_{\eta}^{L}G^2(y)\ud{y}\ud{\eta}\leq C.\no
\end{eqnarray}
Therefore, we have shown
\begin{eqnarray}
\tnnm{{q}_0-{q}_{0,L}}\leq C.
\end{eqnarray}
In summary, we prove that
\begin{eqnarray}
\tnnm{{q}-{q}_{L}}\leq C.
\end{eqnarray}
\end{proof}

\begin{lemma}\label{Milne lemma 2}
There exists a unique solution $\g(\eta,\vvv)$ to the $\e$-Milne problem with geometric correction
(\ref{Milne}) satisfying
\begin{eqnarray}
\tnnm{\g-\g_{L}}\leq C,
\end{eqnarray}
for some $g_L\in\nk$ satisfying $\abs{g_L}\leq C$.
\end{lemma}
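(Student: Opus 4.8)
The plan is to read off the conclusion directly from Lemma \ref{Milne lemma 1}, which already supplies existence and uniqueness of $\g$, the decomposition $\g=w+q$ with $q=q_0\psi_0+q_1\psi_1+q_2\psi_2+q_3\psi_3\in\nk$, and the two core a priori bounds $\tnnm{\sn{w}}\leq C$ and $\tnnm{q-q_L}\leq C$ for a constant vector $q_L\in\nk$ with $\abs{q_L}\leq C$. First I would simply set $\g_L:=q_L$. Since $q_L$ is a fixed linear combination of $\psi_0,\psi_1,\psi_2,\psi_3$, it lies in $\nk$ and satisfies $\abs{\g_L}\leq C$, as required.

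Next I would upgrade the weighted bound on the non-kernel part to the plain $L^2$ norm. The estimate from Lemma \ref{Milne lemma 1} carries the collision-frequency weight $\nu^{1/2}$, so I would invoke the lower bound $\nu(\vvv)\geq\nu_0(1+\abs{\vvv})\geq\nu_0>0$ from Lemma \ref{Milne property} to conclude
\[
\tnnm{w}\leq \nu_0^{-\frac{1}{2}}\tnnm{\sn{w}}\leq C.
\]
Combining with $\tnnm{q-q_L}\leq C$ via the triangle inequality, and writing $\g-\g_L=w+(q-q_L)$, gives
\[
\tnnm{\g-\g_L}\leq \tnnm{w}+\tnnm{q-q_L}\leq C,
\]
which is the claimed estimate; uniqueness is inherited verbatim from Lemma \ref{Milne lemma 1}.

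There is no real obstacle at this stage: the substance of the argument is entirely contained in Lemma \ref{Milne lemma 1}, and here I am only repackaging its output. The only point I would be careful to flag is that the exponential $L^2$-decay of $w$ proved in Step 5 of Lemma \ref{Milne lemma 1} is strictly stronger than the uniform bound $\tnnm{w}\leq C$ needed here, so no further decay argument is required for this lemma; that decay, together with the bound $\abs{\g_L}\leq C$, is what will later be exploited to pass to the weighted $L^{\infty}$ setting and to extract an $\e$-uniform exponential decay rate for $\g-\g_L$.
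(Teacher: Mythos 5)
Your proposal is correct and takes essentially the same approach as the paper; the paper's proof consists of the single sentence ``Taking $\g_{L}={q}_{L}$, we can naturally obtain the desired result,'' and you have simply filled in the routine details (using the lower bound $\nu\geq\nu_0>0$ to pass from $\tnnm{\sn{w}}$ to $\tnnm{w}$, then applying the triangle inequality).
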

\begin{proof}
Taking $\g_{L}={q}_{L}$, we can naturally obtain the
desired result.
\end{proof}

Then we turn to the construction of $\tilde h$ and the well-posedness of the equation (\ref{Milne transform}).
\begin{theorem}\label{Milne theorem 1}
There
exists $\tilde h$ satisfying the condition (\ref{Milne transform
compatibility}) such that there exists a unique solution
$\gg(\eta,\vvv)$ to the $\e$-Milne problem (\ref{Milne transform}) with geometric correction
satisfying
\begin{eqnarray}
\tnnm{\gg}\leq C.
\end{eqnarray}
\end{theorem}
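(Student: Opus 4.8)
The plan is to reduce the statement, via linearity, to the invertibility of a $3\times3$ matrix. Write $\g$ for the solution of the $\e$-Milne problem (\ref{Milne}) with in-flow data $h$ furnished by Lemma \ref{Milne lemma 1}, and more generally, for any $\phi$ defined on $\{\va>0\}$ let $\g[\phi]$ denote the corresponding solution and $q_L[\phi]\in\nk$ its far-field value from Lemma \ref{Milne lemma 2}; by the uniqueness in Lemma \ref{Milne lemma 1} both $\phi\mapsto\g[\phi]$ and $\phi\mapsto q_L[\phi]$ are linear. Since (\ref{Milne transform}) is the same problem with in-flow data $h-\tilde h$, its solution is $\gg=\g-\g[\tilde h]$, so Lemma \ref{Milne lemma 2} gives
\begin{equation*}
\tnnm{\gg-\big(q_L[h]-q_L[\tilde h]\big)}\leq C.
\end{equation*}
Hence it suffices to produce $\tilde h=\sum_{i\in\{0,2,3\}}\tilde D_i\psi_i$ with $\abs{\tilde D_i}\leq C$ and $q_L[\tilde h]=q_L[h]$; such a $\tilde h$ automatically obeys (\ref{Milne transform compatibility}) with $\tilde D_1=0$, and the uniqueness of $\gg$ is inherited from that of the linear problem with data $h-\tilde h$.

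The first observation is that $q_1\equiv0$ for \emph{every} solution of (\ref{Milne}) (this is (\ref{mt 11})), so $q_L[h]$ and all $q_L[\tilde h]$ lie in $\mathrm{span}\{\psi_0,\psi_2,\psi_3\}$ and $\tilde h\mapsto q_L[\tilde h]$ restricts to a linear operator $\mathcal{T}$ on $\r^3$. The crux is to show $\mathcal{T}$ is invertible for $\e$ small, and for this I would establish $\mathcal{T}=\mathrm{Id}+O(\e)$. A direct velocity computation gives
\begin{equation*}
\vb^2\frac{\p\psi_i}{\p\va}-\va\vb\frac{\p\psi_i}{\p\vb}=0\ \ (i=0,3),\qquad \vb^2\frac{\p\psi_2}{\p\va}-\va\vb\frac{\p\psi_2}{\p\vb}=-\va\vb\,\m^{\frac{1}{2}}\in\nk^{\perp}.
\end{equation*}
Therefore, for $\tilde h=\sum_{i\in\{0,2,3\}}\tilde D_i\psi_i$ the constant profile $\tilde h$ is annihilated by $\ll$ and by $\va\p_\eta$, is even in $\va$ (so it satisfies the reflection condition at $\eta=L$), and applying the full operator of (\ref{Milne}) to it leaves only the term $\frac{\e}{\rk-\e\eta}\tilde D_2\va\vb\m^{\frac{1}{2}}$. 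Hence $r:=\g[\tilde h]-\tilde h$ solves (\ref{Milne}) with zero in-flow data and source $-\frac{\e}{\rk-\e\eta}\tilde D_2\va\vb\m^{\frac{1}{2}}\in\nk^{\perp}$. Since $L=\e^{-1/2}$, on $[0,L]$ one has $\rk-\e\eta\geq R_{\min}-\e^{1/2}\geq R_{\min}/2$, so this source has size $O(\e)\,\abs{\tilde D_2}$ in the relevant weighted norms; rerunning the $L^2$ estimates of Lemma \ref{Milne lemma 1} with the source carried through (\ref{mt 01})--(\ref{mt 15}) — legitimate because the source lies in $\nk^{\perp}$ — yields $\abs{q_L[r]}\leq C\e\,\abs{\tilde h}$. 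Thus $q_L[\tilde h]=\tilde h+q_L[r]$ with $\abs{q_L[r]}\leq C\e\,\abs{\tilde h}$, i.e.\ $\mathcal{T}=\mathrm{Id}+O(\e)$.

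With $\mathcal{T}$ invertible for $\e$ sufficiently small and $\nm{\mathcal{T}^{-1}}\leq C$ uniformly in $\e$, I set $\tilde h:=\mathcal{T}^{-1}\big(q_L[h]\big)$. By construction it has no $\psi_1$-component; it satisfies $\abs{\tilde D_i}\leq C$ thanks to the bound $\abs{q_L[h]}\leq C$ from Lemma \ref{Milne lemma 1}; and $q_L[\tilde h]=q_L[h]$, which forces $\tnnm{\gg}\leq C$. This proves the theorem.

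The main obstacle is precisely the invertibility of $\mathcal{T}$, i.e.\ the fact that the far-field value genuinely retains the tangential-momentum component $\psi_2$ of the boundary data. The resolution above relies on two structural facts — the geometric correction is the only term that fails to preserve $\nk$ pointwise, and on the truncated layer $[0,\e^{-1/2}]$ it has operator size $O(\e)$ — so that the only substantial input beyond Lemma \ref{Milne lemma 1} is the inhomogeneous version of its $L^2$ estimates, obtained by repeating that proof with a small $\nk^{\perp}$-valued forcing. Everything else (linearity of the solution map, the kernel identities for $\psi_0,\psi_2,\psi_3$, and the closing synthesis $\tnnm{\gg}\leq C$) is routine.
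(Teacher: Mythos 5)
Your approach is essentially the same as the paper's: both reduce the theorem to the invertibility of a $3\times3$ matrix $\mathcal{T}$ on $\mathrm{span}\{\psi_0,\psi_2,\psi_3\}$, using the observation that the geometric-correction operator $\vb^2\p_{\va}-\va\vb\p_{\vb}$ annihilates $\psi_0$ and $\psi_3$ while mapping $\psi_2$ to $-\va\vb\m^{1/2}$, so that constant profiles with $\tilde D_1=0$ solve (\ref{Milne}) up to a small $\nk^{\perp}$-valued forcing, and then showing $\mathcal{T}$ is a perturbation of the identity. Two small remarks: your identification of the source $G(\eta)\tilde D_2\va\vb\m^{1/2}$ as lying in $\nk^{\perp}$ is the correct one (the paper's "$\in\nk$" is a typo); and your claimed rate $\abs{q_L[r]}\leq C\e\abs{\tilde h}$ is slightly optimistic — the source is $O(\e)$ pointwise but its $L^1_\eta$ and $L^2_\eta$ norms over $[0,\e^{-1/2}]$ are only $O(\e^{1/2})$ and $O(\e^{3/4})$, which is all the paper invokes — though this does not affect the conclusion that $\mathcal{T}=\mathrm{Id}+o(1)$ is invertible for $\e$ small.
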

\begin{proof}
We want to find $\gg$ such that $\gg_L=0$. The key part is the construction of $\tilde h$. Our main idea is to find $\tilde h\in\nk$ such that the equation
\begin{eqnarray}
\left\{
\begin{array}{l}\displaystyle
\va\frac{\p \tilde\g}{\p\eta}+G(\eta)\bigg(\vb^2\dfrac{\p
\tilde\g}{\p\va}-\va\vb\dfrac{\p
\tilde\g}{\p\vb}\bigg)+\ll[\tilde\g]
=0,\\\rule{0ex}{1.5em}
\tilde\g(0,\vvv)=\tilde h(\vvv)\ \ \text{for}\ \ \va>0,\\\rule{0ex}{1.5em}
\tilde\g(L,\vvv)=\tilde\g(L,\rr[\vvv]),\\\rule{0ex}{1.5em}
q_{\tilde\g,L}=q_{g,L},
\end{array}
\right.
\end{eqnarray}
for $\tilde g(\eta,\vvv)$ is well-posed, where
\begin{eqnarray}
\tilde\g_{L}(\vvv)=\g_{L}(\vvv)={q}_{0,L}\psi_0+{q}_{1,L}\psi_1+{q}_{2,L}\psi_2+{q}_{3,L}\psi_3,
\end{eqnarray}
is given by the equation (\ref{Milne}) for $\g$. Note that
\begin{eqnarray}
\tilde h(\vvv)=\tilde D_0\psi_0+\tilde D_1\psi_1+\tilde
D_2\psi_2+\tilde D_3\psi_3,
\end{eqnarray}
with $\tilde D_1=0$. We consider the endomorphism $\mathcal{T}$ in $\tilde\nk\{\psi_0,\psi_2,\psi_3\}$ defined as $\mathcal{T}:\tilde h\rt \mathcal{T}[\tilde h]=\tilde g_{L}$. Therefore, we only need to study the matrix of $\mathcal{T}$ at the basis $\{\psi_0,\psi_2,\psi_3\}$.
It is easy to check when $\tilde
h=\psi_0$ and $\tilde h=\psi_3$, $\mathcal{T}$ is an identity mapping, i.e.
\begin{eqnarray}
\mathcal{T}[\psi_0]&=&\psi_0\\
\mathcal{T}[\psi_3]&=&\psi_3
\end{eqnarray}
The main obstacle is when $\tilde h=\psi_2$. In this case, define $\tilde\g'=\tilde\g-\psi_2$. Then $\tilde\g'$ satisfies the equation
\begin{eqnarray}
\left\{
\begin{array}{l}\displaystyle
\va\frac{\p \tilde\g'}{\p\eta}+G(\eta)\bigg(\vb^2\dfrac{\p \tilde\g'}{\p\va}-\va\vb\dfrac{\p \tilde\g'}{\p\vb}\bigg)+\ll[\tilde\g']
=G(\eta)\m^{\frac{1}{2}}\va\vb,\\\rule{0ex}{1.5em}
\tilde\g'(0,\vvv)=0\ \ \text{for}\ \ \va>0,\\\rule{0ex}{1.5em}
\tilde\g'(L,\vvv)=\tilde\g'(L,\rr[\vvv]).
\end{array}
\right.
\end{eqnarray}
Although $G(\eta)\m^{\frac{1}{2}}\va\vb$ does not decay exponentially, it is easy to check that the $L^1$ and $L^2$ norm of
$G$ can be sufficiently small as $\e\rt0$ and $G(\eta)\m^{\frac{1}{2}}\va\vb\in\nk$. Using a natural extension of Lemma \ref{Milne lemma 1}, we know
$\abs{\tilde{q}'_{L}}$ is also sufficiently small, where $\tilde{q}'\in\nk$ is defined in the decomposition $\tilde\g'=\tilde w'+\tilde q'$. Note that we do not need exponential decay of source term in order to show the bound of $\tilde{q}'_{L}$. This means that
\begin{eqnarray}
\mathcal{T}[\psi_0,\psi_2,\psi_3]=[\psi_0,\psi_2,\psi_3]\left(
\begin{array}{ccc}
1&\tilde{q}'_{0,L}&0\\
0&1+\tilde{q}'_{2,L}&0\\
0&\tilde{q}'_{3,L}&1\\
\end{array}
\right)
\end{eqnarray}
For $\e$ sufficiently small, this matrix is invertible, which means $\mathcal{T}$ is bijective. Therefore, we can always find $\tilde h$ such that
$\tilde\g_{L}=\g_{L}$, which is desired. Then by Lemma
\ref{Milne lemma 2} and the superposition property, when define $\gg=\g-\tilde\g$, the theorem naturally follows.
\end{proof}

%%%%%%%%%%%%%%%%%%%%%%%%%%%%%%%%%%%%%%%%%%%%%%%%%%%%%%%%%%%%%%%%%%%%%%%%%%%%%%%%%%%%%
\subsection{$L^{\infty}$ Estimates}
%%%%%%%%%%%%%%%%%%%%%%%%%%%%%%%%%%%%%%%%%%%%%%%%%%%%%%%%%%%%%%%%%%%%%%%%%%%%%%%%%%%%%

Consider the $\e$-transport problem for $\g(\eta,\vvv)$
\begin{eqnarray}\label{transport}
\left\{
\begin{array}{l}\displaystyle
\va\frac{\p \g}{\p\eta}+G(\eta)\bigg(\vb^2\dfrac{\p
\g}{\p\va}-\va\vb\dfrac{\p \g}{\p\vb}\bigg)+\nu\g
=Q(\eta,\vvv),\\\rule{0ex}{1.5em} \g(0,\vvv)=h(\vvv)\ \
\text{for}\ \ \va>0,\\\rule{0ex}{1.5em}
\g(L,\vvv)=\g(L,R[\vvv]),
\end{array}
\right.
\end{eqnarray}
We define the characteristics starting from
$\Big(\eta(0),\va(0),\vb(0)\Big)$ as $\Big(\eta(s),\va(s),\vb(s)\Big)$
satisfying
\begin{eqnarray}
\frac{\ud{\eta}}{\ud{s}}=\va,\ \
\frac{\ud{\va}}{\ud{s}}=G(\eta)\vb^2,\ \
\frac{\ud{\vb}}{\ud{s}}=-G(\eta)\va\vb,
\end{eqnarray}
which leads to
\begin{eqnarray}
\va^2(s)+\vb^2(s)&=&C_1,\\
\vb(s)\ue^{-W(\eta(s))}&=&C_2,
\end{eqnarray}
where $C_1$ and $C_2$ are two constants depending on the starting
point. Along the characteristics, $\va^2+\vb^2$ and $\vb\ue^{-W(\eta)}$ are conserved quantities and the equation (\ref{transport}) can be
rewritten as
\begin{eqnarray}
\va\frac{\p \g}{\p\eta}+\nu\g&=&Q.
\end{eqnarray}
Define the energy
\begin{eqnarray}
E_1&=&\va^2+\vb^2,\\
E_2&=&\vb\ue^{-W(\eta)}.
\end{eqnarray}
Let
\begin{eqnarray}
\vb'(\eta,\vvv;\eta')&=&\vb e^{W(\eta')-W(\eta)}.
\end{eqnarray}
For $E_1\geq\vb'^2$, define
\begin{eqnarray}
\va'(\eta,\vvv;\eta')&=&\sqrt{E_1-\vb'^2(\eta,\vvv;\eta')},\\
\vvv'(\eta,\vvv;\eta')&=&\Big(\va'(\eta,\vvv;\eta'),\vb'(\eta,\vvv;\eta')\Big),\\
\rr[\vvv'(\eta,\vvv;\eta')]&=&\Big(-\va'(\eta,\vvv;\eta'),\vb'(\eta,\vvv;\eta')\Big).
\end{eqnarray}
Basically, this means $(\eta,\va,\vb)$ and $(\eta',\va',\vb')$, $(\eta',-\va',\vb')$ are on the same characteristics. Also, this implies $\va'\geq0$.
Moreover, define an implicit function $\eta^{+}(\eta,\vvv)$ by the
equation
\begin{eqnarray}
E_1(\eta,\vvv)=\vb'^2(\eta,\vvv;\eta^+).
\end{eqnarray}
We know $(\eta^+,0,\vb')$ at the axis $\va=0$ is on the same characteristics as $(\eta,\vvv)$. Finally put
\begin{eqnarray}
H_{\eta,\eta'}&=&\int_{\eta'}^{\eta}\frac{\nu\Big(\vvv'(\eta,\vvv;y)\Big)}{\va'(\eta,\vvv;y)}\ud{y},\\
\rr[H_{\eta,\eta'}]&=&\int_{\eta'}^{\eta}\frac{\nu\Big(\rr[\vvv'(\eta,\vvv;y)]\Big)}{\va'(\eta,\vvv;y)}\ud{y}.
\end{eqnarray}
Actually, since $\nu$ only depends on $\abs{\vvv}$, we must have $H_{\eta,\eta'}=\rr[H_{\eta,\eta'}]$. This distinction is purely for clarify and does not play a role in the estimates.
We can rewrite the solution to the equation (\ref{transport})
along the characteristics
as
\begin{eqnarray}
\g(\eta,\vvv)=\k\Big[h(\vvv)\Big]+\t\Big[Q(\eta,\vvv)\Big],
\end{eqnarray}
where\\
\ \\
Case I:\\
For $\va>0$,
\begin{eqnarray}\label{mt 16}
\k\Big[h(\vvv)\Big]&=&h\Big(\vvv'(\eta,\vvv; 0)\Big)\exp(-H_{\eta,0}),\\
\t\Big[Q(\eta,\vvv)\Big]&=&\int_0^{\eta}\frac{Q\Big(\eta',\vvv'(\eta,\vvv;\eta')\Big)}{\va'(\eta,\vvv;\eta')}\exp(-H_{\eta,\eta'})\ud{\eta'}.
\end{eqnarray}
\ \\
Case II:\\
For $\va<0$ and $\va^2+\vb^2\geq \vb'^2(\eta,\vvv;L)$,
\begin{eqnarray}\label{mt 17}
\k\Big[h(\vvv)\Big]&=&h\Big(\vvv'(\eta,\vvv; 0)\Big)\exp(-H_{L,0}-\rr[H_{L,\eta}]),\\
\t\Big[Q(\eta,\vvv)\Big]&=&\bigg(\int_0^{L}\frac{Q\Big(\eta',\vvv'(\eta,\vvv;\eta')\Big)}{\va'(\eta,\vvv;\eta')}
\exp(-H_{L,\eta'}-\rr[H_{L,\eta}])\ud{\eta'}\\
&&+\int_{\eta}^{L}\frac{Q\Big(\eta',\rr[\vvv'(\eta,\vvv;\eta')]\Big)}{\va'(\eta,\vvv;\eta')}\exp(\rr[H_{\eta,\eta'}])\ud{\eta'}\bigg).\no
\end{eqnarray}
\ \\
Case III:\\
For $\va<0$ and $\va^2+\vb^2\leq \vb'^2(\eta,\vvv;L)$,
\begin{eqnarray}\label{mt 18}
\k\Big[h(\vvv)\Big]&=&h\Big(\vvv'(\eta,\vvv; 0)\Big)\exp(-H_{\eta^+,0}-\rr[H_{\eta^+,\eta}]),\\
\t\Big[Q(\eta,\vvv)\Big]&=&\bigg(\int_0^{\eta^+}\frac{Q\Big(\eta',\vvv'(\eta,\vvv;\eta')\Big)}{\va'(\eta,\vvv;\eta')}
\exp(-H_{\eta^+,\eta'}-\rr[H_{\eta^+,\eta}])\ud{\eta'}\\
&&+\int_{\eta}^{\eta^+}\frac{Q\Big(\eta',\rr[\vvv'(\eta,\vvv;\eta')]\Big)}{\va'(\eta,\vvv;\eta')}\exp(\rr[H_{\eta,\eta'}])\ud{\eta'}\bigg).\no
\end{eqnarray}
In order to achieve the estimate of $\g$, we need to control $\k[h]$ and $\t[Q]$. Since we always assume that $(\eta,\vvv)$ and $(\eta',\vvv')$ are on the same characteristics, in the following, we will simply write $\vvv'(\eta')$ or even $\vvv'$ instead of $\vvv'(\eta,\vvv;\eta')$ when there is no confusion. We can use this notation interchangeably when necessary.
\begin{lemma}\label{Milne lemma 3}
There is a positive $0<\beta<\nu_0$ such that for any $\vth\geq0$ and
$\varrho\geq0$,
\begin{eqnarray}
\lnm{\ue^{\beta\eta}\k[h]}{\vth,\varrho}\leq C\lnm{h}{\vth,\varrho}.
\end{eqnarray}
\end{lemma}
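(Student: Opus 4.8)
\textbf{Proof plan for Lemma \ref{Milne lemma 3}.}

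The plan is to exploit the fact that $\k[h]$ is essentially the boundary data $h$ transported along a characteristic curve and damped by the collision frequency $\nu$, so that the exponential factor $\ue^{-H_{\eta,\eta'}}$ (and its reflected counterparts) supplies a genuine decay in $\eta$. The starting point is the representation formulas (\ref{mt 16})--(\ref{mt 18}). In all three cases $\k[h]$ has the form $h\big(\vvv'(\eta,\vvv;0)\big)$ times an exponential of a line integral of $\nu/\va'$ along the characteristic connecting $(\eta,\vvv)$ back to the boundary $\eta=0$ (possibly after one reflection at $\eta=L$ or at the turning point $\eta^+$). First I would recall, from the conserved quantities $E_1 = \va^2+\vb^2$ and $E_2 = \vb\ue^{-W(\eta)}$ along the characteristics and the boundedness of $W$ (hence of $\ue^{\pm W}$) established just before this lemma, that $\abs{\vvv'(\eta,\vvv;\eta')}$ is comparable to $\abs{\vvv}$ uniformly, and in particular the weight $\bvv$ evaluated at $\vvv'(\eta,\vvv;0)$ is comparable to $\bvv$ evaluated at $\vvv$. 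This reduces the claim to showing that $\ue^{\beta\eta}\exp(-H_{\eta,0}) \leq C$ (and similarly for the reflected exponents), with $\beta<\nu_0$ a fixed constant.

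The core estimate is the lower bound $H_{\eta,0} = \int_0^{\eta} \frac{\nu(\vvv'(\eta,\vvv;y))}{\va'(\eta,\vvv;y)}\,\ud y \geq \nu_0\,\eta$, using $\nu(\vvv)\geq \nu_0(1+\abs\vvv) \geq \nu_0\abs{\va'}$ from Lemma \ref{Milne property} together with $\va'(\eta,\vvv;y) = \sqrt{E_1 - \vb'^2(\eta,\vvv;y)} \leq \sqrt{E_1}$; more carefully one writes $\frac{\nu(\vvv')}{\va'} \geq \nu_0\frac{1+\abs{\vvv'}}{\va'} \geq \nu_0$ since $\va' \leq \abs{\vvv'} \leq 1+\abs{\vvv'}$. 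Hence $\exp(-H_{\eta,0}) \leq \ue^{-\nu_0\eta}$, and choosing any $0<\beta<\nu_0$ gives $\ue^{\beta\eta}\exp(-H_{\eta,0}) \leq \ue^{-(\nu_0-\beta)\eta} \leq 1$. In Case II the exponent is $-H_{L,0} - \rr[H_{L,\eta}]$; since $H_{L,0} \geq \nu_0 L$ and $\rr[H_{L,\eta}] \geq \nu_0(L-\eta)$, the total is $\geq \nu_0(2L-\eta) \geq \nu_0\eta$ because $\eta \leq L$, so the same bound holds. In Case III the exponent is $-H_{\eta^+,0} - \rr[H_{\eta^+,\eta}]$ with $\eta^+ \geq \eta$ (the characteristic must reach the turning point, which lies at or beyond $\eta$), giving $H_{\eta^+,0} \geq \nu_0\eta^+ \geq \nu_0\eta$, again sufficient. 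Combining the weight comparison with these exponential bounds and taking the supremum over $\vvv$ yields $\lnm{\ue^{\beta\eta}\k[h]}{\vth,\varrho} \leq C \sup_{\va>0}\bvv\abs{h(\vvv)} = C\lnm{h}{\vth,\varrho}$.

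The main obstacle I anticipate is making the comparison $\bvv \,\lesssim\, \br{\vvv'(\eta,\vvv;0)}^{\vth}\ue^{\varrho\abs{\vvv'(\eta,\vvv;0)}^2}$ fully rigorous and $\e$-uniform near the grazing regime $\va \to 0$, where $\eta^+$ and the turning-point geometry degenerate: there $\va'(\eta,\vvv;y)$ can be small on part of the integration range, so the naive bound $\nu/\va' \geq \nu_0$ is the safe one to use rather than anything sharper, and one must check that $\abs{\vvv'}^2 = E_1 - \vb'^2 + \vb'^2 = \va'^2 + \vb^2\ue^{2(W(y)-W(\eta))}$ stays within a fixed multiplicative factor of $\abs{\vvv}^2$ thanks to $\abs{W}\leq \ln(\rk/(\rk-\e L)) = \ln(\rk/(\rk-\e^{1/2}))\to 0$. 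Once that uniform two-sided bound on $\abs{\vvv'}$ is in hand, the weight transfers with an $\e$-independent constant and the rest is the elementary exponential comparison above. I would also note that the constant $\beta$ can be taken to be, say, $\nu_0/2$, independent of all parameters, which is what later decay arguments will need.
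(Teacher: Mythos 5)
Your proposal is correct and follows essentially the same route as the paper: the key step is the lower bound $\dfrac{\nu(\vvv')}{\va'}\geq\nu_0$ (and likewise for the reflected velocity), giving $\exp(-H_{\eta,0})\leq\ue^{-\nu_0\eta}$, with the Case II and III exponents handled by the same inequality, after which any $0<\beta<\nu_0$ works. One remark: since $E_1=\va^2+\vb^2$ is conserved along the characteristics, $\abs{\vvv'(\eta,\vvv;\eta')}=\abs{\vvv}$ exactly, so the weight $\bvv$ transfers with equality rather than merely comparability, and the worry you raise about $\e$-uniform comparability near the grazing regime (and the appeal to $W$ being small) is unnecessary.
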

\begin{proof}
Based on Lemma \ref{Milne property},
we know
\begin{eqnarray}
\frac{\nu(\vvv')}{\va'}\geq\nu_0,\ \
\frac{\nu(\rr[\vvv'])}{\va'}\geq\nu_0.
\end{eqnarray}
It follows that
\begin{eqnarray}
\exp(-H_{\eta,0})&\leq&\ue^{-\beta\eta}\\
\exp(-H_{\eta^+,0}-\rr[H_{\eta^+,\eta}])&\leq&\ue^{-\beta\eta}
\end{eqnarray}
Then our results are obvious.
\end{proof}
\begin{lemma}\label{Milne lemma 4}
For any $\vth\geq0$, $\varrho\geq0$ and
$0\leq\beta\leq\dfrac{\nu_0}{2}$, there is a constant $C$ such that
\begin{eqnarray}
\lnnm{\t[Q]}{\vth,\varrho}\leq C\lnnm{\frac{Q}{\nu}}{\vth,\varrho}.
\end{eqnarray}
Moreover, we have
\begin{eqnarray}
\lnnm{\ue^{\beta\eta}\t[Q]}{\vth,\varrho}\leq
C\lnnm{\frac{\ue^{\beta\eta}Q}{\nu}}{\vth,\varrho}.
\end{eqnarray}
\end{lemma}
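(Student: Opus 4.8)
The plan is to estimate $\t[Q]$ directly from the three explicit representation formulas along characteristics, exploiting the conservation law $\va^2+\vb^2=\va'^2+\vb'^2$ (the quantity $E_1$) which is preserved along every characteristic. The crucial consequence is that the weight $\bvv$ is \emph{invariant} along a characteristic: $\br{\vvv'(\eta,\vvv;\eta')}^{\vth}\ue^{\varrho\abs{\vvv'(\eta,\vvv;\eta')}^2}=\bvv$, and likewise $\br{\rr[\vvv'(\eta,\vvv;\eta')]}^{\vth}\ue^{\varrho\abs{\rr[\vvv'(\eta,\vvv;\eta')]}^2}=\bvv$ since $\rr$ only flips the sign of $\va'$. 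Hence, in each of Cases I, II, III, I would multiply the formula for $\t[Q]$ by $\bvv$, carry the weight inside the integral, and replace every $\bvv Q(\eta',\vvv')$ by $\nu(\vvv')\lnnm{\nu^{-1}Q}{\vth,\varrho}$ (and similarly for $\rr[\vvv']$). This reduces the lemma to showing that the resulting scalar kernels, which are integrals of $\tfrac{\nu(\vvv')}{\va'}\exp(-H)$ type, are bounded by a constant uniform in $(\eta,\vvv)$ and in $\e$.

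The core ingredient is the telescoping identity: since $\p_{\eta'}H_{\eta,\eta'}=-\nu\big(\vvv'(\eta')\big)/\va'(\eta')$, one has $\int_0^{\eta}\frac{\nu(\vvv'(\eta'))}{\va'(\eta')}\exp(-H_{\eta,\eta'})\,\ud{\eta'}=1-\exp(-H_{\eta,0})\le 1$, which remains valid even though $1/\va'$ has an integrable singularity at a turning point (the integral is improper but convergent and the fundamental theorem of calculus still applies). This settles Case I for the unweighted bound. For the weighted version I would write $\ue^{\beta\eta}=\ue^{\beta\eta'}\ue^{\beta(\eta-\eta')}$; because $\nu(\vvv'(y))/\va'(y)\ge\nu_0\ge 2\beta$ along the characteristic (using $\va'\le\abs{\vvv'}$ and $\nu(\vvv')\ge\nu_0(1+\abs{\vvv'})$ from Lemma \ref{Milne property}) and $\eta-\eta'\ge0$ in Case I, one gets $\ue^{\beta(\eta-\eta')}\exp(-H_{\eta,\eta'})\le\exp(-\tfrac12 H_{\eta,\eta'})$, and the same telescoping gives a bound of $2$; the leftover $\ue^{\beta\eta'}Q$ is exactly what is controlled by $\lnnm{\nu^{-1}\ue^{\beta\eta}Q}{\vth,\varrho}$.

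Cases II and III are handled by the same mechanism after splitting the reflected path into the leg running up to the reflection point ($\eta'\nearrow L$ in Case II, $\eta'\nearrow\eta^+$ in Case III) and the leg running back down to $\eta$. The point is that the total optical depth of the reflected path dominates $\beta\abs{\eta-\eta'}$: in Case II, since $\eta\le L$ one has $\eta-\eta'\le(L-\eta')+(L-\eta)$, hence $\beta(\eta-\eta')\le\tfrac{\beta}{\nu_0}\big(H_{L,\eta'}+\rr[H_{L,\eta}]\big)\le\tfrac12\big(H_{L,\eta'}+\rr[H_{L,\eta}]\big)$, so $\ue^{\beta(\eta-\eta')}$ is absorbed into half of the exponential weight and the remaining $\int_0^L\frac{\nu(\vvv')}{\va'}\exp(-\tfrac12 H_{L,\eta'})\,\ud{\eta'}\le 2$ by the telescoping identity, while $\exp(-\tfrac12\rr[H_{L,\eta}])\le 1$. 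On the return leg $\eta'\ge\eta$, so $\ue^{\beta(\eta-\eta')}\le1$ and no absorption is needed; the integral $\int_\eta^L\frac{\nu(\rr[\vvv'])}{\va'}\exp(\rr[H_{\eta,\eta'}])\,\ud{\eta'}\le 1$ again by telescoping. Case III is identical with $\eta^+$ replacing $L$, using $\eta\le\eta^+$ and $\eta'\le\eta^+$.

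The main obstacle I anticipate is the bookkeeping in the reflection cases: keeping straight which optical-depth factor ($H_{L,\eta'}$, $\rr[H_{L,\eta}]$, $\rr[H_{\eta,\eta'}]$, $H_{\eta^+,\eta'}$, $\rr[H_{\eta^+,\eta}]$, $\rr[H_{\eta,\eta'}]$) absorbs the $\ue^{\beta(\eta-\eta')}$ factor in each sub-integral, and checking that after the $\tfrac12$-absorption every residual $\frac{\nu}{\va'}\exp(-\tfrac12 H)$ integral telescopes to $O(1)$ uniformly as $\va'\to0$ near a turning point and, crucially, uniformly in $\e$ (so that $L=\e^{-1/2}\to\infty$ never enters the bound). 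Finally, the first displayed inequality of the lemma is simply the $\beta=0$ instance of the second, so proving the weighted estimate gives both.
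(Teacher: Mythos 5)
Your proposal is correct and follows essentially the same argument as the paper: the key steps in both are (i) exploiting the invariance of $\bvv$ along characteristics so that the weight commutes with $\t$, (ii) pulling out $\lnnm{\nu^{-1}\ue^{\beta\eta}Q}{\vth,\varrho}$ and reducing to a scalar kernel bound, (iii) absorbing $\ue^{\beta(\eta-\eta')}$ into half of the exponential damping via $H\ge\nu_0(\eta-\eta')$ and $\beta\le\nu_0/2$, and (iv) using the substitution $z=H_{\eta,\eta'}$ (the telescoping identity) to bound the residual kernel integral by an absolute constant. The paper carries this out explicitly only for Case I ($\va>0$) and dismisses the $\va<0$ cases as "similar"; your proposal supplies the missing bookkeeping for Cases II and III, in particular the inequality $\beta(\eta-\eta')\le\tfrac12\big(H_{L,\eta'}+\rr[H_{L,\eta}]\big)$ on the outgoing leg and the observation that $\ue^{\beta(\eta-\eta')}\le1$ on the return leg, which is exactly what is needed.
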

\begin{proof}
The first inequality is a special case of the second one, so we only need to prove the second inequality.
For $\va>0$ case, we have
\begin{eqnarray}
\beta(\eta-\eta')-H_{\eta,\eta'}\leq\beta(\eta-\eta')-\frac{\nu_0(\eta-\eta')}{2}-\frac{H_{\eta,\eta'}}{2}\leq-\frac{H_{\eta,\eta'}}{2}.
\end{eqnarray}
It is natural that
\begin{eqnarray}
\int_0^{\eta}\frac{\nu\Big(\vvv'(\eta')\Big)}{\va'(\eta')}
\exp\Big(\beta(\eta-\eta')-H_{\eta,\eta'}\Big)\ud{\eta'}\leq\int_0^{\infty}\exp\bigg(-\frac{z}{2}\bigg)\ud{z}=2,
\end{eqnarray}
for $z=H_{\eta,\eta'}$. Then we estimate
\begin{eqnarray}
\abs{\bvv\ue^{\beta\eta}\t[Q]}&\leq& \ue^{\beta\eta}\int_0^{\eta}\bvv\frac{\abs{Q\Big(\eta',\vvv'(\eta')\Big)}}{\va'(\eta')}\exp(-H_{\eta,\eta'})\ud{\eta'}\\
&\leq&\lnnm{\frac{\ue^{\beta\eta}Q}{\nu}}{\vth,\varrho}\int_0^{\eta}\frac{\nu\Big(\vvv'(\eta')\Big)}{\va'(\eta')}
\exp\Big(\beta(\eta-\eta')-H_{\eta,\eta'}\Big)\ud{\eta'}\no\\
&\leq&C\lnnm{\frac{\ue^{\beta\eta}Q}{\nu}}{\vth,\varrho}.\no
\end{eqnarray}
The $\va<0$ case can be proved in a similar fashion, so we omit it here.
\end{proof}
\begin{lemma}\label{Milne lemma 5}
For any $\d>0$, $\vth>2$ and $\varrho\geq0$, there is a
constant $C(\d)$ such that
\begin{eqnarray}
\ltnm{\t[Q]}{\varrho}\leq C(\d)\tnnm{\nu^{-\frac{1}{2}}Q}+\d\lnnm{Q}{\vth,\varrho}.
\end{eqnarray}
\end{lemma}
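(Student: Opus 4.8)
The goal is to estimate the mixed norm $\ltnm{\t[Q]}{\varrho}$, i.e. the supremum over $\eta$ of the $L^2_{\vvv}$ norm of $\ue^{2\varrho\abs{\vvv}^2}\t[Q](\eta,\cdot)$, by a space-time $L^2$ norm of $Q$ plus a controllably small multiple of the weighted $L^{\infty}$ norm of $Q$. The plan is to split the velocity integration defining the $L^2_{\vvv}$ norm into a bounded region $\abs{\vvv}\leq N$ and its complement $\abs{\vvv}\geq N$, with $N=N(\d)$ chosen large at the end. On the far region $\abs{\vvv}\geq N$, I would simply use the pointwise bound from Lemma~\ref{Milne lemma 4} (the first inequality, $\lnnm{\t[Q]}{\vth,\varrho}\leq C\lnnm{\nu^{-1}Q}{\vth,\varrho}$) together with the fact that $\bvv^{-2}\ue^{-2\varrho\abs{\vvv}^2}$ (equivalently $\br{\vvv}^{-2\vth}\ue^{-2\varrho\abs{\vvv}^2}$ times the remaining Gaussian slack, using $\vth>2$) is integrable and its tail $\int_{\abs{\vvv}\geq N}$ is as small as we like; this produces the $\d\lnnm{Q}{\vth,\varrho}$ term, after absorbing $\nu^{-1}\leq C$.

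For the bounded region $\abs{\vvv}\leq N$, the plan is to bound $\abs{\t[Q](\eta,\vvv)}$ by an integral of $\abs{Q}$ along the characteristic, change variables from the characteristic parameter $\eta'$ to the transported velocity $\vvv'=\vvv'(\eta,\vvv;\eta')$ (or to the physical normal variable), and then apply Cauchy--Schwarz in $(\eta',\vvv)$ to turn the $L^1$-in-characteristic estimate into an $L^2$ estimate. Concretely, writing (for the $\va>0$ case) $\t[Q](\eta,\vvv)=\int_0^{\eta}\va'^{-1}Q(\eta',\vvv'(\eta'))\exp(-H_{\eta,\eta'})\,\ud{\eta'}$, I would use $\exp(-H_{\eta,\eta'})\leq \exp(-c H_{\eta,\eta'})$ to retain an exponential damping, bound by Cauchy--Schwarz in $\ud{\eta'}$ against a harmless integrable factor, then integrate $\abs{\vvv}^2\leq N^2$ over $\vvv$ and perform the Jacobian change of variables $\vvv\mapsto$ (base point of the characteristic), whose determinant is bounded below on the region where $\va'$ is not too small. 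The region where $\va'\to 0$ (grazing set, and the turning points $\eta^+$ in Cases II--III) must be handled separately: there the measure of the bad set of $\vvv$ is small, so a crude $L^{\infty}$ bound on that sliver contributes another $o(1)$-type term that can be folded into the $C(\d)$ constant or the $\d$ term. Collecting the two regions gives $\ltnm{\t[Q]}{\varrho}\leq C(\d)\tnnm{\nu^{-1/2}Q}+\d\lnnm{Q}{\vth,\varrho}$, as claimed; the $\va<0$ cases (II and III) are treated the same way, splitting the two pieces of $\t[Q]$ in \eqref{mt 17}, \eqref{mt 18} and using that $\rr[H]=H$ so the exponential damping is identical.

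The main obstacle, as usual in these Milne estimates, is controlling the characteristic change of variables near $\va'=0$: the factor $1/\va'$ in the Duhamel kernel is singular, and the map $\eta'\mapsto\vvv'$ degenerates exactly where a characteristic grazes the axis $\va=0$ (the point $\eta^+$). I expect to handle this by isolating $\{\va'(\eta,\vvv;\eta')\leq\kappa\}$ for a small $\kappa$, bounding that contribution by $\kappa^{1/2}$ times $\lnnm{Q}{\vth,\varrho}$ (or by a direct estimate exploiting that the set of $(\eta',\vvv)$ with small $\va'$ has small measure, since $E_1-\vb'^2$ vanishes linearly), and on the complement $\{\va'\geq\kappa\}$ the Jacobian is bounded below by $c\kappa$ so the change of variables is legitimate and one gets $C(\kappa)\tnnm{\nu^{-1/2}Q}$. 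Optimizing $\kappa$ against $N$ in terms of $\d$ closes the estimate. One should also be careful that the weight $\ue^{2\varrho\abs{\vvv}^2}$ is essentially conserved along characteristics only up to the bounded factor $\ue^{W(\eta)}$ coming from $\vb'=\vb\ue^{W(\eta')-W(\eta)}$ and $E_1=\abs{\vvv'}^2$; since $W$ is bounded uniformly in $\e$ (as $W(\eta)=\ln(\rk/(\rk-\e\eta))$ and $\e\eta\leq\e L=\e^{1/2}$), this only costs a universal constant and does not affect the structure of the bound.
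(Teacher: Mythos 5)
Your plan follows the paper's proof essentially exactly: you too split the velocity integral into a large-$\abs{\vvv}$ region controlled by the $L^{\infty}$ bound from Lemma~\ref{Milne lemma 4} and the integrability of $\br{\vvv}^{-\vth}$ for $\vth>2$, a bulk region handled by Cauchy--Schwarz together with the characteristic change of variables $\vvv\mapsto\vvv'$ whose Jacobian is bounded below when $\va$ and $\va'$ stay away from zero, and a near-grazing sliver of small $\vvv$-measure absorbed into the $\d\lnnm{Q}{\vth,\varrho}$ term. The one place you gloss over is the $\va<0$ case: for $\eta'>\eta$ one has $\va'(\eta')\leq\abs{\va}$ (the opposite monotonicity to Case~I), so ``$\va'$ small'' does \emph{not} restrict $\va$ to a small set, and the paper compensates by splitting on $\eta'-\eta$, using the damping $\ue^{-\sigma/m}$ when $\eta'-\eta\geq\sigma$ and the explicit bound $\abs{\va}\leq C(m+M\sqrt{\e\sigma})$ when $\eta'-\eta\leq\sigma$; you would need this (or an equivalent) rather than a straight symmetry with Case~I.
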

\begin{proof}
We divide the proof into several cases:\\
\ \\
Case I: For $\va>0$,
\begin{eqnarray}
\t\Big[Q(\eta,\vvv)\Big]&=&\int_0^{\eta}\frac{Q\Big(\eta',\vvv(\eta,\vvv;\eta')\Big)}{\va'(\eta,\vvv;\eta')}\exp(-H_{\eta,\eta'})\ud{\eta'}.
\end{eqnarray}
We need to estimate
\begin{eqnarray}
\int_{\r^2}\ue^{2\varrho\abs{\vvv}^2}\bigg(\int_0^{\eta}\frac{Q\Big(\eta',\vvv(\eta,\vvv;\eta')\Big)}{\va'(\eta,\vvv;\eta')}\exp(-H_{\eta,\eta'})\ud{\eta'}
\bigg)^2\ud{\vvv}.
\end{eqnarray}
Assume $m>0$ is sufficiently small, $M>0$ is sufficiently large and $\sigma>0$ is sufficiently small, which will be determined in the following. We can split the integral into four parts
\begin{eqnarray}
I=I_1+I_2+I_3+I_4.
\end{eqnarray}
In the following, we use $\chi_i$ for $i=1,2,3,4$ to represent the indicator function of each case.\\
\ \\
Case I - Type I: $\chi_1$: $M\leq\va'(\eta,\vvv;\eta')$ or $M\leq\vb'(\eta,\vvv;\eta')$.\\
By Lemma \ref{Milne property}, we have
\begin{eqnarray}
\abs{\vvv(\eta')}+1\leq C\nu\Big(\vvv(\eta')\Big).
\end{eqnarray}
Then for $\vth>2$, since $\abs{\vvv}$ is conserved along the characteristics,  we have
\begin{eqnarray}
I_1&\leq&C\lnnm{Q}{\vth,\varrho}^2\int_{\r^2}\chi_1\bigg(\int_0^{\eta}\frac{1}{\br{\vvv'}^{\vth}}\frac{\exp(-H_{\eta,\eta'})}{\va'(\eta,\vvv;\eta')}\ud{\eta'}
\bigg)^2\ud{\vvv}\\
&\leq&\frac{C}{M^{\vth}}\lnnm{Q}{\vth,\varrho}^2\int_{\r^2}\frac{1}{\br{\vvv}^{\vth}}\bigg(\int_0^{\eta}\frac{\exp(-H_{\eta,\eta'})}{\va'(\eta,\vvv;\eta')}\ud{\eta'}
\bigg)^2\ud{\vvv}\no\\
&\leq&\frac{C}{M^{\vth}}\lnnm{Q}{\vth,\varrho}^2\int_{\r^2}\frac{1}{\br{\vvv}^{\vth}}\ud{\vvv}\no\\
&\leq&\frac{C}{M^{\vth}}\lnnm{Q}{\vth,\varrho}^2,\no
\end{eqnarray}
since for $y=H_{\eta,\eta'}$,
\begin{eqnarray}
\abs{\int_0^{\eta}\frac{\exp(-H_{\eta,\eta'})}{\va'(\eta,\vvv;\eta')}\ud{\eta'}}&\leq& \abs{\int_0^{\eta}\frac{\nu\Big(\vvv'(\eta,\vvv;\eta')\Big)\exp(-H_{\eta,\eta'})}{\va'(\eta,\vvv;\eta')}\ud{\eta'}}\\
&\leq&\int_0^{\infty}\ue^{-y}\ud{y}=1.\no
\end{eqnarray}
\ \\
Case I - Type II: $\chi_2$: $\va\geq\sigma$, $m\leq\va'(\eta,\vvv;\eta')\leq M$ and $\vb'(\eta,\vvv;\eta')\leq M$.\\
Since along the characteristics, $\abs{\vvv}^2$ can be bounded by
$2M^2$ and the integral domain for $\vvv$ is finite, by Cauchy's inequality, we have
\begin{eqnarray}
I_2&\leq&C\ue^{4\varrho M^2}\int_{\r^2}\bigg(\int_0^{\eta}\frac{Q^2}{\nu}\Big(\eta',\vvv'(\eta,\vvv;\eta')\Big)\ud{\eta'}\bigg)
\bigg(\int_0^{\eta}\frac{\nu\Big(\vvv'(\eta,\vvv;\eta')\Big)\exp(-2H_{\eta,\eta'})}{\va'^2(\eta,\vvv;\eta')}\ud{\eta'}\bigg)\ud{\vvv}\\
&\leq&C\frac{\ue^{4\varrho M^2}}{m}\int_{\r^2}\bigg(\int_0^{\eta}\frac{Q^2}{\nu}\Big(\eta',\vvv'(\eta,\vvv;\eta')\Big)\ud{\eta'}\bigg)
\bigg(\int_0^{\eta}\frac{\nu\Big(\vvv'(\eta,\vvv;\eta')\Big)\exp(-2H_{\eta,\eta'})}{\va'(\eta,\vvv;\eta')}\ud{\eta'}\bigg)\ud{\vvv}\no\\
&\leq&C\frac{\ue^{4\varrho M^2}}{m}\bigg(\int_{\r^2}\int_0^{\eta}\frac{Q^2}{\nu}\Big(\eta',\vvv'(\eta,\vvv;\eta')\Big)\ud{\eta'}\ud{\vvv}\bigg)\no\\
&\leq&C\frac{M\ue^{4\varrho M^2}}{m\sigma}\bigg(\int_{\r^2}\int_0^{\eta}\frac{Q}{\nu^2}\Big(\eta',\vvv'\Big)\ud{\eta'}\ud{\vvv'}\bigg)\no\\
&\leq& C\frac{M\ue^{4\varrho M^2}}{m\sigma}\tnnm{\nu^{-\frac{1}{2}}Q}^2,\no
\end{eqnarray}
where for $y=H_{\eta,\eta'}$,
\begin{eqnarray}
\int_0^{\eta}\frac{\nu\Big(\vvv'(\eta,\vvv;\eta')\Big)}{\va'(\eta,\vvv;\eta')}\exp(-2H_{\eta,\eta'})\ud{\eta'}
\ud{\vvv}&\leq&\int_0^{\infty}\ue^{-2y}\ud{y}=\half,
\end{eqnarray}
and the Jacobian
\begin{eqnarray}
\abs{\dfrac{\ud{\vvv}}{\ud{\vvv'}}}=\abs{\dfrac{\rk-\e\eta}{\rk-\e\eta'}\dfrac{\va'}{\va}}\leq C\dfrac{\va'}{\va}\leq \dfrac{CM}{\sigma}.
\end{eqnarray}
\ \\
Case I - Type III: $\chi_3$: $\va\geq\sigma$, $0\leq\va'(\eta,\vvv;\eta')\leq m$ and $\vb'(\eta,\vvv;\eta')\leq M$.\\
We can directly verify the fact that
\begin{eqnarray}
0\leq\va\leq\va'(\eta,\vvv;\eta'),
\end{eqnarray}
for $\eta'\leq\eta$. Then we know the integral of $\va$ is always in a small domain.
We have for $y=H_{\eta,\eta'}$,
\begin{eqnarray}
I_3&\leq&C\ue^{4\varrho M^2}\lnnm{Q}{\vth,\varrho}^2\int_{\r^2}\frac{\chi_3}{\br{\vvv}^{\vth}}\bigg(\int_0^{\eta}\frac{\exp(-H_{\eta,\eta'})}{\va'(\eta,\vvv;\eta')}\ud{\eta'}
\bigg)^2\ud{\vvv}\\
&\leq&C\ue^{4\varrho M^2}\lnnm{Q}{\vth,\varrho}^2\int_{\r^2}\frac{\chi_3}{\br{\vvv}^{\vth}}\bigg(\int_0^{\infty}\ue^{-y}\ud{y}
\bigg)^2\ud{\vvv}\no\\
&\leq&C\ue^{4\varrho M^2}\lnnm{Q}{\vth,\varrho}^2\int_{\r^2}\frac{\chi_3}{\br{\vvv}^{\vth}}\ud{\vvv}\no\\
&\leq&C\ue^{4\varrho M^2}m\lnnm{Q}{\vth,\varrho}^2.\no
\end{eqnarray}
\ \\
Case I - Type IV: $\chi_4$: $\va\leq\sigma$, $\va'(\eta,\vvv;\eta')\leq M$ and $\vb'(\eta,\vvv;\eta')\leq M$.\\
Similar to Case I - Type III , we know the integral of $\va$ is always in a small domain.
We have for $y=H_{\eta,\eta'}$,
\begin{eqnarray}
I_3&\leq&C\ue^{4\varrho M^2}\lnnm{Q}{\vth,\varrho}^2\int_{\r^2}\frac{\chi_4}{\br{\vvv}^{\vth}}\bigg(\int_0^{\eta}\frac{\exp(-H_{\eta,\eta'})}{\va'(\eta,\vvv;\eta')}\ud{\eta'}
\bigg)^2\ud{\vvv}\\
&\leq&C\ue^{4\varrho M^2}\lnnm{Q}{\vth,\varrho}^2\int_{\r^2}\frac{\chi_4}{\br{\vvv}^{\vth}}\bigg(\int_0^{\infty}\ue^{-y}\ud{y}
\bigg)^2\ud{\vvv}\no\\
&\leq&C\ue^{4\varrho M^2}\lnnm{Q}{\vth,\varrho}^2\int_{\r^2}\frac{\chi_4}{\br{\vvv}^{\vth}}\ud{\vvv}\no\\
&\leq&C\ue^{4\varrho M^2}\sigma\lnnm{Q}{\vth,\varrho}^2.\no
\end{eqnarray}
\ \\
Collecting all three types, we have
\begin{eqnarray}
I\leq C\frac{M\ue^{4\varrho
M^2}}{m\sigma}\tnnm{\nu^{-\frac{1}{2}}Q}^2+C\bigg(\frac{1}{M^{\vth}}+\ue^{4\varrho M^2}(m+\sigma)\bigg)\lnnm{Q}{\vth,\varrho}^2.
\end{eqnarray}
Taking $M$ sufficiently large, $m<<\ue^{-4\varrho M^2}$ and
$\sigma<<\ue^{-4\varrho M^2}$ sufficiently small, we obtain
the desired result.\\
\ \\
Case II: \\
For $\va<0$ and $\va^2+\vb^2\geq \vb'^2(\eta,\vvv;L)$,
\begin{eqnarray}
\t\Big[Q(\eta,\vvv)\Big]&=&\bigg(\int_0^{L}\frac{Q\Big(\eta',\vvv(\eta,\vvv;\eta')\Big)}{\va'(\eta,\vvv;\eta')}
\exp(-H_{L,\eta'}-\rr[H_{L,\eta}])\ud{\eta'}\\
&&+\int_{\eta}^{L}\frac{Q\Big(\eta',\rr[\vvv(\eta,\vvv;\eta')]\Big)}{\va'(\eta,\vvv;\eta')}\exp(\rr[H_{\eta,\eta'}])\ud{\eta'}\bigg).\no
\end{eqnarray}
We first estimate
\begin{eqnarray}
\int_{\r^2}\ue^{2\varrho\abs{\vvv}^2}\bigg(\int_{\eta}^{L}\frac{Q\Big(\eta',\rr[\vvv(\eta,\vvv;\eta')]\Big)}{\va'(\eta,\vvv;\eta')}
\exp(\rr[H_{\eta,\eta'}])\ud{\eta'}
\bigg)^2\ud{\vvv}.
\end{eqnarray}
We can split the integral into four parts:
\begin{eqnarray}
II=II_1+II_2+II_3+II_4.
\end{eqnarray}
\ \\
Case II - Type I: $\chi_1$: $M\leq\va'(\eta,\vvv;\eta')$ or $M\leq\vb'(\eta,\vvv;\eta')$.\\
Similar to Case I - Type I, we have
\begin{eqnarray}
II_1&\leq&C\lnnm{Q}{\vth,\varrho}^2\int_{\r^2}\chi_1\bigg(\int_{\eta}^{L}\frac{1}{\br{\vvv'}^{\vth}}\frac{\exp(\rr[H_{\eta,\eta'}])}{\va'(\eta,\vvv;\eta')}\ud{\eta'}
\bigg)^2\ud{\vvv}\\
&\leq&\frac{C}{M^{\vth}}\lnnm{Q}{\vth,\varrho}^2\int_{\r^2}\frac{1}{\br{\vvv}^{\vth}}\bigg(\int_{\eta}^{L}\frac{\exp(\rr[H_{\eta,\eta'}])}{\va'(\eta,\vvv;\eta')}\ud{\eta'}
\bigg)^2\ud{\vvv}\no\\
&\leq&\frac{C}{M^{\vth}}\lnnm{Q}{\vth,\varrho}^2\int_{\r^2}\frac{1}{\br{\vvv}^{\vth}}\ud{\vvv}\no\\
&\leq&\frac{C}{M^{\vth}}\lnnm{Q}{\vth,\varrho}^2,\no
\end{eqnarray}
since for $y=H_{\eta,\eta'}$,
\begin{eqnarray}
\abs{\int_{\eta}^{L}\frac{\exp(\rr[H_{\eta,\eta'}])}{\va'(\eta,\vvv;\eta')}\ud{\eta'}}&\leq& \abs{\int_{\eta}^{L}\frac{\nu\Big(\vvv'(\eta,\vvv;\eta')\Big)\exp(\rr[H_{\eta,\eta'}])}{\va'(\eta,\vvv;\eta')}\ud{\eta'}}\\
&\leq&\int^0_{-\infty}\ue^{y}\ud{y}=1.\no
\end{eqnarray}
\ \\
Case II - Type II: $\chi_2$: $m\leq\va'(\eta,\vvv;\eta')\leq M$ and $\vb'(\eta,\vvv;\eta')\leq M$.\\
We can directly verify the fact that
\begin{eqnarray}
0\leq\va'(\eta,\vvv;\eta')\leq\abs{\va},
\end{eqnarray}
for $\eta'\geq\eta$. Similar to Case I - Type II, by Cauchy's inequality, we have
\begin{eqnarray}
II_2&\leq&C\ue^{4\varrho M^2}\int_{\r^2}\bigg(\int_0^{\eta}\frac{Q^2}{\nu}\Big(\eta',\vvv(\eta,\vvv;\eta')\Big)\ud{\eta'}\bigg)
\bigg(\int_{\eta}^{L}\frac{\nu\Big(\vvv'(\eta,\vvv;\eta')\Big)\exp(2\rr[H_{\eta,\eta'}])}{\va'^2(\eta,\vvv;\eta')}\ud{\eta'}\bigg)\ud{\vvv}\\
&\leq&C\frac{\ue^{4\varrho M^2}}{m}\int_{\r^2}\bigg(\int_{\eta}^{L}\frac{Q^2}{\nu}\Big(\eta',\vvv(\eta,\vvv;\eta')\Big)\ud{\eta'}\bigg)
\bigg(\int_{\eta}^{L}\frac{\nu\Big(\vvv'(\eta,\vvv;\eta')\Big)\exp(2\rr[H_{\eta,\eta'}])}{\va'(\eta,\vvv;\eta')}\ud{\eta'}\bigg)\ud{\vvv}\no\\
&\leq&C\frac{\ue^{4\varrho M^2}}{m}\bigg(\int_{\r^2}\int_{\eta}^{L}\frac{Q^2}{\nu}\Big(\eta',\vvv(\eta,\vvv;\eta')\Big)\ud{\eta'}\ud{\vvv}\bigg)\no\\
&\leq&C\frac{\ue^{4\varrho M^2}}{m}\bigg(\int_{\r^2}\int_{\eta}^{L}\frac{Q^2}{\nu}\Big(\eta',\vvv(\eta,\vvv;\eta')\Big)\ud{\eta'}\ud{\vvv'}\bigg)\no\\
&\leq& C\frac{\ue^{4\varrho M^2}}{m}\tnnm{\nu^{-\frac{1}{2}}Q}^2,\no
\end{eqnarray}
where for $y=H_{\eta,\eta'}$,
\begin{eqnarray}
\int_0^{\eta}\frac{\nu\Big(\vvv'(\eta,\vvv;\eta')\Big)}{\va'(\eta,\vvv;\eta')}\exp(-2H_{\eta,\eta'})\ud{\eta'}
\ud{\vvv}&\leq&\int_0^{\infty}\ue^{-2y}\ud{y}=\half,
\end{eqnarray}
and the Jacobian
\begin{eqnarray}
\abs{\dfrac{\ud{\vvv}}{\ud{\vvv'}}}=\abs{\dfrac{\rk-\e\eta}{\rk-\e\eta'}\dfrac{\va'}{\va}}\leq C\abs{\dfrac{\va'}{\va}}\leq C.
\end{eqnarray}
\ \\
Case II - Type III: $\chi_3$: $0\leq\va'(\eta,\vvv;\eta')\leq m$, $\vb'(\eta,\vvv;\eta')\leq M$ and $\eta'-\eta\geq\sigma$.\\
We know
\begin{eqnarray}
H_{\eta,\eta'}\leq-\frac{\sigma}{m}.
\end{eqnarray}
Then after substitution $y=H_{\eta,\eta'}$, the integral is not from zero, but from
$-\dfrac{\sigma}{m}$. In detail, we have
\begin{eqnarray}
I_3&\leq&C\ue^{4\varrho M^2}\lnnm{Q}{\vth,\varrho}^2\int_{\r^2}\frac{\chi_3}{\br{\vvv}^{\vth}}\bigg(\int_{\eta}^{L}\frac{\exp(\rr[H_{\eta,\eta'}])}{\va'(\eta,\vvv;\eta')}\ud{\eta'}
\bigg)^2\ud{\vvv}\\
&\leq&C\ue^{4\varrho M^2}\lnnm{Q}{\vth,\varrho}^2\int_{\r^2}\frac{\chi_3}{\br{\vvv}^{\vth}}\bigg(\int_{\eta}^{L}\frac{\nu\Big(\vvv'(\eta,\vvv;\eta')\Big)
\exp(\rr[H_{\eta,\eta'}])}{\va'(\eta,\vvv;\eta')}\ud{\eta'}
\bigg)^2\ud{\vvv}\no\\
&\leq&C\ue^{4\varrho M^2}\lnnm{Q}{\vth,\varrho}^2\int_{\r^2}\frac{\chi_3}{\br{\vvv}^{\vth}}\bigg(\int^{-\frac{\sigma}{m}}_{-\infty}\ue^{y}\ud{y}\bigg)^2\ud{\vvv}\no\\
&\leq&C\ue^{4\varrho M^2}\ue^{-\frac{\sigma}{m}}\lnnm{Q}{\vth,\varrho}^2\int_{\r^2}\frac{\chi_3}{\br{\vvv}^{\vth}}\ud{\vvv}\no\\
&\leq& C\ue^{4\varrho M^2}\ue^{-\frac{\sigma}{m}}\lnnm{Q}{\vth,\varrho}^2.\no
\end{eqnarray}
\ \\
Case II - Type IV: $\chi_4$: $0\leq\va'(\eta,\vvv;\eta')\leq m$, $\vb'(\eta,\vvv;\eta')\leq M$ and $\eta'-\eta\leq\sigma$.\\
For $\eta'\leq\eta$ and $\eta'-\eta\leq\sigma$, we have
\begin{eqnarray}
\va&=&\sqrt{\va'^2(\eta,\vvv;\eta')+\vb'^2(\eta,\vvv;\eta')-\vb^2}\\
&=&\sqrt{\va'^2(\eta,\vvv;\eta')+\vb'^2(\eta,\vvv;\eta')-\vb'^2\ue^{2W(\eta)-2W(\eta')}}\no\\
&=&\sqrt{\va'^2(\eta,\vvv;\eta')+\vb'^2(\eta,\vvv;\eta')-\vb'^2\left(\frac{\rk-\e\eta'}{\rk-\e\eta}\right)^2}\no\\
&\leq&\sqrt{\va'^2(\eta,\vvv;\eta')+2\rk M^2\e(\eta'-\eta)}\no\\
&\leq&
C\sqrt{m^2+\e M^2\sigma}\leq C(m+M\sqrt{\e\sigma}).\no
\end{eqnarray}
Therefore, the integral domain for $\va$ is very small. We have the estimate for $y=H_{\eta,\eta'}$
\begin{eqnarray}
I_4&\leq&C\ue^{4\varrho M^2}\lnnm{Q}{\vth,\varrho}^2\int_{\r^2}\frac{\chi_4}{\br{\vvv}^{\vth}}\bigg(\int_{\eta}^{L}\frac{\exp(\rr[H_{\eta,\eta'}])}{\va'(\eta,\vvv;\eta')}\ud{\eta'}
\bigg)^2\ud{\vvv}\\
&\leq&C\ue^{4\varrho M^2}\lnnm{Q}{\vth,\varrho}^2\int_{\r^2}\frac{\chi_4}{\br{\vvv}^{\vth}}\bigg(\int^0_{-\infty}\ue^{y}\ud{y}
\bigg)^2\ud{\vvv}\no\\
&\leq&C\ue^{4\varrho M^2}\lnnm{Q}{\vth,\varrho}^2\int_{\r^2}\frac{\chi_4}{\br{\vvv}^{\vth}}\ud{\vvv}\no\\
&\leq&C\ue^{4\varrho M^2}(m+\sqrt{\e\sigma})\lnnm{Q}{\vth,\varrho}^2.\no
\end{eqnarray}
\ \\
Collecting all four types, we have
\begin{eqnarray}
II\leq C\frac{\ue^{4\varrho
M^2}}{m}\tnnm{\nu^{-\frac{1}{2}}Q}^2+C\bigg(\frac{1}{M^{\vth}}+\ue^{4\varrho M^2}m\bigg)\lnnm{Q}{\vth,\varrho}^2.
\end{eqnarray}
Taking $M$ sufficiently large, $\sigma<<\ue^{-8\varrho M^2}$ sufficiently small and
$m<<\min\{\sigma,\ue^{-4\varrho M^2}\}$ sufficiently small, we obtain
the desired result.

Note that we have the decomposition
\begin{eqnarray}
&&\int_0^{L}\frac{Q\Big(\eta',\vvv(\eta,\vvv;\eta')\Big)}{\va'(\eta,\vvv;\eta')}
\exp(-H_{L,\eta'}-\rr[H_{L,\eta}])\ud{\eta'}\\
&=&\int_0^{\eta}\frac{Q\Big(\eta',\vvv(\eta,\vvv;\eta')\Big)}{\va'(\eta,\vvv;\eta')}
\exp(-H_{L,\eta'}-\rr[H_{L,\eta}])\ud{\eta'}+\int_{\eta}^{L}\frac{Q\Big(\eta',\vvv(\eta,\vvv;\eta')\Big)}{\va'(\eta,\vvv;\eta')}
\exp(-H_{L,\eta'}-\rr[H_{L,\eta}])\ud{\eta'}.\no
\end{eqnarray}
Then this term can actually be bounded using the techniques in Case I and Case II.\\
\ \\
Case III: \\
For $\va<0$ and $\va^2+\vb^2\leq \vb'^2(\eta,\vvv;L)$,
\begin{eqnarray}
\t\Big[Q(\eta,\vvv)\Big]&=&\bigg(\int_0^{\eta^+}\frac{Q\Big(\eta',\vvv(\eta,\vvv;\eta')\Big)}{\va'(\eta,\vvv;\eta')}
\exp(-H_{\eta^+,\eta'}-\rr[H_{\eta^+,\eta}])\ud{\eta'}\\
&&+\int_{\eta}^{\eta^+}\frac{Q\Big(\eta',\rr[\vvv(\eta,\vvv;\eta')]\Big)}{\va'(\eta,\vvv;\eta')}\exp(\rr[H_{\eta,\eta'}])\ud{\eta'}\bigg).\no
\end{eqnarray}
This is a combination of Case I and Case II, so it naturally holds.
\end{proof}

\begin{lemma}\label{Milne lemma 6}
The solution $\g(\eta,\vvv)$ to the $\e$-Milne problem (\ref{Milne})
satisfies for $\varrho\geq0$ and an integer $\vth\geq3$,
\begin{eqnarray}
\lnnm{\g-\g_{L}}{\vth,\varrho}\leq
C+C\tnnm{\g-\g_{L}}.
\end{eqnarray}
\end{lemma}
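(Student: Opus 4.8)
The plan is to estimate $\bar\g:=\g-\g_L$ directly from the transport representation (\ref{mt 16})--(\ref{mt 18}), using the $L^2$ control of Lemma \ref{Milne lemma 5} together with a double Duhamel iteration to extract the $\tnnm{\g-\g_L}$ term on the right. Recall $\g_L=q_L=q_{0,L}\psi_0+q_{1,L}\psi_1+q_{2,L}\psi_2+q_{3,L}\psi_3\in\nk$ with $q_{1,L}=0$, so $\g_L$ is independent of $\eta$, even in $\va$, and annihilated by $\ll$. First I would subtract $\g_L$ from (\ref{Milne}) and write $\ll=\nu I-K$: then $\bar\g$ solves the $\e$-transport problem (\ref{transport}) with source $Q=K[\bar\g]+Q_0$, where $Q_0=-G(\eta)\big(\vb^2\p_{\va}\g_L-\va\vb\p_{\vb}\g_L\big)$, with in-flow data $h-\g_L$ on $\va>0$, and with reflection $\bar\g(L,\vvv)=\bar\g(L,\rr[\vvv])$ inherited from $\g$ (using $q_{1,L}=0$). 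Two terms are immediately harmless: $\lnm{h-\g_L}{\vth,\varrho}\le C$ by (\ref{Milne bound}) and $\abs{q_L}\le C$; and since $L=\e^{-1/2}$ forces $\abs{G(\eta)}=\frac{\e}{\rk-\e\eta}\le C\e$ while $\vb^2\p_{\va}\g_L-\va\vb\p_{\vb}\g_L$ is $\m^{1/2}$ times a polynomial with coefficients $O(\abs{q_L})$, the Gaussian weight $\bvv\,\m^{1/2}$ decays for $0\le\varrho<1/4$, giving $\lnnm{\nu^{-1}Q_0}{\vth,\varrho}\le C$.

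Next, along the characteristics $\bar\g=\k[h-\g_L]+\t[K[\bar\g]]+\t[Q_0]$. Lemma \ref{Milne lemma 3} gives $\lnnm{\k[h-\g_L]}{\vth,\varrho}\le C$ and Lemma \ref{Milne lemma 4} gives $\lnnm{\t[Q_0]}{\vth,\varrho}\le C$, so everything reduces to controlling $\lnnm{\t[K[\bar\g]]}{\vth,\varrho}$. Since Lemma \ref{Milne lemma 4} alone bounds this only by $C\lnnm{\bar\g}{\vth,\varrho}$ with a non-small constant, I would iterate the mild formulation once more, as in the proof of Theorem \ref{LI estimate'}:
\begin{eqnarray*}
\t[K[\bar\g]]=\t\big[K[\k[h-\g_L]]\big]+\t\big[K[\t[Q_0]]\big]+\t\big[K[\t[K[\bar\g]]]\big],
\end{eqnarray*}
where the first two terms are $\le C$ by Lemma \ref{Milne lemma 4} and the bounds just obtained. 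For the double-$K$ term I would split the inner velocity integral of $K$ at $\abs{\vvv}=N$: the weighted kernel estimates of Lemma \ref{wellposedness prelim lemma 8} (large velocities) together with Cauchy--Schwarz (bounded velocities) yield, for any $\d>0$ and $N$ large, the splitting bound $\lnnm{\nu^{-1}K[f]}{\vth,\varrho}\le\d\lnnm{f}{\vth,\varrho}+C(\d)\ltnm{f}{\varrho}$; applying it with $f=\t[K[\bar\g]]$ and Lemma \ref{Milne lemma 4} to the outer $\t$,
\begin{eqnarray*}
\lnnm{\t[K[\t[K[\bar\g]]]]}{\vth,\varrho}\le C\d\,\lnnm{\t[K[\bar\g]]}{\vth,\varrho}+C(\d)\,\ltnm{\t[K[\bar\g]]}{\varrho}.
\end{eqnarray*}
Finally Lemma \ref{Milne lemma 5} (applicable since $\vth\ge3>2$) turns the mixed norm into $\ltnm{\t[K[\bar\g]]}{\varrho}\le C(\d')\tnnm{\nu^{-1/2}K[\bar\g]}+\d'\lnnm{K[\bar\g]}{\vth,\varrho}\le C(\d')\tnnm{\bar\g}+C\d'\lnnm{\bar\g}{\vth,\varrho}$ by the $L^2$-boundedness of $K$ and Lemma \ref{prelim 1}.

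Collecting, $\lnnm{\bar\g}{\vth,\varrho}\le C+\lnnm{\t[K[\bar\g]]}{\vth,\varrho}$ and $\lnnm{\t[K[\bar\g]]}{\vth,\varrho}\le C+C\d\lnnm{\t[K[\bar\g]]}{\vth,\varrho}+C(\d)C(\d')\tnnm{\bar\g}+C(\d)C\d'\lnnm{\bar\g}{\vth,\varrho}$; choosing $\d$ small (to absorb $C\d\lnnm{\t[K[\bar\g]]}{\vth,\varrho}$, which is a priori finite by the regularity of the solution) and then $\d'$ small depending on $\d$ (to absorb $\lnnm{\bar\g}{\vth,\varrho}$) gives $\lnnm{\g-\g_L}{\vth,\varrho}\le C+C\tnnm{\g-\g_L}$. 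The hard part will be the double-$K$ term $\t[K[\t[K[\bar\g]]]]$: one must run the three-regime analysis behind Lemmas \ref{Milne lemma 4}--\ref{Milne lemma 5} --- large velocities; very short characteristic segments; and the generic regime, which needs a change of variables $\vvv\mapsto\vvv'$ along the \emph{curved} characteristics with Jacobian $\frac{\rk-\e\eta}{\rk-\e\eta'}\frac{\va'}{\va}$, hence an extra restriction near the grazing set $\va\approx0$ --- all in the presence of the reflection at $\eta=L$, and then order $\d,\d'$ so that both $L^\infty$ contributions are absorbed while only a bounded multiple of $\tnnm{\g-\g_L}$ survives.
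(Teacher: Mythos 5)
Your argument is correct, and it shares the paper's overall scaffolding: subtract $\g_L$ (setting $u=\g-\g_L$), identify the source $S=q_{2,L}G(\eta)\m^{1/2}\va\vb$ produced by the geometric-correction operator acting on the $\psi_2$-component of $\g_L$, express $u$ via the mild formula $u=\k[p]+\t[K[u]]+\t[S]$ with $p=h-\g_L$, cash in the $L^2$ bound via Lemma \ref{Milne lemma 5}, and then absorb. Where you diverge is in how the residual $L^\infty$ contribution from $K$ is controlled. You iterate the mild formula once more (a double Duhamel) and then split the inner $K$ at a velocity cutoff $N$, claiming $\lnnm{\nu^{-1}K[f]}{\vth,\varrho}\le\d\lnnm{f}{\vth,\varrho}+C(\d)\ltnm{f}{\varrho}$ in the spirit of the $N$-splitting of Theorem \ref{LI estimate'}, before applying Lemma \ref{Milne lemma 5} to the mixed-norm piece. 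The paper instead invokes the weight-dropping smoothing of $K$ from \cite[Lemma 3.3.1]{Glassey1996}, namely $\lnnm{K[u]}{\vth,\varrho}\le\lnnm{u}{\vth-1,\varrho}$ and $\lnnm{K[u]}{0,\varrho}\le\ltnm{u}{\varrho}$, descending from weight $\vth$ to $0$ in $\vth$ steps and then applying Lemma \ref{Milne lemma 5} once. Both are legitimate; the paper's route is slightly cleaner since it avoids both the cutoff splitting and the extra Duhamel iteration. Indeed, once you have your splitting bound, the double Duhamel is unnecessary: apply it directly to $\lnnm{\t[K[u]]}{\vth,\varrho}\le C\lnnm{\nu^{-1}K[u]}{\vth,\varrho}\le C\d\lnnm{u}{\vth,\varrho}+C(\d)\ltnm{u}{\varrho}$, then handle $\ltnm{u}{\varrho}$ through the mild formula and Lemma \ref{Milne lemma 5} as the paper does, which shortens your argument to essentially the paper's last four inequalities. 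The caveat you flagged --- that absorbing $\d\lnnm{\cdot}{\vth,\varrho}$ requires a priori finiteness of that $L^\infty$ quantity --- applies to the paper's absorption step as well, so your proof is on equal footing there.
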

\begin{proof}
Define $u=\g-\g_{L}$. Then $u$ satisfies the equation
\begin{eqnarray}
\left\{
\begin{array}{l}\displaystyle
\va\frac{\p u}{\p\eta}+G(\eta)\bigg(\vb^2\dfrac{\p
u}{\p\va}-\va\vb\dfrac{\p u}{\p\vb}\bigg)+\nu u-K[u]
=S=\g_{2,L}G(\eta)\m^{\frac{1}{2}}\va\vb,\\\rule{0ex}{1.5em} u(0,\vvv)=p(\vvv)=h(\vvv)-\g_{L}(\vvv)\ \
\text{for}\ \ \va>0,\\\rule{0ex}{1.5em}
u(L,\vvv)=u(L,\rr[\vvv]).
\end{array}
\right.
\end{eqnarray}
Since $u=\k[p]+\t\Big[K[u]\Big]+\t[S]$,
based on Lemma \ref{Milne lemma 5}, we have
\begin{eqnarray}
\ltnm{u}{\varrho}
&\leq&\lnmv{\k[p]}+\ltnm{\t\Big[K[u]\Big]}{\varrho}+\lnnmv{\t[S]}\\
&\leq&\lnmv{p}+
C(\d)\tnnm{\nu^{-\frac{1}{2}}K[u]}+\d\lnnm{K[u]}{\vth,\varrho}+\lnnm{\nu^{-1}S}{\vth,\varrho}\no\\
&\leq&
\lnmv{p}+
C(\d)\tnnm{u}+\d\lnnm{K[u]}{\vth,\varrho}+\lnnm{\nu^{-1}S}{\vth,\varrho},\no
\end{eqnarray}
where we can directly verify
\begin{eqnarray}
\tnnm{\nu^{-\frac{1}{2}}K[u]}&\leq&\tnnm{u}.
\end{eqnarray}
In \cite[Lemma 3.3.1]{Glassey1996}, it is shown that
\begin{eqnarray}
\lnnm{K[u]}{\vth,\varrho}&\leq&\lnnm{u}{\vth-1,\varrho},\\
\lnnm{K[u]}{0,\varrho}&\leq&\ltnm{u}{\varrho}.
\end{eqnarray}
Since $u=\k[p]+\t\Big[K[u]+ S\Big]$, by Lemma \ref{Milne lemma 3} and Lemma \ref{Milne lemma 4}, for $\e$ and $\d$ sufficiently
small, we can estimate
\begin{eqnarray}
\lnnm{u}{\vth,\varrho}&\leq&C\bigg(\lnnm{\t\Big[K[u]\Big]}{\vth,\varrho}+\lnnm{\t[S]}{\vth,\varrho}+\lnm{\k[p]}{\vth,\varrho}\bigg)\\
&\leq&C\bigg(\lnnm{K[u]}{\vth,\varrho}+\lnnm{\nu^{-1}S}{\vth,\varrho}+\lnm{p}{\vth,\varrho}\bigg)\no\\
&\leq&C\bigg(\lnnm{u}{\vth-1,\varrho}+\lnnm{\nu^{-1}S}{\vth,\varrho}+\lnm{p}{\vth,\varrho}\bigg)\no\\
&\leq&\ldots\no\\
&\leq&C\bigg(\lnnm{K[u]}{0,\varrho}+\lnnm{\nu^{-1}S}{\vth,\varrho}+\lnm{p}{\vth,\varrho}\bigg)\no\\
&\leq&C\bigg(\ltnm{u}{\varrho}+\lnnm{\nu^{-1}S}{\vth,\varrho}+\lnm{p}{\vth,\varrho}\bigg)\no\\
&\leq&C(\d)\tnnm{u}+\d\lnnm{K[u]}{\vth,\varrho}+C\bigg(\lnnm{\nu^{-1}S}{\vth,\varrho}+\lnm{p}{\vth,\varrho}\bigg).\no
\end{eqnarray}
Therefore, absorbing $\d\lnnm{K[u]}{\vth,\varrho}$ into the right-hand side of the second inequality implies
\begin{eqnarray}
\lnnm{K[u]}{\vth,\varrho}&\leq& C\bigg(\tnnm{u}+\lnnm{\nu^{-1}S}{\vth,\varrho}+\lnm{p}{\vth,\varrho}\bigg).
\end{eqnarray}
Therefore, we have
\begin{eqnarray}
\lnnm{u}{\vth,\varrho}&\leq&\lnmv{\k[p]}+\lnnmv{\t\Big[K[u]\Big]}+\lnnmv{\t[S]}\\
&\leq&C\bigg(\lnmv{p}+\lnnmv{\nu^{-1}K[u]}+\lnnmv{\nu^{-1}S}\bigg)\no\\
&\leq&C\bigg(\lnmv{p}+\lnnmv{K[u]}+\lnnmv{\nu^{-1}S}\bigg)\no\\
&\leq&C\bigg(\tnnm{u}+\lnnm{\nu^{-1}S}{\vth,\varrho}+\lnm{p}{\vth,\varrho}\bigg).\no
\end{eqnarray}
Then our result naturally follows.
\end{proof}
\begin{lemma}\label{Milne lemma 7}
There
exists a unique solution $\g(\eta,\vvv)$ to the $\e$-Milne problem
(\ref{Milne}) satisfying for $\varrho\geq0$ and an integer $\vth\geq3$,
\begin{eqnarray}
\lnnm{\g-\g_{L}}{\vth,\varrho}\leq C.
\end{eqnarray}
\end{lemma}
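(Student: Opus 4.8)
The plan is to read off the weighted $L^{\infty}$ estimate as a direct consequence of the $L^{2}$ theory already developed, so that only the two previous lemmas do real work. Existence and uniqueness need no new argument: they were obtained in Lemma~\ref{Milne lemma 1} by inserting the penalty term $\lambda\g$, solving along the characteristics for $0<\lambda\ll1$, and letting $\lambda\to0$. Hence the sole remaining task is the quantitative bound for that solution $\g$, with $\g_{L}$ the profile $q_{L}\in\nk$ constructed coefficient-by-coefficient in Step~4 of Lemma~\ref{Milne lemma 1}.

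The execution is then short. Lemma~\ref{Milne lemma 2} gives $\tnnm{\g-\g_{L}}\leq C$ with $\abs{\g_{L}}\leq C$, and Lemma~\ref{Milne lemma 6} gives $\lnnm{\g-\g_{L}}{\vth,\varrho}\leq C+C\tnnm{\g-\g_{L}}$ for $\varrho\geq0$ and integer $\vth\geq3$. Since the reference profile $\g_{L}$ is the same object in both statements (namely $q_{L}$, whose $\psi_{i}$-coefficients are all bounded), substituting the first bound into the right-hand side of the second yields $\lnnm{\g-\g_{L}}{\vth,\varrho}\leq C$, which is exactly the assertion. I would also record, for later use in the matching procedure, that the identical reasoning applied to the shifted problem~(\ref{Milne transform}) gives $\lnnm{\gg}{\vth,\varrho}\leq C$ once $\tilde h$ has been fixed by Theorem~\ref{Milne theorem 1}, since there $\gg_{L}=0$.

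Because the present statement is essentially a corollary of Lemmas~\ref{Milne lemma 2} and~\ref{Milne lemma 6}, the genuine difficulty has already been met inside those lemmas, and I do not expect a fresh obstacle at this step. The delicate ingredient is the bootstrap in Lemma~\ref{Milne lemma 6}: one writes $u=\g-\g_{L}$ as $u=\k[p]+\t[K[u]+S]$ and iterates the mild representation so that the velocity weight $\br{\vvv}^{\vth}$ is consumed one power at a time, until a term controlled by the $L^{2}$--$L^{\infty}$ interpolation of Lemma~\ref{Milne lemma 5} is reached; the latter in turn hinges on the case split in the kinetic variables dictated by the conserved energies $E_{1}=\va^{2}+\vb^{2}$ and $E_{2}=\vb\,\ue^{-W(\eta)}$, on the exponential factor coming from $H_{\eta,\eta'}$, and on the Jacobian $\abs{\ud{\vvv}/\ud{\vvv'}}=\abs{(\rk-\e\eta)\va'/((\rk-\e\eta')\va)}$ remaining bounded uniformly in $\e$. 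Granting all of that, the conclusion of Lemma~\ref{Milne lemma 7} is the one-line substitution above.
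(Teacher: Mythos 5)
Your argument is exactly the one the paper uses: Lemma~\ref{Milne lemma 7} follows by substituting the $L^{2}$ bound from Lemma~\ref{Milne lemma 2} into the inequality $\lnnm{\g-\g_{L}}{\vth,\varrho}\leq C+C\tnnm{\g-\g_{L}}$ of Lemma~\ref{Milne lemma 6}, with existence and uniqueness already handled in Lemma~\ref{Milne lemma 1}. The paper's proof is the one-liner ``Based on Lemma~\ref{Milne lemma 2} and Lemma~\ref{Milne lemma 6}, this is obvious,'' and your write-up is simply a fuller statement of the same substitution.
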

\begin{proof}
Based on Lemma \ref{Milne lemma 2} and Lemma \ref{Milne lemma 6}, this is obvious.
\end{proof}
\begin{theorem}\label{Milne theorem 2}
There exists a unique solution $\gg(\eta,\vvv)$ to the $\e$-Milne
problem (\ref{Milne transform}) satisfying for
$\varrho\geq0$ and an integer $\vth\geq3$,
\begin{eqnarray}
\lnnm{\gg}{\vth,\varrho}\leq C.
\end{eqnarray}
\end{theorem}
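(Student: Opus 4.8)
The proof is an assembly of the results already proved in this section, so I will only describe how the pieces fit together. First I would invoke Theorem \ref{Milne theorem 1}, which supplies the function $\tilde h=\sum_{i=0}^3\tilde D_i\psi_i\in\nk$ with $\tilde D_1=0$, the unique solution $\gg$ of (\ref{Milne transform}), and the $L^2$-type bound $\tnnm{\gg}\leq C$ with $C$ uniform in $\e$. Two further facts extracted from that proof will be used: (i) its construction writes $\gg=\g-\tilde\g$, where $\g$ solves (\ref{Milne}) and $\tilde\g$ solves the auxiliary problem with $\tilde\g_L=\g_L$, so since the map $\g\mapsto\g_L$ built in Lemma \ref{Milne lemma 1} is linear, the superposition principle forces $\gg_L=\g_L-\tilde\g_L=0$; and (ii) the coefficients $\tilde D_i$ come from the invertible matrix of the endomorphism $\mathcal{T}$ and are $O(1)$, hence $\lnm{\tilde h}{\vth,\varrho}\leq C$.

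Next I would observe that $\gg$ satisfies a Milne problem of exactly the form (\ref{Milne}), with in-flow datum $h-\tilde h$ in place of $h$; by (\ref{Milne bound}) and fact (ii), $\lnm{h-\tilde h}{\vth,\varrho}\leq C$, so the hypotheses under which Lemmas \ref{Milne lemma 3}--\ref{Milne lemma 7} were established are met for $\gg$. Applying Lemma \ref{Milne lemma 6} to $\gg$, and using $\gg_L=0$ from fact (i), gives
\begin{eqnarray}
\lnnm{\gg}{\vth,\varrho}=\lnnm{\gg-\gg_L}{\vth,\varrho}\leq C+C\tnnm{\gg-\gg_L}=C+C\tnnm{\gg}.
\end{eqnarray}
Combining this with $\tnnm{\gg}\leq C$ yields $\lnnm{\gg}{\vth,\varrho}\leq C$ uniformly in $\e$, and uniqueness is inherited from Lemma \ref{Milne lemma 7} and Theorem \ref{Milne theorem 1}.

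There is no new analytic obstacle at this stage: the delicate work of the section --- the four-regime velocity splitting in Lemma \ref{Milne lemma 5}, the exponential $L^2$-decay of $w$ in Lemma \ref{Milne lemma 1}, and the matrix-invertibility argument producing $\tilde h$ in Theorem \ref{Milne theorem 1} --- has already been carried out. The only point requiring care is bookkeeping: checking that Lemmas \ref{Milne lemma 6}--\ref{Milne lemma 7} apply with the \emph{shifted} datum $h-\tilde h$ rather than the original $h$, and that the constant $\gg_L$ produced by the abstract construction coincides with the one forced to vanish in Theorem \ref{Milne theorem 1}; both follow from the linearity of the $L^2$ construction and the superposition property.
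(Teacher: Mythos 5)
Your proposal is correct and follows essentially the same route as the paper, which simply cites Theorem \ref{Milne theorem 1} together with Lemma \ref{Milne lemma 7} and declares the result obvious. Your version makes explicit the two bookkeeping points the paper leaves tacit --- that $\gg$ solves a Milne problem of the form (\ref{Milne}) with shifted datum $h-\tilde h$ still satisfying (\ref{Milne bound}), and that the construction in Theorem \ref{Milne theorem 1} forces $\gg_L=0$ so that the $L^\infty$ bound on $\gg-\gg_L$ becomes a bound on $\gg$ itself; going through Lemma \ref{Milne lemma 6} plus the $L^2$ estimate rather than directly through Lemma \ref{Milne lemma 7} is an immaterial difference.
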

\begin{proof}
Based on Theorem \ref{Milne theorem 1} and Lemma \ref{Milne lemma
7}, this is obvious.
\end{proof}

%%%%%%%%%%%%%%%%%%%%%%%%%%%%%%%%%%%%%%%%%%%%%%%%%%%%%%%%%%%%%%%%%%%%%%%%%%%%%%%%%%%%%
\subsection{Exponential Decay}
%%%%%%%%%%%%%%%%%%%%%%%%%%%%%%%%%%%%%%%%%%%%%%%%%%%%%%%%%%%%%%%%%%%%%%%%%%%%%%%%%%%%%

\begin{theorem}\label{Milne theorem 3}
For sufficiently small $K_0$, there
exists a unique solution $\gg(\eta,\vvv)$ to the $\e$-Milne
problem (\ref{Milne transform}) satisfying for
$\varrho\geq0$ and an integer $\vth\geq3$,
\begin{eqnarray}
\lnnm{\ue^{K_0\eta}\gg}{\vth,\varrho}\leq C.
\end{eqnarray}
\end{theorem}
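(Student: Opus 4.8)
The plan is to upgrade the uniform $L^{\infty}$ bound of Theorem \ref{Milne theorem 2} into an exponentially weighted one by combining the $L^2$ exponential decay of the non-kernel part $w$ (already established in Step 5 of the proof of Lemma \ref{Milne lemma 1}) with the decay carried by the transport semigroup in Lemma \ref{Milne lemma 3} and Lemma \ref{Milne lemma 4}. First I would record that, since $\gg_L=0$ by the construction in Theorem \ref{Milne theorem 1}, we may work directly with $\gg$ rather than $\gg-\gg_L$. Write $\gg=w+q$ with $q=q_{\gg,0}\psi_0+q_{\gg,2}\psi_2+q_{\gg,3}\psi_3$ (recall $q_1=0$), and observe that the $L^2$-estimates of Lemma \ref{Milne lemma 1}, applied to $\gg$ whose limit is $0$, give $\tnnm{\ue^{K_0\eta}\sn{w}}\leq C$ after repeating the weighted energy argument of Step 5, and similarly the scalar ODEs governing $q_{\gg,0},q_{\gg,2},q_{\gg,3}$ — which have the form $\dfrac{\ud{\Omega}}{\ud\eta}=G(BA^{-1})\Omega+(\text{terms controlled by }w)$ — yield $\tnnm{\ue^{K_0\eta}q}\leq C$ once we feed in the exponential $L^2$-decay of $w$ and use that $\Omega(L)=0$. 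In short, Step 1 is: upgrade all the $L^2$ bounds of Section 5.1 to their $\ue^{K_0\eta}$-weighted versions, for $K_0$ small enough that the coercivity $\br{\ll[w],w}-K_0\br{\va w,w}\geq C\um{w}^2$ still holds.

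Next I would set $u=\ue^{K_0\eta}\gg$ and derive its equation. Since $\dfrac{\p}{\p\eta}(\ue^{K_0\eta}\gg)=\ue^{K_0\eta}(\dfrac{\p\gg}{\p\eta}+K_0\gg)$ and the velocity-derivative terms and $\ll$ commute with multiplication by $\ue^{K_0\eta}$, $u$ solves
\begin{eqnarray}
\va\frac{\p u}{\p\eta}-\dfrac{\e}{\rk-\e\eta}\bigg(\vb^2\dfrac{\p u}{\p\va}-\va\vb\dfrac{\p u}{\p\vb}\bigg)+\ll[u]=K_0\va u,
\end{eqnarray}
with in-flow data $u(0,\vvv)=h(\vvv)-\tilde h(\vvv)$ for $\va>0$ and the reflexive condition $u(L,\vvv)=u(L,\rr[\vvv])$. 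Writing this as a transport problem with source $Q=K[u]+K_0\va u$ and applying the mild formulation $u=\k[u(0,\cdot)]+\t[K[u]]+\t[K_0\va u]$, I would run exactly the iteration scheme of Lemma \ref{Milne lemma 6}: Lemma \ref{Milne lemma 3} controls $\k$ of the boundary data, Lemma \ref{Milne lemma 4} controls $\t$ of $\nu^{-1}$ times a source, Lemma \ref{Milne lemma 5} (the $L^2\to$ weighted-$L^\infty$ smoothing estimate) lets us trade $\tnnm{K[u]}$ for a small multiple of $\lnnm{K[u]}{\vth,\varrho}$ plus $\tnnm{u}$, and the $K[u]$ regularity-gain chain $\lnnm{K[u]}{\vth,\varrho}\leq\lnnm{u}{\vth-1,\varrho}\leq\cdots\leq\ltnm{u}{\varrho}$ closes the loop. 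The extra term $\t[K_0\va u]$ is harmless because $\nu^{-1}\abs{\va}\leq C$ and $K_0$ is small, so it can be absorbed; the outcome is $\lnnm{u}{\vth,\varrho}\leq C(\tnnm{u}+\lnm{h-\tilde h}{\vth,\varrho})$, and $\tnnm{u}=\tnnm{\ue^{K_0\eta}\gg}\leq\tnnm{\ue^{K_0\eta}w}+\tnnm{\ue^{K_0\eta}q}\leq C$ by Step 1 together with the smallness/boundedness of $\tilde h$ from Theorem \ref{Milne theorem 1}.

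The main obstacle is Step 1: carrying the exponential weight $\ue^{K_0\eta}$ through the $q$-estimates. In Lemma \ref{Milne lemma 1} the bounds on $q-q_L$ already relied on the $L^2$-decay of $w$, but the bookkeeping for the matrix ODEs (the quantities $\tilde\beta$, $\hat\Omega$, and the scalar relation for $q_{\gg,0}$ through $\br{\va\psi_1,\gg}$) must be redone with the weight present, checking that every integral of the form $\int_\eta^L\ue^{K_0\eta}(\cdots)\ud{y}$ converges uniformly in $L=\e^{-1/2}$ and in $\e$ — this is where the $\ue^{K_0 y}$-weighted $L^2$ bound on $w$ is indispensable and where the potential $W(\eta)=\ln(\rk/(\rk-\e\eta))$ being bounded uniformly in $\e$ on $[0,L]$ is used. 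A secondary point to verify is that the threshold for $K_0$ coming from the coercivity condition in Step 5 is compatible with the threshold $0<\beta<\nu_0$ needed in Lemmas \ref{Milne lemma 3}–\ref{Milne lemma 4}; taking $K_0\leq\min\{\beta,\,c_0\}$ for the appropriate small constant $c_0$ settles this. Once these are in hand, the $L^\infty$ bootstrap is word-for-word the one in Lemma \ref{Milne lemma 6} and the theorem follows.
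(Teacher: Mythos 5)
Your proposal matches the paper's proof in both structure and substance: Step 1 upgrades the $L^2$ bound to a weighted one by reusing the $\ue^{2K_0\eta}$-weighted energy estimate on $w_{\gg}$ from Step 5 of Lemma \ref{Milne lemma 1} and then controls $q_{\gg}$ via the Step-6 ODE argument fed with that decay; Step 2 sets $U=\ue^{K_0\eta}\gg$, observes it solves the Milne equation with extra source $K_0\va U$, and runs the Lemma \ref{Milne lemma 6} bootstrap, absorbing $K_0\lnnm{U}{\vth,\varrho}$ for $K_0$ small. The only cosmetic difference is that you flag the compatibility of the two smallness thresholds on $K_0$ explicitly, which the paper leaves implicit.
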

\begin{proof}
Define $U=\ue^{K_0\eta}\gg$. Then $U$ satisfies the equation
\begin{eqnarray}\label{exponential}
\left\{
\begin{array}{l}\displaystyle
\va\frac{\p U}{\p\eta}+G(\eta)\bigg(\vb^2\dfrac{\p
U}{\p\va}-\va\vb\dfrac{\p U}{\p\vb}\bigg)+\ll[U]
=K_0\va U,\\\rule{0ex}{1.5em}
U(0,\vvv)=h(\vvv)-\tilde h(\vvv)\ \ \text{for}\ \
\va>0,\\\rule{0ex}{1.5em}
U(L,\vvv)=U(L,\rr[\vvv])
\end{array}
\right.
\end{eqnarray}
We divide the proof into several steps:\\
\ \\
Step 1: $L^2$ Estimates.\\
In the proof of Lemma \ref{Milne lemma 1},
we already show
\begin{eqnarray}
\int_0^{L}\ue^{2K_0\eta}\br{{w}_{\gg},{w}_{\gg}}(\eta)\ud{\eta}\leq C,
\end{eqnarray}
where we decompose $\gg={w}_{\gg}+{q}_{\gg}$ with $q_{\gg}\in\nk$ and $w_{\gg}\in\nk^{\perp}$. Based on the construction of $\gg$, we naturally have ${q}_{\gg,L}=0$.
Then using the orthogonal relation, we have
\begin{eqnarray}
&&\int_0^{L}\ue^{2K_0\eta}\int_{\r^2}\gg^2(\eta,\vvv)\ud{\vvv}\ud{\eta}=
\int_0^{L}\ue^{2K_0\eta}\br{{q}_{\gg},{q}_{\gg}}(\eta)\ud{\eta}+\int_0^{L}\ue^{2K_0\eta}\br{{w}_{\gg},{w}_{\gg}}(\eta)\ud{\eta}.
\end{eqnarray}
Similar to Step 6 in the proof of Lemma \ref{Milne lemma 1}, using the exponential decay of ${w}_{\gg}$, we have
\begin{eqnarray}
\int_0^{L}\ue^{2K_0\eta}\br{{q}_{\gg},{q}_{\gg}}(\eta)\ud{\eta}\leq C+C\int_0^{L}\ue^{2K_0\eta}\br{{w}_{\gg},{w}_{\gg}}(\eta)\ud{\eta}.
\end{eqnarray}
This shows that
\begin{eqnarray}
\tnnm{U}<C.
\end{eqnarray}
\ \\
Step 2: $L^{\infty}$ Estimates.\\
Since $U=\k[p]+\t\Big[K[U]\Big]+\t[K_0\va U]$, similar to the proof of Lemma \ref{Milne lemma 6}, we have
\begin{eqnarray}
\lnnm{U}{\vth,\varrho}&\leq& C\bigg(\tnnm{U}+\lnnm{\nu^{-1}K_0\va U}{\vth,\varrho}+\lnm{p}{\vth,\varrho}\bigg)\\
&\leq&C\bigg(\tnnm{U}+K_0\lnnm{U}{\vth,\varrho}+\lnm{p}{\vth,\varrho}\bigg).\no
\end{eqnarray}
When $K_0>0$ is sufficiently small, we may absorb $K_0\lnnm{U}{\vth,\varrho}$ into the left-hand side to obtain
\begin{eqnarray}
\lnnm{U}{\vth,\varrho}&\leq&C\bigg(\tnnm{U}+\lnm{p}{\vth,\varrho}\bigg).\no
\end{eqnarray}
Then we naturally obtain the result.
\end{proof}

\newpage

%%%%%%%%%%%%%%%%%%%%%%%%%%%%%%%%%%%%%%%%%%%%%%%%%%%%%%%%%%%%%%%%%%%%%%%%
\section{Regularity of $\e$-Milne Problem with Geometric Correction}%%%%
%%%%%%%%%%%%%%%%%%%%%%%%%%%%%%%%%%%%%%%%%%%%%%%%%%%%%%%%%%%%%%%%%%%%%%%%

In this section, we consider the regularity of the $\e$-Milne problem with geometric correction for $\gg(\eta,\theta,\vww)$ in
the domain
$(\eta,\theta,\vww)\in[0,L]\times[-\pi,\pi)\times\r^2$ as
\begin{eqnarray}\label{Milne difference problem.}
\left\{
\begin{array}{l}\displaystyle
\va\dfrac{\p\gg }{\p\eta}-\dfrac{\e}{\rk-\e\eta}\bigg(\vb^2\dfrac{\p\gg }{\p\va}-\va\vb\dfrac{\p\gg }{\p\vb}\bigg)+\ll[\gg ]
=0,\\\rule{0ex}{1.5em}
\gg (0,\theta,\vww)=p (\theta,\vww)\ \ \text{for}\ \
\va>0,\\\rule{0ex}{1.5em}
\displaystyle\gg (L,\theta,\vww)=\gg (L,\theta,\rr[\vww]),
\end{array}
\right.
\end{eqnarray}
where $\rr[\vvv]=(-\va,\vb)$ and $L=\e^{-\frac{1}{2}}$. For simplicity, we temporarily ignore the dependence of $\theta$, i.e. consider
the $\e$-Milne problem with geometric correction for $\gg(\eta,\vww)$ in
the domain
$(\eta,\vww)\in[0,L]\times\r^2$ as
\begin{eqnarray}\label{Milne difference problem}
\left\{
\begin{array}{l}\displaystyle
\va\dfrac{\p\gg }{\p\eta}-\dfrac{\e}{\rk-\e\eta}\bigg(\vb^2\dfrac{\p\gg }{\p\va}-\va\vb\dfrac{\p\gg }{\p\vb}\bigg)+\ll[\gg ]
=0,\\\rule{0ex}{1.5em}
\gg (0,\vww)=p (\vww)\ \ \text{for}\ \
\va>0,\\\rule{0ex}{1.5em}
\displaystyle\gg (L,\vww)=\gg (L,\rr[\vww]).
\end{array}
\right.
\end{eqnarray}
Define a weight function
\begin{eqnarray}
\zeta(\eta,\va,\vb)=\left(\left(\va^2+\vb^2\right)-\left(\frac{\rk-\e\eta}{\rk}\right)^2\vb^2\right)^{\frac{1}{2}}.
\end{eqnarray}
It is easy to see that the closer a point $(\eta,\va,\vb)$ is to the grazing set $(\eta,\va,\vb)=(0,0,\vb)$, the smaller $\zeta$ is. In particular, at the grazing set, $\zeta(0,0,\vb)=0$.
\begin{lemma}\label{weight lemma}
We have
\begin{eqnarray}
\va\dfrac{\p\zeta}{\p\eta}-\dfrac{\e}{\rk-\e\eta}\bigg(\vb^2\dfrac{\p\zeta}{\p\va}-\va\vb\dfrac{\p\zeta}{\p\vb}\bigg)=0.
\end{eqnarray}
\end{lemma}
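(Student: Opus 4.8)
The plan is to verify the identity by a direct computation of the three partial derivatives of $\zeta$ and then substituting them into the transport operator, checking that everything cancels. Write $\zeta^2 = \left(\va^2+\vb^2\right)-\left(\frac{\rk-\e\eta}{\rk}\right)^2\vb^2$, so that $2\zeta\,\p_\eta\zeta$, $2\zeta\,\p_{\va}\zeta$, $2\zeta\,\p_{\vb}\zeta$ are easy to read off. First I would compute
\begin{eqnarray}
2\zeta\dfrac{\p\zeta}{\p\eta}&=&-\dfrac{\p}{\p\eta}\left(\frac{\rk-\e\eta}{\rk}\right)^2\vb^2=\dfrac{2\e(\rk-\e\eta)}{\rk^2}\vb^2,\\
2\zeta\dfrac{\p\zeta}{\p\va}&=&2\va,\\
2\zeta\dfrac{\p\zeta}{\p\vb}&=&2\vb-2\left(\frac{\rk-\e\eta}{\rk}\right)^2\vb.
\end{eqnarray}
Hence, away from the grazing set where $\zeta\neq0$,
\begin{eqnarray}
\dfrac{\p\zeta}{\p\eta}=\dfrac{\e(\rk-\e\eta)\vb^2}{\rk^2\zeta},\qquad
\dfrac{\p\zeta}{\p\va}=\dfrac{\va}{\zeta},\qquad
\dfrac{\p\zeta}{\p\vb}=\dfrac{\vb}{\zeta}\left(1-\left(\frac{\rk-\e\eta}{\rk}\right)^2\right).
\end{eqnarray}

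Next I would substitute these into the operator $\va\p_\eta - \dfrac{\e}{\rk-\e\eta}\left(\vb^2\p_{\va}-\va\vb\p_{\vb}\right)$ applied to $\zeta$. The first term gives $\dfrac{\e\va\vb^2(\rk-\e\eta)}{\rk^2\zeta}$. The correction term gives
\begin{eqnarray}
-\dfrac{\e}{\rk-\e\eta}\left(\vb^2\dfrac{\va}{\zeta}-\va\vb\cdot\dfrac{\vb}{\zeta}\left(1-\left(\frac{\rk-\e\eta}{\rk}\right)^2\right)\right)
=-\dfrac{\e\va\vb^2}{(\rk-\e\eta)\zeta}\left(\frac{\rk-\e\eta}{\rk}\right)^2=-\dfrac{\e\va\vb^2(\rk-\e\eta)}{\rk^2\zeta}.
\end{eqnarray}
Adding the two contributions yields $0$, which is the claimed identity on the open set $\{\zeta>0\}$. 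To conclude on all of $[0,L]\times\r^2$ I would invoke continuity: $\zeta$ is continuous and the left-hand side, multiplied through by $\zeta$ (equivalently, working with the relation $2\zeta\cdot(\text{operator applied to }\zeta)=\text{operator applied to }\zeta^2$), extends the identity across the grazing set. Concretely, applying the operator to $\zeta^2$ and using the product rule shows $2\zeta\bigl(\va\p_\eta\zeta-\dfrac{\e}{\rk-\e\eta}(\vb^2\p_{\va}\zeta-\va\vb\p_{\vb}\zeta)\bigr)=\va\p_\eta(\zeta^2)-\dfrac{\e}{\rk-\e\eta}(\vb^2\p_{\va}(\zeta^2)-\va\vb\p_{\vb}(\zeta^2))$, and the right-hand side computes directly (with no division by $\zeta$) to $0$; hence the bracket vanishes wherever $\zeta\neq0$ and, by continuity of the bracket, everywhere.

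There is essentially no obstacle here: this is a routine verification whose only subtlety is the behavior at the grazing set $\{\zeta=0\}$, which is handled cleanly by carrying out the computation on $\zeta^2$ rather than $\zeta$ and then dividing. The conceptual content—that $\zeta$ is a conserved quantity along the characteristic flow $\dot\eta=\va$, $\dot{\va}=G(\eta)\vb^2$, $\dot{\vb}=-G(\eta)\va\vb$ with $G(\eta)=-\dfrac{\e}{\rk-\e\eta}$—is exactly the statement that the left-hand side is $\dfrac{\ud}{\ud s}\zeta(\eta(s),\va(s),\vb(s))$, so this lemma simply records that the invariants $\va^2+\vb^2$ and $\vb\,\ue^{-W(\eta)}$ (already noted in Section 5) combine into $\zeta$; indeed $\zeta^2=(\va^2+\vb^2)-\ue^{-2W(\eta)}\vb^2$ since $\ue^{-W(\eta)}=\dfrac{\rk-\e\eta}{\rk}$. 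I would mention this characteristic interpretation as a remark, since it makes the identity transparent and will be the reason $\zeta$ is the natural weight for the weighted $L^\infty$ estimates of the normal derivative in the rest of the section.
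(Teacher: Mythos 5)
Your proof is correct and uses the same direct computation as the paper: differentiate $\zeta$, substitute into the transport operator, and verify cancellation. Two small notes: the paper's displayed expression for $\p\zeta/\p\eta$ is missing a factor of $\rk$ in the denominator (it should read $\dfrac{\e(\rk-\e\eta)\vb^2}{\rk^2\zeta}$ as you have it; with the paper's version as written the two terms would not cancel), and the paper does not comment on the grazing set $\{\zeta=0\}$ — your observation that one can work with $\zeta^2$ and divide, or appeal to continuity, cleanly closes that gap. Your characteristic-flow interpretation, that $\zeta^2=(\va^2+\vb^2)-\ue^{-2W(\eta)}\vb^2=E_1-E_2^2$ is built from the two invariants already identified in Section 5, is indeed exactly how the paper motivates and later uses $\zeta$, and is worth keeping as a remark.
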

\begin{proof}
We may directly compute
\begin{eqnarray}
\frac{\p\zeta}{\p\eta}=\frac{1}{\zeta}\frac{\rk-\e\eta}{\rk}\e\vb^2,\ \
\frac{\p\zeta}{\p\va}=\frac{1}{\zeta}\va,\ \
\frac{\p\zeta}{\p\vb}=\frac{1}{\zeta}\left(\vb-\left(\frac{\rk-\e\eta}{\rk}\right)^2\vb\right)
\end{eqnarray}
Then we know
\begin{eqnarray}
&&\va\dfrac{\p\zeta}{\p\eta}-\dfrac{\e}{\rk-\e\eta}\bigg(\vb^2\dfrac{\p\zeta}{\p\va}-\va\vb\dfrac{\p\zeta}{\p\vb}\bigg)\\
&=&\frac{1}{\zeta}\left(\frac{\rk-\e\eta}{\rk}\e\va\vb^2
-\frac{\e}{\rk-\e\eta}\left(\va\vb^2-\va\vb^2+\va\vb^2\left(\frac{\rk-\e\eta}{\rk}\right)^2\right)\right)=0.\no
\end{eqnarray}
\end{proof}

%%%%%%%%%%%%%%%%%%%%%%%%%%%%%%%%%%%%%%%%%%%%%%%%%%%%%%%%%%%%%%%%%%%%%%%%
\subsection{Mild Formulation}
%%%%%%%%%%%%%%%%%%%%%%%%%%%%%%%%%%%%%%%%%%%%%%%%%%%%%%%%%%%%%%%%%%%%%%%%

Consider the $\e$-transport problem for $\a=\zeta\dfrac{\p\v}{\p\eta}$ as
\begin{eqnarray}\label{normal derivative equation}
\left\{
\begin{array}{l}\displaystyle
\va\frac{\p\a}{\p\eta}+G(\eta)\bigg(\vb^2\dfrac{\p\a }{\p\va}-\va\vb\dfrac{\p\a }{\p\vb}\bigg)+\nu\a=\tilde\a+S_{\a},\\\rule{0ex}{1.5em}
\a(0,\vvv)=p_{\a}(\vvv)\ \ \text{for}\ \ \va>0,\\\rule{0ex}{1.5em}
\a(L,\vvv)=\a(L,\rr[\vvv]),
\end{array}
\right.
\end{eqnarray}
where $p_{\a}$ and $S_{\a}$ will be specified later with
\begin{eqnarray}
\tilde\a(\eta,\vvv)=\int_{\r^2}\frac{\zeta(\eta,\vvv)}{\zeta(\eta,\vuu)}k(\vuu,\vvv)\a(\eta,\vuu)\ud{\vuu}.
\end{eqnarray}
Here we utilize Lemma \ref{weight lemma}. We need to derive the a priori estimate of $\a$.
Define the energy as before
\begin{eqnarray}
E_1&=&\va^2+\vb^2,\\
E_2&=&\vb\ue^{-W(\eta)}.
\end{eqnarray}
We can easily check that the weight function $\zeta=\sqrt{E_1-E_2^2}$. Along the characteristics, where $E_1$, $E_2$ and $\zeta$ are constants, the equation (\ref{normal derivative equation}) can be simplified as follows:
\begin{eqnarray}
\va\frac{\ud{\a}}{\ud{\eta}}+\a=\tilde\a+S_{\a}.
\end{eqnarray}
Similar to the $\e$-Milne problem with geometric correction, we can define the solution along the characteristics as follows:
\begin{eqnarray}
\a(\eta,\vvv)=\k[p_{\a}]+\t[\tilde\a+S_{\a}],
\end{eqnarray}
where\\
\ \\
Region I:\\
For $\va>0$,
\begin{eqnarray}\label{mt 16}
\k[p_{\a}]&=&p_{\a}\Big(\vvv'(\eta,\vvv; 0)\Big)\exp(-H_{\eta,0}),\\
\t[\tilde\a+S_{\a}]&=&\int_0^{\eta}\frac{(\tilde\a+S_{\a})\Big(\eta',\vvv'(\eta,\vvv;\eta')\Big)}{\va'(\eta,\vvv;\eta')}\exp(-H_{\eta,\eta'})\ud{\eta'}.
\end{eqnarray}
\ \\
Region II:\\
For $\va<0$ and $\va^2+\vb^2\geq \vb'^2(\eta,\vvv;L)$,
\begin{eqnarray}\label{mt 17}
\k[p_{\a}]&=&p_{\a}\Big(\vvv'(\eta,\vvv; 0)\Big)\exp(-H_{L,0}-\rr[H_{L,\eta}]),\\
\t[\tilde\a+S_{\a}]&=&\bigg(\int_0^{L}\frac{(\tilde\a+S_{\a})\Big(\eta',\vvv'(\eta,\vvv;\eta')\Big)}{\va'(\eta,\vvv;\eta')}
\exp(-H_{L,\eta'}-\rr[H_{L,\eta}])\ud{\eta'}\\
&&+\int_{\eta}^{L}\frac{(\tilde\a+S_{\a})\Big(\eta',\rr[\vvv'(\eta,\vvv;\eta')]\Big)}{\va'(\eta,\vvv;\eta')}\exp(\rr[H_{\eta,\eta'}])\ud{\eta'}\bigg).\no
\end{eqnarray}
\ \\
Region III:\\
For $\va<0$ and $\va^2+\vb^2\leq \vb'^2(\eta,\vvv;L)$,
\begin{eqnarray}\label{mt 18}
\k[p_{\a}]&=&p_{\a}\Big(\vvv'(\eta,\vvv; 0)\Big)\exp(-H_{\eta^+,0}-\rr[H_{\eta^+,\eta}]),\\
\t[\tilde\a+S_{\a}]&=&\bigg(\int_0^{\eta^+}\frac{(\tilde\a+S_{\a})\Big(\eta',\vvv'(\eta,\vvv;\eta')\Big)}{\va'(\eta,\vvv;\eta')}
\exp(-H_{\eta^+,\eta'}-\rr[H_{\eta^+,\eta}])\ud{\eta'}\\
&&+\int_{\eta}^{\eta^+}\frac{(\tilde\a+S_{\a})\Big(\eta',\rr[\vvv'(\eta,\vvv;\eta')]\Big)}{\va'(\eta,\vvv;\eta')}\exp(\rr[H_{\eta,\eta'}])\ud{\eta'}\bigg).\no
\end{eqnarray}
Then we need to estimate $\k[p_{\a}]$ and $\t[\tilde\a+S_{\a}]$ in each region. We assume $0<\d<<1$ and $0<\d_0<<1$ are small quantities which will be determined later.
%From now on, for function $f(\va,\vb,\eta)$, we use $\p_if$ for $i=1,2,3$ to represent the derivative with respect to the $i^{th}$ component. This is particularly useful when we consider composite functions and apply the chain rule.

%%%%%%%%%%%%%%%%%%%%%%%%%%%%%%%%%%%%%%%%%%%%%%%%%%%%%%%%%%%%%%%%%%%%%%%%
\subsection{Region I: $\va>0$}
%%%%%%%%%%%%%%%%%%%%%%%%%%%%%%%%%%%%%%%%%%%%%%%%%%%%%%%%%%%%%%%%%%%%%%%%

Based on Lemma \ref{Milne lemma 3} and Lemma \ref{Milne lemma 4},
we can directly obtain
\begin{eqnarray}
\lnmv{\k[p_{\a}]}&\leq&\lnmv{p_{\a}},\\
\lnnmv{\t[S_{\a}]}&\leq&\lnnmv{\frac{S_{\a}}{\nu}}.
\end{eqnarray}
Hence, we only need to estimate
\begin{eqnarray}
I=\t[\tilde\a]=\int_0^{\eta}\frac{\tilde\a\Big(\eta',\vvv'(\eta,\vvv;\eta')\Big)}{\va'(\eta,\vvv;\eta')}\exp(-H_{\eta,\eta'})\ud{\eta'}.
\end{eqnarray}
Since we always assume that $(\eta,\vvv)$ and $(\eta',\vvv')$ are on the same characteristics, in the following, we will simply write $\vvv'(\eta')$ or even $\vvv'$ instead of $\vvv'(\eta,\vvv;\eta')$ when there is no confusion. We can use this notation interchangeably when necessary.\\
\ \\
We divide it into several steps:\\
\ \\
Step 0: Preliminaries.\\
We have
\begin{eqnarray}
E_2(\eta',\vb')=\frac{\rk-\e\eta'}{\rk}\vb'.
\end{eqnarray}
Then we can directly obtain
\begin{eqnarray}\label{pt 01}
\zeta(\eta',\vvv')&=&\frac{1}{\rk}\sqrt{\rk^2(\va'^2+\vb'^2)-\bigg((\rk-\e\eta')\vb'\bigg)^2}
=\frac{1}{\rk}\sqrt{\rk^2\va'^2+\Big(\rk^2-(\rk-\e\eta')^2\Big)\vb'^2},\\
&\leq& \frac{1}{\rk}\sqrt{\rk^2\va'^2}+\frac{1}{\rk}\sqrt{\Big(\rk^2-(\rk-\e\eta')^2\Big)\vb'^2}\leq C\bigg(\va'+\sqrt{\e\eta'}\vb'\bigg)\leq C\nu(\vvv'),\no
\end{eqnarray}
and
\begin{eqnarray}\label{pt 02}
\zeta(\eta',\vvv')\geq\frac{1}{2}\left(\frac{1}{\rk}\sqrt{\rk^2\va'^2}+\frac{1}{\rk}\sqrt{\Big(\rk^2-(\rk-\e\eta')^2\Big)\vb'^2}\right)\geq C\bigg(\va'+\sqrt{\e\eta'}\vb'\bigg)\geq C\sqrt{\e\eta'}\abs{\vvv}.
\end{eqnarray}
Also, we know for $0\leq\eta'\leq\eta$,
\begin{eqnarray}
\va'&=&\sqrt{\va^2+\vb^2-\vb'^2}=\sqrt{\va^2+\vb^2-\vb^2\bigg(\frac{\rk-\e\eta}{\rk-\e\eta'}\bigg)^2}\\
&=&\frac{\sqrt{(\rk-\e\eta')^2\va^2+(2\rk-\e\eta-\e\eta')(\e\eta-\e\eta')\vb^2}}{\rk-\e\eta'}.
\end{eqnarray}
Since
\begin{eqnarray}
0\leq(2\rk-\e\eta-\e\eta')(\e\eta-\e\eta')\vb^2\leq 2\rk\e(\eta-\eta')\vb^2,
\end{eqnarray}
we have
\begin{eqnarray}
\va\leq\va'
\leq2\sqrt{\va^2+\e(\eta-\eta')\vb^2},
\end{eqnarray}
which means
\begin{eqnarray}
\frac{1}{2\sqrt{\va^2+\e(\eta-\eta')\vb^2}}\leq\frac{1}{\va'}
\leq\frac{1}{\va}.
\end{eqnarray}
Therefore,
\begin{eqnarray}\label{pt 03}
-\int_{\eta'}^{\eta}\frac{1}{\va'(\eta,\vvv;y)}\ud{y}&\leq& -\int_{\eta'}^{\eta}\frac{1}{2\sqrt{\va^2+\e(\eta-y)\vb^2}}\ud{y}=\frac{1}{\e\vb^2}\bigg(\va-\sqrt{\va^2+\e(\eta-\eta')\vb^2}\bigg)\\
&=&-\frac{\eta-\eta'}{\va+\sqrt{\va^2+\e(\eta-\eta')\vb^2}}\leq-\frac{\eta-\eta'}{2\sqrt{\va^2+\e(\eta-\eta')\vb^2}}.\no
\end{eqnarray}
Define a cut-off function $\chi\in C^{\infty}[0,\infty)$ satisfying
\begin{eqnarray}
\chi(\va)=\left\{
\begin{array}{ll}
1&\text{for}\ \ \abs{\va}\leq\d,\\
0&\text{for}\ \ \abs{\va}\geq2\d,
\end{array}
\right.
\end{eqnarray}
In the following, we will divide the estimate of $I$ into several cases based on the value of $\va$, $\va'$, $\e\eta'$ and $\e(\eta-\eta')$. Let $\id$ denote the indicator function. Assume the dummy variable $\vuu=(\ua,\ub)$. We write
\begin{eqnarray}
I&=&\int_0^{\eta}\id_{\{\va\geq\d_0\}}+\int_0^{\eta}\id_{\{0\leq\va\leq\d_0\}}\id_{\{\chi(\ua)<1\}}
+\int_0^{\eta}\id_{\{0\leq\va\leq\d_0\}}\id_{\{\chi(\ua)=1\}}\id_{\{ \sqrt{\e\eta'}\abs{\vb'}\geq\va'\}}\\
&&+\int_0^{\eta}\id_{\{0\leq\va\leq\d_0\}}\id_{\{\chi(\ua)=1\}}\id_{\{\sqrt{\e\eta'}\abs{\vb'}\leq\va'\}}\id_{\{\va^2\leq\e(\eta-\eta')\vb^2\}}\no\\
&&+\int_0^{\eta}\id_{\{0\leq\va\leq\d_0\}}\id_{\{\chi(\ua)=1\}}\id_{\{\sqrt{\e\eta'}\abs{\vb'}\leq\va'\}}\id_{\{\va^2\geq\e(\eta-\eta')\vb^2\}}\no\\
&=&I_1+I_2+I_3+I_4+I_5.\no
\end{eqnarray}
\ \\
Step 1: Estimate of $I_1$ for $\va\geq\d_0$.\\
In this step, we will prove estimates based on the characteristics of $\v$ itself instead of $\a$.
Here, we rewrite the equation (\ref{Milne difference problem}) along the characteristics as
\begin{eqnarray}
\va\frac{\ud{\gg}}{\ud{\eta}}+\nu\gg=K[\v].
\end{eqnarray}
In the following, we will repeatedly use simple facts (SF):
\begin{itemize}
\item
Based on the well-posedness and decay theorem for $\gg$, we know $\lnmv{\gg}\leq C$.
\item
Based on Lemma \ref{prelim 1}, we get $\lnmv{K[\gg]}\leq\lnmv{\gg}\leq C$ and $\lnmv{\nabla_vK[\gg]}\leq\lnmv{\gg}\leq C$.
\item
Since $E_1$ is conserved along the characteristics, we must have $\abs{\vvv}=\abs{\vvv'}$.
\item
For $\eta'\leq\eta$, we must have $\va'\geq\va\geq\d_0$.
\item
Using substitution $y=H_{\eta,\eta'}$, we know
\begin{eqnarray}\label{inte 1}
\abs{\int_0^{\eta}\frac{\nu\Big(\vvv'(\eta,\vvv;\eta')\Big)}{\va'(\eta,\vvv;\eta')}\exp(-H_{\eta,\eta'})\ud{\eta'}}\leq\abs{\int_0^{\infty}\ue^{-y}\ud{y}}=1.
\end{eqnarray}
\end{itemize}
For $\va\geq\d_0$, we do not need the mild formulation for $\a$. Instead, we directly estimate
\begin{eqnarray}
\abs{\bvv I_1} &\leq&\abs{\bvv\frac{\p\gg}{\p\eta}}.
\end{eqnarray}
We rewrite the equation (\ref{Milne difference problem}) along the characteristics as
\begin{eqnarray}\label{mt 01}
\gg(\eta,\vvv)&=&\exp\left(-H_{\eta,0}\right)\Bigg(p\Big(\vvv'(\eta,\vvv;0)\Big)
+\int_0^{\eta}\frac{K[\gg]\Big(\eta',\vvv'(\eta,\vvv;\eta')\Big)}{\va'(\eta,\vvv;\eta')}
\exp\left(H_{\eta',0}\right)\ud{\eta'}\Bigg).
\end{eqnarray}
where $(\eta',\vvv')$ and $(\eta,\vvv)$ are on the same characteristic with $\va'\geq0$, and
\begin{eqnarray}
H_{t,s}&=&\int_{s}^{t}\frac{\nu(\vvv'(\eta,\vvv;y))}{\va'(\eta,\vvv;y)}\ud{y}.
\end{eqnarray}
for any $s,t\geq0$.

Taking $\eta$ derivative on both sides of (\ref{mt 01}), we have
\begin{eqnarray}\label{mt 04}
\frac{\p\gg}{\p\eta}&=&X=X_1+X_2+X_3+X_4+X_5+X_6,
\end{eqnarray}
where
\begin{eqnarray}
X_1&=&-\exp\left(-H_{\eta,0}\right)\frac{\p{H_{\eta,0}}}{\p{\eta}}\Bigg(p\Big(\vvv'(\eta,\vvv;0)\Big)
+\int_0^{\eta}\frac{K[\gg]\Big(\eta',\vvv'(\eta,\vvv;\eta')\Big)}{\va'(\eta')}
\exp\left(H_{\eta',0}\right)\ud{\eta'}\Bigg),\\
X_2&=&\exp\left(-H_{\eta,0}\right)\frac{\p p\Big(\vvv'(\eta,\vvv;0)\Big)}{\p\eta},\\
X_3&=&\frac{K[\gg](\eta,\vvv)}{\va},\\
X_4&=&-\exp\left(-H_{\eta,0}\right)\int_0^{\eta}\Bigg(K[\gg]\Big(\eta',\vvv'(\eta,\vvv;\eta')\Big)
\exp\left(H_{\eta',0}\right)\frac{1}{\va'^2(\eta,\vvv;\eta')}\frac{\p\va'(\eta,\vvv;\eta')}{\p\eta}\ud{\eta'}\bigg)\\
X_5&=&\exp\left(-H_{\eta,0}\right)\int_0^{\eta}\frac{K[\gg]\Big(\eta',\vvv'(\eta,\vvv;\eta')\Big)}{\va'(\eta,\vvv;\eta')}
\exp\left(H_{\eta',0}\right)\frac{\p{H_{\eta',0}}}{\p{\eta}}\ud{\eta'},\no\\
X_6&=&\exp\left(-H_{\eta,0}\right)
\int_0^{\eta}\frac{1}{\va'(\eta,\vvv;\eta')}\bigg(\nabla_{\mathfrak{v}'} K[\gg]\Big(\eta',\vvv'(\eta,\vvv;\eta')\Big)\frac{\p\vvv'(\eta,\vvv;\eta')}{\p\eta}\bigg)
\exp\left(H_{\eta',0}\right)\ud{\eta'}.
\end{eqnarray}
We need to estimate each term.
Note that
\begin{eqnarray}\label{rt 03}
\frac{\p{H_{t,s}}}{\p{\eta}}&=&\int_{s}^{t}\frac{\p{}}{\p{\eta}}\bigg(\frac{\nu(\vvv'(\eta,\vvv;y))}{\va'(\eta,\vvv;y)}\bigg)\ud{y}\\
&=&\int_{s}^{t}\frac{1}{\va'(\eta,\vvv;y)}\frac{\p\nu(\abs{\vvv'})}{\p\abs{\vvv'}}\frac{1}{\abs{\vvv'}}
\bigg(\va'(\eta,\vvv;y)\frac{\p\va'(\eta,\vvv;y)}{\p\eta}+\vb'(\eta,\vvv;y)\frac{\p\vb'(\eta,\vvv;y)}{\p\eta}\bigg)\ud{y}\no\\
&&-\int_{s}^{t}\frac{\nu(\vvv'(\eta,\vvv;y))}{\va'^2(\eta,\vvv;y)}\frac{\p\va'(\eta,\vvv;y)}{\p\eta}\ud{y}.\no
\end{eqnarray}
Considering
\begin{eqnarray}
\vb'(\eta,\vvv;y)&=&\vb\ue^{W(\eta')-W(\eta)}=\vb\frac{\rk-\e\eta}{\rk-\e\eta'},\\
\va'(\eta,\vvv;y)&=&\sqrt{\va^2+\vb^2-\vb'^2}=\sqrt{\va^2+\vb^2-\vb^2\bigg(\frac{\rk-\e\eta}{\rk-\e\eta'}\bigg)^2},
\end{eqnarray}
we know
\begin{eqnarray}
\frac{\p\vb'(\eta,\vvv;y)}{\p\eta}=\frac{\e\vb}{\rk-\e\eta'},\quad
\frac{\p\va'(\eta,\vvv;y)}{\p\eta}=\frac{2\e\vb^2}{\va'(\eta,\vvv;y)}\frac{\rk-\e\eta}{\rk-\e\eta'}.
\end{eqnarray}
This implies
\begin{eqnarray}
\abs{\frac{\p\va'(\eta,\vvv;y)}{\p\eta}}\leq \frac{C\e\abs{\vvv}}{\va'(\eta,\vvv;y)}\leq \frac{C\e\abs{\vvv}}{\d_0},\ \ \abs{\frac{\p\vb'(\eta,\vvv;y)}{\p\eta}}\leq C\e\abs{\vvv}.
\end{eqnarray}
The method to estimate $X_i$ is standard and we simply use the facts (SF) and direct computation, so we omit the details and only list the result
\begin{eqnarray}
\abs{\bv X_1}&\leq&\frac{C}{\d_0}\lnnmv{\gg},\\
\abs{\bv X_2}&\leq&\frac{C}{\d_0}\bigg(\lnmv{\frac{\p p}{\p\va}}+\lnmv{\frac{\p p}{\p\vb}}\bigg),\\
\abs{\bv X_3}&\leq&\frac{C}{\d_0}\lnnmv{\gg},\\
\abs{\bv X_4}&\leq&\frac{C}{\d_0}\lnnmv{\gg},\\
\abs{\bv X_5}&\leq&\frac{C}{\d_0}\lnnmv{\gg},\\
\abs{\bv X_6}&\leq&\frac{C}{\d_0}\lnnmv{\gg}.
\end{eqnarray}
In summary, we have
\begin{eqnarray}
\abs{\bvv I_1} &\leq& \frac{C}{\d_0}\bigg(\lnmv{\frac{\p p}{\p\va}}+\lnmv{\frac{\p p}{\p\vb}}+\lnnmv{\gg}\bigg).
\end{eqnarray}
\ \\
Step 2: Estimate of $I_2$ for $0\leq\va\leq\d_0$ and $\chi(\ua)<1$.\\
We have
\begin{eqnarray}
I_2&=&\int_0^{\eta}\bigg(\int_{\r^2}\frac{\zeta(\eta',\vvv')}{\zeta(\eta',\vuu)}\Big(1-\chi(\ua)\Big)
k(\vuu,\vvv')\a(\eta',\vuu)\ud{\vuu}\bigg)
\frac{1}{\va'}\exp(-H_{\eta,\eta'})\ud{\eta'}\\
&=&\int_0^{\eta}\bigg(\int_{\r^2}\Big(1-\chi(\ua)\Big)
k(\vuu,\vvv')\frac{\gg(\eta',\vuu)}{\p\eta'}\ud{\vuu}\bigg)
\frac{\zeta(\eta',\vvv')}{\va'}\exp(-H_{\eta,\eta'})\ud{\eta'}.\no
\end{eqnarray}
Based on the $\e$-Milne problem with geometric correction of $\gg$ as
\begin{eqnarray}
\ua\frac{\p\v(\eta',\vuu)}{\p\eta'}
+G(\eta')\bigg(\ub^2\frac{\p\gg(\eta',\vuu)}{\p\ua}-\ua\ub\frac{\p\gg(\eta',\vuu)}{\p\ub}\bigg)
+\nu\gg(\eta',\vuu)-K[\gg](\eta',\vuu)=0,
\end{eqnarray}
we have
\begin{eqnarray}
\frac{\p\v(\eta',\vuu)}{\p\eta'}=-\frac{1}{\ua}
\bigg(G(\eta')\bigg(\ub^2\frac{\p\gg(\eta',\vuu)}{\p\ua}-\ua\ub\frac{\p\gg(\eta',\vuu)}{\p\ub}\bigg)
+\nu\gg(\eta',\vuu)-K[\gg](\eta',\vuu)\bigg).
\end{eqnarray}
Hence, we have
\begin{eqnarray}
\tilde A&:=&\int_{\r^2}\Big(1-\chi(\ua)\Big)
k(\vuu,\vvv')\frac{\gg(\eta',\vuu)}{\p\eta'}\ud{\vuu}\\
&=&-\int_{\r^2}\Big(1-\chi(\ua)\Big)
k(\vuu,\vvv')\frac{1}{\ua}\bigg(\nu\gg(\eta',\vuu)-K[\gg](\eta',\vuu)\bigg)\ud{\vuu}\no\\
&&-\int_{\r^2}\Big(1-\chi(\ua)\Big)
k(\vuu,\vvv')\frac{1}{\ua}
G(\eta')\bigg(\ub^2\frac{\p\gg(\eta',\vuu)}{\p\ua}-\ua\ub\frac{\p\gg(\eta',\vuu)}{\p\ub}\bigg)\ud{\vuu}\no\\
&=&\tilde A_1+\tilde A_2.\no
\end{eqnarray}
Using $\gg$ estimates and $\abs{\ua}\geq\d$, we may directly obtain
\begin{eqnarray}
&&\abs{\bvvp\tilde A_1}\\
&\leq&\abs{\bvvp\int_{\r^2}\Big(1-\chi(\ua)\Big)
k(\vuu,\vvv')\frac{1}{\ua}\bigg(\nu\gg(\eta',\vuu)-K[\gg](\eta',\vuu)\bigg)\ud{\vuu}}\no\\
&\leq&\frac{C}{\d}\lnnmv{\v}.\no
\end{eqnarray}
On the other hand, an integration by parts yields
\begin{eqnarray}
\tilde A_2&=&\int_{\r^2}\frac{\p}{\p\ua}\bigg(\frac{\ub^2}{\ua}
G(\eta')\Big(1-\chi(\ua)\Big)
k(\vuu,\vvv')\bigg)\gg(\eta',\vuu)\ud{\vuu}\\
&&-\int_{\r^2}\frac{\p}{\p\ub}\bigg(\ub
G(\eta')\Big(1-\chi(\ua)\Big)
k(\vuu,\vvv')\bigg)\gg(\eta',\vuu)\ud{\vuu},\no
\end{eqnarray}
which further implies
\begin{eqnarray}
\abs{\bvvp\tilde A_2}&\leq&\frac{C}{\d^2}\lnnmv{\gg}.
\end{eqnarray}
As before we can use substitution to show that
\begin{eqnarray}\label{inte 2}
\abs{\int_0^{\eta}\frac{\zeta\Big(\eta',\vvv'(\eta,\vvv;\eta')\Big)}{\va'(\eta,\vvv;\eta')}\exp(-H_{\eta,\eta'})\ud{\eta'}}\leq \abs{\int_0^{\eta}\frac{\nu\Big(\vvv'(\eta,\vvv;\eta')\Big)}{\va'(\eta,\vvv;\eta')}\exp(-H_{\eta,\eta'})\ud{\eta'}}\leq 1,
\end{eqnarray}
and $\abs{\vvv}$ is a constant along the characteristics. Then we have
\begin{eqnarray}
\abs{\bvv I_2}&\leq&\frac{C}{\d^2}\lnnmv{\v}.\no
\end{eqnarray}
\ \\
Step 3: Estimate of $I_3$ for $0\leq\va\leq\d_0$, $\chi(\ua)=1$ and $\sqrt{\e\eta'}\vb'\geq\va'$.\\
Based on (\ref{pt 01}), this implies
\begin{eqnarray}
\zeta(\eta',\vvv')\leq C\sqrt{\e\eta'}\vb'.\no
\end{eqnarray}
Also, we know that
\begin{eqnarray}
\zeta(\eta',\vuu)\geq C\sqrt{\e\eta'}\abs{\vuu}.\no
\end{eqnarray}
Then we may decompose
\begin{eqnarray}
\tilde A&:=&\int_{\r^2}\frac{\zeta(\eta',\vvv')}{\zeta(\eta',\vuu)}\chi(\ua)k(\vuu,\vvv')\a(\eta',\vuu)\ud{\vuu}\\
&\leq&\int_{\abs{\vuu}\geq\sqrt{\d}}\frac{\vb'}{\abs{\vuu}}\chi(\ua)k(\vuu,\vvv')\a(\eta',\vuu)\ud{\vuu}
+\int_{\abs{\vuu}\leq\sqrt{\d}}\frac{\vb'}{\abs{\vuu}}\chi(\ua)k(\vuu,\vvv')\a(\eta',\vuu)\ud{\vuu}\no\\
&=&\vb'\Big(\tilde A_1+\tilde A_2\Big).\no
\end{eqnarray}
Using Lemma \ref{wellposedness prelim lemma 8}, we directly estimate
\begin{eqnarray}
\abs{\bvvp\tilde A_1}&\leq&C\abs{\bvvp\int_{\abs{\vuu}\geq\sqrt{\d}}\frac{1}{\abs{\vuu}}\chi(\ua)k(\vuu,\vvv')\a(\eta',\vuu)\ud{\vuu}}\\
&\leq&\frac{C}{\sqrt{\d}}\abs{\bvvp\int_{\abs{\ua}\leq2\d}k(\vuu,\vvv')\a(\eta',\vuu)\ud{\vuu}}\no\\
&\leq&C\sqrt{\d}\lnnmv{\a}.\no
\end{eqnarray}
Also, based on Lemma \ref{wellposedness prelim lemma 8}, we obtain
\begin{eqnarray}
\abs{\bvvp\tilde A_2}&\leq&C\abs{\bvvp\int_{\abs{\vuu}\leq\sqrt{\d}}\frac{1}{\abs{\vuu}}k(\vuu,\vvv')\a(\eta',\vuu)\ud{\vuu}}\\
&\leq&C\d\lnnmv{\a},\no
\end{eqnarray}
Hence, since $\abs{\vvv}$ is a constant along the characteristics, we have
\begin{eqnarray}
\abs{\bvv I_3}&\leq&C\sqrt{\d}\lnnmv{\a}\bigg(\int_0^{\eta}\frac{\vb'}{\va'}\exp(-H_{\eta,\eta'})\ud{\eta'}\bigg)\\
&\leq&C\sqrt{\d}\lnnmv{\a}\bigg(\int_0^{\eta}\frac{\nu(\vvv')}{\va'}\exp(-H_{\eta,\eta'})\ud{\eta'}\bigg)\leq C\sqrt{\d}\lnnmv{\a}.\no
\end{eqnarray}
\ \\
Step 4: Estimate of $I_4$ for $0\leq\va\leq\d_0$, $\chi(\ua)=1$, $\sqrt{\e\eta'}\vb'\leq\va'$ and $\va^2\leq\e(\eta-\eta')\vb^2$.\\
Based on (\ref{pt 01}), this implies
\begin{eqnarray}
\zeta(\eta',\vvv')\leq C\va'.
\end{eqnarray}
Based on (\ref{pt 03}), we have
\begin{eqnarray}
-H_{\eta,\eta'}=-\int_{\eta'}^{\eta}\frac{\nu(\vvv)}{\va'(y)}\ud{y}&\leq&-\frac{\nu(\vvv)(\eta-\eta')}{2\vb\sqrt{\e(\eta-\eta')}}
\leq-\frac{C'\nu(\vvv)}{\vb}\sqrt{\frac{\eta-\eta'}{\e}}.
\end{eqnarray}
Hence, since $\abs{\vvv}$ is a constant along the characteristics, we know
\begin{eqnarray}
&&\abs{\bvv I_4}\\
&\leq&C\abs{\int_0^{\eta}\bigg(\bvvp\int_{\r^2}\frac{\zeta(\eta',\vvv')}{\zeta(\eta',\vuu)}\chi(\ua)k(\vuu,\vvv')\a(\eta',\vuu)\ud{\vuu}\bigg)
\frac{1}{\va'}\exp(-H_{\eta,\eta'})\ud{\eta'}}\no\\
&\leq&C\abs{\int_0^{\eta}\bigg(\bvvp\int_{\r^2}\frac{1}{\sqrt{\e\eta'}\abs{\vuu}}\chi(\ua)k(\vuu,\vvv')\a(\eta',\vuu)\ud{\vuu}\bigg)
\frac{\va'}{\va'}\exp(-H_{\eta,\eta'})\ud{\eta'}}\no\\
&=&C\abs{\int_0^{\eta}\bigg(\bvvp\int_{\r^2}\frac{1}{\abs{\vuu}}\chi(\ua)k(\vuu,\vvv')\a(\eta',\vuu)\ud{\vuu}\bigg)
\frac{1}{\sqrt{\e\eta'}}\exp(-H_{\eta,\eta'})\ud{\eta'}}\no
\end{eqnarray}
Using a similar argument as in Step 3, we obtain
\begin{eqnarray}
\abs{\bvvp\int_{\r^2}\frac{1}{\abs{\vuu}}\chi(\ua)k(\vuu,\vvv')\a(\eta',\vuu)\ud{\vuu}}\leq C\sqrt{\d}\lnnmv{\a}.
\end{eqnarray}
Hence, we have
\begin{eqnarray}
\abs{\bvv I_4}&\leq&C\sqrt{\d}\lnnmv{\a}\bigg(\int_0^{\eta}\frac{1}{\sqrt{\e\eta'}}\exp(-H_{\eta,\eta'})\ud{\eta'}\bigg)\no\\
&\leq&C\sqrt{\d}\lnnmv{\a}\int_0^{\eta}\frac{1}{\sqrt{\e\eta'}}\exp\bigg(-\frac{C'\nu(\vvv)}{\vb}\sqrt{\frac{\eta-\eta'}{\e}}\bigg)\ud{\eta'}\no
\end{eqnarray}
Define $z=\dfrac{\eta'}{\e}$, which implies $\ud{\eta'}=\e\ud{z}$. Substituting this into above integral, we have
\begin{eqnarray}
&&\abs{\bvv I_4}\\
&\leq&C\sqrt{\d}\lnnmv{\a}\int_0^{\frac{\eta}{\e}}\frac{1}{\sqrt{z}}\exp\bigg(-\frac{C'\nu(\vvv)}{\vb}\sqrt{\frac{\eta}{\e}-z}\bigg)\ud{z}\no\\
&=&C\sqrt{\d}\lnnmv{\a}\Bigg(\int_0^{1}\frac{1}{\sqrt{z}}\exp\bigg(-\frac{C'\nu(\vvv)}{\vb}\sqrt{\frac{\eta}{\e}-z}\bigg)\ud{z}
+\int_1^{\frac{\eta}{\e}}\frac{1}{\sqrt{z}}\exp\bigg(-\frac{C'\nu(\vvv)}{\vb}\sqrt{\frac{\eta}{\e}-z}\bigg)\ud{z}\Bigg).\no
\end{eqnarray}
We can estimate these two terms separately.
\begin{eqnarray}
\int_0^{1}\frac{1}{\sqrt{z}}\exp\bigg(-\frac{C'\nu(\vvv)}{\vb}\sqrt{\frac{\eta}{\e}-z}\bigg)\ud{z}&\leq&\int_0^{1}\frac{1}{\sqrt{z}}\ud{z}=2.
\end{eqnarray}
\begin{eqnarray}
\int_1^{\frac{\eta}{\e}}\frac{1}{\sqrt{z}}\exp\bigg(-\frac{C'\nu(\vvv)}{\vb}\sqrt{\frac{\eta}{\e}-z}\bigg)\ud{z}
&\leq&\int_1^{\frac{\eta}{\e}}\exp\bigg(-\frac{C'\nu(\vvv)}{\vb}\sqrt{\frac{\eta}{\e}-z}\bigg)\ud{z}\\
&\overset{t^2=\frac{\eta}{\e}-z}{\leq}&2\int_0^{\infty}t\ue^{-\frac{C'\nu(\vvv)}{\vb}t}\ud{t}<C\left(\frac{\vb}{\nu(\vvv)}\right)^2\leq C.\no
\end{eqnarray}
Therefore, we know
\begin{eqnarray}
\abs{\bvv I_4}&\leq&C\sqrt{\d}\lnnmv{\a}.
\end{eqnarray}
\ \\
Step 5: Estimate of $I_5$ for $0\leq\va\leq\d_0$, $\chi(\ua)=1$, $\sqrt{\e\eta'}\vb'\leq\va'$ and $\va^2\geq\e(\eta-\eta')\vb^2$.\\
Based on (\ref{pt 01}), this implies
\begin{eqnarray}
\zeta(\eta',\vvv')\leq C\va'.\no
\end{eqnarray}
Based on (\ref{pt 03}), we have
\begin{eqnarray}
-H_{\eta,\eta'}=-\int_{\eta'}^{\eta}\frac{\nu(\vvv')}{\va'(y)}\ud{y}&\leq&-\frac{C\nu(\vvv')(\eta-\eta')}{\va}.
\end{eqnarray}
Hence, we know
\begin{eqnarray}
&&\abs{\bvv I_5}\\
&\leq&C\abs{\int_0^{\eta}\bigg(\bvvp\int_{\r^2}\frac{\zeta(\eta',\vvv')}{\zeta(\eta',\vuu)}\chi(\ua)k(\vuu,\vvv')\a(\eta',\vuu)\ud{\vuu}\bigg)
\frac{1}{\va'}\exp(-H_{\eta,\eta'})\ud{\eta'}}\no\\
&\leq&C\abs{\int_0^{\eta}\bigg(\bvvp\int_{\r^2}\frac{1}{\zeta(\eta',\vuu)}\chi(\ua)k(\vuu,\vvv')\a(\eta',\vuu)\ud{\vuu}\bigg)
\frac{\va'}{\va'}\exp(-H_{\eta,\eta'})\ud{\eta'}}\no\\
&\leq&C\abs{\int_0^{\eta}\bigg(\bvvp\int_{\r^2}\frac{1}{\zeta(\eta',\vuu)}\chi(\ua)k(\vuu,\vvv')\a(\eta',\vuu)\ud{\vuu}\bigg)
\exp\bigg(-\frac{C\nu(\vvv')(\eta-\eta')}{\va}\bigg)\ud{\eta'}}\no
\end{eqnarray}
Using H\"{o}lder's inequality, we obtain
\begin{eqnarray}
&&\abs{\bvvp\int_{\r^2}\frac{1}{\zeta(\eta',\vuu)}\chi(\ua)k(\vuu,\vvv')\a(\eta',\vuu)\ud{\vuu}}\\
&=&\abs{\bvvp\int_{\r^2}\frac{1}{\zeta^{\frac{1}{1+s}}(\eta',\vuu)}\frac{1}{\zeta^{\frac{s}{1+s}}(\eta',\vuu)}\chi(\ua)
k(\vuu,\vvv')\a(\eta',\vuu)\ud{\vuu}}\no\\
&\leq&\frac{C}{(\e\eta')^{\frac{s}{1+s}}}\abs{\bvvp\int_{\r^2}\frac{1}{\zeta^{\frac{1}{1+s}}(\eta',\vuu)}\frac{1}{\abs{\ub}^{\frac{s}{1+s}}}\chi(\ua)
k(\vuu,\vvv')\a(\eta',\vuu)\ud{\vuu}}\no\\
&\leq&\frac{C}{(\e\eta')^{\frac{s}{1+s}}}\abs{\bvvp\int_{\r^2}\frac{1}{\zeta^{\frac{1}{1+s}}(\eta',\vuu)}\frac{1}{\abs{\ub}^{\frac{s}{1+s}}}
\frac{1}{\abs{\ub}^{\frac{1}{1+2s}}}\chi(\ua)
\abs{\ub}^{\frac{1}{1+2s}}k(\vuu,\vvv')\a(\eta',\vuu)\ud{\vuu}}\no\\
&\leq&\frac{C\br{\vvv'}^{\frac{1}{1+2s}}}{(\e\eta')^{\frac{s}{1+s}}}
\bigg(\int_{\r^2}\frac{1}{\zeta(\eta',\vuu)}\frac{1}{\abs{\ub}^{s+\frac{1+s}{1+2s}}}\chi^{1+s}(\ua)\ud{\vuu}\bigg)^{\frac{1}{1+s}}\no\\
&&\times\bigg(\left(\bvvp\right)^{\frac{1+s}{s}}
\int_{\r^2}\abs{\frac{\ub^{\frac{1}{1+2s}}}{\br{\vvv'}^{\frac{1}{1+2s}}}k(\vuu,\vvv')\a(\eta',\vuu)}^{\frac{1+s}{s}}\ud{\vuu}\bigg)^{\frac{s}{1+s}},\no
\end{eqnarray}
where $0<s<<1$. Note the fact that in 2D, $k(\vuu,\vvv')$ does not contain the singularity of $\abs{\vuu-\vvv'}^{-1}$. Using a similar argument as in Step 3, we may directly compute
\begin{eqnarray}
&&\bigg(\left(\bvvp\right)^{\frac{1+s}{s}}
\int_{\r^2}\abs{\frac{\ub^{\frac{1}{1+2s}}}{\br{\vvv'}^{\frac{1}{1+2s}}}k(\vuu,\vvv')\a(\eta',\vuu)}^{\frac{1+s}{s}}\ud{\vuu}\bigg)^{\frac{s}{1+s}}\\
&\leq&\bigg(\left(\bvvp\right)^{\frac{1+s}{s}}
\int_{\r^2}\abs{\frac{\br{\vuu}^{\frac{1}{1+2s}}}{\br{\vvv'}^{\frac{1}{1+2s}}}k(\vuu,\vvv')\a(\eta',\vuu)}^{\frac{1+s}{s}}\ud{\vuu}\bigg)^{\frac{s}{1+s}}\no\\
&\leq& C\lnnmv{\a}.\no
\end{eqnarray}
Then we estimate
\begin{eqnarray}
\int_{\r^2}\frac{1}{\zeta(\eta',\vuu)}\frac{1}{\abs{\ub}^{s+\frac{1+s}{1+2s}}}\chi^{1+s}(\ua)\ud{\vuu}
&\leq&
\int_{\r}\bigg(\int_{-\d}^{\d}\frac{1}{\zeta(\eta',\vuu)}\chi(\ua)\ud{\ua}\bigg)\frac{1}{\abs{\ub}^{s+\frac{1+s}{1+2s}}}\ud{\ub}.
\end{eqnarray}
We may directly compute
\begin{eqnarray}
\int_{-\d}^{\d}\frac{1}{\zeta(\eta',\vuu)}\chi(\ua)\ud{\ua}
&\leq&\int_{-\d}^{\d}\frac{1}{\sqrt{\rk^2(\ua)^2+\left(\rk^2-(\rk-\e\eta')^2\right)(\ub)^2}}\chi(\ua)\ud{\ua}\\
&\leq&C\int_{-\d}^{\d}\frac{1}{\sqrt{(\ua)^2+r^2(\ub)^2}}\ud{\ua},\no
\end{eqnarray}
where
\begin{eqnarray}
r=\sqrt{\frac{\rk^2-(\rk-\e\eta')^2}{\rk^2}}\leq C\sqrt{\e\eta'}.
\end{eqnarray}
We may further compute
\begin{eqnarray}
\int_{-\d}^{\d}\frac{1}{\sqrt{(\ua)^2+r^2(\ub)^2}}\ud{\ua}&=&2\int_{0}^{\d}\frac{1}{\sqrt{(\ua)^2+r^2(\ub)^2}}\ud{\ua}\\
&=&2\bigg(\ln\left(\ua+\sqrt{r^2(\ub)^2+(\ua)^2}\right)-\ln(r\ub)\bigg)\bigg|_0^{\d}\no\\
&=&2\bigg(\ln\left(\d+\sqrt{r^2(\ub)^2+\d^2}\right)-\ln(r\ub)\bigg)\no\\
&\leq&C\bigg(1+\ln(r)+\ln(\ub)\bigg).\no
\end{eqnarray}
Then considering $s+\dfrac{1+s}{1+2s}>1$, we know
\begin{eqnarray}
\int_{\r^2}\frac{1}{\zeta(\eta',\vuu)}\frac{1}{\abs{\ub}^{s+\frac{1+s}{1+2s}}}\chi^{1+s}(\ua)\ud{\vuu}
&\leq&
C\int_{\r}\bigg(1+\ln(r)+\ln(\ub)\bigg)\frac{1}{\abs{\ub}^{s+\frac{1+s}{1+2s}}}\ud{\ub}\\
&\leq&C\bigg(1+\abs{\ln(\e)}+\abs{\ln{(\eta')}}\bigg).\no
\end{eqnarray}
Hence, we can obtain
\begin{eqnarray}
\\
\abs{\bvvp\int_{\r^2}\frac{1}{\zeta(\eta',\vuu)}\chi(\ua)k(\vuu,\vvv')\a(\eta',\vuu)\ud{\vuu}}\leq \frac{C\br{\vvv'}^{\frac{1}{1+2s}}}{(\e\eta')^{\frac{s}{1+s}}}\lnnmv{\a}\bigg(1+\abs{\ln(\e)}+\abs{\ln{(\eta')}}\bigg).\no
\end{eqnarray}
Hence, we know
\begin{eqnarray}
\\
\abs{\bvv I_5}&\leq&\frac{C}{\e^{s}}\lnnmv{\a}\int_0^{\eta}\frac{\br{\vvv'}^{\frac{1}{1+2s}}}{\eta'^{\frac{s}{1+s}}}\bigg(1+\abs{\ln(\e)}+\abs{\ln(\eta')}\bigg)
\exp\left(-\frac{C\nu(\vvv')(\eta-\eta')}{\va}\right)\ud{\eta'}.\no
\end{eqnarray}
Hence, we first estimate
\begin{eqnarray}
\abs{\int_0^{\eta}\frac{\br{\vvv'}^{\frac{1}{1+2s}}}{\eta'^{\frac{s}{1+s}}}\abs{\ln(\eta')}
\exp\left(-\frac{C\nu(\vvv')(\eta-\eta')}{\va}\right)\ud{\eta'}}.
\end{eqnarray}
If $0\leq\eta\leq 2$, using H\"{o}lder's inequality, we have
\begin{eqnarray}
&&\abs{\int_0^{\eta}\frac{\br{\vvv'}^{\frac{1}{1+2s}}}{\eta'^{\frac{s}{1+s}}}\abs{\ln(\eta')}
\exp\left(-\frac{C\nu(\vvv')(\eta-\eta')}{\va}\right)\ud{\eta'}}\\
&\leq&\bigg(\int_0^{\eta}\frac{1}{\eta'^{\frac{1}{2}}}\abs{\ln(\eta')}^{\frac{1+2s}{2s}}\ud{\eta'}\bigg)^{\frac{2s}{1+2s}}\bigg(\int_0^{\eta}\br{\vvv'}
\exp\left(-\frac{(1+2s)\nu(\vvv')C(\eta-\eta')}{\va}\right)\ud{\eta'}\bigg)^{\frac{1}{1+2s}}\no\\
&\leq&C\bigg(\frac{\va\br{\vvv'}}{\nu(\vvv')}\bigg)^{\frac{1}{1+2s}}\leq\va^{\frac{1}{1+2s}}\leq C\d_0^{\frac{1}{1+2s}}\leq\sqrt{\d_0}.\no
\end{eqnarray}
If $\eta\geq 2$, it suffices to estimate
\begin{eqnarray}
&&\abs{\int_2^{\eta}\frac{\br{\vvv'}^{\frac{1}{1+2s}}}{\eta'^{\frac{s}{1+s}}}\abs{\ln(\eta')}
\exp\left(-\frac{C\nu(\vvv')(\eta-\eta')}{\va}\right)\ud{\eta'}}\\
&\leq&\ln(L)\abs{\int_2^{\eta}\br{\vvv'}^{\frac{1}{1+2s}}
\exp\left(-\frac{C\nu(\vvv')(\eta-\eta')}{\va}\right)\ud{\eta'}}\leq C\abs{\ln(\e)}\va\leq C\abs{\ln(\e)}\d_0.\no
\end{eqnarray}
With a similar argument, we may justify
\begin{eqnarray}
\abs{\int_0^{\eta}\bigg(1+\abs{\ln(\e)}\bigg)\frac{\br{\vvv'}^{\frac{1}{1+2s}}}{\eta'^{\frac{s}{1+s}}}
\exp\left(-\frac{C\nu(\vvv')(\eta-\eta')}{\va}\right)}\leq C\bigg(\sqrt{\d_0}+\abs{\ln(\e)}\d_0\bigg).
\end{eqnarray}
Hence, we have
\begin{eqnarray}
\abs{\bvv I_5}\leq \frac{C}{\e^{s}}\bigg(\sqrt{\d_0}+\abs{\ln(\e)}\d_0\bigg)\lnnmv{\a}.
\end{eqnarray}
\ \\
Step 6: Synthesis.\\
Collecting all the terms in previous steps, we have proved
\begin{eqnarray}
\abs{\bvv I}
&\leq&\frac{C}{\e^{s}}\Big(1+\abs{\ln(\e)}\Big)\sqrt{\d_0}\lnnmv{\a}+C\sqrt{\d}\lnnmv{\a}\\
&&+\frac{C}{\d^2}\lnnmv{\gg}+\frac{C}{\d_0}\bigg(\lnmv{\frac{\p p}{\p\va}}+\lnmv{\frac{\p p}{\p\vb}}+\lnnmv{\gg}\bigg).\no
\end{eqnarray}

%%%%%%%%%%%%%%%%%%%%%%%%%%%%%%%%%%%%%%%%%%%%%%%%%%%%%%%%%%%%%%%%%%%%%%%%
\subsection{Region II: $\va<0$ and $\va^2+\vb^2\geq \vb'^2(\eta,\vvv;L)$}
%%%%%%%%%%%%%%%%%%%%%%%%%%%%%%%%%%%%%%%%%%%%%%%%%%%%%%%%%%%%%%%%%%%%%%%%

Based on Lemma \ref{Milne lemma 3} and Lemma \ref{Milne lemma 4},
we can directly obtain
\begin{eqnarray}
\abs{\k[p_{\a}]}&\leq&\lnmv{p_{\a}},\\
\abs{\t[S_{\a}]}&\leq&\lnnmv{\frac{S_{\a}}{\nu}}.
\end{eqnarray}
Hence, we only need to estimate
\begin{eqnarray}
II=\t[\tilde\a]&=&\int_0^{L}\frac{\tilde\a\Big(\eta',\vvv'(\eta,\vvv;\eta')\Big)}{\va'(\eta,\vvv;\eta')}
\exp(-H_{L,\eta'}-\rr[H_{L,\eta}])\ud{\eta'}\\
&&+\int_{\eta}^{L}\frac{\tilde\a\Big(\eta',\rr[\vvv'(\eta,\vvv;\eta')]\Big)}{\va'(\eta,\vvv;\eta')}\exp(\rr[H_{\eta,\eta'}])\ud{\eta'}.\no
\end{eqnarray}
In particular, we can decompose
\begin{eqnarray}
\t[\tilde\a]&=&\int_0^{\eta}\frac{\tilde\a\Big(\eta',\vvv'(\eta,\vvv;\eta')\Big)}{\va'(\eta,\vvv;\eta')}\exp(-H_{L,\eta'}-\rr[H_{L,\eta}])\ud{\eta'}\\
&&+\int_{\eta}^{L}\frac{\tilde\a\Big(\eta',\vvv'(\eta,\vvv;\eta')\Big)}{\va'(\eta,\vvv;\eta')}\exp(-H_{L,\eta'}-\rr[H_{L,\eta}])\ud{\eta'}
+\int_{\eta}^{L}\frac{\tilde\a\Big(\eta',\rr[\vvv'(\eta,\vvv;\eta')]\Big)}{\va'(\eta,\vvv;\eta')}\exp(\rr[H_{\eta,\eta'}])\ud{\eta'}.\no
\end{eqnarray}
The integral $\displaystyle\int_0^{\eta}$ part can be estimated as in Region I, so we only need to estimate the integral $\displaystyle\int_{\eta}^L$ part. Also, noting that fact that
\begin{eqnarray}
\exp(-H_{L,\eta'}-\rr[H_{L,\eta}])\leq \exp(-\rr[H_{\eta',\eta}]),
\end{eqnarray}
we only need to estimate
\begin{eqnarray}
\int_{\eta}^{L}\frac{\tilde\a\Big(\eta',\vvv'(\eta,\vvv;\eta')\Big)}{\va'(\eta,\vvv;\eta')}\exp(-H_{\eta',\eta})\ud{\eta'}.
\end{eqnarray}
Here the proof is almost identical to that in Region I, so we only point out the key differences.\\
\ \\
Step 0: Preliminaries.\\
We need to update one key result. For $0\leq\eta\leq\eta'$,
\begin{eqnarray}
\va'&=&\sqrt{E_1-\vb'^2}=\sqrt{E_1-\bigg(\frac{\rk-\e\eta}{\rk-\e\eta'}\bigg)^2\vb^2}\leq\va.
\end{eqnarray}
Then we have
\begin{eqnarray}\label{pt 04}
-\int_{\eta}^{\eta'}\frac{1}{\va'(y)}\ud{y}&\leq&-\frac{\eta'-\eta}{\va}.
\end{eqnarray}
In the following, we will divide the estimate of $II$ into several cases based on the value of $\va$, $\va'$ and $\e\eta'$. We write
\begin{eqnarray}
II&=&\int_{\eta}^L\id_{\{\va\leq-\d_0\}}+\int_{\eta}^L\id_{\{-\d_0\leq\va\leq0\}}\id_{\{\chi(\ua)<1\}}\\
&&+\int_{\eta}^L\id_{\{-\d_0\leq\va\leq0\}}\id_{\{\chi(\ua)=1\}}\id_{\{\sqrt{\e\eta'}\vb'\geq\va'\}}
+\int_{\eta}^L\id_{\{-\d_0\leq\va\leq0\}}\id_{\{\chi(\ua)=1\}}\id_{\{\sqrt{\e\eta'}\vb'\leq\va'\}}\no\\
&=&II_1+II_2+II_3+II_4.\no
\end{eqnarray}
\ \\
Step 1: Estimate of $II_1$ for $\va\leq-\d_0$.\\
We first estimate $\va'$. Along the characteristics, we know
\begin{eqnarray}
\ue^{-W(\eta')}\vb'=\ue^{-W(\eta)}\vb,
\end{eqnarray}
which implies
\begin{eqnarray}
\abs{\vb'}&=&\ue^{W(\eta')-W(\eta)}\abs{\vb}\leq \ue^{W(L)-W(0)}\abs{\vb}\leq \ue^{W(L)-W(0)}\sqrt{E_1-\d_0^2}.
\end{eqnarray}
Then we can further deduce that
\begin{eqnarray}
\abs{\vb'}\leq \bigg(1-\frac{\e^{\frac{1}{2}}}{\rk}\bigg)^{-1}\sqrt{E_1-\d_0^2}.
\end{eqnarray}
Then we have
\begin{eqnarray}
\va'\geq\sqrt{E_1-\bigg(1-\frac{\e^{\frac{1}{2}}}{\rk}\bigg)^{-2}(E_1-\d_0^2)}\geq \d_0-C\e^{\frac{1}{4}}>\frac{\d_0}{2},
\end{eqnarray}
when $\e$ is sufficiently small. Then this implies that for $\abs{E_1(\eta,\vvv)}\geq \abs{\vb'(\eta,\vvv;L)}$, for $\e$ sufficiently small, we know $\min\va'\geq\d_0$ where $(\eta',\vvv')$ is on the same characteristics as $(\eta,\vvv)$ with $\va'\geq0$.

Similar to the estimate of $I_1$, in this step, we will prove estimates based on the characteristics of $\v$ itself instead of $\a$.
Here, we rewrite the equation (\ref{Milne difference problem}) along the characteristics as
\begin{eqnarray}
\va\frac{\ud{\gg}}{\ud{\eta}}+\nu\gg=K[\v].
\end{eqnarray}
Also, we will still use simple facts (SF):
\begin{itemize}
\item
Based on the well-posedness and decay theorem for $\gg$, we know $\lnmv{\gg}\leq C$.
\item
Based on Lemma \ref{prelim 1}, we get $\lnmv{K[\gg]}\leq\lnmv{\gg}\leq C$ and $\lnmv{\nabla_vK[\gg]}\leq\lnmv{\gg}\leq C$.
\item
Since $E_1$ is conserved along the characteristics, we must have $\abs{\vvv}=\abs{\vvv'}$.
\item
For $\eta'\leq\eta$, we must have $\va'\geq\abs{\va}\geq\d_0$.
\item
Using substitution $y=H_{\eta,\eta'}$, we know
\begin{eqnarray}\label{inte 1}
\abs{\int_{\eta}^L\frac{\nu\Big(\vvv'(\eta,\vvv;\eta')\Big)}{\va'(\eta,\vvv;\eta')}\exp(H_{\eta,\eta'})\ud{\eta'}}\leq\abs{\int_{-\infty}^0\ue^{y}\ud{y}}=1.
\end{eqnarray}
\end{itemize}
For $\va\leq-\d_0$, we do not need the mild formulation for $\a$. Instead, we directly estimate
\begin{eqnarray}
\abs{\bvv II_1} &\leq&\abs{\bvv\frac{\p\gg}{\p\eta}}.
\end{eqnarray}
We rewrite the equation along the characteristics as
\begin{eqnarray}\label{mt 03}
\gg(\eta,\vvv)&=&p\Big(\vvv'(\eta,\vvv;0)\Big)\exp(-H_{L,0}-\rr[H_{L,\eta}])\\
&&+\int_0^{L}\frac{K[\gg]\Big(\eta',\vvv'(\eta,\vvv;\eta')\Big)}{\va'(\eta,\vvv;\eta')}\exp(-H_{L,\eta'}-\rr[H_{L,\eta}])\ud{\eta'}\no\\
&&+\int_{\eta}^{L}\frac{K[\gg]\Big(\eta',\rr[\vvv'(\eta,\vvv;\eta')]\Big)}{\va'(\eta,\vvv;\eta')}\exp(\rr[H_{\eta,\eta'}])\ud{\eta'},
\end{eqnarray}
where $\vvv'(\eta')=\vvv'(\eta,\vvv; \eta')$ satisfying $(\eta',\vvv')$ and $(\eta,\vvv)$ are on the same characteristic with $\va'\geq0$, and
\begin{eqnarray}
H_{t,s}&=&\int_{s}^{t}\frac{\nu(\vvv'(\eta,\vvv;y))}{\va'(\eta,\vvv;y)}\ud{y}.\no
\end{eqnarray}
for any $s,t\geq0$.

Then taking $\eta$ derivative on both sides of (\ref{mt 03}) yields
\begin{eqnarray}
\frac{\p\v}{\p\eta}&=&Y=Y_1+Y_2+Y_3+Y_4+Y_5+Y_6+Y_7+Y_8+Y_9,
\end{eqnarray}
where
\begin{eqnarray}
Y_1&=&\frac{\p p\Big(\vvv'(\eta,\vvv;0)\Big)}{\p\eta}\exp(-H_{L,0}-\rr[H_{L,\eta}]),\\
Y_2&=&-p\Big(\vvv'(\eta,\vvv;0)\Big)\exp(-H_{L,0}-\rr[H_{L,\eta}])\bigg(\frac{\p H_{L,0}}{\p\eta}+\frac{\p \rr[H_{L,\eta}]}{\p\eta}\bigg),\\
Y_3&=&\int_0^{L}\frac{K[\gg]\Big(\eta',\vvv'(\eta,\vvv;\eta')\Big)}{\va'^2(\eta,\vvv;\eta')}\frac{\p\va'(\eta,\vvv;\eta')}{\p\eta}\exp(-H_{L,\eta'}-\rr[H_{L,\eta}])\ud{\eta'},\\
Y_4&=&-\int_0^{L}\frac{K[\gg]\Big(\eta',\vvv'(\eta,\vvv;\eta')\Big)}{\va'(\eta,\vvv;\eta')}\exp(-H_{L,\eta'}-\rr[H_{L,\eta}])\bigg(\frac{\p H_{L,\eta'}}{\p\eta}+\frac{\p \rr[H_{L,\eta}]}{\p\eta}\bigg)\ud{\eta'},\\
Y_5&=&\int_0^{L}\frac{1}{\va'(\eta,\vvv;\eta')}\exp(-H_{L,\eta'}-\rr[H_{L,\eta}])
\bigg(\nabla_{\mathfrak{v}'}K[\gg]\Big(\eta',\vvv'(\eta,\vvv;\eta')\Big)\frac{\p\vvv'(\eta,\vvv;\eta')}{\p\eta}\bigg)\ud{\eta'},\\
Y_6&=&\int_{\eta}^{L}\frac{K[\gg]\Big(\eta',\rr[\vvv'(\eta,\vvv;\eta')]\Big)}{\va'^2(\eta,\vvv;\eta')}\frac{\p\va'(\eta,\vvv;\eta')}{\p\eta}\exp(\rr[H_{\eta,\eta'}])\ud{\eta'},\\
Y_7&=&\int_{\eta}^{L}\frac{K[\gg]\Big(\eta',\rr[\vvv'(\eta,\vvv;\eta')]\Big)}{\va'(\eta,\vvv;\eta')}\exp(\rr[H_{\eta,\eta'}])\frac{\rr[H_{\eta,\eta'}]}{\p\eta}\ud{\eta'},\\
Y_8&=&\int_{\eta}^{L}\frac{1}{\va'(\eta,\vvv;\eta')}\exp(\rr[H_{\eta,\eta'}])
\bigg(\nabla_{\mathfrak{v}'}K[\gg]\Big(\eta',\vvv'(\eta,\vvv;\eta')\Big)\frac{\p\vvv'(\eta,\vvv;\eta')}{\p\eta}\bigg)\ud{\eta'},\\
Y_9&=&-\frac{K[\eta,\rr[\vvv]]}{\va}.
\end{eqnarray}
We need to estimate each term. Since the techniques are very similar to the estimate of $I_1$ without introducing new tricks, we just list the results here:
\begin{eqnarray}
\abs{\bvv Y_1}&\leq&C\left(1+\frac{1}{\d_0}\right)\bigg(\lnmv{\frac{\p p}{\p\va}}+\lnmv{\frac{\p p}{\p\vb}}\bigg),\\
\abs{\bvv Y_2}&\leq&C\left(1+\frac{1}{\d_0}\right)\lnmv{p},\\
\abs{\bvv Y_3}&\leq&C\left(1+\frac{1}{\d_0}\right)\lnnmv{\gg},\\
\abs{\bvv Y_4}&\leq&C\left(1+\frac{1}{\d_0}\right)\lnnmv{\gg},\\
\abs{\bvv Y_5}&\leq&C\left(1+\frac{1}{\d_0}\right)\lnnmv{\gg},\\
\abs{\bvv Y_6}&\leq&C\left(1+\frac{1}{\d_0}\right)\lnnmv{\gg},\\
\abs{\bvv Y_7}&\leq&C\left(1+\frac{1}{\d_0}\right)\lnnmv{\gg},\\
\abs{\bvv Y_8}&\leq&C\left(1+\frac{1}{\d_0}\right)\lnnmv{\gg}.\\
\abs{\bvv Y_9}&\leq&C\left(1+\frac{1}{\d_0}\right)\lnnmv{\gg}.
\end{eqnarray}
In summary, we have
\begin{eqnarray}
\abs{\bvv II_1} &\leq& \frac{C}{\d_0}\bigg(\lnmv{\frac{\p p}{\p\va}}+\lnmv{\frac{\p p}{\p\vb}}+\lnnmv{\gg}\bigg).
\end{eqnarray}
\ \\
Step 2: Estimate of $II_2$ for $-\d_0\leq\va\leq0$ and $\chi(\ua)<1$.\\
This is similar to the estimate of $I_2$ based on the integral
\begin{eqnarray}
\int_{\eta}^{L}\frac{\nu\Big(\vvv'(\eta,\vvv;\eta')\Big)}{\va'(\eta,\vvv;\eta')}\exp(-H_{\eta',\eta})\ud{\eta'}\leq 1.
\end{eqnarray}
Then we have
\begin{eqnarray}
\abs{\bvv II_2}&\leq&\frac{C}{\d^2}\lnnmv{\v}.\no
\end{eqnarray}
\ \\
Step 3: Estimate of $II_3$ for $-\d_0\leq\va\leq0$, $\chi(\ua)=1$ and $\sqrt{\e\eta'}\vb'\geq\va'$.\\
This is identical to the estimate of $I_3$, we have
\begin{eqnarray}
\abs{\bvv II_3}&\leq& C\sqrt{\d}\lnnmv{\a}.
\end{eqnarray}
\ \\
Step 4: Estimate of $II_4$ for $-\d_0\leq\va\leq0$, $\chi(\ua)=1$ and $\sqrt{\e\eta'}\vb'\leq\va'$.\\
This step is different. We do not need to further decompose the cases.
Based on (\ref{pt 04}), we have,
\begin{eqnarray}
-H_{\eta,\eta'}&\leq&-\frac{\nu(\vvv)(\eta'-\eta)}{\va}.
\end{eqnarray}
Then following the same argument in estimating $I_5$, we know
\begin{eqnarray}
\\
\abs{\bvv II_4}&\leq&\frac{C}{\e^{s}}\lnnmv{\a}\int_{\eta}^L\frac{\br{\vvv'}^{\frac{1}{1+2s}}}{\eta'^{\frac{s}{1+s}}}\bigg(1+\abs{\ln(\e)}+\abs{\ln(\eta')}\bigg)
\exp\left(-\frac{C\nu(\vvv')(\eta-\eta')}{\va}\right)\ud{\eta'}.\no
\end{eqnarray}
Hence, we first estimate
\begin{eqnarray}
\abs{\int_{\eta}^L\frac{\br{\vvv'}^{\frac{1}{1+2s}}}{\eta'^{\frac{s}{1+s}}}\abs{\ln(\eta')}
\exp\left(-\frac{C\nu(\vvv')(\eta-\eta')}{\va}\right)\ud{\eta'}}.
\end{eqnarray}
If $\eta\geq 2$, we have
\begin{eqnarray}
&&\abs{\int_{\eta}^L\frac{\br{\vvv'}^{\frac{1}{1+2s}}}{\eta'^{\frac{s}{1+s}}}\abs{\ln(\eta')}
\exp\left(-\frac{C\nu(\vvv')(\eta-\eta')}{\va}\right)\ud{\eta'}}\\
&\leq&\ln(L)\abs{\int_{\eta}^L\br{\vvv'}^{\frac{1}{1+2s}}
\exp\left(-\frac{C\nu(\vvv')(\eta-\eta')}{\va}\right)\ud{\eta'}}\leq C\abs{\ln(\e)}\va\leq C\abs{\ln(\e)}\d_0.\no
\end{eqnarray}
If $0\leq\eta\leq 2$, using H\"{o}lder's inequality, it suffices to estimate
\begin{eqnarray}
&&\abs{\int_0^{2}\frac{\br{\vvv'}^{\frac{1}{1+2s}}}{\eta'^{\frac{s}{1+s}}}\abs{\ln(\eta')}
\exp\left(-\frac{C\nu(\vvv')(\eta-\eta')}{\va}\right)\ud{\eta'}}\\
&\leq&\bigg(\int_0^{2}\frac{1}{\eta'^{\frac{1}{2}}}\abs{\ln(\eta')}^{\frac{1+2s}{2s}}\ud{\eta'}\bigg)^{\frac{2s}{1+2s}}\bigg(\int_0^{2}\br{\vvv'}
\exp\left(-\frac{(1+2s)\nu(\vvv')C(\eta-\eta')}{\va}\right)\ud{\eta'}\bigg)^{\frac{1}{1+2s}}\no\\
&\leq&C\bigg(\frac{\va\br{\vvv'}}{\nu(\vvv')}\bigg)^{\frac{1}{1+2s}}\leq\va^{\frac{1}{1+2s}}\leq C\d_0^{\frac{1}{1+2s}}\leq\sqrt{\d_0}.\no
\end{eqnarray}
With a similar argument, we may justify
\begin{eqnarray}
\abs{\int_{\eta}^L\bigg(1+\abs{\ln(\e)}\bigg)\frac{\br{\vvv'}^{\frac{1}{1+2s}}}{\eta'^{\frac{s}{1+s}}}
\exp\left(-\frac{C\nu(\vvv')(\eta-\eta')}{\va}\right)}\leq C\bigg(\sqrt{\d_0}+\abs{\ln(\e)}\d_0\bigg).
\end{eqnarray}
Hence, we have
\begin{eqnarray}
\abs{\bvv II_4}\leq \frac{C}{\e^{s}}(1+\abs{\ln(\e)})\sqrt{\d_0}\lnnmv{\a}.
\end{eqnarray}
Hence, we have
\begin{eqnarray}
\abs{\bvv II_5}\leq \frac{C}{\e^{s}}\bigg(\sqrt{\d_0}+\abs{\ln(\e)}\d_0\bigg)\lnnmv{\a}.
\end{eqnarray}
\ \\
Step 5: Synthesis.\\
Collecting all the terms in previous steps, we have proved
\begin{eqnarray}
\abs{\bvv II}
&\leq&\frac{C}{\e^{s}}\Big(1+\abs{\ln(\e)}\Big)\sqrt{\d_0}\lnnmv{\a}+C\sqrt{\d}\lnnmv{\a}\\
&&+\frac{C}{\d^2}\lnnmv{\gg}+\frac{C}{\d_0}\bigg(\lnmv{\frac{\p p}{\p\va}}+\lnmv{\frac{\p p}{\p\vb}}+\lnnmv{\gg}\bigg).\no
\end{eqnarray}

%%%%%%%%%%%%%%%%%%%%%%%%%%%%%%%%%%%%%%%%%%%%%%%%%%%%%%%%%%%%%%%%%%%%%%%%
\subsection{Region III: $\va<0$ and $\va^2+\vb^2\leq \vb'^2(\eta,\vvv;L)$}
%%%%%%%%%%%%%%%%%%%%%%%%%%%%%%%%%%%%%%%%%%%%%%%%%%%%%%%%%%%%%%%%%%%%%%%%

Based on Lemma \ref{Milne lemma 3} and Lemma \ref{Milne lemma 4}, we still have
\begin{eqnarray}
\abs{\k[p_{\a}]}&\leq&\lnmv{p_{\a}},\\
\abs{\t[S_{\a}]}&\leq&\lnnmv{\frac{S_{\a}}{\nu}}.
\end{eqnarray}
Hence, we only need to estimate
\begin{eqnarray}
III=\t[\tilde\a]&=&\int_0^{\eta^+}\frac{\tilde\a\Big(\eta',\vvv'(\eta,\vvv;\eta')\Big)}{\va'(\eta,\vvv;\eta')}
\exp(-H_{\eta^+,\eta'}-\rr[H_{\eta^+,\eta}])\ud{\eta'}\\
&&+\int_{\eta}^{\eta^+}\frac{\tilde\a\Big(\eta',\rr[\vvv'(\eta,\vvv;\eta')]\Big)}{\va'(\eta,\vvv;\eta')}\exp(\rr[H_{\eta,\eta'}])\ud{\eta'}.\no
\end{eqnarray}
In particular, we can decompose
\begin{eqnarray}
\t[\tilde\a]&=&\int_0^{\eta}\frac{\tilde\a\Big(\eta',\vvv'(\eta,\vvv;\eta')\Big)}{\va'(\eta,\vvv;\eta')}\exp(-H_{\eta^+,\eta'}-\rr[H_{\eta^+,\eta}])\ud{\eta'}\\
&&+\int_{\eta}^{\eta^+}\frac{\tilde\a\Big(\eta',\vvv'(\eta,\vvv;\eta')\Big)}{\va'(\eta,\vvv;\eta')}\exp(-H_{\eta^+,\eta'}-\rr[H_{\eta^+,\eta}])\ud{\eta'}\no\\
&&+\int_{\eta}^{\eta^+}\frac{\tilde\a\Big(\eta',\rr[\vvv'(\eta,\vvv;\eta')]\Big)}{\va'(\eta,\vvv;\eta')}\exp(\rr[H_{\eta,\eta'}])\ud{\eta'}.\no
\end{eqnarray}
Then the integral $\displaystyle\int_0^{\eta}(\cdots)$ is similar to the argument in Region I, and the integral $\displaystyle\int_{\eta}^{\eta^+}(\cdots)$ is similar to the argument in Region II. The only difference is in Step 1 when estimating $\displaystyle\int_{\eta}^{\eta^+}(\cdots)$ part for $\eta\leq-\d_0$. Here, we introduce a special trick.

We first estimate $\va$ in term of $\vb$. Along the characteristics, we know
\begin{eqnarray}
\ue^{-W(L)}\vb'(L)=\ue^{-W(\eta)}\vb,
\end{eqnarray}
which implies
\begin{eqnarray}
\abs{\vb'(L)}&=&\ue^{W(L)-W(\eta)}\abs{\vb}\leq \ue^{W(L)-W(0)}\abs{\vb}= \bigg(1-\frac{\e^{\frac{1}{2}}}{\rk}\bigg)^{-1}\abs{\vb}.
\end{eqnarray}
Then we can further deduce that
\begin{eqnarray}
\va^2+\vb^2\leq \bigg(1-\frac{\e^{\frac{1}{2}}}{\rk}\bigg)^{-2}\vb^2\leq \bigg(1-\frac{\e^{\frac{1}{2}}}{\rk}\bigg)^{-2}\vb^2.
\end{eqnarray}
Then we have
\begin{eqnarray}
\abs{\va}\leq\sqrt{\bigg(1-\frac{\e^{\frac{1}{2}}}{\rk}\bigg)^{-2}\vb^2-\vb^2}\leq \e^{\frac{1}{4}}\abs{\vb}\leq\d_0\abs{\vb},
\end{eqnarray}
when $\e$ is sufficiently small.
\begin{itemize}
\item
Therefore, if $\abs{\vb}\leq 1$, then Step 1 is not necessary at all since we already have $\abs{\va}\leq\d_0$. We directly apply the argument in estimating II to obtain
\begin{eqnarray}
\abs{\bvv III}
&\leq&\frac{C}{\e^{s}}\Big(1+\abs{\ln(\e)}\Big)\sqrt{\d_0}\lnnmv{\a}+C\sqrt{\d}\lnnmv{\a}+\frac{C}{\d^2}\lnnmv{\gg}.
\end{eqnarray}
\item
However, if ${\vb}\geq1$, let $(\eta,\va,\vb)$ and $(\tilde\eta,-\d_0,\tilde\vb)$ be on the same characteristics. Then we have the mild formulation
\begin{eqnarray}
\gg(\eta,\vvv)=\gg(\tilde\eta,-\d_0,\tilde\vb)\exp(-H_{\tilde\eta,\eta})+\int_{\eta}^{\tilde\eta}
\frac{K[\gg]\Big(\eta',\vvv'(\eta,\vvv;\eta')\Big)}{\va'(\eta,\vvv;\eta')}\exp(H_{\eta',\eta})\ud{\eta'}.
\end{eqnarray}
In other words, we try to use a mild formulation and avoid go through $\eta^+$ point. Then similar to the estimate of $II_1$, taking $\eta$ derivative in the mild formulation, we obtain
\begin{eqnarray}
\abs{\bv\frac{\p\gg}{\p\eta}}&\leq& C\bigg(1+\frac{1}{\d_0}\bigg)\lnnmv{\gg}+C\abs{\bv\zeta(\tilde\eta,-\d_0,\tilde\vb)\frac{\p\gg(\tilde\eta,-\d_0,\tilde\vb)}{\p\eta}}.\no
\end{eqnarray}
Also, we may directly verify that
\begin{eqnarray}
&&\abs{\bv\zeta(\tilde\eta,-\d_0,\tilde\vb)\frac{\p\gg(\tilde\eta,-\d_0,\tilde\vb)}{\p\eta}}\\
&\leq&\abs{\bv\zeta(\tilde\eta,-\d_0,\tilde\vb)\frac{\p\gg(\tilde\eta,-\d_0,\tilde\vb)}{\p\tilde\eta}\frac{\p\tilde\eta}{\p\eta}}+
\abs{\bv\zeta(\tilde\eta,-\d_0,\tilde\vb)\frac{\p\gg(\tilde\eta,-\d_0,\tilde\vb)}{\p\tilde\vb}\frac{\p\tilde\vb}{\p\eta}}\no\\
&\leq&\abs{\bv\tilde\a(\tilde\eta,-\d_0,\tilde\vb)},\no
\end{eqnarray}
since $\dfrac{\p\tilde\vb}{\p\eta}=0$. The estimate of $\abs{\bv\tilde\a(\tilde\eta,-\d_0,\tilde\vb)}$ is achieved since now $\abs{\tilde\va}\leq\d_0$.
\end{itemize}
Hence, we have
\begin{eqnarray}
\abs{\bvv III}&\leq&\frac{C}{\e^{s}}\Big(1+\abs{\ln(\e)}\Big)\sqrt{\d_0}\lnnmv{\a}+C\sqrt{\d}\lnnmv{\a}\\
&&+\frac{C}{\d^2}\lnnmv{\gg}+\frac{C}{\d_0}\lnnmv{\gg}.\no
\end{eqnarray}

%%%%%%%%%%%%%%%%%%%%%%%%%%%%%%%%%%%%%%%%%%%%%%%%%%%%%%%%%%%%%%%%%%%%%%%%
\subsection{Estimates of Normal Derivative}
%%%%%%%%%%%%%%%%%%%%%%%%%%%%%%%%%%%%%%%%%%%%%%%%%%%%%%%%%%%%%%%%%%%%%%%%

Combining the analysis in these three regions, we have for $0<s<<1$,
\begin{eqnarray}\label{pt 05}
\lnnmv{\a}
&\leq&\frac{C}{\e^{s}}\Big(1+\abs{\ln(\e)}\Big)\sqrt{\d_0}\lnnmv{\a}+C\sqrt{\d}\lnnmv{\a}\\
&&+\frac{C}{\d^2}\lnnmv{\gg}+\frac{C}{\d_0}\bigg(\lnmv{\frac{\p p}{\p\va}}+\lnmv{\frac{\p p}{\p\vb}}+\lnnmv{\gg}\bigg)\no\\
&&+\lnmv{p_{\a}}+\lnnmv{\frac{S_{\a}}{\nu}}.\no
\end{eqnarray}
Then we choose these constants to perform absorbing argument. First we choose $0<\d<<1$ sufficiently small such that
\begin{eqnarray}
C\sqrt{\d}\leq\frac{1}{4}.
\end{eqnarray}
Then we take $\d_0=\sqrt{\d}\e^s\abs{\ln(\e)}^{-1}$ such that
\begin{eqnarray}
\frac{C}{\e^{s}}(1+\abs{\ln(\e)})\sqrt{\d_0}\leq 2C\d\leq\frac{1}{2}.
\end{eqnarray}
for $\e$ sufficiently small. Note that this mild decay of $\d_0$ with respect to $\e$ also justifies the assumption in Case II and Case III that
\begin{eqnarray}
\e^{\frac{1}{4}}\leq \d_0,
\end{eqnarray}
for $\e$ sufficiently small. Here since $\d$ and $C$ are independent of $\e$, there is no circulant argument. Hence, we can absorb all the term related to $\lnnmv{\a}$ on the right-hand side of (\ref{pt 05}) to the left-hand side to obtain the desired result.
\begin{lemma}\label{pt lemma 3}
We have
\begin{eqnarray}
\lnnmv{\a}
&\leq&C\bigg(\lnmv{p_{\a}}+\lnnmv{\frac{S_{\a}}{\nu}}\bigg)\\
&&+C\abs{\ln(\e)}\e^{-s}\bigg(\lnmv{\frac{\p p}{\p\va}}+\lnmv{\frac{\p p}{\p\vb}}+\lnnmv{\gg}\bigg).\no
\end{eqnarray}
\end{lemma}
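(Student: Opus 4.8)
The statement we must establish, Lemma \ref{pt lemma 3}, is precisely the synthesis of the region-by-region estimates that have already been carried out for the quantity $\a = \zeta\,\p_\eta\gg$. The plan is therefore to collect the three bounds proved in the three subsections (Region I for $\va>0$, Region II for $\va<0$ with $\va^2+\vb^2\geq\vb'^2(\eta,\vvv;L)$, and Region III for $\va<0$ with $\va^2+\vb^2\leq\vb'^2(\eta,\vvv;L)$), combine them through the mild formulation $\a=\k[p_\a]+\t[\tilde\a+S_\a]$, and then perform an absorbing argument with a careful choice of the auxiliary small parameters $\d$ and $\d_0$.

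First I would write $\a = \k[p_\a] + \t[\tilde\a] + \t[S_\a]$ along the characteristics, and apply Lemma \ref{Milne lemma 3} to bound $\lnmv{\k[p_\a]}\leq C\lnmv{p_\a}$ and Lemma \ref{Milne lemma 4} to bound $\lnnmv{\t[S_\a]}\leq C\lnnmv{\nu^{-1}S_\a}$. The entire remaining difficulty is concentrated in $\t[\tilde\a]$, which splits according to the three cases $\va>0$, $\va<0$ with large energy, and $\va<0$ with small energy; these are exactly what the three subsections estimate. In each region the bound obtained is of the shape
\begin{eqnarray}
\lnnmv{\t[\tilde\a]}
&\leq&\frac{C}{\e^{s}}\Big(1+\abs{\ln(\e)}\Big)\sqrt{\d_0}\,\lnnmv{\a}+C\sqrt{\d}\,\lnnmv{\a}\nonumber\\
&&+\frac{C}{\d^2}\lnnmv{\gg}+\frac{C}{\d_0}\bigg(\lnmv{\frac{\p p}{\p\va}}+\lnmv{\frac{\p p}{\p\vb}}+\lnnmv{\gg}\bigg),\nonumber
\end{eqnarray}
which, assembled with the $\k[p_\a]$ and $\t[S_\a]$ contributions, yields exactly the preliminary inequality (\ref{pt 05}).

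The key step — and the one that makes the conclusion nontrivial rather than a mere bookkeeping exercise — is the absorbing argument on (\ref{pt 05}). The two terms multiplying $\lnnmv{\a}$ on the right, namely $C\sqrt{\d}$ and $C\e^{-s}(1+\abs{\ln\e})\sqrt{\d_0}$, must both be made smaller than, say, $\tfrac14$, but $\d_0$ also appears in the denominator $\tfrac{C}{\d_0}$ multiplying the data terms, so one cannot simply send $\d_0\to0$. The resolution is to first fix $\d$ small and independent of $\e$ so that $C\sqrt{\d}\leq\tfrac14$, and then choose $\d_0 = \sqrt{\d}\,\e^{s}\abs{\ln\e}^{-1}$, which decays with $\e$ only mildly. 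One checks that with this choice $\tfrac{C}{\e^s}(1+\abs{\ln\e})\sqrt{\d_0}\leq 2C\d\leq\tfrac12$, and, crucially, that $\e^{1/4}\leq\d_0$ for $\e$ small, which retroactively validates the hypotheses used in Regions II and III (where $\va\leq-\d_0$ and $\va\geq\d_0$ respectively were split against the geometric smallness $\e^{1/4}$). Since neither $\d$ nor the universal constant $C$ depends on $\e$, there is no circular dependence. Absorbing the $\lnnmv{\a}$ terms into the left-hand side and tracking that each remaining data term now carries a factor $\d_0^{-1}\sim \e^{-s}\abs{\ln\e}$ yields the stated inequality
\begin{eqnarray}
\lnnmv{\a}
&\leq&C\bigg(\lnmv{p_{\a}}+\lnnmv{\frac{S_{\a}}{\nu}}\bigg)\nonumber\\
&&+C\abs{\ln(\e)}\e^{-s}\bigg(\lnmv{\frac{\p p}{\p\va}}+\lnmv{\frac{\p p}{\p\vb}}+\lnnmv{\gg}\bigg).\nonumber
\end{eqnarray}
The main obstacle, then, is not any single new estimate but ensuring that the balancing of $\d$ and $\d_0$ is done in the correct order so that the loss in $\e$ is confined to the prescribed $\abs{\ln\e}\e^{-s}$ and so that the side conditions assumed throughout the three regions remain consistent; everything else follows by assembling results already established.
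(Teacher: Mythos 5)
Your proposal is correct and follows the paper's own argument essentially verbatim: assemble the region-by-region bounds for $\t[\tilde\a]$ together with the $\k[p_\a]$ and $\t[S_\a]$ estimates to obtain (\ref{pt 05}), then fix $\d$ first (independent of $\e$) so $C\sqrt{\d}\leq\tfrac14$, set $\d_0=\sqrt{\d}\,\e^{s}\abs{\ln\e}^{-1}$, verify $\e^{1/4}\leq\d_0$ to validate the side conditions in Regions II and III, and absorb the $\lnnmv{\a}$ terms. The only minor imprecision is the parenthetical description of the splits in Regions II and III (both regions have $\va<0$), but this does not affect the argument.
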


%%%%%%%%%%%%%%%%%%%%%%%%%%%%%%%%%%%%%%%%%%%%%%%%%%%%%%%%%%%%%%%%%%%%%%%%
\subsection{Estimates Velocity Derivative}
%%%%%%%%%%%%%%%%%%%%%%%%%%%%%%%%%%%%%%%%%%%%%%%%%%%%%%%%%%%%%%%%%%%%%%%%

Consider the general $\e$-Milne problem with geometric correction for $\b=\zeta\dfrac{\p\v}{\p\va}$ as
\begin{eqnarray}
\left\{
\begin{array}{l}\displaystyle
\va\frac{\p\b}{\p\eta}+G(\eta)\bigg(\vb^2\dfrac{\p\b }{\p\va}-\va\vb\dfrac{\p\b }{\p\vb}\bigg)+\nu\b=\tilde\b+S_{\b},\\\rule{0ex}{1.5em}
\b(0,\vvv)=p_{\b}(\vvv)\ \ \text{for}\ \ \va>0,\\\rule{0ex}{1.5em}
\b(L,\vvv)=\b(L,\rr[\vvv]),
\end{array}
\right.
\end{eqnarray}
where $p_{\b}$ and $S_{\b}$ will be specified later with
\begin{eqnarray}
\tilde\b(\eta,\vvv)=\int_{\r^2}\zeta(\eta,\vvv)\p_{\va}k(\vuu,\vvv)\gg(\eta,\vuu)\ud{\vuu}.
\end{eqnarray}
This is much simpler than normal derivative, since $\tilde\b$ does not contain $\b$ directly. Then by a similar argument as before, we obtain the desired result.
\begin{lemma}\label{pt lemma 4}
We have
\begin{eqnarray}
\lnnmv{\b}
&\leq&C\bigg(\lnmv{p_{\b}}+\lnnmv{\frac{S_{\b}}{\nu}}\bigg)\\
&&+C\bigg(\lnmv{\frac{\p p}{\p\va}}+\lnmv{\frac{\p p}{\p\vb}}+\lnnmv{\gg}\bigg).\no
\end{eqnarray}
\end{lemma}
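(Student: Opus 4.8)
The plan is to mirror the treatment of the normal derivative $\a = \zeta\,\p_\eta\gg$ carried out in Lemma \ref{pt lemma 3}, but to exploit the structural simplification in the source term $\tilde\b$. Concretely, I would first verify, using Lemma \ref{weight lemma}, that $\b = \zeta\,\p_{\va}\gg$ indeed satisfies the stated $\e$-transport equation with source $\tilde\b + S_{\b}$: differentiating the equation (\ref{Milne difference problem}) in $\va$ and multiplying by $\zeta$ produces the commutator terms that, together with $\zeta$'s transport invariance, reorganize into $\nu\b$ on the left and $\tilde\b + S_{\b}$ on the right, where $\tilde\b(\eta,\vvv) = \int_{\r^2}\zeta(\eta,\vvv)\p_{\va}k(\vuu,\vvv)\gg(\eta,\vuu)\ud{\vuu}$ and $S_{\b}$ collects the lower-order contributions of $G(\eta)$ and of $\p_{\va}\zeta$ acting on $\gg$. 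Then I would write the mild formulation $\b = \k[p_{\b}] + \t[\tilde\b + S_{\b}]$ along the characteristics exactly as in Section 6.1, splitting into the three regions $\va>0$, $\va<0$ with $\va^2+\vb^2\ge\vb'^2(\eta,\vvv;L)$, and $\va<0$ with $\va^2+\vb^2\le\vb'^2(\eta,\vvv;L)$.

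The key observation — and the reason this lemma is "much simpler than normal derivative" — is that $\tilde\b$ does not contain $\b$ itself: the integrand involves $\gg$ directly, not $\p_{\va}\gg$. Therefore there is no self-referential absorbing argument to run against $\lnnmv{\b}$; the term $\t[\tilde\b]$ can be bounded outright. Since $\zeta(\eta,\vvv)\le C\nu(\vvv)$ by (\ref{pt 01}) (the bound $\zeta \le C(\va+\sqrt{\e\eta}\vb)\le C\nu$ being valid at the evaluation point along the characteristic because $\abs{\vvv}$ is conserved), and since $\p_{\va}k(\vuu,\vvv)$ enjoys the same Lemma \ref{wellposedness prelim lemma 8}-type integrability as $k$ (in 2D there is no $\abs{\vuu-\vvv}^{-1}$ singularity to worry about), we get $\lnnmv{\nu^{-1}\tilde\b}\le C\lnnmv{\gg}$, whence by Lemma \ref{Milne lemma 4}, $\lnnmv{\t[\tilde\b]}\le C\lnnmv{\gg}$. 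For $\t[S_{\b}]$ one uses Lemma \ref{Milne lemma 4} again to get $\lnnmv{\t[S_{\b}]}\le C\lnnmv{\nu^{-1}S_{\b}}$, and for $\k[p_{\b}]$ one uses Lemma \ref{Milne lemma 3} to get $\lnmv{\k[p_{\b}]}\le C\lnmv{p_{\b}}$. It remains to handle the region $\va$ small, where the $\frac{1}{\va'}$ factor in $\t$ is dangerous; but here, as in Steps 2–5 of Regions I and II for $\a$, the point is that near $\va=0$ one either invokes the cutoff $\chi(\ua)$ and the decay of the exponential $\exp(-H_{\eta,\eta'})$, or — on the set $\abs{\va}\ge\d_0$ — one returns to the characteristic formulation of $\gg$ itself and differentiates, producing only terms controlled by $\lnnmv{\gg}$ and $\lnmv{\p p/\p\va}$, $\lnmv{\p p/\p\vb}$ (the $\frac{1}{\d_0}$-type losses here are harmless because, unlike for $\a$, no absorption is needed and $\d_0$ can simply be fixed).

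Assembling these estimates over the three regions yields
\begin{eqnarray*}
\lnnmv{\b} \le C\bigg(\lnmv{p_{\b}} + \lnnmv{\nu^{-1}S_{\b}} + \lnmv{\tfrac{\p p}{\p\va}} + \lnmv{\tfrac{\p p}{\p\vb}} + \lnnmv{\gg}\bigg),
\end{eqnarray*}
which, after absorbing the fixed constants, is the claimed inequality. The main obstacle — such as it is — is bookkeeping: one must carefully re-derive the commutator structure to identify $S_{\b}$ (checking that $\p_{\va}\zeta = \va/\zeta$ and the $G(\eta)$ coefficients combine into something with the right $\nu$-weighted bound), and then re-run, essentially verbatim, the region-by-region characteristic analysis of Section 6.1 with $\a$ replaced by $\b$ and $k$ replaced by $\p_{\va}k$, verifying at each step that the absence of $\b$ inside $\tilde\b$ removes the delicate $\e^{-s}\abs{\ln\e}$ absorption that was forced in Lemma \ref{pt lemma 3}. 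Since all the hard characteristic estimates and integrability lemmas are already in place, no genuinely new idea is required; I would present the derivation of $S_{\b}$ in detail and then refer to the earlier region analysis for the remaining bounds.
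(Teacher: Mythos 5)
Your core argument is correct and it is in fact more transparent than the paper's terse remark that the result follows ``by a similar argument as before.'' The structural facts you rely on are exactly right: since $\tilde\b$ involves $\gg$ rather than $\b$, and since (\ref{pt 01}) gives $\zeta(\eta,\vvv)\le C\nu(\vvv)$ uniformly on $[0,L]\times\r^2$, one has
\begin{eqnarray}
\nu^{-1}\tilde\b(\eta,\vvv)=\frac{\zeta(\eta,\vvv)}{\nu(\vvv)}\int_{\r^2}\p_{\va}k(\vuu,\vvv)\gg(\eta,\vuu)\ud{\vuu},\nonumber
\end{eqnarray}
with $\zeta/\nu\le C$, and Lemma \ref{prelim 1} bounds the $\vuu$-integral in $L^{\infty}_{\vth,\varrho}$ by $C\lnmv{\gg(\eta,\cdot)}$, so $\lnnmv{\nu^{-1}\tilde\b}\le C\lnnmv{\gg}$. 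Lemma \ref{Milne lemma 4} then gives $\lnnmv{\t[\tilde\b]}\le C\lnnmv{\gg}$, and combining with Lemma \ref{Milne lemma 3} for $\k[p_\b]$ and Lemma \ref{Milne lemma 4} again for $\t[S_\b]$ yields $\lnnmv{\b}\le C\big(\lnmv{p_\b}+\lnnmv{\nu^{-1}S_\b}+\lnnmv{\gg}\big)$, which implies the stated bound. No absorption, no $\e$-loss.

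One piece of your write-up is superfluous and in fact contradicts what you just established: the passage ``It remains to handle the region $\va$ small, where the $\frac{1}{\va'}$ factor in $\t$ is dangerous.'' Lemma \ref{Milne lemma 4} already covers all of $[0,L]\times\r^2$, including grazing velocities: its proof absorbs the $1/\va'$ singularity into the $\nu(\vvv')/\va'$ weight and the substitution $y=H_{\eta,\eta'}$, uniformly over Regions I, II, III. Once $\lnnmv{\nu^{-1}\tilde\b}\le C\lnnmv{\gg}$ is in hand, there is nothing left to do near $\va=0$; no cutoff $\chi$, no case split, and no return to the characteristic formulation of $\gg$ itself are needed. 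The entire delicate Region I/II/III machinery of Section 6 exists solely to cope with the self-referential weight ratio $\zeta(\eta',\vvv')/\zeta(\eta',\vuu)$ inside $\tilde\a$; for $\tilde\b$ that ratio is absent, the weight $\zeta(\eta,\vvv)$ sits outside the $\vuu$-integral, and the shortcut in your opening paragraph is the whole proof. Commit to it and delete the hedge.
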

In a similar fashion, consider the general $\e$-Milne problem with geometric correction for $\c=\zeta\dfrac{\p\v}{\p\vb}$ as
\begin{eqnarray}
\left\{
\begin{array}{l}\displaystyle
\va\frac{\p\c}{\p\eta}+G(\eta)\bigg(\vb^2\dfrac{\p\c }{\p\va}-\va\vb\dfrac{\p\c }{\p\vb}\bigg)+\nu\c=\tilde\c+S_{\c},\\\rule{0ex}{1.5em}
\c(0,\vvv)=p_{\c}(\vvv)\ \ \text{for}\ \ \va>0,\\\rule{0ex}{1.5em}
\c(L,\vvv)=\c(L,\rr[\vvv]),
\end{array}
\right.
\end{eqnarray}
where $p_{\c}$ and $S_{\c}$ will be specified later with
\begin{eqnarray}
\tilde\c(\eta,\vvv)=\int_{\r^2}\zeta(\eta,\vvv)\p_{\vb}k(\vuu,\vvv)\gg(\eta,\vuu)\ud{\vuu}.
\end{eqnarray}
This is also much simpler than normal derivative, since $\tilde\c$ does not contain $\c$ directly. Then by a similar argument as before, we obtain the desired result.
\begin{lemma}\label{pt lemma 5}
We have
\begin{eqnarray}
\lnnmv{\c}
&\leq&C\bigg(\lnmv{p_{\c}}+\lnnmv{\frac{S_{\c}}{\nu}}\bigg)\\
&&+C\bigg(\lnmv{\frac{\p p}{\p\va}}+\lnmv{\frac{\p p}{\p\vb}}+\lnnmv{\gg}\bigg).\no
\end{eqnarray}
\end{lemma}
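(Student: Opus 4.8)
The plan is to recognize the $\e$-Milne problem for $\c=\zeta\frac{\p\gg}{\p\vb}$ as a special instance of the $\e$-transport problem (\ref{transport}) with forcing $\tilde\c+S_{\c}$, and then to feed it into the transport estimates already established in Section 5, rather than to rerun the long region-by-region analysis of Lemma \ref{pt lemma 3}. Writing the solution along the characteristics (in the three regions of the mild formulation, exactly as for $\a$) as $\c=\k[p_{\c}]+\t[\tilde\c]+\t[S_{\c}]$, the decisive structural point, already indicated after the statement, is that
\[
\tilde\c(\eta,\vvv)=\zeta(\eta,\vvv)\int_{\r^2}\p_{\vb}k(\vuu,\vvv)\gg(\eta,\vuu)\ud{\vuu}=\zeta(\eta,\vvv)\,\p_{\vb}K[\gg](\eta,\vvv)
\]
carries a single factor $\zeta$ evaluated at the \emph{output} velocity and is driven by $\gg$, not by $\c$. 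In particular there is no factor $\zeta(\eta,\vuu)^{-1}$ under the integral, unlike in $\tilde\a$, so the delicate near-grazing case analysis and the accompanying $\e^{-s}\abs{\ln\e}$ loss that governed Lemma \ref{pt lemma 3} do not enter here at all; Lemma \ref{weight lemma} is still used to turn the transport operator into a derivation when one derives the equation for $\c$.

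Next I would estimate the three pieces separately. By Lemma \ref{Milne lemma 3}, $\lnmv{\k[p_{\c}]}\leq C\lnmv{p_{\c}}$; by Lemma \ref{Milne lemma 4}, $\lnnmv{\t[S_{\c}]}\leq C\lnnmv{\frac{S_{\c}}{\nu}}$. For the remaining term, since $0\leq\zeta(\eta,\vvv)\leq\abs{\vvv}\leq C\nu(\vvv)$ (as in (\ref{pt 01})) and $\lnmv{\nabla_{v}K[\gg]}\leq C\lnmv{\gg}$ by Lemma \ref{prelim 1} applied for each fixed $\eta$ (with $\p_{\vb}K$ a single component of $\nabla_{v}K$), one gets $\lnnmv{\frac{\tilde\c}{\nu}}\leq C\lnnmv{\nabla_{v}K[\gg]}\leq C\lnnmv{\gg}$, hence $\lnnmv{\t[\tilde\c]}\leq C\lnnmv{\frac{\tilde\c}{\nu}}\leq C\lnnmv{\gg}$ by a second application of Lemma \ref{Milne lemma 4}. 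Collecting the three bounds gives
\[
\lnnmv{\c}\leq C\bigg(\lnmv{p_{\c}}+\lnnmv{\frac{S_{\c}}{\nu}}+\lnnmv{\gg}\bigg),
\]
which already implies the asserted estimate, since the terms $\lnmv{\frac{\p p}{\p\va}}$ and $\lnmv{\frac{\p p}{\p\vb}}$ only weaken the right-hand side; they are retained in the statement for uniformity with Lemmas \ref{pt lemma 3} and \ref{pt lemma 4} and because, once $p_{\c}$ and $S_{\c}$ are specified in the application, they are dominated by these quantities together with $\lnnmv{\gg}$ (and, using $\abs{G(\eta)}\ls\e$, the first-order self-coupling through $\b$ and $\c$ produced by differentiating the $\gg$-equation can be absorbed, with $\lnnmv{\b}$ then estimated by Lemma \ref{pt lemma 4}).

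I do not expect a real obstacle. The entire difficulty of the regularity analysis is concentrated in the normal derivative, where $\tilde\a$ contained $\a$ itself and the grazing singularity $\zeta(\eta,\vuu)^{-1}$ had to be absorbed by a carefully tuned choice of the cut-off parameters $\d,\d_0$; for $\c$ the corresponding nonlocal term is controlled outright by $\lnnmv{\gg}\leq C$ (Theorem \ref{Milne theorem 2}) and Lemma \ref{prelim 1}. The only small verification is that $\p_{\vb}k(\vuu,\vvv)$ inherits the componentwise weighted integrability bound of Lemma \ref{wellposedness prelim lemma 8} — immediate, since that estimate is stated for $\nabla_{v}k$ and the orthogonal velocity substitution of Section 3 does not affect quantities depending only on $\abs{\vvv}$. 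If one instead prefers to imitate the region-by-region proof of Lemma \ref{pt lemma 3} verbatim, the argument still goes through, the only change being that in the analogues of its Steps~3--5 the quantity $\tilde\c/\nu$ is now bounded directly by $\lnnmv{\gg}$, so no $\e^{-s}\abs{\ln\e}$ factor is generated and no absorbing of $\lnnmv{\c}$ against itself is required.
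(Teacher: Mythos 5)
Your argument is correct, and it hinges on exactly the observation the paper makes explicit before its one-line proof: $\tilde\c$ is driven by $\gg$ rather than by $\c$, so there is no factor $\zeta(\eta,\vuu)^{-1}$ and no self-coupling to absorb, hence none of the $\e^{-s}\abs{\ln\e}$ loss that appears in Lemma \ref{pt lemma 3}. What you do somewhat differently is to bypass the region-by-region analysis of Lemma \ref{pt lemma 3} entirely and simply plug the mild formulation $\c=\k[p_\c]+\t[\tilde\c+S_\c]$ into Lemmas \ref{Milne lemma 3} and \ref{Milne lemma 4}, using $\zeta(\eta,\vvv)\leq\abs{\vvv}\leq C\nu(\vvv)$ and $\lnmv{\nabla_v K[\gg]}\leq C\lnmv{\gg}$ (Lemma \ref{prelim 1}) to get $\lnnmv{\tilde\c/\nu}\leq C\lnnmv{\gg}$; this produces the slightly stronger bound $\lnnmv{\c}\leq C(\lnmv{p_\c}+\lnnmv{S_\c/\nu}+\lnnmv{\gg})$ with no need for the terms $\lnmv{\p p/\p\va}$, $\lnmv{\p p/\p\vb}$ on the right. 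The paper's ``by a similar argument as before'' most naturally suggests rerunning the Step~1 argument of Lemma \ref{pt lemma 3} for $\abs{\va}\geq\d_0$ (differentiating the mild formula for $\gg$ itself, which is where the $\p p/\p\va$, $\p p/\p\vb$ terms originate), with the remaining steps trivialized; your route avoids Step~1 altogether since $\tilde\c$ already sits in the right weighted class globally. Both arguments are sound; yours is shorter and shows the stated extra terms are not actually needed, while the paper's presumable route keeps a uniform template across $\a$, $\b$, $\c$, which is convenient because the specific $p_\c$ and $S_\c$ used in Theorem \ref{pt theorem 1} will in any case reintroduce dependence on $\p p$ and on $\b$, $\c$ through (\ref{dt 06}) and (\ref{dt 03}).
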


%%%%%%%%%%%%%%%%%%%%%%%%%%%%%%%%%%%%%%%%%%%%%%%%%%%%%%%%%%%%%%%%%%%%%%%%
\subsection{Estimates of Tangential Derivative}
%%%%%%%%%%%%%%%%%%%%%%%%%%%%%%%%%%%%%%%%%%%%%%%%%%%%%%%%%%%%%%%%%%%%%%%%

In this subsection, we combine above a priori estimates of normal and velocity derivatives.
\begin{theorem}\label{pt theorem 1}
We have
\begin{eqnarray}
\lnnmv{\zeta\frac{\p\v}{\p\eta}}+\lnnmv{\zeta\frac{\p\v}{\p\va}}+\lnnmv{\zeta\frac{\p\v}{\p\vb}}\leq C\abs{\ln(\e)}\e^{-s},
\end{eqnarray}
for some $0<s<<1$.
\end{theorem}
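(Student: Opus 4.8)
The plan is to assemble Theorem \ref{pt theorem 1} from the three a priori estimates on the weighted derivatives that have already been set up: Lemma \ref{pt lemma 3} for $\a=\zeta\p_\eta\gg$, Lemma \ref{pt lemma 4} for $\b=\zeta\p_{\va}\gg$, and Lemma \ref{pt lemma 5} for $\c=\zeta\p_{\vb}\gg$. Each of these bounds the weighted $L^\infty L^\infty$ norm of the respective derivative by boundary data $p_{\bullet}$, source data $S_{\bullet}/\nu$, plus the already-controlled quantities $\lnnmv{\gg}$ (finite by Theorem \ref{Milne theorem 2}, indeed with exponential decay by Theorem \ref{Milne theorem 3}) and $\lnmv{\p_{\va}p}+\lnmv{\p_{\vb}p}$. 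So the first task is purely bookkeeping: identify, by differentiating the $\e$-Milne equation (\ref{Milne difference problem}) in $\eta$, $\va$, $\vb$ respectively and multiplying through by $\zeta$ (using Lemma \ref{weight lemma}, which makes $\zeta$ pass through the transport operator), exactly what $p_{\a},p_{\b},p_{\c}$ and $S_{\a},S_{\b},S_{\c}$ are. The initial data $p_{\bullet}$ on $\va>0$ are combinations of $p$, $\p_{\va}p$, $\p_{\vb}p$ (and the curvature factor), hence controlled by $\lnmv{p}+\lnmv{\p_{\va}p}+\lnmv{\p_{\vb}p}\le C$ under the standing hypotheses on the boundary data. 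The source terms $S_{\a},S_{\b},S_{\c}$ arise from commuting the derivative past the geometric-correction term $G(\eta)(\vb^2\p_{\va}-\va\vb\p_{\vb})$ and past $K$; crucially each carries either a factor $G(\eta)$ (whose $L^\infty$ norm is $O(\e)$, since $G=-\e/(\rk-\e\eta)$) or is itself built from $\b,\c$ and lower-order pieces, so $\lnnmv{S_{\bullet}/\nu}$ is bounded by $o(1)$ times $\lnnmv{\a}+\lnnmv{\b}+\lnnmv{\c}$ plus $C\lnnmv{\gg}$.

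With the data identified, the next step is the closing argument. Add the three inequalities from Lemmas \ref{pt lemma 3}–\ref{pt lemma 5}. The right-hand side then contains (i) the genuinely bounded terms $C(\lnmv{p}+\lnmv{\p_{\va}p}+\lnmv{\p_{\vb}p}+\lnnmv{\gg})$, all $\le C$; (ii) the term $C\abs{\ln(\e)}\e^{-s}(\lnmv{\p_{\va}p}+\lnmv{\p_{\vb}p}+\lnnmv{\gg})\le C\abs{\ln(\e)}\e^{-s}$ coming from Lemma \ref{pt lemma 3}; and (iii) the cross terms $\lnnmv{S_{\bullet}/\nu}$, which by the previous paragraph are $\le o(1)(\lnnmv{\a}+\lnnmv{\b}+\lnnmv{\c})+C\lnnmv{\gg}$. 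Since the $o(1)$ coefficient is genuinely small for $\e$ small, these cross terms can be absorbed into the left-hand side. What remains is exactly
\begin{eqnarray}
\lnnmv{\a}+\lnnmv{\b}+\lnnmv{\c}\le C+C\abs{\ln(\e)}\e^{-s}\le C\abs{\ln(\e)}\e^{-s},
\end{eqnarray}
which is the assertion, after noting $\zeta\p_\eta\gg=\a$, $\zeta\p_{\va}\gg=\b$, $\zeta\p_{\vb}\gg=\c$.

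The main obstacle — and where I would spend the most care — is the verification that the cross terms really are $o(1)$-small and that the absorption is legitimate, i.e.\ that there is no hidden circular dependence. Concretely, $S_{\a}$ contains a term like $G(\eta)\zeta(\vb^2\p_{\va}\gg - \va\vb\p_{\vb}\gg)$-type contributions together with $\zeta$ times the commutator $[\p_\eta,K]$; the first is $\le \abs{G}_{L^\infty}C(\lnnmv{\b}+\lnnmv{\c})\le C\e(\lnnmv{\b}+\lnnmv{\c})$, while the $K$-commutator piece is handled by Lemma \ref{prelim 1}-type bounds and contributes $C\lnnmv{\gg}$; similarly $S_{\b},S_{\c}$ pick up a factor $G(\eta)$ from differentiating the correction coefficients, again $O(\e)$. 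One must also check that the $\d,\d_0$ constants fixed in the synthesis preceding Lemma \ref{pt lemma 3} are chosen independently of $\e$ (they are: $\d$ is an absolute small constant, $\d_0=\sqrt\d\,\e^s\abs{\ln\e}^{-1}$ decays only mildly), so the constant $C$ in the final bound does not secretly blow up. A secondary point worth stating explicitly is that $\lnnmv{\gg}\le C$ is available unconditionally from Theorem \ref{Milne theorem 2}, so it is a genuine datum and not part of the quantity being estimated. Once these absorptions are justified, the theorem follows immediately by collecting terms.
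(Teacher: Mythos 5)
Your synthesis step — add the three inequalities from Lemmas \ref{pt lemma 3}--\ref{pt lemma 5} and absorb the cross terms under the claim that $\lnnmv{S_{\bullet}/\nu}\leq o(1)\bigl(\lnnmv{\a}+\lnnmv{\b}+\lnnmv{\c}\bigr)+C\lnnmv{\gg}$ — has a genuine gap, and it is exactly the claim that ``$S_{\b}$ picks up a factor $G(\eta)$''. It does not. Differentiating the $\e$-Milne equation in $\va$ produces the undamped term $\p_\eta\gg$ from $\p_\va(\va\p_\eta\gg)$, so after multiplying by $\zeta$ the paper has $S_{\b}=\a-G\vb\c$: the leading piece of $S_\b$ is $\a$ itself, with coefficient $1$, not $O(\e)$. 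Therefore $\lnnmv{S_{\b}/\nu}$ is of the same order as $\lnnmv{\a}$ (up to $1/\nu_0$), and the constant $C$ in Lemma \ref{pt lemma 4} multiplying $\lnnmv{S_{\b}/\nu}$ is the one coming from the $L^\infty$ a priori machinery — there is no reason it is $<1$. So adding the three estimates produces $\lnnmv{\a}+\lnnmv{\b}+\lnnmv{\c}\leq\cdots+C\lnnmv{\a/\nu}+\cdots$ with a coefficient on $\lnnmv{\a}$ that cannot be absorbed, and the argument does not close.

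The coupling structure is asymmetric and has to be used sequentially, which is what the paper does. From $S_\c=G(2\vb\b-\va\c)$ one first gets $\lnnmv{\c}\leq C_0+C_0\e\lnnmv{\b}$ (absorbing the $\e\lnnmv{\c}$ term). Feeding this into the $\b$-estimate with $S_\b=\a-G\vb\c$ gives $\lnnmv{\b}\leq C_0+C_0\lnnmv{\a/\nu}$ — note this bound is \emph{not} small in $\a$ and is not meant to be absorbed. Only then is the $\a$-estimate invoked: because $S_\a=\p_\eta G\,(\vb^2\b-\va\vb\c)$ carries the extra factor $\p_\eta G=O(\e)$, substituting the bounds for $\b,\c$ in terms of $\a$ produces $\lnnmv{\a}\leq C_0\abs{\ln\e}\e^{-s}+C_0\e\lnnmv{\a}$, and the $\e$-factor from $\p_\eta G$ is what makes the final absorption legitimate. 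The smallness needed to close is concentrated in the single place where it genuinely exists — the $\a$-equation — not spread across all three. Your plan to close by a symmetric sum misses this and would need to be replaced by the chain $\c\to\b\to\a$.
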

\begin{proof}
Collecting the estimates for $\a$, $\b$, and $\c$ in Lemma \ref{pt lemma 3}, Lemma \ref{pt lemma 4}, and Lemma \ref{pt lemma 5}, we have
\begin{eqnarray}
\\
\lnnmv{\a}
&\leq&C\bigg(\lnmv{p_{\a}}+\lnnmv{\frac{S_{\a}}{\nu}}\bigg)+C_0\abs{\ln(\e)}\e^{-s},\no\\
\lnnmv{\b}
&\leq&C\bigg(\lnmv{p_{\b}}+\lnnmv{\frac{S_{\b}}{\nu}}\bigg)+C_0,\\
\lnnmv{\c}
&\leq&C\bigg(\lnmv{p_{\c}}+\lnnmv{\frac{S_{\c}}{\nu}}\bigg)+C_0,
\end{eqnarray}
where
\begin{eqnarray}
C_0&=&\lnmv{p}+\lnmv{\frac{\p p}{\p\va}}+\lnmv{\frac{\p p}{\p\vb}}+\lnnmv{\gg}.
\end{eqnarray}
Taking derivatives on both sides of (\ref{Milne difference problem}) and multiplying $\zeta$, we have
\begin{eqnarray}
p_{\a}&=&-\frac{\e}{\rk}\bigg(\vb^2\frac{\p p}{\p\va}-\va\vb\frac{\p p}{\p\vb}\bigg)+\nu p-K[\gg](0,\vvv),\label{dt 04}\\
p_{\b}&=&\va\frac{\p p}{\p\va},\label{dt 05}\\
p_{\c}&=&\va\frac{\p p}{\p\vb},\label{dt 06}\\
S_{\a}&=&\frac{\p{G}}{\p{\eta}}\bigg(\vb^2\b-\va\vb\c\bigg),\label{dt 01}\\
S_{\b}&=&\a-G\vb\c,\label{dt 02}\\
S_{\c}&=&G\bigg(2\vb\b-\va\c\bigg).\label{dt 03}
\end{eqnarray}
We can directly verify that
\begin{eqnarray}
\lnmv{p_{\a}}+\lnmv{p_{\b}}+\lnmv{p_{\c}}\leq C_0.
\end{eqnarray}
Since $\abs{G(\eta)}+\abs{\dfrac{\p G}{\p\eta}}\leq \e$, from (\ref{dt 03}), we obtain
\begin{eqnarray}
\lnnmv{\c}&\leq&C_0+C_0\e\bigg(\lnnmv{\frac{\vb\b}{\nu}}+\lnnmv{\frac{\va\c}{\nu}}\bigg)\\
&\leq& C_0+C_0\e\bigg(\lnnmv{\b}+\lnnmv{\c}\bigg),\no
\end{eqnarray}
which further implies
\begin{eqnarray}\label{dt 07}
\lnnmv{\c}&\leq& C_0+C_0\e\lnnmv{\b}.
\end{eqnarray}
Plugging (\ref{dt 07}) into (\ref{dt 02}), we obtain
\begin{eqnarray}
\lnnmv{\b}&\leq&C_0+C_0\bigg(\lnnmv{\frac{\a}{\nu}}+\e\lnnmv{\frac{\vb\c}{\nu}}\bigg)\\
&\leq& C_0+C_0\bigg(\lnnmv{\frac{\a}{\nu}}+\e\lnnmv{\c}\bigg)\no\\
&\leq&C_0+C_0\bigg(\lnnmv{\frac{\a}{\nu}}+\e^2\lnnmv{\b}\bigg),\no
\end{eqnarray}
which further implies
\begin{eqnarray}\label{dt 08}
\lnnmv{\b}&\leq&C_0+C_0\lnnmv{\frac{\a}{\nu}},\\
\lnnmv{\c}&\leq&C_0+C_0\e\lnnmv{\frac{\a}{\nu}}.\label{dt 09}
\end{eqnarray}
Plugging (\ref{dt 08}) and (\ref{dt 09}) into (\ref{dt 01}), we get
\begin{eqnarray}
\lnnmv{\a}&\leq&C_0\abs{\ln(\e)}\e^{-s}+C_0\e\bigg(\lnnmv{\frac{\vb^2\b}{\nu}}+\lnnmv{\frac{\va\vb\c}{\nu}}\bigg)\\
&\leq&C_0\abs{\ln(\e)}\e^{-s}+C_0\e\bigg(\lnnmv{\frac{\vb^2\a}{\nu^2}}+\e\lnnmv{\frac{\va\vb\a}{\nu^2}}\bigg)\no\\
&\leq&C_0\abs{\ln(\e)}\e^{-s}+C_0\e\lnnmv{\a},\no
\end{eqnarray}
which implies
\begin{eqnarray}
\lnnmv{\a}&\leq&C_0\abs{\ln(\e)}\e^{-s}.
\end{eqnarray}
Hence, we derive
\begin{eqnarray}
\a&\leq&C\abs{\ln(\e)}\e^{-s},\\
\b&\leq&C\abs{\ln(\e)}\e^{-s},\\
\c&\leq&C\abs{\ln(\e)}\e^{-s},
\end{eqnarray}
\end{proof}
Above theorems only provide a priori estimates. The rigorous proof relies on a penalty method and an iteration argument. This step is standard as in \cite{AA007}, so we omit it here.
\begin{theorem}\label{pt theorem 2}
For $K_0>0$ sufficiently small, we have
\begin{eqnarray}
\lnnmv{\ue^{K_0\eta}\zeta\frac{\p\v}{\p\eta}}+\lnnmv{\ue^{K_0\eta}\zeta\frac{\p\v}{\p\va}}+\lnnmv{\ue^{K_0\eta}\zeta\frac{\p\v}{\p\vb}}\leq C\abs{\ln(\e)}\e^{-s},
\end{eqnarray}
for some $0<s<<1$.
\end{theorem}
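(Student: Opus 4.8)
The plan is to upgrade the uniform estimate of Theorem \ref{pt theorem 1} to one carrying the exponential weight $\ue^{K_0\eta}$, exactly mirroring the way Theorem \ref{Milne theorem 3} upgrades Theorem \ref{Milne theorem 2}. Set $A=\ue^{K_0\eta}\a$, $B=\ue^{K_0\eta}\b$, $C=\ue^{K_0\eta}\c$, where $\a=\zeta\p_\eta\gg$, $\b=\zeta\p_{\va}\gg$, $\c=\zeta\p_{\vb}\gg$. Since $\zeta$ is conserved along the characteristics (Lemma \ref{weight lemma}) and $\ue^{K_0\eta}$ depends only on $\eta$, each of $A,B,C$ satisfies the same transport-type equation as $\a,\b,\c$ but with an extra linear term $K_0\va$ times the unknown on the right-hand side, plus the original source terms multiplied by $\ue^{K_0\eta}$:
\begin{eqnarray}
\va\frac{\p A}{\p\eta}+G(\eta)\bigg(\vb^2\frac{\p A}{\p\va}-\va\vb\frac{\p A}{\p\vb}\bigg)+\nu A=\tilde A+\ue^{K_0\eta}S_{\a}+K_0\va A,\no
\end{eqnarray}
and analogously for $B$ and $C$, with the boundary data unchanged at $\eta=0$ (since $\ue^{K_0\cdot 0}=1$) and the reflexive condition at $\eta=L$ preserved because $\ue^{K_0\eta}$ is even in $\va$.

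First I would record that, by the decay result of Theorem \ref{Milne theorem 3}, $\lnnmv{\ue^{K_0\eta}\gg}\leq C$, so wherever the Region I--III estimates used $\lnnmv{\gg}\leq C$ they now give $\lnnmv{\ue^{K_0\eta}\gg}\leq C$ at no extra cost. Next I would re-run the Region I, II, III analysis verbatim for $A$ in place of $\a$: every bound there was linear in the unknown and in $\gg$, and multiplying through by $\ue^{K_0\eta}$ and tracking it along characteristics works because $\ue^{K_0\eta}\leq\ue^{K_0\eta'}\ue^{-K_0(\eta'-\eta)}$ for $\eta'\geq\eta$ and similarly in the other direction, so the exponential weight is simply absorbed by (a fraction of) the decay $\ue^{-H_{\eta,\eta'}/2}$ already present, precisely as in the proofs of Lemma \ref{Milne lemma 4} and Lemma \ref{Milne lemma 3}. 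This produces the analogue of \eqref{pt 05} with $\a\to A$, $\gg\to\ue^{K_0\eta}\gg$, and the new source contribution $K_0\lnnmv{\nu^{-1}\va A}\leq CK_0\lnnmv{A}$; taking $K_0$ small lets this term be absorbed alongside the $C\sqrt{\d}\lnnmv{A}$ and $\frac{C}{\e^s}(1+|\ln\e|)\sqrt{\d_0}\lnnmv{A}$ terms in the same absorbing argument used after \eqref{pt 05}, yielding the analogue of Lemma \ref{pt lemma 3} for $A$. The bootstrap coupling of $A,B,C$ via the relations \eqref{dt 01}--\eqref{dt 03} (with sources now also carrying $\ue^{K_0\eta}$ and the extra $K_0\va$ terms, all multiplied by $|G|+|G'|\leq\e$ or by $K_0$) closes exactly as in the proof of Theorem \ref{pt theorem 1}, since $\e$ and $K_0$ are both small and independent of each other.

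The main obstacle, and the only place requiring genuine care, is Step 1 of Regions II and III — the estimates of $I_1$, $II_1$, $III$ on the set $\abs{\va}\geq\d_0$ — where one does not use the mild formulation for $\a$ but differentiates the mild formulation for $\gg$ itself and estimates the resulting nine terms $Y_1,\dots,Y_9$. For the weighted version I would instead differentiate the mild formulation for $\ue^{K_0\eta}\gg$; the extra $\eta$-derivative hitting $\ue^{K_0\eta}$ produces a harmless factor $K_0\ue^{K_0\eta}\gg$, bounded by $CK_0\lnnmv{\ue^{K_0\eta}\gg}\leq CK_0$, while all other terms inherit the $\ue^{K_0\eta}$ weight and are controlled by $\lnnmv{\ue^{K_0\eta}\gg}\leq C$ via Theorem \ref{Milne theorem 3} and Lemma \ref{prelim 1} just as before. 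One must double-check that the various $\p\va'/\p\eta$, $\p\vb'/\p\eta$, $\p H_{t,s}/\p\eta$ bounds from \eqref{rt 03} and the surrounding computations are untouched — they are, since they involve only the geometry of the characteristics, not the weight. Having assembled the weighted analogues of Lemmas \ref{pt lemma 3}, \ref{pt lemma 4}, \ref{pt lemma 5}, the conclusion
\begin{eqnarray}
\lnnmv{\ue^{K_0\eta}\zeta\frac{\p\v}{\p\eta}}+\lnnmv{\ue^{K_0\eta}\zeta\frac{\p\v}{\p\va}}+\lnnmv{\ue^{K_0\eta}\zeta\frac{\p\v}{\p\vb}}\leq C\abs{\ln(\e)}\e^{-s}\no
\end{eqnarray}
follows by repeating the final bootstrap of Theorem \ref{pt theorem 1} word for word. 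As with Theorem \ref{pt theorem 1}, the statement here is an a priori estimate; the rigorous existence is obtained by the same penalty-and-iteration scheme referenced there and from \cite{AA007}, so I would again omit those routine details.
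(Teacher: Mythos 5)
Your proposal is correct and follows essentially the same route as the paper: both conjugate by $\ue^{K_0\eta}$, observe that the resulting system for $A,B,C$ differs from that for $\a,\b,\c$ only by the extra source terms $K_0\va A$, $K_0\va B$, $K_0\va C$ (together with the original sources carrying the weight $\ue^{K_0\eta}$), and then absorb these new $K_0$-contributions alongside the $\sqrt{\d}$ and $\sqrt{\d_0}$ terms in the same absorbing argument that closes the proof of Theorem \ref{pt theorem 1}. The paper compresses this to a one-paragraph remark, whereas you spell out the supporting details — the identity $A=\ue^{K_0\eta}\a$ etc., the replacement of $\lnnmv{\gg}\leq C$ by $\lnnmv{\ue^{K_0\eta}\gg}\leq C$ via Theorem \ref{Milne theorem 3}, and the absorption of $\ue^{K_0(\eta-\eta')}$ into a fraction of the damping $\ue^{-H_{\eta,\eta'}}$ for $K_0<\nu_0$ — but the underlying argument is the same.
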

\begin{proof}
This proof is almost identical to Theorem \ref{pt theorem 1}. The only difference is that $S_{\a}$ is added by $K_0\va\a$, $S_{\b}$ added by $K_0\va\b$, and $S_{\c}$ added by $K_0\va\c$. When $K_0$ is sufficiently small, we can also absorb them into the left-hand side. Hence, this is obvious.
\end{proof}
Now we pull $\theta$ dependence back and study the tangential derivative.
\begin{theorem}\label{Milne tangential}
We have
\begin{eqnarray}
\lnnmv{\ue^{K_0\eta}\frac{\p\v}{\p\theta}(\eta,\theta,\phi)}\leq C\abs{\ln(\e)}\e^{-s},
\end{eqnarray}
for some $0<s<<1$.
\end{theorem}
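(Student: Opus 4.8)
The plan is to convert the $\theta$-derivative of $\gg$ into the already-controlled quantities $\zeta\,\partial_\eta\gg$, $\zeta\,\partial_{\va}\gg$, $\zeta\,\partial_{\vb}\gg$. Recall from Substitution 2 in the asymptotic-analysis section that, once the velocity is written in the normal/tangential frame $(\va,\vb)$ attached to the boundary point $\theta$, the tangential partial derivative $\partial_\theta$ acting on a function of $(\eta,\theta,\va,\vb)$ is not the naive coordinate derivative but carries a rotation in velocity: schematically
\begin{eqnarray}
\frac{\p}{\p\theta}\;\longrightarrow\;\frac{\p}{\p\theta}-\kappa(\theta)(r^2+r'^2)^{\frac12}\,\vb\,\frac{\p}{\p\va}+\kappa(\theta)(r^2+r'^2)^{\frac12}\,\va\,\frac{\p}{\p\vb}.
\end{eqnarray}
So if $\tilde f$ denotes $\gg$ expressed in the mixed coordinates, its true tangential derivative $\partial_\theta\gg$ equals $\partial_\theta\tilde f$ (the coordinate derivative at fixed $(\va,\vb)$) plus a bounded-coefficient combination of $\partial_{\va}\gg$ and $\partial_{\vb}\gg$. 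The second piece is controlled by Theorem~\ref{pt theorem 2}: multiplying by $\zeta$ and using $\lnnmv{\ue^{K_0\eta}\zeta\partial_{\va}\gg}+\lnnmv{\ue^{K_0\eta}\zeta\partial_{\vb}\gg}\leq C\abs{\ln(\e)}\e^{-s}$, together with $\zeta\leq C\nu$, absorbs the geometric rotation terms. What remains is the coordinate derivative $\partial_\theta\tilde f$.

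For the coordinate derivative, I would differentiate the $\e$-Milne equation \eqref{Milne difference problem.} in $\theta$. The only $\theta$-dependence in the transport part is through $\rk=\rk(\theta)$ in the coefficient $G(\eta)=-\e/(\rk(\theta)-\e\eta)$, and $\ll$ has no $\theta$-dependence; the boundary data $p(\theta,\vvv)$ depends on $\theta$. Writing $\mathfrak{d}=\partial_\theta\tilde f$, one gets an equation of exactly the same transport-plus-collision form,
\begin{eqnarray}
\va\frac{\p\mathfrak{d}}{\p\eta}+G(\eta)\Big(\vb^2\frac{\p\mathfrak{d}}{\p\va}-\va\vb\frac{\p\mathfrak{d}}{\p\vb}\Big)+\ll[\mathfrak{d}]=\frac{\p G}{\p\theta}\Big(\vb^2\frac{\p\gg}{\p\va}-\va\vb\frac{\p\gg}{\p\vb}\Big),
\end{eqnarray}
with in-flow data $\mathfrak{d}(0,\theta,\vvv)=\partial_\theta p(\theta,\vvv)$ and reflexive condition at $\eta=L$. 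Since $\partial_\theta G$ is again of size $\e$ (same structure as $G$, with $\rk'(\theta)$ in the numerator and a bounded denominator), the source on the right is bounded by $C\e\big(\abs{\vb^2\partial_{\va}\gg}+\abs{\va\vb\partial_{\vb}\gg}\big)$. Multiplying through by $\zeta$ is not the natural normalization here; instead I would keep $\mathfrak{d}$ itself and feed the equation into the $L^{\infty}$ machinery of Section~5 (Lemma~\ref{Milne lemma 3}, Lemma~\ref{Milne lemma 4}, Lemma~\ref{Milne lemma 5}, Lemma~\ref{Milne lemma 6}, and the exponential-decay argument of Theorem~\ref{Milne theorem 3}), treating the right-hand side as a known source $S_{\mathfrak d}$. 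The point is that $\lnnmv{\ue^{K_0\eta}\nu^{-1}S_{\mathfrak d}}\leq C\e\,\lnnmv{\ue^{K_0\eta}\nu^{-1}\vb^2\partial_{\va}\gg}\leq C\e\,\lnnmv{\ue^{K_0\eta}\zeta\partial_{\va}\gg}\leq C\e\abs{\ln\e}\e^{-s}$ by Theorem~\ref{pt theorem 2} and $\zeta\gtrsim \va$-type lower bounds — wait, we must be careful: $\vb^2/\nu\sim\abs{\vb}$ and $\zeta$ can be as small as $\va$, so $\vb^2\partial_{\va}\gg$ is not directly bounded by $\zeta\partial_{\va}\gg$ pointwise. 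Here the extra factor $\e$ together with the boundedness of $\vb^2/\zeta$ on the relevant region, and the fact that $G$ already contains the $\e/(\rk-\e\eta)$ decay, is what saves us; this is the step requiring care, and it mirrors exactly the absorbing arguments (\ref{dt 07})--(\ref{dt 09}) used to close the $\a,\b,\c$ system.

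Concretely the steps are: (i) write $\partial_\theta\gg = \partial_\theta\tilde f + (\text{rotation terms in }\partial_{\va},\partial_{\vb})$ and dispose of the rotation terms by Theorem~\ref{pt theorem 2}; (ii) derive the transport-collision equation for $\mathfrak d=\partial_\theta\tilde f$ with source $S_{\mathfrak d}=\partial_\theta G\,(\vb^2\partial_{\va}\gg-\va\vb\partial_{\vb}\gg)$ and boundary data $\partial_\theta p$; (iii) run the $L^2$ estimate (as in Lemma~\ref{Milne lemma 1}, noting $\mathfrak d$ has the same good structure — the source lies essentially in $\nk^{\perp}$ modulo $\e$-small pieces, or can be handled as a non-decaying-but-$\e$-small forcing exactly as $G\m^{1/2}\va\vb$ was in Theorem~\ref{Milne theorem 1}) to get $\tnnm{\ue^{K_0\eta}\mathfrak d}\leq C$; (iv) bootstrap to $L^\infty$ via Lemma~\ref{Milne lemma 6}-type estimates, obtaining $\lnnmv{\ue^{K_0\eta}\mathfrak d}\leq C\big(\lnmv{\partial_\theta p}+\lnnmv{\ue^{K_0\eta}\nu^{-1}S_{\mathfrak d}}+\tnnm{\ue^{K_0\eta}\mathfrak d}\big)$; (v) bound $\lnnmv{\ue^{K_0\eta}\nu^{-1}S_{\mathfrak d}}$ by $C\abs{\ln\e}\e^{-s}$ using Theorem~\ref{pt theorem 2} and the $\e$-smallness of $\partial_\theta G$, and bound $\lnmv{\partial_\theta p}$ by a constant from the regularity of the interior solution and $\m_k$ in the boundary data. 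The main obstacle, as flagged, is step (v): the source $S_{\mathfrak d}$ carries weights $\vb^2$ and $\va\vb$ that are not pointwise dominated by $\zeta$, so one must exploit either the region decomposition ($\vb^2\lesssim\zeta^2$ away from the grazing set, and near the grazing set the extra $\e$ and the smallness of the velocity domain compensate) or the self-consistent absorbing structure, precisely analogous to how (\ref{dt 01})--(\ref{dt 09}) closed the normal/velocity-derivative system; getting this balance right, uniformly in $\e$, is the heart of the argument, and once it is in place the estimate $\lnnmv{\ue^{K_0\eta}\partial_\theta\gg}\leq C\abs{\ln\e}\e^{-s}$ follows by combining (i) and (iv)--(v).
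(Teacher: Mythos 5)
Your overall strategy --- differentiate the Milne problem in $\theta$, view $\w=\p_\theta\gg$ as solving a Milne problem with a known source, then apply the $L^2$ and $L^\infty$ machinery of Section~5 --- is the same as the paper's. However, there are two problems.

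First, step (i) is unnecessary. The Milne problem (\ref{Milne difference problem.}) is already posed in the $(\eta,\theta,\va,\vb)$ coordinates, and the quantity $\frac{\p\gg}{\p\theta}$ in the theorem statement (and wherever it appears in the remainder estimate, e.g.\ in the transport operator after Substitutions~1--3 and in $\lll[\qb]$) is exactly the coordinate derivative at fixed $(\va,\vb)$. The velocity--rotation chain rule from Substitution~2 relates the $(\mathfrak N,\theta,v_1,v_2)$ derivative to the $(\mathfrak N,\theta,\va,\vb)$ derivative, but $\gg$ never lives in the Cartesian velocity frame, so there is nothing to convert.

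Second, and more seriously, step (v) is where the argument fails, and you correctly flag the difficulty but neither of your proposed fixes works. The source for $\w$ is $S_{\w}=\dfrac{\rk'}{\rk-\e\eta}\,G(\eta)\big(\vb^2\p_{\va}\gg-\va\vb\p_{\vb}\gg\big)$, and $\vb^2,\va\vb$ are \emph{not} pointwise dominated by $\zeta$: using $\zeta\gs\sqrt{\e\eta}\abs{\vb}$ one gets $\e\vb^2/\zeta\ls\sqrt{\e/\eta}\,\abs{\vb}$, which blows up as $\eta\rt0$; using $\zeta\geq\va$ gives $\e\vb^2/\va$, which is unbounded near grazing. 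The absorbing idea of (\ref{dt 01})--(\ref{dt 09}) also does not apply: the source depends on $\p_{\va}\gg$ and $\p_{\vb}\gg$, not on $\w$ itself, so there is no self-consistent closure. The missing idea, which is the crux of the paper's proof, is to use the $\gg$--equation (\ref{Milne difference problem}) to rewrite the source. The bracketed factor is precisely the geometric correction term, so
\begin{eqnarray}
G(\eta)\Big(\vb^2\frac{\p\gg}{\p\va}-\va\vb\frac{\p\gg}{\p\vb}\Big)=\va\frac{\p\gg}{\p\eta}+\ll[\gg],
\end{eqnarray}
and hence $S_{\w}=\dfrac{\rk'}{\rk-\e\eta}\Big(\va\dfrac{\p\gg}{\p\eta}+\ll[\gg]\Big)$ with a bounded prefactor. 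Now $\abs{\va\p_\eta\gg}\leq\abs{\zeta\p_\eta\gg}$ since $\va\leq\zeta$, which Theorem~\ref{pt theorem 2} controls with the exponential weight, while $\ll[\gg]=\nu\gg-K[\gg]$ is controlled by Theorem~\ref{Milne theorem 3} and Lemma~\ref{prelim 1}. This yields $\lnnmv{\ue^{K_0\eta}\nu^{-1}S_{\w}}\leq C\abs{\ln\e}\e^{-s}$, after which a single application of Theorem~\ref{Milne theorem 3} (rather than re-deriving (iii)--(iv) from scratch) finishes the proof.
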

\begin{proof}
Let $\w=\dfrac{\p\gg}{\p\theta}$. Taking $\theta$ derivative on both sides of (\ref{Milne difference problem}), we have that $\w$ satisfies the equation
\begin{eqnarray}\label{Milne tangential problem}
\left\{
\begin{array}{l}\displaystyle
\va\frac{\p \w}{\p\eta}+G(\eta)\bigg(\vb^2\dfrac{\p
\w}{\p\va}-\va\vb\dfrac{\p
\w}{\p\vb}\bigg)+\nu\w-K[\w]=\dfrac{\rk'}{\rk-\e\eta}G(\eta)\bigg(\vb^2\dfrac{\p
\gg}{\p\va}-\va\vb\dfrac{\p
\gg}{\p\vb}\bigg),\\\rule{0ex}{2.0em}
\w(0,\theta,\vvv)=\dfrac{\p p}{\p\theta}(\theta,\vvv)\ \ \text{for}\ \ \sin\phi>0,\\\rule{0ex}{2.0em}
\w(L,\theta,\vvv)=\w(L,\theta,\rr[\vvv]),
\end{array}
\right.
\end{eqnarray}
where $\rk'$ is the $\theta$ derivative of $\rk$.
For $\eta\in[0,L]$, we have
\begin{eqnarray}
\dfrac{\rk'}{\rk-\e\eta}\leq C\max_{\theta}\rk'\leq C.
\end{eqnarray}
Since $\zeta(\eta,\vvv)\geq\va$, based on Theorem \ref{pt theorem 2} and the equation (\ref{Milne difference problem}), we know
\begin{eqnarray}
\lnnmv{\ue^{K_0\eta}\va\frac{\p
\w}{\p\eta}}\leq C\abs{\ln(\e)}\e^{-s},
\end{eqnarray}
which further implies
\begin{eqnarray}
\lnnmv{\ue^{K_0\eta}G(\eta)\bigg(\vb^2\frac{\p
\w}{\p\va}-\va\vb\frac{\p
\w}{\p\vb}\bigg)}\leq C\abs{\ln(\e)}\e^{-s},
\end{eqnarray}
for some $0<s<<1$.
Therefore, the source term in the equation (\ref{Milne tangential problem}) is in $L^{\infty}$ and decays exponentially. By Theorem \ref{Milne theorem 3}, we have that
\begin{eqnarray}
\lnnmv{\ue^{K_0\eta}\w(\eta,\theta,\phi)}\leq C\abs{\ln(\e)}\e^{-s},
\end{eqnarray}
for some $0<s<<1$.
\end{proof}

\newpage

%%%%%%%%%%%%%%%%%%%%%%%%%%%%%%%%%%%%%%%%%%%%%%%%%%%%%%%%%%%%%%%%%%%%%%%%
\section{Hydrodynamic Limits}%%%%%%%%%%%%%%%%%%%%%%%%%%%%%%%%%%%%%%%%%%%
%%%%%%%%%%%%%%%%%%%%%%%%%%%%%%%%%%%%%%%%%%%%%%%%%%%%%%%%%%%%%%%%%%%%%%%%

\begin{theorem}
For given $M_0>0$ and $\mb>0$ satisfying (\ref{expansion assumption}) and (\ref{smallness assumption}) with $0<\e<<1$, there exists a unique positive
solution $\fs^{\e}=M_0\m+\m^{\frac{1}{2}}f^{\e}$ to the stationary Boltzmann equation (\ref{large system}), and $f^{\e}$ fulfils that for integer $\vth\geq3$ and $0\leq\varrho<\dfrac{1}{4}$,
\begin{eqnarray}
\im{\bv\Big(f^{\e}-\e\f\Big)}\leq C(\d)\e^{2-\d},
\end{eqnarray}
for any $0<\d<<1$, where
\begin{eqnarray}
\f&=&\m^{\frac{1}{2}}\left(\rh+\vu\cdot\vw+\th\frac{\abs{\vw}^2-2}{2}\right),
\end{eqnarray}
satisfies the steady Navier-Stokes-Fourier system
\begin{eqnarray}\label{interior 1}
\left\{
\begin{array}{rcl}
\nx(\rh +\th )&=&0,\\\rule{0ex}{1.0em}
\uh\cdot\nx\uh -\gamma_1\dx\uh +\nx P_2 &=&0,\\\rule{0ex}{1.0em}
\nx\cdot\uh &=&0,\\\rule{0ex}{1.0em}
\uh \cdot\nx\th -\gamma_2\dx\th &=&0,\\\rule{0ex}{1.0em}
\rh (\vx_0)&=&\rh_{\bb,1}(\vx_0)+M(\vx_0),\\
\uh (\vx_0)&=&\vu_{\bb,1}(\vx_0),\\
\th (\vx_0)&=&\th_{\bb,1}(\vx_0),\\
\end{array}
\right.
\end{eqnarray}
where $\gamma_1>0$ and $\gamma_2>0$ are some constants, $M(\vx_0)$ is a constant such that the Boussinesq relation
\begin{eqnarray}
\rh+\th=\text{constant},
\end{eqnarray}
and the normalization condition
\begin{eqnarray}
\int_{\Omega}\int_{\r^2}\f(\vx,\vw)\m^{\frac{1}{2}}(\vw)\ud{\vw}\ud{\vx}=0,
\end{eqnarray}
hold.
\end{theorem}
\begin{proof}
The asymptotic analysis already reveals that the construction of the interior solution and boundary layer is valid. Here, we focus on the remainder estimates. We divide the proof into several steps:\\
\ \\
Step 1: Remainder definitions.\\
Define the remainder as
\begin{eqnarray}\label{pf 1}
R&=&\frac{1}{\e^3}\bigg(f^{\e}-\Big(\e\f_1+\e^2\f_2+\e^3\f_3\Big)-\Big(\e\fb_1+\e^2\fb_2\Big)\bigg)=\frac{1}{\e^3}\bigg(f^{\e}-\q -\qb \bigg),
\end{eqnarray}
where
\begin{eqnarray}
\q &=&\e\f_1+\e^2\f_2+\e^3\f_3,\\
\qb &=&\e\fb_1+\e^2\fb_2.
\end{eqnarray}
In other words, we have
\begin{eqnarray}
f^{\e}=\q+\qb+\e^3R.
\end{eqnarray}
We write $\lll$ to denote the
linearized Boltzmann operator as follows:
\begin{eqnarray}
\lll[f]&=&\e\vv\cdot\nx u+\ll[f]\\
&=&\va\dfrac{\p f}{\p\eta}-\dfrac{\e}{\rk-\e\eta}\bigg(\vb^2\dfrac{\p f}{\p\va}-\va\vb\dfrac{\p
f}{\p\vb}\bigg)-\frac{\e}{\rk-\e\eta}\dfrac{\rk}{(r^2+r'^2)^{\frac{1}{2}}}\vb\dfrac{\p
f}{\p\theta}+\ll[f].\nonumber
\end{eqnarray}
\ \\
Step 2: Representation of $\lll[R]$.\\
The equation (\ref{small system_}) is actually
\begin{eqnarray}
\lll[f^{\e}]=\Gamma[f^{\e},f^{\e}],
\end{eqnarray}
which means
\begin{eqnarray}
\lll[\q+\qb+\e^3R]=\Gamma[\q+\qb+\e^3R,\q+\qb+\e^3R].
\end{eqnarray}
Note that the nonlinear term can be decomposed as
\begin{eqnarray}
\Gamma[\q+\qb+\e^3R,\q+\qb+\e^3R]&=&\e^6\Gamma[R,R]+2\e^3\Gamma[R,\q+\qb]+\Gamma[\q+\qb,\q+\qb].
\end{eqnarray}
The interior contribution can be represented as
\begin{eqnarray}
\lll[\q ]&=&\e\vv\cdot\nx\Big(\e\f_1+\e^2\f_2+\e^3\f_3\Big) +\ll[\e\f_1+\e^2\f_2+\e^3\f_3]\\
&=&\e^4\vv\cdot\nx\f_3+\e^2\Gamma[\f_1,\f_1]+2\e^3\Gamma[\f_1,\f_1].\no
\end{eqnarray}
The nonlinear term will be handled by $\Gamma[\q+\qb,\q+\qb]$. On the other hand, we consider the boundary layer contribution. Since $\fb_1=0$, we may directly compute
\begin{eqnarray}\label{remainder temp 1}
\lll[\qb ]&=&\e^2\bigg(\va\dfrac{\p\fb_2}{\p\eta}-\dfrac{\e}{\rk-\e\eta}\bigg(\vb^2\dfrac{\p\fb_2}{\p\va}-\va\vb\dfrac{\p
\fb_2}{\p\vb}\bigg)-\frac{\e}{\rk-\e\eta}\dfrac{\rk}{(r^2+r'^2)^{\frac{1}{2}}}\vb\dfrac{\p
\fb_2}{\p\theta}+\ll[\fb_2]\bigg)\\
&=&-\frac{\e^3}{\rk-\e\eta}\dfrac{\rk}{(r^2+r'^2)^{\frac{1}{2}}}\vb\dfrac{\p
\fb_2}{\p\theta}.\no
\end{eqnarray}
Therefore, we have
\begin{eqnarray}
\lll[R]&=&\e^3\Gamma[R,R]+2\Gamma[R,\q+\qb]+S_1+S_2,
\end{eqnarray}
where
\begin{eqnarray}
S_1&=&-\e\vv\cdot\nx\f_3+\frac{1}{\rk-\e\eta}\dfrac{\rk}{(r^2+r'^2)^{\frac{1}{2}}}\vb\dfrac{\p
\fb_2}{\p\theta},\\
S_2&=&2\Gamma[\f_1,\fb_2]+2\e\Gamma[\f_1,\f_3]+\e\Gamma[\fb_2,\fb_2]+2\e\Gamma[\f_2,\fb_1]+2\e^2\Gamma[\f_2,\f_3]+\e^3\Gamma[\f_3,\f_3]
\end{eqnarray}
\ \\
Step 3: Representation of $R-\pp[R]$.\\
Since
\begin{eqnarray}
f^{\e}(\vx_0,\vw)&=&\mb(\vx_0,\vv)\m^{-\frac{1}{2}}(\vv)
\displaystyle\int_{\vuu\cdot\vn(\vx_0)>0}\m^{\frac{1}{2}}(\vuu)
f^{\e}(\vx_0,\vuu)\abs{\vuu\cdot\vn(\vx_0)}\ud{\vuu}+\m^{-\frac{1}{2}}(\vw)\bigg(\mb(\vx_0,\vv)-\m(\vv)\bigg),\no
\end{eqnarray}
where both sides are linear, we may directly write
\begin{eqnarray}
R(\vx_0,\vw)-\pp[R](\vx_0)
&=&H[R](\vx_0,\vv)+h(\vx_0,\vv),
\end{eqnarray}
where
\begin{eqnarray}
H[R](\vx_0,\vv)=\Big(\mb(\vx_0,\vv)-\m(\vv)\Big)\m^{-\frac{1}{2}}(\vv)
\displaystyle\int_{\vuu\cdot\vn(\vx_0)>0}\m^{\frac{1}{2}}(\vuu)
R(\vx_0,\vuu)\abs{\vuu\cdot\vn(\vx_0)}\ud{\vuu},
\end{eqnarray}
and
\begin{eqnarray}
&&h(\vx_0,\vw)\\
&=&\Big(\mb(\vx_0,\vv)-\m(\vv)\Big)\m^{-\frac{1}{2}}(\vv)
\displaystyle\int_{\vuu\cdot\vn(\vx_0)>0}\m^{\frac{1}{2}}(\vuu)
\f_3(\vx_0,\vuu)\abs{\vuu\cdot\vn(\vx_0)}\ud{\vuu}\no\\
&&+\Big(\mb(\vx_0,\vv)-\m(\vv)-\e\m_1(\vx_0,\vv)\Big)\m^{-\frac{1}{2}}(\vv)
\displaystyle\int_{\vuu\cdot\vn(\vx_0)>0}\m^{\frac{1}{2}}(\vuu)
(\e^{-1}\f_2+\e^{-1}\fb_2+\f_3)(\vx_0,\vuu)\abs{\vuu\cdot\vn(\vx_0)}\ud{\vuu}\no\\
&&+\Big(\mb(\vx_0,\vv)-\m(\vv)-\e\m_1(\vx_0,\vv)-\e^2\m_2(\vx_0,\vv)\Big)\m^{-\frac{1}{2}}(\vv)\cdot\no\\
&&\displaystyle\int_{\vuu\cdot\vn(\vx_0)>0}\m^{\frac{1}{2}}(\vuu)
\e^{-3}(\q+\qb)(\vx_0,\vuu)\abs{\vuu\cdot\vn(\vx_0)}\ud{\vuu}\no\\
&&+\e^{-3}\m^{-\frac{1}{2}}(\vw)\bigg(\mb(\vx_0,\vv)-\m(\vv)-\e\m_1(\vx_0,\vv)-\e^2\m_2(\vx_0,\vv)-\e^3\m_3(\vx_0,\vv)\bigg).\no
\end{eqnarray}
\ \\
Step 4: $L^{2m}$ Estimates of $R$.\\
Using Theorem \ref{LN estimate}, we have the $L^{2m}$ estimate of $R$
\begin{eqnarray}\label{ht 06}
&&\frac{1}{\e}\um{(\ik-\pk)[R]}+\frac{1}{\e^{\frac{1}{2}}}\tss{(1-\pp)[R]}{+}+\nm{\pk[R]}_{L^{2m}}\\
&\leq&C\bigg(o(1)\e^{\frac{1}{m}}\nm{R}_{L^{\infty}}+\frac{1}{\e^{2}}\nm{\pk\Big[\lll[R]\Big]}_{L^{\frac{2m}{2m-1}}}+\frac{1}{\e}\tm{\lll[R]}
+\abs{R-\pp[R]}_{L^{m}_-}+\frac{1}{\e}\tss{R-\pp[R]}{-}\bigg)\no\\
&\leq& C\bigg(o(1)\e^{\frac{1}{m}}\nm{R}_{L^{\infty}}+\frac{1}{\e^{2}}\nm{\pk[S_1]}_{L^{\frac{2m}{2m-1}}}\no\\
&&+\frac{1}{\e}\bigg(\tm{\e^3\Gamma[R,R]}+\tm{2\Gamma[R,\q+\qb]}+\tm{S_1}+\tm{S_2}\bigg)\no\\
&&+\abs{H[R]}_{L^{m}_-}+\abs{h}_{L^{m}_-}\no\\
&&+\frac{1}{\e}\tss{H[R]}{-}+\frac{1}{\e}\tss{h}{-}\bigg).\no
\end{eqnarray}
Note that here we do not have other source terms in the $L^{\frac{2m}{2m-1}}$ norm because
for any $f,g\in L^2$,
\begin{eqnarray}\label{ft 01}
\pk[\Gamma(f,g)]=0.
\end{eqnarray}
We need to estimate each term. It is easy to check
\begin{eqnarray}\label{pf 2_}
\tm{\e\vv\cdot\nx\f_3}&\leq& C\e,\\
\nm{\e\vv\cdot\nx\f_3}_{L^{\frac{2m}{2m-1}}}&\leq& C\e,
\end{eqnarray}
and also by Theorem \ref{Milne tangential}, using the rescaling and exponential decay, we have
\begin{eqnarray}
\tm{\frac{1}{\rk-\e\eta}\dfrac{\rk}{(r^2+r'^2)^{\frac{1}{2}}}\vb\dfrac{\p
\fb_2}{\p\theta}}&\leq&
C\Bigg(\int_{-\pi}^{\pi}\int_0^{R_{\min}\e^{\frac{1}{2}}}(\rk-\mathfrak{N})\lnmv{\frac{\p\fb_2}{\p\theta}(\mathfrak{N},\theta)}^2\ud{\mathfrak{N}}\ud{\theta}\Bigg)^{1/2}\\
&\leq&C\e^{\frac{1}{2}}\Bigg(\int_{-\pi}^{\pi}\int_0^{R_{\min}\e^{-\frac{1}{2}}}\lnmv{\frac{\p\fb_2}{\p\theta}(\eta,\theta)}^2\ud{\eta}\ud{\theta}\Bigg)^{1/2}\no\\
&\leq&C\e^{\frac{1}{2}}\Bigg(\int_{-\pi}^{\pi}\int_0^{R_{\min}\e^{-\frac{1}{2}}}\ue^{-2K_0\eta}\abs{\ln(\e)}^{2}\e^{-2s}\ud{\eta}\ud{\theta}\Bigg)^{1/2}\no\\
&\leq& C\e^{\frac{1}{2}-s}\abs{\ln(\e)},\no
\end{eqnarray}
for some $0<s<<1$. Similarly, we can prove that
\begin{eqnarray}
\nm{\frac{1}{\rk-\e\eta}\dfrac{\rk}{(r^2+r'^2)^{\frac{1}{2}}}\vb\dfrac{\p
\fb_2}{\p\theta}}_{L^{\frac{2m}{2m-1}}}&\leq&C\e^{1-\frac{1}{2m}-s}\abs{\ln(\e)}.
\end{eqnarray}
In total, we have
\begin{eqnarray}
\tm{S_1}&\leq&C\e^{\frac{1}{2}-s}\abs{\ln(\e)},\\
\nm{\pk[S_1]}_{L^{\frac{2m}{2m-1}}}&\leq&C\e^{1-\frac{1}{2m}-s}\abs{\ln(\e)}.
\end{eqnarray}
On the other hand, by Lemma \ref{nonlinear}, we know
\begin{eqnarray}
\tm{2\Gamma[R,\q+\qb]}&\leq&C\bigg(\tm{R}\lnmv{\q+\qb}+\um{R}\im{\q+\qb}\bigg).
\end{eqnarray}
Based on the smallness assumption (\ref{smallness assumption}) on the boundary Maxwellian, it is easy to check that
\begin{eqnarray}
\tm{R}\lnmv{\q+\qb}\leq o(1)\e\tm{R}.
\end{eqnarray}
Also, we may decompose
\begin{eqnarray}
\um{R}\im{\q+\qb}&\leq&\um{(\ik-\pk)[R]}\im{\q+\qb}+\um{\pk[R]}\im{\q+\qb}\\
&\leq&o(1)\e\um{(\ik-\pk)[R]}+o(1)\e\nm{\pk[R]}_{L^{2}}.\no
\end{eqnarray}
Then we may derive that
\begin{eqnarray}
\tm{2\Gamma[R,\q+\qb]}&\leq&o(1)\e\bigg(\nm{\pk[R]}_{L^{2}}+\um{(\ik-\pk)[R]}\bigg).
\end{eqnarray}
Also, using the smallness assumption (\ref{smallness assumption}) again, we can directly estimate
\begin{eqnarray}
\abs{H[R]}_{L^{m}_-}&\leq&o(1)\e\abs{R}_{L^{m}_-}\leq o(1)\e\im{R},\\
\tss{H[R]}{-}&\leq& o(1)\e\tss{\pp[R]}{+}.
\end{eqnarray}
Using Lemma \ref{nonlinear}, it is easy to check
\begin{eqnarray}
\tm{S_2}&\leq&C\e^{\frac{1}{2}-s}\abs{\ln(\e)},\\
\abs{h}_{L^{m}_-}&\leq&C\e,\\
\tss{h}{-}&\leq&C\e.
\end{eqnarray}
Summarizing all of them, we have proved that
\begin{eqnarray}
&&\frac{1}{\e}\um{(\ik-\pk)[R]}+\frac{1}{\e^{\frac{1}{2}}}\tss{(1-\pp)[R]}{+}+\nm{\pk[R]}_{L^{2m}}\\
&\leq& C\bigg(o(1)\e^{\frac{1}{m}}\nm{R}_{L^{\infty}}+\e^{-1-\frac{1}{2m}-s}\abs{\ln(\e)}\no\\
&&+\e^2\tm{\Gamma[R,R]}+o(1)\nm{\pk[R]}_{L^{2}}+o(1)\um{(\ik-\pk)[R]}+\e^{-\frac{1}{2}-s}\abs{\ln(\e)}+\e^{-\frac{1}{2}-s}\abs{\ln(\e)}\no\\
&&+o(1)\e\im{R}+\e\no\\
&&+o(1)\tss{\pp[R]}{+}+1\bigg)\no
\end{eqnarray}
Absorbing $\um{(\ik-\pk)[R]}$ into the left-hand side, we obtain
\begin{eqnarray}\label{ht 02}
&&\frac{1}{\e}\um{(\ik-\pk)[R]}+\frac{1}{\e^{\frac{1}{2}}}\tss{(1-\pp)[R]}{+}+\nm{\pk[R]}_{L^{2m}}\\
&\leq&C\bigg(o(1)\e^{\frac{1}{m}}\nm{R}_{L^{\infty}}+\e^{-1-\frac{1}{2m}-s}\abs{\ln(\e)}+\e^2\tm{\Gamma[R,R]}+o(1)\nm{\pk[R]}_{L^{2}}+o(1)\tss{\pp[R]}{+}\bigg).\no
\end{eqnarray}
Here, we apply the estimate (\ref{wt 32}) to bound
\begin{eqnarray}\label{ht 03}
\tss{\pp[R]}{+}&\leq&C\left(\tm{\pk[R]}+\frac{1}{\e^{\frac{1}{2}}}\tm{(\ik-\pk)[R]}+\frac{1}{\e^{\frac{1}{2}}}\left(\int_{\Omega\times\r^2}R\lll[R]\right)^{\frac{1}{2}}\right)\\
&\leq&C\left(\tm{\pk[R]}+\frac{1}{\e^{\frac{1}{2}}}\tm{(\ik-\pk)[R]}+\frac{1}{\e^{\frac{1}{2}}}\tm{(\ik-\pk)\Big[\lll[R]\Big]}+\frac{1}{\e}\tm{\pk\Big[\lll[R]\Big]}\right).\no
\end{eqnarray}
Then using Lemma \ref{wellposedness prelim lemma 3}, we obtain
\begin{eqnarray}
\tm{\pk[R]}\leq C\left(\tss{(1-\pp)[R]}{+}+\frac{1}{\e}\tm{(\ik-\pk)[R]}+\frac{1}{\e}\tm{\lll[R]}+\tss{R-\pp[R]}{-}\right).
\end{eqnarray}
Therefore, we have
\begin{eqnarray}\label{ht 05}
\tss{\pp[R]}{+}&\leq&C\left(\tm{\pk[R]}+\frac{1}{\e^{\frac{1}{2}}}\tm{(\ik-\pk)[R]}+\frac{1}{\e^{\frac{1}{2}}}\left(\int_{\Omega\times\r^2}R\lll[R]\right)^{\frac{1}{2}}\right)\\
&\leq&C\bigg(\tss{(1-\pp)[R]}{+}+\frac{1}{\e}\tm{(\ik-\pk)[R]}\no\\
&&+\frac{1}{\e}\tm{(\ik-\pk)\Big[\lll[R]\Big]}+\frac{1}{\e}\tm{\pk\Big[\lll[R]\Big]}+\tss{R-\pp[R]}{-}\bigg).\no
\end{eqnarray}
Note that the right-hand side of (\ref{ht 05}) has been estimated in (\ref{ht 06}) and (\ref{ht 02}), so we obtain
\begin{eqnarray}
\\
\tss{\pp[R]}{+}&\leq&C\bigg(o(1)\e^{\frac{1}{m}}\nm{R}_{L^{\infty}}+\e^{-1-\frac{1}{2m}-s}\abs{\ln(\e)}+\e^2\tm{\Gamma[R,R]}+o(1)\nm{\pk[R]}_{L^{2}}+o(1)\tss{\pp[R]}{+}\bigg).\no
\end{eqnarray}
Absorbing $\tss{\pp[R]}{+}$ into the left-hand side, we obtain
\begin{eqnarray}\label{ht 07}
\tss{\pp[R]}{+}&\leq&C\bigg(o(1)\e^{\frac{1}{m}}\nm{R}_{L^{\infty}}+\e^{-1-\frac{1}{2m}-s}\abs{\ln(\e)}+\e^2\tm{\Gamma[R,R]}+o(1)\nm{\pk[R]}_{L^{2}}\bigg).
\end{eqnarray}
Plugging (\ref{ht 07}) into (\ref{ht 02}), we have
\begin{eqnarray}
&&\frac{1}{\e}\um{(\ik-\pk)[R]}+\frac{1}{\e^{\frac{1}{2}}}\tss{(1-\pp)[R]}{+}+\nm{\pk[R]}_{L^{2m}}\\
&\leq&C\bigg(o(1)\e^{\frac{1}{m}}\nm{R}_{L^{\infty}}+\e^{-1-\frac{1}{2m}-s}\abs{\ln(\e)}+\e^2\tm{\Gamma[R,R]}+o(1)\nm{\pk[R]}_{L^{2}}\bigg).\no
\end{eqnarray}
Note that the estimate of $\nm{\pk[R]}_{L^{2}}$ has been incorporated in above analysis for $\tss{\pp[R]}{+}$, so we may further simplify
\begin{eqnarray}\label{ht 04}
&&\frac{1}{\e}\um{(\ik-\pk)[R]}+\frac{1}{\e^{\frac{1}{2}}}\tss{(1-\pp)[R]}{+}+\nm{\pk[R]}_{L^{2m}}\\
&\leq&C\bigg(o(1)\e^{\frac{1}{m}}\nm{R}_{L^{\infty}}+\e^{-1-\frac{1}{2m}-s}\abs{\ln(\e)}+\e^2\tm{\Gamma[R,R]}\bigg).\no
\end{eqnarray}
\ \\
Step 5: $L^{\infty}$ Estimates of $R$.\\
Based on Theorem \ref{LI estimate}, we have
\begin{eqnarray}
&&\im{\bv R}+\iss{\bv R}{+}\\
&\leq&C\bigg(\frac{1}{\e^{\frac{1}{m}}}\nm{\pk[R]}_{L^{2m}}+\frac{1}{\e}\nm{(\ik-\pk)[R]}_{L^{2}}\no\\
&&+\im{\frac{\bv \e^3\Gamma[R,R]}{\nu}}+\im{\frac{\bv \Gamma[R,\q+\qb]}{\nu}}\no\\
&&+\im{\frac{\bv S_1}{\nu}}+\im{\frac{\bv S_2}{\nu}}+\iss{\bv H[R]}{-}+\iss{\bv h}{-}\bigg).\no
\end{eqnarray}
Hence, using (\ref{ht 04}), we know
\begin{eqnarray}
&&\im{\bv R}+\iss{\bv R}{+}\\
&\leq&C\bigg(o(1)\nm{R}_{L^{\infty}}+\e^{-1-\frac{1}{2m}-4s}\abs{\ln(\e)}^8+\e^2\tm{\Gamma[R,R]}\no\\
&&+\im{\frac{\bv \e^3\Gamma[R,R]}{\nu}}+\im{\frac{\bv \Gamma[R,\q+\qb]}{\nu}}\no\\
&&+\im{\frac{\bv S_1}{\nu}}+\im{\frac{\bv S_2}{\nu}}+\iss{\bv H[R]}{-}+\iss{\bv h}{-}\bigg).\no
\end{eqnarray}
We can directly estimate
\begin{eqnarray}
\tm{\e^2\Gamma[R,R]}&\leq& C\e^2\im{\bv R}^2,\\
\e^3\im{\frac{\bv \Gamma[R,R]}{\nu}}&\leq&C\e^3\im{\bv R}^2,\\
\im{\frac{\bv \Gamma[R,\q+\qb]}{\nu}}&\leq&C\e\im{\bv R}.
\end{eqnarray}
Also, we know
\begin{eqnarray}
\im{\frac{\bv S_1}{\nu}}&\leq&C,\\
\im{\frac{\bv S_2}{\nu}}&\leq&C,\\
\iss{\bv H[R]}{-}&\leq&\e\iss{\bv R}{+},\\
\iss{\bv h}{-}&\leq&\e.
\end{eqnarray}
Hence, in total, we have
\begin{eqnarray}
&&\im{\bv R}+\iss{\bv R}{+}\\
&\leq&C\bigg(o(1)\im{\bv R}+o(1)\iss{\bv R}{+}+\e^{-1-\frac{1}{2m}-s}\abs{\ln(\e)}+\e^2\im{\bv R}^2\bigg).\no
\end{eqnarray}
Absorbing $o(1)\im{\bv R}$ and $o(1)\iss{\bv R}{+}$ into the left-hand side, we obtain
\begin{eqnarray}
&&\im{\bv R}+\iss{\bv R}{+}\leq C\bigg(\e^{-1-\frac{1}{2m}-s}\abs{\ln(\e)}+\e^2\im{\bv R}^2\bigg),
\end{eqnarray}
which further implies
\begin{eqnarray}
\im{\bv R}+\iss{\bv R}{+}&\leq&C\e^{-1-\frac{1}{2m}-s}\abs{\ln(\e)},
\end{eqnarray}
for $\e$ sufficiently small. This means we have shown
\begin{eqnarray}
\frac{1}{\e^3}\im{\bv\bigg(f^{\e}-\Big(\e\f_1+\e^2\f_2+\e^3\f_3\Big)-\Big(\e\fb_1+\e^2\fb_2\Big)\bigg)}&\leq&C\e^{-1-\frac{1}{2m}-s}\abs{\ln(\e)}.
\end{eqnarray}
Therefore, we know
\begin{eqnarray}
\im{\bv\bigg(f^{\e}-\e\f_1-\e\fb_1\bigg)}\leq C\e^{2-\frac{1}{2m}-s}\abs{\ln(\e)}.
\end{eqnarray}
Since $\fb_1=0$, then we naturally have for $\f=\f_1$.
\begin{eqnarray}
\im{\bv\bigg(f^{\e}-\e\f\bigg)}\leq C\e^{2-\frac{1}{2m}-s}\abs{\ln(\e)}.
\end{eqnarray}
Here $0<s<<1$, so we may further bound
\begin{eqnarray}
\im{\bv\bigg(f^{\e}-\e\f\bigg)}\leq C(\d)\e^{2-\d},
\end{eqnarray}
for any $0<\d<<1$. Also, this justifies that the solution $f^{\e}$ to the equation (\ref{small system_})
exists and is well-posed. The uniqueness and positivity follow from a standard argument as in \cite{Esposito.Guo.Kim.Marra2013}.
\end{proof}

\newpage

%%%%%%%%%%%%%%%%%%%%%%%%%%%%%%%%%%%%%%%%%%%%%%%%%%%%%%%%%%%%%%%%%%%%%%%%

\bibliographystyle{siam}
\bibliography{Reference}

\begin{thebibliography}{10}

\bibitem{Bardos.Caflisch.Nicolaenko1986}
{\sc C.~Bardos, R.~E. Caflisch, and B.~Nicolaenko}, {\em The {Milne} and
  {Kramers} problems for the {Boltzmann} equation of a hard sphere gas}, Comm.
  Pure Appl. Math., 39 (1986), pp.~323--352.

\bibitem{Bardos.Golse.Levermore1991}
{\sc C.~Bardos, F.~Golse, and D.~Levermore}, {\em Fluid dynamical limits of
  kinetic equations {I}: formal derivations}, J. Statist. Phys., 63 (1991),
  pp.~323--344.

\bibitem{Bardos.Golse.Levermore1993}
\leavevmode\vrule height 2pt depth -1.6pt width 23pt, {\em Fluid dynamical
  limits of kinetic equations {II}: convergence proofs for the {Boltzmann}
  equation}, Comm. Pure Appl. Math., 46 (1993), pp.~667--753.

\bibitem{Bardos.Golse.Levermore1998}
\leavevmode\vrule height 2pt depth -1.6pt width 23pt, {\em Acoustic and
  {Stokes} limits for the {Boltzmann} equation}, C. R Acad. Sci. Paris, Serie 1
  Math, 327 (1998), pp.~323--328.

\bibitem{Bardos.Golse.Levermore2000}
\leavevmode\vrule height 2pt depth -1.6pt width 23pt, {\em The acoustic limit
  for the {Boltzmann} equation}, Arch. Rational Mech. Anal., 153 (2000),
  pp.~177--204.

\bibitem{Bensoussan.Lions.Papanicolaou1979}
{\sc A.~Bensoussan, J.-L. Lions, and G.~C. Papanicolaou}, {\em Boundary layers
  and homogenization of transport processes}, Publ. Res. Inst. Math. Sci., 15
  (1979), pp.~53--157.

\bibitem{Cercignani.Marra.Esposito1998}
{\sc C.~Cercignani, R.~Marra, and R.~Esposito}, {\em The {Milne} problem with a
  force term}, Transport Theory Statist. Phys., 27 (1998), pp.~1--33.

\bibitem{Darrozes1969}
{\sc J.~S. Darrozes}, {\em Approximate solutions of the {Boltzmann} equation
  for flows past bodies of moderate curvature in: {L. Trilling and H. Y.
  Wachman}, eds., rarefied gas dynamics},  (1969).

\bibitem{Esposito.Guo.Kim.Marra2013}
{\sc R.~Esposito, Y.~Guo, C.~Kim, and R.~Marra}, {\em Non-isothermal boundary
  in the {Boltzmann} theory and {Fourier} law}, Comm. Math. Phys., 323 (2013),
  pp.~177--239.

\bibitem{Esposito.Guo.Kim.Marra2015}
\leavevmode\vrule height 2pt depth -1.6pt width 23pt, {\em Stationary solutions
  to the {Boltzmann} equation in the hydrodynamic limit}, Annals of PDE, 4:1
  (2018), pp.~1--119.

\bibitem{Glassey1996}
{\sc R.~T. Glassey}, {\em The {Cauchy} problem in kinetic theory.}, Society for
  Industrial and Applied Mathematics (SIAM), Philadelphia, PA, 1996.

\bibitem{Golse.Poupaud1989}
{\sc F.~Golse and F.~Poupaud}, {\em Stationary solutions of the linearized
  {Boltzmann} equation in a half-space}, Math Meth. Appl. Sciences, 11 (1989),
  p.~503¨C524.

\bibitem{Golse.Saint-Raymond2004}
{\sc F.~Golse and L.~Saint-Raymond}, {\em The {Navier-Stokes} limit of the
  {Boltzmann} equation for bounded collision kernels}, Invent. Math, 155
  (2004), p.~81¨C161.

\bibitem{Grad1963}
{\sc H.~Grad}, {\em Asymptotic theory of the {Boltzmann} equation}, Phys.
  Fluids, 6 (1963), pp.~147--18l.

\bibitem{Guo2002}
{\sc Y.~Guo}, {\em The {Vlasov-Poisson-Boltzmann} system near {Maxwellians}},
  Comm. Pure Appl. Math., 55 (2002), pp.~1104--1135.

\bibitem{Guo2010}
\leavevmode\vrule height 2pt depth -1.6pt width 23pt, {\em Decay and continuity
  of the {Boltzmann} equation in bounded domains}, Arch. Ration. Mech. Anal.,
  197 (2010), pp.~713--809.

\bibitem{Guo.Kim.Tonon.Trescases2013}
{\sc Y.~Guo, C.~Kim, D.~Tonon, and A.~Trescases}, {\em Regularity of the
  {Boltzmann} equation in convex domain}, Inventiones Mathematicae, 207 (2016),
  pp.~115--290.

\bibitem{AA007}
{\sc Y.~Guo and L.~Wu}, {\em Geometric correction in diffusive limit of neutron
  transport equation in {2D} convex domains}, Arch. Rational Mech. Anal., 226
  (2017), pp.~321--403.

\bibitem{AA009}
\leavevmode\vrule height 2pt depth -1.6pt width 23pt, {\em Regularity of
  {Milne} problem with geometric correction in {3D}}, Math. Models Methods
  Appl. Sci., 27 (2017), pp.~453--524.

\bibitem{Masi.Esposito.Lebowitz1989}
{\sc A.~D. Masi, R.~Esposito, and J.~L. Lebowitz}, {\em Incompressible
  {Navier-Stokes} and {Euler} limits of the {Boltzmann} equation}, Comm. Pure
  and Appl. Math., 42 (1989), pp.~1189--1214.

\bibitem{Sone1969}
{\sc Y.~Sone}, {\em Asymptotic theory of flow of rarefied gas over a smooth
  boundary I, in: L. Trilling and H. Y. Wachman, eds., Rarefied Gas Dynamics},
  Academic Press, New York, 1969.

\bibitem{Sone1971}
\leavevmode\vrule height 2pt depth -1.6pt width 23pt, {\em Asymptotic theory of
  flow of rarefied gas over a smooth boundary II, in: D. Dini, ed., Rarefied
  Gas Dynamics}, Editrice Tecnico Scientifica, Pisa, 1971.

\bibitem{Sone1991}
\leavevmode\vrule height 2pt depth -1.6pt width 23pt, {\em Asymptotic theory of
  a steady flow of a rarefied gas past bodies for small Knudsen numbers, in: R.
  Gatignol and Soubbaramayer, eds., Advances in Kinetic Theory and Continuum
  Mechanics}, Springer-Verlag, Berlin, 1991.

\bibitem{Sone2002}
\leavevmode\vrule height 2pt depth -1.6pt width 23pt, {\em Kinetic theory and
  fluid dynamics.}, Birkhauser Boston, Inc., Boston, MA, 2002.

\bibitem{Sone2007}
\leavevmode\vrule height 2pt depth -1.6pt width 23pt, {\em Molecular gas
  dynamics. Theory, techniques, and applications.}, Birkhauser Boston, Inc.,
  Boston, MA, 2007.

\bibitem{Sone.Aoki1987}
{\sc Y.~Sone and K.~Aoki}, {\em Steady gas flows past bodies at small knudsen
  numbers-{Boltzmann} and hydrodynamic systems.}, Transp. Theory Stat. Phys.,
  16 (1987), pp.~189--199.

\bibitem{AA004}
{\sc L.~Wu}, {\em Hydrodynamic limit with geometric correction of stationary
  {Boltzmann} equation}, J. Differential Equations, 260 (2016), pp.~7152--7249.

\bibitem{AA005}
\leavevmode\vrule height 2pt depth -1.6pt width 23pt, {\em Diffusive limit with
  geometric correction of unsteady neutron transport equation}, Kinet. Relat.
  Models, 10 (2017), pp.~1163--1203.

\bibitem{AA003}
{\sc L.~Wu and Y.~Guo}, {\em Geometric correction for diffusive expansion of
  steady neutron transport equation}, Comm. Math. Phys., 336 (2015),
  pp.~1473--1553.

\bibitem{AA006}
{\sc L.~Wu, X.~Yang, and Y.~Guo}, {\em Asymptotic analysis of transport
  equation in annulus}, J. Stat. Phys., 165 (2016), pp.~585--644.

\end{thebibliography}

\end{document}